\documentclass[10pt, reqno]{amsart}
\usepackage{amssymb}
\usepackage{eucal}
\usepackage{amsmath}
\usepackage{amscd}
\usepackage[dvips]{color}
\usepackage{multicol}
\usepackage[all]{xy}           
\usepackage{graphicx}
\usepackage{color}
\usepackage{colordvi}
\usepackage{xspace}
\usepackage{makecell}
\usepackage{tikz}

\usepackage{ifpdf}
\ifpdf
 \usepackage[colorlinks,final,backref=page,hyperindex]{hyperref}
\else
 \usepackage[colorlinks,final,backref=page,hyperindex,hypertex]{hyperref}
\fi

\begin{document}
\newcommand {\emptycomment}[1]{} 

\newcommand{\nc}{\newcommand}
\newcommand{\delete}[1]{}
\nc{\mfootnote}[1]{\footnote{#1}} 
\nc{\todo}[1]{\tred{To do:} #1}

\delete{
\nc{\mlabel}[1]{\label{#1}}  
\nc{\mcite}[1]{\cite{#1}}  
\nc{\mref}[1]{\ref{#1}}  
\nc{\meqref}[1]{\eqref{#1}} 
\nc{\mbibitem}[1]{\bibitem{#1}} 
}

\nc{\mlabel}[1]{\label{#1}  
{\hfill \hspace{1cm}{\bf{{\ }\hfill(#1)}}}}
\nc{\mcite}[1]{\cite{#1}{{\bf{{\ }(#1)}}}}  
\nc{\mref}[1]{\ref{#1}{{\bf{{\ }(#1)}}}}  
\nc{\meqref}[1]{\eqref{#1}{{\bf{{\ }(#1)}}}} 
\nc{\mbibitem}[1]{\bibitem[\bf #1]{#1}} 

\newtheorem{thm}{Theorem}[section]
\newtheorem{lem}[thm]{Lemma}
\newtheorem{cor}[thm]{Corollary}
\newtheorem{pro}[thm]{Proposition}
\newtheorem{conj}[thm]{Conjecture}
\theoremstyle{definition}
\newtheorem{defi}[thm]{Definition}
\newtheorem{ex}[thm]{Example}
\newtheorem{rmk}[thm]{Remark}
\newtheorem{pdef}[thm]{Proposition-Definition}
\newtheorem{condition}[thm]{Condition}

\renewcommand{\labelenumi}{{\rm(\alph{enumi})}}
\renewcommand{\theenumi}{\alph{enumi}}

\nc{\tred}[1]{\textcolor{red}{#1}}
\nc{\tblue}[1]{\textcolor{blue}{#1}}
\nc{\tgreen}[1]{\textcolor{green}{#1}}
\nc{\tpurple}[1]{\textcolor{purple}{#1}}
\nc{\btred}[1]{\textcolor{red}{\bf #1}}
\nc{\btblue}[1]{\textcolor{blue}{\bf #1}}
\nc{\btgreen}[1]{\textcolor{green}{\bf #1}}
\nc{\btpurple}[1]{\textcolor{purple}{\bf #1}}

\nc{\rp}[1]{\textcolor{blue}{Ruipu:#1}}
\nc{\cm}[1]{\textcolor{red}{Chengming:#1}}
\nc{\li}[1]{\textcolor{blue}{#1}}
\nc{\lir}[1]{\textcolor{blue}{Li:#1}}


\nc{\twovec}[2]{\left(\begin{array}{c} #1 \\ #2\end{array} \right )}
\nc{\threevec}[3]{\left(\begin{array}{c} #1 \\ #2 \\ #3 \end{array}\right )}
\nc{\twomatrix}[4]{\left(\begin{array}{cc} #1 & #2\\ #3 & #4 \end{array} \right)}
\nc{\threematrix}[9]{{\left(\begin{matrix} #1 & #2 & #3\\ #4 & #5 & #6 \\ #7 & #8 & #9 \end{matrix} \right)}}
\nc{\twodet}[4]{\left|\begin{array}{cc} #1 & #2\\ #3 & #4 \end{array} \right|}

\nc{\rk}{\mathrm{r}}
\newcommand{\g}{\mathfrak g}
\newcommand{\h}{\mathfrak h}
\newcommand{\pf}{\noindent{$Proof$.}\ }
\newcommand{\frkg}{\mathfrak g}
\newcommand{\frkh}{\mathfrak h}
\newcommand{\Id}{\rm{Id}}
\newcommand{\gl}{\mathfrak {gl}}
\newcommand{\ad}{\mathrm{ad}}
\newcommand{\add}{\frka\frkd}
\newcommand{\frka}{\mathfrak a}
\newcommand{\frkb}{\mathfrak b}
\newcommand{\frkc}{\mathfrak c}
\newcommand{\frkd}{\mathfrak d}
\newcommand {\comment}[1]{{\marginpar{*}\scriptsize\textbf{Comments:} #1}}

\nc{\tforall}{\text{ for all }}

\nc{\svec}[2]{{\tiny\left(\begin{matrix}#1\\
#2\end{matrix}\right)\,}}  
\nc{\ssvec}[2]{{\tiny\left(\begin{matrix}#1\\
#2\end{matrix}\right)\,}} 

\nc{\typeI}{local cocycle $3$-Lie bialgebra\xspace}
\nc{\typeIs}{local cocycle $3$-Lie bialgebras\xspace}
\nc{\typeII}{double construction $3$-Lie bialgebra\xspace}
\nc{\typeIIs}{double construction $3$-Lie bialgebras\xspace}

\nc{\bia}{{$\mathcal{P}$-bimodule ${\bf k}$-algebra}\xspace}
\nc{\bias}{{$\mathcal{P}$-bimodule ${\bf k}$-algebras}\xspace}

\nc{\rmi}{{\mathrm{I}}}
\nc{\rmii}{{\mathrm{II}}}
\nc{\rmiii}{{\mathrm{III}}}
\nc{\pr}{{\mathrm{pr}}}
\newcommand{\huaA}{\mathcal{A}}

\nc{\mcdot}{{}}

\nc{\OT}{constant $\theta$-}
\nc{\T}{$\theta$-}
\nc{\IT}{inverse $\theta$-}


\nc{\asi}{ASI\xspace}
\nc{\dualp}{transposed Poisson\xspace}
\nc{\Dualp}{Transposed Poisson\xspace}
\nc{\dualpop}{{\bf TPois}\xspace}
\nc{\ldualp}{derivation-transposed Poisson\xspace}

\nc{\spdualp}{sp-dual Poisson \xspace} \nc{\aybe}{AYBE\xspace}

\nc{\admset}{\{\pm x\}\cup K^{\times} x^{-1}}

\nc{\dualrep}{gives a dual representation\xspace}
\nc{\admt}{admissible to\xspace}

\nc{\ciri}{\circ_{\rm I}}
\nc{\cirii}{\circ_{\rm II}}
\nc{\ciriii}{\circ_{\rm III}}

\nc{\opa}{\cdot_A}
\nc{\opb}{\cdot_B}

\nc{\post}{positive type\xspace}
\nc{\negt}{negative type\xspace}
\nc{\invt}{inverse type\xspace}

\nc{\pll}{\beta}
\nc{\plc}{\epsilon}

\nc{\ass}{{\mathit{Ass}}}
\nc{\comm}{{\mathit{Comm}}}
\nc{\dend}{{\mathit{Dend}}}
\nc{\zinb}{{\mathit{Zinb}}}
\nc{\tdend}{{\mathit{TDend}}}
\nc{\prelie}{{\mathit{preLie}}}
\nc{\postlie}{{\mathit{PostLie}}}
\nc{\quado}{{\mathit{Quad}}}
\nc{\octo}{{\mathit{Octo}}}
\nc{\ldend}{{\mathit{ldend}}}
\nc{\lquad}{{\mathit{LQuad}}}

 \nc{\adec}{\check{;}} \nc{\aop}{\alpha}
\nc{\dftimes}{\widetilde{\otimes}} \nc{\dfl}{\succ} \nc{\dfr}{\prec}
\nc{\dfc}{\circ} \nc{\dfb}{\bullet} \nc{\dft}{\star}
\nc{\dfcf}{{\mathbf k}} \nc{\apr}{\ast} \nc{\spr}{\cdot}
\nc{\twopr}{\circ} \nc{\tspr}{\star} \nc{\sempr}{\ast}
\nc{\disp}[1]{\displaystyle{#1}}
\nc{\bin}[2]{ (_{\stackrel{\scs{#1}}{\scs{#2}}})}  
\nc{\binc}[2]{ \left (\!\! \begin{array}{c} \scs{#1}\\
    \scs{#2} \end{array}\!\! \right )}  
\nc{\bincc}[2]{  \left ( {\scs{#1} \atop
    \vspace{-.5cm}\scs{#2}} \right )}  
\nc{\sarray}[2]{\begin{array}{c}#1 \vspace{.1cm}\\ \hline
    \vspace{-.35cm} \\ #2 \end{array}}
\nc{\bs}{\bar{S}} \nc{\dcup}{\stackrel{\bullet}{\cup}}
\nc{\dbigcup}{\stackrel{\bullet}{\bigcup}} \nc{\etree}{\big |}
\nc{\la}{\longrightarrow} \nc{\fe}{\'{e}} \nc{\rar}{\rightarrow}
\nc{\dar}{\downarrow} \nc{\dap}[1]{\downarrow
\rlap{$\scriptstyle{#1}$}} \nc{\uap}[1]{\uparrow
\rlap{$\scriptstyle{#1}$}} \nc{\defeq}{\stackrel{\rm def}{=}}
\nc{\dis}[1]{\displaystyle{#1}} \nc{\dotcup}{\,
\displaystyle{\bigcup^\bullet}\ } \nc{\sdotcup}{\tiny{
\displaystyle{\bigcup^\bullet}\ }} \nc{\hcm}{\ \hat{,}\ }
\nc{\hcirc}{\hat{\circ}} \nc{\hts}{\hat{\shpr}}
\nc{\lts}{\stackrel{\leftarrow}{\shpr}}
\nc{\rts}{\stackrel{\rightarrow}{\shpr}} \nc{\lleft}{[}
\nc{\lright}{]} \nc{\uni}[1]{\tilde{#1}} \nc{\wor}[1]{\check{#1}}
\nc{\free}[1]{\bar{#1}} \nc{\den}[1]{\check{#1}} \nc{\lrpa}{\wr}
\nc{\curlyl}{\left \{ \begin{array}{c} {} \\ {} \end{array}
    \right .  \!\!\!\!\!\!\!}
\nc{\curlyr}{ \!\!\!\!\!\!\!
    \left . \begin{array}{c} {} \\ {} \end{array}
    \right \} }
\nc{\leaf}{\ell}       
\nc{\longmid}{\left | \begin{array}{c} {} \\ {} \end{array}
    \right . \!\!\!\!\!\!\!}
\nc{\ot}{\otimes} \nc{\sot}{{\scriptstyle{\ot}}}
\nc{\otm}{\overline{\ot}}
\nc{\ora}[1]{\stackrel{#1}{\rar}}
\nc{\ola}[1]{\stackrel{#1}{\la}}
\nc{\pltree}{\calt^\pl}
\nc{\epltree}{\calt^{\pl,\NC}}
\nc{\rbpltree}{\calt^r}
\nc{\scs}[1]{\scriptstyle{#1}} \nc{\mrm}[1]{{\rm #1}}
\nc{\dirlim}{\displaystyle{\lim_{\longrightarrow}}\,}
\nc{\invlim}{\displaystyle{\lim_{\longleftarrow}}\,}
\nc{\mvp}{\vspace{0.5cm}} \nc{\svp}{\vspace{2cm}}
\nc{\vp}{\vspace{8cm}} \nc{\proofbegin}{\noindent{\bf Proof: }}
\nc{\proofend}{$\blacksquare$ \vspace{0.5cm}}
\nc{\freerbpl}{{F^{\mathrm RBPL}}}
\nc{\sha}{{\mbox{\cyr X}}}  
\nc{\ncsha}{{\mbox{\cyr X}^{\mathrm NC}}} \nc{\ncshao}{{\mbox{\cyr
X}^{\mathrm NC,\,0}}}
\nc{\shpr}{\diamond}    
\nc{\shprm}{\overline{\diamond}}    
\nc{\shpro}{\diamond^0}    
\nc{\shprr}{\diamond^r}     
\nc{\shpra}{\overline{\diamond}^r}
\nc{\shpru}{\check{\diamond}} \nc{\catpr}{\diamond_l}
\nc{\rcatpr}{\diamond_r} \nc{\lapr}{\diamond_a}
\nc{\sqcupm}{\ot}
\nc{\lepr}{\diamond_e} \nc{\vep}{\varepsilon} \nc{\labs}{\mid\!}
\nc{\rabs}{\!\mid} \nc{\hsha}{\widehat{\sha}}
\nc{\lsha}{\stackrel{\leftarrow}{\sha}}
\nc{\rsha}{\stackrel{\rightarrow}{\sha}} \nc{\lc}{\lfloor}
\nc{\rc}{\rfloor}
\nc{\tpr}{\sqcup}
\nc{\nctpr}{\vee}
\nc{\plpr}{\star}
\nc{\rbplpr}{\bar{\plpr}}
\nc{\sqmon}[1]{\langle #1\rangle}
\nc{\forest}{\calf}
\nc{\altx}{\Lambda_X} \nc{\vecT}{\vec{T}} \nc{\onetree}{\bullet}
\nc{\Ao}{\check{A}}
\nc{\seta}{\underline{\Ao}}
\nc{\deltaa}{\overline{\delta}}
\nc{\trho}{\tilde{\rho}}

\nc{\rpr}{\circ}
\nc{\dpr}{{\tiny\diamond}}
\nc{\rprpm}{{\rpr}}

\nc{\mmbox}[1]{\mbox{\ #1\ }} \nc{\ann}{\mrm{ann}}
\nc{\Aut}{\mrm{Aut}} \nc{\can}{\mrm{can}}
\nc{\twoalg}{{two-sided algebra}\xspace}
\nc{\colim}{\mrm{colim}}
\nc{\Cont}{\mrm{Cont}} \nc{\rchar}{\mrm{char}}
\nc{\cok}{\mrm{coker}} \nc{\dtf}{{R-{\rm tf}}} \nc{\dtor}{{R-{\rm
tor}}}
\renewcommand{\det}{\mrm{det}}
\nc{\depth}{{\mrm d}}
\nc{\Div}{{\mrm Div}} \nc{\End}{\mrm{End}} \nc{\Ext}{\mrm{Ext}}
\nc{\Fil}{\mrm{Fil}} \nc{\Frob}{\mrm{Frob}} \nc{\Gal}{\mrm{Gal}}
\nc{\GL}{\mrm{GL}} \nc{\Hom}{\mrm{Hom}} \nc{\hsr}{\mrm{H}}
\nc{\hpol}{\mrm{HP}} \nc{\id}{\mrm{id}} \nc{\im}{\mrm{im}}
\nc{\incl}{\mrm{incl}} \nc{\length}{\mrm{length}}
\nc{\LR}{\mrm{LR}} \nc{\mchar}{\rm char} \nc{\NC}{\mrm{NC}}
\nc{\mpart}{\mrm{part}} \nc{\pl}{\mrm{PL}}
\nc{\ql}{{\QQ_\ell}} \nc{\qp}{{\QQ_p}}
\nc{\rank}{\mrm{rank}} \nc{\rba}{\rm{RBA }} \nc{\rbas}{\rm{RBAs }}
\nc{\rbpl}{\mrm{RBPL}}
\nc{\rbw}{\rm{RBW }} \nc{\rbws}{\rm{RBWs }} \nc{\rcot}{\mrm{cot}}
\nc{\rest}{\rm{controlled}\xspace}
\nc{\rdef}{\mrm{def}} \nc{\rdiv}{{\rm div}} \nc{\rtf}{{\rm tf}}
\nc{\rtor}{{\rm tor}} \nc{\res}{\mrm{res}} \nc{\SL}{\mrm{SL}}
\nc{\Spec}{\mrm{Spec}} \nc{\tor}{\mrm{tor}} \nc{\Tr}{\mrm{Tr}}
\nc{\mtr}{\mrm{sk}}

\nc{\ab}{\mathbf{Ab}} \nc{\Alg}{\mathbf{Alg}}
\nc{\Algo}{\mathbf{Alg}^0} \nc{\Bax}{\mathbf{Bax}}
\nc{\Baxo}{\mathbf{Bax}^0} \nc{\RB}{\mathbf{RB}}
\nc{\RBo}{\mathbf{RB}^0} \nc{\BRB}{\mathbf{RB}}
\nc{\Dend}{\mathbf{DD}} \nc{\bfk}{{K}} \nc{\bfone}{{\bf 1}}
\nc{\base}[1]{{a_{#1}}} \nc{\detail}{\marginpar{\bf More detail}
    \noindent{\bf Need more detail!}
    \svp}
\nc{\Diff}{\mathbf{Diff}} \nc{\gap}{\marginpar{\bf
Incomplete}\noindent{\bf Incomplete!!}
    \svp}
\nc{\FMod}{\mathbf{FMod}} \nc{\mset}{\mathbf{MSet}}
\nc{\rb}{\mathrm{RB}} \nc{\Int}{\mathbf{Int}}
\nc{\Mon}{\mathbf{Mon}}
\nc{\remarks}{\noindent{\bf Remarks: }}
\nc{\OS}{\mathbf{OS}} 
\nc{\Rep}{\mathbf{Rep}}
\nc{\Rings}{\mathbf{Rings}} \nc{\Sets}{\mathbf{Sets}}
\nc{\DT}{\mathbf{DT}}

\nc{\BA}{{\mathbb A}} \nc{\CC}{{\mathbb C}} \nc{\DD}{{\mathbb D}}
\nc{\EE}{{\mathbb E}} \nc{\FF}{{\mathbb F}} \nc{\GG}{{\mathbb G}}
\nc{\HH}{{\mathbb H}} \nc{\LL}{{\mathbb L}} \nc{\NN}{{\mathbb N}}
\nc{\QQ}{{\mathbb Q}} \nc{\RR}{{\mathbb R}} \nc{\BS}{{\mathbb{S}}} \nc{\TT}{{\mathbb T}}
\nc{\VV}{{\mathbb V}} \nc{\ZZ}{{\mathbb Z}}


\nc{\calao}{{\mathcal A}} \nc{\cala}{{\mathcal A}}
\nc{\calc}{{\mathcal C}} \nc{\cald}{{\mathcal D}}
\nc{\cale}{{\mathcal E}} \nc{\calf}{{\mathcal F}}
\nc{\calfr}{{{\mathcal F}^{\,r}}} \nc{\calfo}{{\mathcal F}^0}
\nc{\calfro}{{\mathcal F}^{\,r,0}} \nc{\oF}{\overline{F}}
\nc{\calg}{{\mathcal G}} \nc{\calh}{{\mathcal H}}
\nc{\cali}{{\mathcal I}} \nc{\calj}{{\mathcal J}}
\nc{\call}{{\mathcal L}} \nc{\calm}{{\mathcal M}}
\nc{\caln}{{\mathcal N}} \nc{\calo}{{\mathcal O}}
\nc{\calp}{{\mathcal P}} \nc{\calq}{{\mathcal Q}} \nc{\calr}{{\mathcal R}}
\nc{\calt}{{\mathcal T}} \nc{\caltr}{{\mathcal T}^{\,r}}
\nc{\calu}{{\mathcal U}} \nc{\calv}{{\mathcal V}}
\nc{\calw}{{\mathcal W}} \nc{\calx}{{\mathcal X}}
\nc{\CA}{\mathcal{A}}

\nc{\fraka}{{\mathfrak a}} \nc{\frakB}{{\mathfrak B}}
\nc{\frakb}{{\mathfrak b}} \nc{\frakd}{{\mathfrak d}}
\nc{\oD}{\overline{D}}
\nc{\frakF}{{\mathfrak F}} \nc{\frakg}{{\mathfrak g}}
\nc{\frakm}{{\mathfrak m}} \nc{\frakM}{{\mathfrak M}}
\nc{\frakMo}{{\mathfrak M}^0} \nc{\frakp}{{\mathfrak p}}
\nc{\frakS}{{\mathfrak S}} \nc{\frakSo}{{\mathfrak S}^0}
\nc{\fraks}{{\mathfrak s}} \nc{\os}{\overline{\fraks}}
\nc{\frakT}{{\mathfrak T}}
\nc{\oT}{\overline{T}}
\nc{\frakX}{{\mathfrak X}} \nc{\frakXo}{{\mathfrak X}^0}
\nc{\frakx}{{\mathbf x}}
\nc{\frakTx}{\frakT}      
\nc{\frakTa}{\frakT^a}        
\nc{\frakTxo}{\frakTx^0}   
\nc{\caltao}{\calt^{a,0}}   
\nc{\ox}{\overline{\frakx}} \nc{\fraky}{{\mathfrak y}}
\nc{\frakz}{{\mathfrak z}} \nc{\oX}{\overline{X}}

\font\cyr=wncyr10

\nc{\al}{\alpha}
\nc{\lam}{\lambda}
\nc{\lr}{\longrightarrow}


\title[Diassociative bialgebras]{Skew-symmetric Frobenius dialgebras, diassociative bialgebras and Yang-Baxter equations}

\author{Dilei Lu}
\address{College of Applied Science, Beijing Information Science and Technology University, Beijing 100192, China}
         \email{ludyray@bistu.edu.cn}

\date{\today}

\begin{abstract}

A diassociative algebra (or dialgebra), a generalization of associative algebras with two associative products, is a fundamental algebraic structure of great importance and wide-ranging applications. In this paper, we establish a bialgebra theory for dialgebras completely and systematically. Explicitly, we introduce the notion of diassociative bialgebras (bi-dialgebras) which are equivalent to double constructions of Frobenius dialgebras as well as matched pairs of dialgebras. Moreover, we show that a bi-dialgebra gives rise to a Leibniz bialgebra, thereby lifting the classical relation  between dialgebras and Leibniz algebras to the bialgebra level. In the  coboundary cases, we introduce the diassociative Yang-Baxter equation (DAYBE) in a 
dialgebra, and solutions whose skew-symmetric part is invariant give rise to the so-called quasi-triangular bi-dialgebras. Moreover, weighted  $\mathcal{O}$-operators on dialgebras are defined, which afford an operator interpretation for such solutions of the DAYBE.
 Finally, we also develop the triangular and factorizable theories for bi-dialgebras, and introduce skew-symmetric Rota-Baxter Frobenius dialgebras of arbitrary weight. We show that  skew-symmetric Rota-Baxter Frobenius dialgebras of weight  zero give rise to triangular bi-dialgebras, whereas those of nonzero weight induce factorizable bi-dialgebras.  As an application,  we provide concrete examples of factorizable bi-dialgebras from associative algebras and dialgebras, respectively.
\end{abstract}

\subjclass[2020]{
16T10,  
16T25,  
16W99, 
17A32,  
17B38,  
17B62,  
17B60,  
81R60.  
}

\keywords{Associative algebra; dialgebra; Frobenius dialgebra;    diassociative bialgebra; diassociative Yang-Baxter equation; $\mathcal{O}$-operator}

\maketitle


\tableofcontents

\allowdisplaybreaks

\section{Introduction}

 The aim of this paper is to study the bialgebra theory for dialgebras with the motivation from the great importance of Lie bialgebras and infinitesimal bialgebras.  By introducing analogs of the classical Yang-Baxter equation  and 
$\mathcal{O}$-operators, we also develop the quasi-triangular theory for bi-dialgebras, including the triangular and factorizable cases.

\subsection{Dialgebras} 
The notion of  diassociative algebras (or dialgebras, for
short)  was first introduced by Loday in the early 1990s in connection with  periodicity phenomena in algebraic $K$-theory \cite{L4,Loday4}. As a generalization of associative algebras, dialgebras are equipped with two associative products satisfying three compatibility conditions, and they naturally give rise to a Leibniz algebra, which can be regarded as a non-commutative generalization of Lie algebras \cite{Loday3,Loday4}.  In particular, just as associative algebras play the role of universal enveloping algebras for Lie algebras, dialgebras serve as the universal enveloping algebras for Leibniz algebras \cite{Loday5}.  Moreover, a class of dialgebras arises from averaging operators on associative algebras \cite{MA}. Surprisingly, \cite{MAMG2} showed that the dialgebra cohomology of a dialgebra admits a $G$-algebra structure, analogous to that on the Hochschild cohomology of an associative algebra \cite{GM1,GM2}.  Since then,  remarkable progress has been made in the study of dialgebras, including their structural theory \cite{GCM,VRFR}, deformation theory \cite{FM,MAMG1,DonY},  Gröbner-Shirshov bases \cite{BCL} and algebraic operads  \cite{MRB,JBGN,UK,VB1}.  Furthermore, dialgebras have arisen in numerous other branches of pure and applied mathematics,  such as the Yang-Baxter equations \cite{FLVO},  conformal algebraic structures \cite{PSKXY}, functional analysis \cite{FR} and so on. Several further recent developments on dialgebras can be found in \cite{BMR,KPS2,KPS1}. Especially, the operad of dialgebras is the operadic Koszul duality of the operad of dendriform algebras which are
equipped with an associative product  written as a linear combination of
nonassociative compositions \cite{B2,L4}.

Apart from Leibniz algebras and dendriform algebras, dialgebras are closely related to other important classes of algebras. This is a well-known diagram introduced by Chapoton \cite{CF23}, which illustrates the close relationships among Lie algebras, associative algebras, commutative algebras, pre-Lie algebras, dendriform algebras, Zinbiel algebras, Leibniz algebras, dialgebras and perm algebras. 
\begin{equation*}    
\xymatrix@C=2cm@R=0.8cm{ 
\text{Leibniz algebra}     &  \ar[l]_-{\small\txt{commutator}}  \txt{dialgebra}  &   \text{perm  algebra}\ar@{->}[l]_{\ni} \\    
\text{Lie algebra}  \ar@<.5ex>[u]^{\tiny{\txt{average \\ operator}}} \ar@<.2ex>[d]^{\tiny{\txt{Rota-Baxter \\ operator}}}  &\ar[l]_-{\small\txt{commutator}}    \text{associative algebra}   \ar@<.2ex>[d]^{\tiny{\txt{Rota-Baxter \\ operator}}}  \ar@<.5ex>[u]^-{\tiny{\txt{average \\ operator}}}     &   \ar[l]_{\ni} \txt{commutative  algebra} \ar@<.5ex>[u]^{\tiny{\txt{average \\ operator}}} \ar@<.2ex>[d]^-{\tiny{\txt{Rota-Baxter \\ operator}}}     \\    
\txt{pre-Lie algebra}   \ar@<1ex>[u]^{\text{sub-adjacent}} &  \ar[l]_-{\;\;\small\txt{commutator}}  \text{dendriform algebra}   \ar@<1ex>[u]^{\text{sub-adjacent}} &  \ar[l]_-{\ni} \text{Zinbiel  algebra}\ar@<1ex>[u]^-{\text{sub-adjacent}}  }
\end{equation*}
where the symbol $\ni$ means the algebra on the right is a special case (commutative version) of that on the left.  Algebraically, the first-row algebras arise from  the second row via averaging operators, and the third-row algebras via Rota-Baxter operators  \cite{MA,BBG,PBG,JBGN}. At the operadic level, all operads corresponding to the algebras in the diagram are Koszul and the operadic Koszul duality  is given by central symmetry \cite{BGL,Gik,LB}. Furthermore, the operads of the first and third rows of algebras are the duplicator and disuccessor of those in the middle row, respectively \cite{BBG,JBGN}. More importantly, the operads of the algebras in the second row are cyclic, while those of the first and third rows are anticyclic \cite{CF1,GK}. 

In this paper, we first recall the basic properties and classical constructions of dialgebras. These include constructions via averaging operators on associative algebras (Proposition \ref{avonass}), the tensor product of perm algebras and associative algebras (Proposition \ref{ifdualpp}), and bimodules of associative algebras (Proposition \ref{difromrepAss}). After that, we introduce bimodules of dialgebras and show that every given bimodule of a dialgebra admits a natural dual bimodule (Lemma \ref{dualbimodofdias}). Due to the anticyclicity of the operad of dialgebras,
Chapoton introduced the invariance condition for bilinear forms on dialgebras   in \cite{CF1}, which motivates us to define (skew-symmetric) Frobenius dialgebras (Definition \ref{Frbendi}). Note that, unlike symmetric Frobenius algebras, for which the invariant bilinear form on the associative algebra is symmetric, this invariant bilinear form on dialgebras is skew-symmetric rather than symmetric (in the sense of Proposition \ref{symFrodi}). We then provide three explicit constructions of  skew-symmetric  Frobenius dialgebras from associative algebras, dialgebras, and the tensor products of  symmetric Frobenius algebras with quadratic perm algebras (Propositions \ref{p:ppplgphpl} and  \ref{ifqdualpp}). Furthermore, there is a quadratic Leibniz algebra arising from a skew-symmetric Frobenius dialgebra (Proposition \ref{subjecntqla}). More importantly, we prove the equivalence between the regular bimodule and the coregular bimodule in the  skew-symmetric Frobenius dialgebras (Proposition \ref{dualdj:x}). 
Finally, we investigate the invariant elements of dialgebras together with their equivalent characterizations (Definition \ref{invele1}) and give the tensor forms of invariant bilinear forms on dialgebras (Proposition \ref{tensorformIndias}).


\subsection{Diassociative bialgebras and diassociative Yang-Baxter equations}

A bialgebra structure consists of an algebra structure and a coalgebra structure related by appropriate compatibility conditions. They often play important roles in various fields and are closely connected with other structures arising from mathematics and physics. Well-known examples of such structures include Lie bialgebras and antisymmetric infinitesimal bialgebras. As structures equivalent to Manin triples of Lie algebras, Lie bialgebras are the infinitesimal versions of Poisson-Lie groups, and play a crucial role in the study of quantum groups \cite{CV,Dr}. Antisymmetric infinitesimal bialgebras, regarded as the associative analogues of Lie bialgebras, can be characterized as double constructions of Frobenius algebras. These structures have broad applications in two-dimensional topological field theory and string theory \cite{KJ,LAPH}.  
Note that, with the exception of dialgebras, all algebraic structures appearing in the preceding diagram possess well-established bialgebra theories, including Lie algebras \cite{CV,Dr}, (commutative) associative algebras \cite{B2}, pre-Lie algebras \cite{B1},    dendriform algebras \cite{B2}, Zinbiel algebras \cite{WNB}, 
Leibniz algebras \cite{TS},  perm algebras \cite{BYZ}.  In view of the absence of bialgebra structures for dialgebras, developing a suitable bialgebra theory for dialgebras is thus a natural and well-motivated undertaking. 

Additionally, our research motivation is also derived from a critical question: does there exist a bialgebra structure on dialgebras that induces a Leibniz bialgebra (introduced in \cite{TS}), thereby generalizing the link between dialgebras and Leibniz algebras to the bialgebra level? In this paper, we give an affirmative answer to this question. Explicitly, we introduce the notion of a bi-dialgebra, defined as a structure consisting of a dialgebra and a co-dialgebra that are compatible with each other. Such a structure is equivalent to a double construction of Frobenius dialgebras, as well as to a certain matched pair of dialgebras  (Theorem \ref{sandengjia}). Moreover, we show that a bi-dialgebra naturally gives rise to a Leibniz bialgebra via a commutator approach, thereby lifting the  connection that a dialgebra induces a Leibniz algebra to the level of bialgebras (Theorem \ref{bidiatoLeibbi}). We summarize these results in the following diagram:
\begin{equation*}    
\xymatrix@C=1.8cm@R=0.8cm{ 
\text{Leibniz bialgebras}     &  \ar@{-->}[l]_-{\quad\text{sub-adjacent}}  \txt{bi-dialgebras}  \ar@{<->}[r]^-{\mathrm{Prop.\;}{\ref{mpdppba}}} \ar@{<->}[dr]^-{\mathrm{Thm.\;}{\ref{sandengjia}}}& \txt{matched pairs of \\dialgebras }\ar@{<->}[d]^-{ \mathrm{Prop.\;}{\ref{mpandmtofdppa}}} \\    
\text{Leibniz algebras}  \ar@<5ex>[u]^{\tiny{\txt{bialgebraization}}}  &\ar[l]_-{\quad\text{sub-adjacent}}    \text{dialgebras}  \ar@{-->}[u]^-{\tiny{\txt{bialgebraization}}} \ar@{->}[r]^-{\mathrm{Ex.\;}{\ref{doublefromdias}}} & \txt{double constructions of\\ Frobenius dialgebras} }
\end{equation*}

In the coboundary  case,  we introduce the diassociative Yang-Baxter equation (DAYBE), an analogue of the classical Yang-Baxter equation for dialgebras. Different types of bi-dialgebras arise from its solutions with distinct properties:
\begin{itemize}
\item Solutions whose skew-symmetric part is invariant induce quasi-triangular bi-dialgebras.  We investigate the operator form of the DAYBE (Proposition \ref{operaforDAYBE}) and accordingly introduce the notion of $\mathcal{O}$-operators with weights on dialgebras  to characterize those solutions of the DAYBE with invariant skew-symmetric parts (Theorem \ref{ilovethisthm}).

\item Symmetric solutions give rise to triangular bi-dialgebras. By introducing the notion of a pre-dialgebra as the disuccessor of dialgebras in the sense of operad theory, we provide explicit and concrete constructions for these symmetric solutions and, consequently, for triangular bi-dialgebras. (Corollary \ref{pretoBialgebra}).

\item Solutions whose skew-symmetric part is invariant  and nondegenerate  yield factorizable bi-dialgebras. We prove that every factorizable bi-dialgebra induces a factorization of its underlying bi-dialgebra (Proposition \ref{prop:fpt}). Moreover, we show that the Drinfeld classical double of a bi-dialgebra admits a canonical factorizable bi-dialgebra structure automatically (Theorem \ref{Dfincdouble}).
\end{itemize}

Movitvated by the study of quasi-triangular and factorizable theories of Lie bialgebras \cite{LS} and antisymmetric infinitesimal bialgebras \cite{SYWY}, we introduce the notion of a skew-symmetric Rota-Baxter Frobenius dialgebra  by equipping a Frobenius dialgebra   with a Rota-Baxter operator satisfying a compatibility condition. We show that a skew-symmetric Rota-Baxter Frobenius dialgebra  of zero weight can give rise to a triangular bi-dialgebra (Proposition \ref{rbfna2}). Moreover, we establish a one-to-one correspondence between factorizable  bi-dialgebras and skew-symmetric Rota-Baxter Frobenius dialgebras of nonzero weights (Theorem \ref{thm:fdiasb2qrpx}). As an application, we construct explicit examples of factorizable bi-dialgebras from associative algebras and dialgebras, respectively  (Corollary \ref{EX:jpx:ppplgphpl}).

{}

\subsection{Layout of the paper}
The paper is organized as follows.

 In Section \ref{Property},   we exhibit fundamental properties of dialgebras and provide several explicit constructions. We then introduce the notion of bimodules for dialgebras and give the dual bimodule associated with a given bimodule of a dialgebra. By equipping a dialgebra with a nondegenerate (skew-symmetric) invariant bilinear form, we introduce the notion of a (skew-symmetric) Frobenius dialgebra. We further present two natural constructions of skew-symmetric Frobenius dialgebras, arising from associative algebras and dialgebras respectively. Finally, we study invariant elements of dialgebras and establish an equivalent characterization for them, which serves as indispensable preliminaries for our subsequent investigations.

 In Section \ref{Bialgebra}, the notions of double constructions of Frobenius dialgebras and  diassociative bialgebras (bi-dialgebras) are introduced. Their equivalence is interpreted in terms of matched pairs of dialgebras.
 Furthermore, we prove that a bi-dialgebra naturally induces a Leibniz bialgebra via the sub-adjacent way. 
  Then we consider coboundary bi-dialgebras whose study leads to the introduction of diassociative Yang-Baxter equation (DAYBE). Explicitly, a solution whose skew-symmetric part is invariant gives a special coboundary bi-dialgebras, called quasi-triangular bi-dialgebras. Finally, we introduce the notion of  $\mathcal{O}$-operators on dialgebras with weights, which provides an operator-form interpretation for solutions to the DAYBE with invariant skew-symmetric part. 
 
 In Section \ref{CBialgebra}, we study two special classes of quasi-triangular bi-dialgebras. The triangular bi-dialgebra is constructed from symmetric solutions to the DAYBE, and we introduce pre-dialgebras to obtain such solutions in certain larger dialgebras.
The factorizable bi-dialgebra induces a factorization of the underlying dialgebra, and we show that the Drinfeld classical double of a bi-dialgebra naturally admits such a structure. Finally,
We define skew-symmetric Rota-Baxter Frobenius dialgebras of arbitrary weight and prove that those of weight zero induce triangular bi-dialgebras. A one-to-one correspondence is established between factorizable bi-dialgebras and those of nonzero weight.
As an application, we construct distinct factorizable bi-dialgebras from associative algebras and dialgebras, respectively.

 Unless otherwise specified,  we work with finite-dimensional vector spaces and algebras over a field $\mathbb{F}$ of characteristic $0$ throughout this paper, although many results and definitions remain valid in the infinite-dimensional case.
\begin{enumerate}
\item  For a  $\mathbb{F}$-vector space $V$, let  $V^{*}:=\operatorname{Hom}_{\mathbb{F}}(V, \mathbb{F})$  denote the dual  $\mathbb{F}$-vector space. Denote the usual pairing between  $V^{*}$  and  $V$  by
$$
\langle \ , \ \rangle: V^{*} \times V \rightarrow \mathbb{F}, \quad \left\langle u^{*}, v\right\rangle=u^{*}(v),\quad  \forall u^{*} \in V^{*}, v \in V .
$$

\item   Let  $A$  and  $V$  be vector spaces. For a linear map  $\varphi: A \rightarrow {\rm End}(V)$, define a linear map  $\varphi^{*}: A \rightarrow {\rm End}(V^*)$  by 
\begin{align}
\left\langle \varphi^{*}(x) v^{*}, u\right\rangle=-\left\langle v^{*}, \varphi(x) u\right\rangle, \quad \forall x \in A, u \in V, v^{*} \in V^{*}. \label{dualmap}
\end{align} 

\item  Let $A$ be a vector space with a binary
operation $\diamond$. Define linear maps $L_{\diamond},
R_{\diamond}:A\rightarrow {\rm End}(A)$ respectively by
\begin{eqnarray*}
L_{\diamond}(x)y:=x\diamond y =: R_{\diamond}(y)x, \;\;\; \forall x, y\in A.
\end{eqnarray*}
 If there is a binary operation $\odot$ on the dual space $A^*$, we denote the linear maps $\mathcal{L}_{\odot},
\mathcal{R}_{\odot}:A^*\rightarrow {\rm End}(A^*)$ respectively by
 \begin{eqnarray*}
\mathcal{L}_{\odot}(a^*)b^*:= a^* \odot b^* =: \mathcal{R}_{\odot}(b^*)a^*, \;\;\;\forall a^*,b^*\in A^*.
\end{eqnarray*}
\end{enumerate}

\section{Frobenius dialgebras}\label{Property}

 In this section, we first present the fundamental properties and interesting constructions of diassociative algebras (dialgebras). We then introduce the notion of the bimodules  of dialgebras.  The dual bimodule of a bimodule of a dialgebra  is also given. Equipped with a nondegenerate (skew-symmetric) invariant bilinear form on a dialgebra, we introduce the concept of a (skew-symmetric) Frobenius dialgebra. We further give two natural constructions of  skew-symmetric Frobenius dialgebras from associative algebras and dialgebras, respectively.  Finally, we investigate invariant elements for dialgebras and provide an equivalent characterization thereof, as indispensable preliminaries for our subsequent study.

\subsection{Some basic results on diassociative algebras}

\begin{defi}\label{def1}
A   \textbf{diassociative algebra (dialgebra)} is a triple $(A,  \rhd,\lhd)$ where $(A,  \rhd)$ and $(A, \lhd)$ are associative algebras  satisfying the following conditions 
\begin{align}
(x \lhd y) \lhd z&=x \lhd (y \rhd z), \label{dias1}\\
(x \rhd y) \lhd z&=x \rhd(y \lhd z), \label{dias2}\\
(x \lhd y) \rhd z&=x \rhd(y\rhd z), \quad \forall x,y,z \in A.\label{dias3} 
\end{align}
Let $(A, \rhd_A,\lhd_A)$ and $(B, \rhd_B,\lhd_B)$ be  dialgebras. A linear map $f: A \to B$ is called a {\bf homomorphism of   dialgebras} if it satisfies 
\begin{align}
 f(x \rhd_A y) = f(x) \rhd_B f(y),\quad  f(x \lhd_A y) = f(x) \lhd_B f(y),\quad \forall x,y \in A.
\end{align}
\end{defi}
\begin{rmk}
Although some results still hold in the unital case, throughout this article we only consider non-unital dialgebras and associative algebras.
\end{rmk}

Recall that a  {\bf  Leibniz algebra} is a pair  $(A,  [\cdot,\cdot])$ where $A$ is a vector space with a binary operation  $[\cdot,\cdot]: A \otimes A \to A$ such that  
\begin{eqnarray}
[x , [y ,z]] = [[x ,y] ,z] + [y, [x,z]], \quad \forall x,y,z \in A. \label{Leibniz}
\end{eqnarray}

\begin{pro}\cite{L4}\label{diatoLeib}
    Let $(A,  \rhd,\lhd)$ be a dialgebra. Then the binary operation $[\cdot,\cdot]: A \otimes A \to A$ given by 
\begin{align}
  [x, y]  = x \rhd y - y \lhd x, \quad \forall x,y \in A, \label{inducedLeib}
\end{align}
defines a Leibniz algebra $(A,[\cdot, \cdot])$, which is called the {\bf sub-adjacent Leibniz algebra} of $(A,  \rhd,\lhd)$.
\end{pro}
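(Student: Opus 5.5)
The plan is to verify the Leibniz identity \eqref{Leibniz} directly for the bracket \eqref{inducedLeib}. We expand both sides into monomials in $\rhd$ and $\lhd$, then use associativity of $\rhd$ and of $\lhd$ together with \eqref{dias1}--\eqref{dias3} to match the terms. No earlier result beyond Definition \ref{def1} is needed.

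First, expand the three brackets:
\begin{align*}
[x,[y,z]] &= x\rhd(y\rhd z) - x\rhd(z\lhd y) - (y\rhd z)\lhd x + (z\lhd y)\lhd x,\\
[[x,y],z] &= (x\rhd y)\rhd z - (y\lhd x)\rhd z - z\lhd(x\rhd y) + z\lhd(y\lhd x),\\
[y,[x,z]] &= y\rhd(x\rhd z) - y\rhd(z\lhd x) - (x\rhd z)\lhd y + (z\lhd x)\lhd y.
\end{align*}
The twelve monomials fall into three classes according to the position of $z$ (last, middle, or first). The identity should hold class by class, so I will treat each class separately.

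For $z$ in the last position, associativity of $\rhd$ gives $(x\rhd y)\rhd z = x\rhd(y\rhd z)$, which matches the first term on the left. Axiom \eqref{dias3} gives $(y\lhd x)\rhd z = y\rhd(x\rhd z)$, so these two right-hand terms cancel each other. For $z$ in the middle, axiom \eqref{dias2} gives $x\rhd(z\lhd y) = (x\rhd z)\lhd y$ and $(y\rhd z)\lhd x = y\rhd(z\lhd x)$, so the two negative left-hand terms equal the two negative right-hand terms. For $z$ in the first position, axiom \eqref{dias1} gives $(z\lhd x)\lhd y = z\lhd(x\rhd y)$, which cancels against $-z\lhd(x\rhd y)$. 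The remaining term $z\lhd(y\lhd x)$ equals $(z\lhd y)\lhd x$ by associativity of $\lhd$, and this matches the last left-hand term.

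There is no real obstacle here; the only care needed is bookkeeping of signs and choosing, for each term, which axiom to apply. The potentially delicate class is the one with $z$ first. There one must use associativity of $\lhd$ and not \eqref{dias1} to rewrite $z\lhd(y\lhd x)$, while \eqref{dias1} is reserved for the $(z\lhd x)\lhd y$ cancellation.
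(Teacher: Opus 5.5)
Your proof is correct. I checked all twelve monomials, and each of the three $z$-position classes balances exactly as you describe: associativity of $\rhd$ and \eqref{dias3} handle $z$ last, \eqref{dias2} used twice handles $z$ in the middle, and \eqref{dias1} together with associativity of $\lhd$ handles $z$ first. The paper gives no proof of its own here and simply cites Loday's original work; your direct term-by-term expansion is the standard verification behind that citation.
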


Apart from the known algebraic examples of dialgebras in  \cite{L4}, some low-dimensional classifications of complex diassociative algebras are given in \cite{RRB2}, while an infinite diassociative algebras structure over a polynomial algebra  is constructed in \cite{LY2010}. In what follows, we present several examples of dialgebras.

\begin{ex}\label{dwdpp}
(1) Let $(A,\cdot)$ be an associative algebra, then the following equations  
$$
x \rhd y=x \cdot y=x \lhd y, \quad \forall x,y \in A,
$$
define a dialgebra structure  on  $A$.   Thus any associative algebras are dialgebras. 

(2)  Let  $V$  be a  $\mathbb{F}$-vector space and  $\varphi \in V^{*} \backslash\{0\}$  be a nonzero linear functional. Define two binary  operations on $V$ by 
\begin{align*}
u \rhd v =\varphi(u)v, \quad u \lhd v =\varphi(v)u, \quad \forall u,v \in V.
\end{align*}
 Then $(V,\rhd, \lhd)$ is a dialgebra.

(3) Let  $A:= \mathbb{F}\left[t^{ \pm}\right] \partial_{1} \oplus \mathbb{F}\left[t^{ \pm}\right] \partial_{2} $ be the  algebra of Laurent polynomials in the indeterminate  $t$. The set  $\left\{t^{i} \partial_{m} \mid i \in \mathbb{Z}, m \in\{1,2\}\right\}$  forms a basis of  $A$ over  $\mathbb{F}$. We define two binary  operations  $\rhd,\lhd$ on  $A$  respectively by
\begin{align}
t^{i}\partial_{m}   \rhd t^{j}\partial_{n}  =t^{i+j+\delta_{m,1}}\partial_{n}, \quad t^{i}\partial_{m}  \lhd t^{j}\partial_{n}  = t^{i+j+\delta_{n,1}}\partial_{m}, \quad \forall i, j \in \mathbb{Z},  m, n \in\{1,2\}, \label{ex:diainfinix}
\end{align}
where $\delta_{a,b}$ is the Kronecker symbol with $\delta_{a,b} =1 $ if $a=b$, $\delta_{a,b} =0 $ if $a\ne b$. Then  $(A,\rhd,\lhd)$  is an infinite-dimensional dialgebra.
\end{ex}

{}

An important class of dialgebras arises from averaging operators on associative algebras. We now recall the notion of an averaging operator on an associative algebra.

\begin{defi}
    Let $(A,  \cdot)$ be an  associative algebra and $P: A \to A$  be a linear map. If $P$ satisfies the following equations
\begin{align}
P(x) \cdot P(y) = P(P(x) \cdot y )= P( x \cdot  P(y)  ), \quad \forall x,y \in A,
\end{align} 
then  $P$ is called an {\bf averaging operator} on $(A,  \cdot)$.
\end{defi}

\begin{pro} \label{avonass}\cite{MA} Let $(A,  \cdot)$ be an associative algebra and $P: A \to A$ be an  averaging operator on $(A,  \cdot)$. Define new binary operations $\rhd$ and $\lhd$ on $A$ by
\begin{align}
 x \rhd y = P(x)\cdot y , \quad   x \lhd y = x \cdot P(y),\quad \forall x,y \in A.
\end{align} 
Then  $(A, \rhd ,\lhd)$ is a dialgebra.
\end{pro}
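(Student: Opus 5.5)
The plan is a direct verification of the five defining identities of Definition \ref{def1}: associativity of $\rhd$ and of $\lhd$, and the compatibility conditions \eqref{dias1}--\eqref{dias3}. Each identity will be reduced, using only associativity of $\cdot$ and the averaging identities
\[
P(x)\cdot P(y)=P(P(x)\cdot y)=P(x\cdot P(y)),
\]
to a statement in which both sides have the same form.

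We begin with associativity. For $\rhd$, we have
\[
(x\rhd y)\rhd z = P(P(x)\cdot y)\cdot z = P(x)\cdot P(y)\cdot z,
\]
by the first averaging identity and associativity. On the other side, $x\rhd(y\rhd z)=P(x)\cdot(P(y)\cdot z)$, and these agree. Symmetrically, $(x\lhd y)\lhd z = x\cdot P(y)\cdot P(z)$, while
\[
x\lhd(y\lhd z)=x\cdot P(y\cdot P(z)) = x\cdot P(y)\cdot P(z),
\]
now using the second averaging identity.

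Next come the compatibilities.
\begin{itemize}
\item For \eqref{dias1}, the left side is $x\cdot P(y)\cdot P(z)$. The right side is $x\cdot P(P(y)\cdot z)=x\cdot P(y)\cdot P(z)$.
\item For \eqref{dias2}, both sides equal $P(x)\cdot y\cdot P(z)$ by associativity alone.
\item For \eqref{dias3}, the left side is $P(x\cdot P(y))\cdot z = P(x)\cdot P(y)\cdot z$. This matches the right side $P(x)\cdot P(y)\cdot z$.
\end{itemize}

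No step is a genuine obstacle. The only point requiring care is to apply the correct half of the averaging identity in each case: $P(P(x)\cdot y)$ for $\rhd$-associativity and \eqref{dias1}, and $P(x\cdot P(y))$ for $\lhd$-associativity and \eqref{dias3}. Because both halves are part of the hypothesis, every identity closes.
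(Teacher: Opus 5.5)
Your proof is correct. You check all five identities (associativity of $\rhd$ and of $\lhd$, and the three compatibilities), using associativity of $\cdot$ together with the appropriate half of the averaging identity. The paper gives no proof of its own here and simply quotes the result from the literature; your direct verification is exactly the standard argument behind it.
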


\begin{ex}\label{Ex:dias}
(1) Let  $(A, \cdot)$ be the 2-dimensional non-commutative  associative   algebra with a basis $\{e_1, e_2\}$ whose the  nonzero products of $\cdot$ are given by
\begin{align}
   e_1 \cdot e_2  =e_1,\quad    e_2 \cdot e_2  =e_2. \label{ass:1}
\end{align}
Let $P : A \to A$ be a linear map given by
\begin{align*}
P(e_1) = 0, \quad P(e_2) = e_1 + e_2.
\end{align*}
Then  $P$ is an averaging operator on $(A, \cdot)$. Thus by Proposition \ref{avonass}, there is a 2-dimensional dialgebra $(A, \rhd,\lhd)$ whose the nonzero products of $\rhd,\lhd$ are explicitly given by
{}
\begin{align}
e_2 \rhd e_2  = P(e_2) \cdot e_2 = e_1+e_2, \;e_1 \lhd e_2  = e_1 \cdot P(e_2)  =  e_1,\; e_2 \lhd e_2  = e_2 \cdot P(e_2)  =  e_2. \label{exfordias}
\end{align}

(2) Let  $M(n,\mathbb{F}) =\left\{\mathsf{A}=\left(a_{i j}\right) \mid a_{i j} \in \mathbb{F}, i, j=1, \ldots, n\right\}$ be the $n^2$-dimensional matrix vector space. It is known that $M(n,\mathbb{F})$ is an associative algebra under matrix multiplication. Let $P : M(n,\mathbb{F}) \to M(n,\mathbb{F})$ be the diagonal projection which is given by
\begin{align*}
P((a_{ij}))   =\operatorname{diag}\left(a_{11}, a_{22}, \ldots, a_{n n}\right).   
\end{align*}
Then  $P$ is an averaging operator on $M(n,\mathbb{F})$. Thus by Proposition \ref{avonass}, there is a $n^2$-dimensional dialgebra $(M(n,\mathbb{F}), \rhd,\lhd)$ whose the products of $\rhd,\lhd$ are explicitly given by
\begin{align*}
 (a_{ij}) \rhd (b_{ij})  = (a_{ii}b_{ij}),   \quad  (a_{ij}) \lhd (b_{ij})  = (a_{ij}b_{jj}).
\end{align*}
\end{ex}

Moreover, the tensor product of a perm algebra and an associative algebra admits a dialgebra structure. Recall that a  {\bf perm algebra} is a pair  $(A,  \circ)$ where $A$ is a vector space with a binary operation  $\circ: A \otimes A \to A$ such that  
\begin{eqnarray}
 (x \circ y) \circ z = x \circ (y \circ z)   = y \circ (x \circ z), \quad \forall x,y,z \in A. \label{perm}
\end{eqnarray}

\begin{pro}\label{ifdualpp}\cite{L4}
 Let $(A,\circ)$ be a perm algebra and $(B, \cdot)$ be an associative algebra. Then $(A \otimes B , \rhd_{A \otimes B},\lhd_{A \otimes B})$ is a dialgebra where $\rhd_{A \otimes B},\lhd_{A \otimes B}$ are respectively  given by
 \begin{align}
 (x \otimes a) \rhd_{A \otimes B} (y \otimes b) &=  x \circ y  \otimes  a \cdot b,\\
 (x \otimes a) \lhd_{A \otimes B} (y \otimes b)&=   y \circ x  \otimes  a \cdot b,\quad \forall x,y \in A, a,b \in B.
\end{align} 
\end{pro}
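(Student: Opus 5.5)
The plan is a direct verification on pure tensors $x\otimes a$, $y\otimes b$, $z\otimes c$; everything then extends by linearity. The five axioms of Definition \ref{def1} split into two groups: the associativity of each of $\rhd_{A\otimes B}$ and $\lhd_{A\otimes B}$, and the three compatibilities \eqref{dias1}--\eqref{dias3}. In every case the $B$-component of both sides is $(a\cdot b)\cdot c = a\cdot(b\cdot c)$, by associativity of $B$. So each axiom reduces to an identity among triple products of $x,y,z$ in the perm algebra $(A,\circ)$, and it remains to check that each of these follows from \eqref{perm}.

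The $A$-components are as follows.

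For associativity of $\rhd_{A\otimes B}$, we need $(x\circ y)\circ z = x\circ(y\circ z)$. This is the first equality in \eqref{perm}.

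For associativity of $\lhd_{A\otimes B}$, the left side is $((x\otimes a)\lhd(y\otimes b))\lhd(z\otimes c)$. It has $A$-component $z\circ(y\circ x)$. The right side has $A$-component $(z\circ y)\circ x$. These agree by associativity of $\circ$.

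For \eqref{dias1}, the left side gives $z\circ(y\circ x)$. On the right, we first compute $y\rhd z$, which has $A$-component $y\circ z$. Then $x\lhd(\cdot)$ gives $(y\circ z)\circ x = y\circ(z\circ x)$. The left-commutativity $u\circ(v\circ w)=v\circ(u\circ w)$ in \eqref{perm} turns $z\circ(y\circ x)$ into $y\circ(z\circ x)$, so the two sides agree.

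For \eqref{dias2}, the left side gives $z\circ(x\circ y)$. The right side gives $x\circ(z\circ y)$. Again left-commutativity makes them equal.

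For \eqref{dias3}, the left side gives $(y\circ x)\circ z = y\circ(x\circ z)$. The right side gives $x\circ(y\circ z)$. These agree by left-commutativity.

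There is no real obstacle here. The only care needed is bookkeeping: $\lhd_{A\otimes B}$ reverses the order of the $A$-factors but not the $B$-factors, so the nested expressions must be expanded in the correct order. Once expanded, associativity of $\circ$ and the left-commutativity in \eqref{perm} close every case, while $B$ only ever needs associativity. I would present the five checks as a short chain of equalities each, noting that the $B$-factor is always $abc$.
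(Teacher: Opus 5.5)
Your proof is correct and complete. In every one of the five axioms the $B$-component is $a\cdot b\cdot c$, and each $A$-component identity follows from associativity of $\circ$ or from the left-commutativity $u\circ(v\circ w)=v\circ(u\circ w)$; your expansions of the nested $\lhd_{A\otimes B}$ products are right. The paper gives no proof of its own for this statement (it only cites Loday), and your direct check on pure tensors is the standard argument, in the same style as the paper's computation for Proposition~\ref{ifqdualpp}.
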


We provide the following example to illustrate the above construction explicitly.

\begin{ex}\label{infdualppa}
Let $A=\mathbb{F}\left[t^{ \pm}\right] \partial_{1} \oplus \mathbb{F}\left[t^{ \pm}\right] \partial_{2}$, and define a  binary   operation   $\star: A \otimes A \rightarrow A $ by
$$
 t^{i} \partial_{m}  \star t^{j} \partial_{n} =  t^{i+j+\delta_{m,1}}\partial_{n},  \quad \forall i, j \in \mathbb{Z},\; m,n \in \{1,2\}.
$$
Then $ (A, \star)$  is a   perm algebra. Moreover, let  $B= M(n,\mathbb{F})$ be the  matrix algebra.  By Proposition \ref{ifdualpp}, there is an  infinite-dimensional dialgebra $ (A \otimes B, \rhd_{A \otimes B},\lhd_{A \otimes B})$ with $\rhd_{A \otimes B},\lhd_{A \otimes B}$ respectively given by 
\begin{align*}
(t^{i} \partial_{m} \otimes \mathsf{A}) \rhd_{A \otimes B} (t^{j} \partial_{n} \otimes \mathsf{B}) &=  t^{i+j+\delta_{m,1}}\partial_{n} \otimes \mathsf{A}\mathsf{B},\\
(t^{i}\partial_{m}  \otimes \mathsf{A}) \lhd_{A \otimes B} (t^{j}\partial_{n}  \otimes \mathsf{B}) &= t^{i+j+\delta_{n,1}}\partial_{m}  \otimes \mathsf{A}\mathsf{B},
\end{align*}
 for all $i, j\in \mathbb{Z},\; m,n \in \{1,2\},\; \mathsf{A},\mathsf{B} \in B$.
\end{ex}

It is well known that the operad of dialgebras is the duplicator of the operad of associative algebras \cite{JBGN}.  Consequently, one obtains a natural construction of dialgebras from bimodules of associative algebras.

 Recall \cite{B2} that a \textbf{bimodule} of an associative algebra $(A,\cdot)$ is a triple $(V;l,r)$  where $V$ is a vector space and  $l,r: A \rightarrow \mathrm{End}(V)$ are linear maps satisfying 
\begin{align*}
 l(x\cdot y)  =l(x) l(y)  , \quad r(x\cdot y)  =r(y) r(x) , \quad l(x) r(y)  =r(y) l(x) , \quad \forall x,y \in A.
\end{align*}
 Then $(V^*;r^*,l^*)$ is an  bimodule of $(A,\cdot)$, called the {\bf dual bimodule} of $(V;l,r)$.

\begin{pro}\label{difromrepAss}
 Let $(A,\cdot)$ be an associative  algebra and $(V;l,r)$ be a bimodule of $(A,\cdot)$. Then there is a dialgebra structrue on the direct sum  $A \oplus V$  of vector spaces given by
 \begin{align}
(x+u) \rhd_{A \oplus V}(y+v) &:=x \cdot  y+ l(x) v  ,\\
(x+u) \lhd_{A \oplus V}(y+v)&:=x \cdot y+ r(y) u, \quad \forall x, y \in A, u, v \in V.
\end{align}
We denote this dialgebra $(A \oplus V,\rhd_{A \oplus V}, \lhd_{A \oplus V})$ by $A \rightthreetimes_{l,r} V$. Moreover, there is another dialgebra $A \rightthreetimes_{r^*,l^*} V^*$  by considering the dual bimodule $(V^*;r^*,l^*)$ of  $(V;l,r)$.
\end{pro}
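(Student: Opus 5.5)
The plan is to verify the five defining identities of a dialgebra directly on $A\oplus V$: associativity of $\rhd_{A \oplus V}$, associativity of $\lhd_{A \oplus V}$, and \eqref{dias1}--\eqref{dias3}. Each identity is trilinear, so it suffices to take $X=x+u$, $Y=y+v$, $Z=z+w$ and compare the $A$-component and the $V$-component of both sides separately. The $A$-components of both products are just $x\cdot y$. Hence every $A$-component of every identity reduces to associativity of $(A,\cdot)$, and only the $V$-components need work.

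For the $V$-components, I first record the structural feature of the two products. The $V$-part of $X\rhd Y$ is $l(x)v$, which depends only on $x$ and $v$. The $V$-part of $X\lhd Y$ is $r(y)u$, which depends only on $y$ and $u$. I then compute each side.

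\begin{itemize}
\item[(i)] Associativity of $\rhd$: both sides have $V$-part $l(x)l(y)w$ versus $l(x\cdot y)w$, which agree since $l$ is a homomorphism.
\item[(ii)] Associativity of $\lhd$: the $V$-parts are $r(z)r(y)u$ versus $r(y\cdot z)u$, which agree since $r$ is an anti-homomorphism.
\item[(iii)] \eqref{dias1}: the left side $(X\lhd Y)\lhd Z$ gives $r(z)r(y)u$. The right side $X\lhd(Y\rhd Z)$ gives $r(y\cdot z)u$. These are equal.
\item[(iv)] \eqref{dias2}: the left side $(X\rhd Y)\lhd Z$ gives $r(z)l(x)v$. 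The right side $X\rhd(Y\lhd Z)$ gives $l(x)r(z)v$. These are equal by the commutation relation $l(x)r(z)=r(z)l(x)$.
\item[(v)] \eqref{dias3}: the left side $(X\lhd Y)\rhd Z$ gives $l(x\cdot y)w$. The right side $X\rhd(Y\rhd Z)$ gives $l(x)l(y)w$. These are equal.
\end{itemize}

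This establishes that $A\rightthreetimes_{l,r}V$ is a dialgebra. The only point needing care is bookkeeping of which $V$-coordinate survives in each product. That is the "main obstacle", and it is handled by the observation in the previous paragraph.

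For the second claim, I do not repeat the computation. I use the already recalled fact that $(V^*;r^*,l^*)$ is a bimodule of $(A,\cdot)$. If one wants to check it, it follows from \eqref{dualmap}: $r^*(x\cdot y)=r^*(x)r^*(y)$ comes from dualizing $r(x\cdot y)=r(y)r(x)$, where the two sign changes cancel; similarly for $l^*$ and the commutation relation. Applying the first part with $(V;l,r)$ replaced by $(V^*;r^*,l^*)$ then yields the dialgebra $A\rightthreetimes_{r^*,l^*}V^*$.
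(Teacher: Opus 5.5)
Your verification of the two associativities and of \eqref{dias1}--\eqref{dias3} on $A\oplus V$ is correct, and it is exactly the paper's argument. The problem is in your justification of the second claim. Under the paper's convention \eqref{dualmap}, the two sides of the identity you want carry different numbers of signs:
\[
\langle r^*(x\cdot y)v^*,u\rangle=-\langle v^*,r(y)r(x)u\rangle,\qquad \langle r^*(x)r^*(y)v^*,u\rangle=+\langle v^*,r(y)r(x)u\rangle .
\]
The two sign changes cancel on the right, but the left-hand side has only one. Hence $r^*(x\cdot y)=-r^*(x)r^*(y)$, and likewise $l^*(x\cdot y)=-l^*(y)l^*(x)$; only the commutation relation survives. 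So with \eqref{dualmap}, $(V^*;r^*,l^*)$ is not a bimodule. The product $\rhd$ of $A\rightthreetimes_{r^*,l^*}V^*$ is then not associative in general, since the $V^*$-components of $(X\rhd Y)\rhd Z$ and $X\rhd(Y\rhd Z)$ are $r^*(x\cdot y)w^*$ and $-r^*(x\cdot y)w^*$.

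The recalled fact holds only when $r^*,l^*$ denote the plain (unsigned) transposes, which is the convention of \cite{B2}. In this paper's sign convention the dual bimodule is $(V^*;-r^*,-l^*)$. That is also what the paper actually uses immediately afterwards in Example \ref{amaExspe}~(1), namely $(A;-R_{\cdot}^*,-L_{\cdot}^*)$ and $A\rightthreetimes_{-R_{\cdot}^*,-L_{\cdot}^*}A^*$.

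To fix the step, either state explicitly that $*$ means the unsigned transpose here, or replace the pair by $(-r^*,-l^*)$. With that change your sign count works, since both sides then carry an even number of signs, and applying the first part gives the second dialgebra. The paper itself offers no argument for this part beyond the recalled fact, so the error lies in your added check. But that check, as written, would certify a statement that is false under \eqref{dualmap}.
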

\begin{proof}
It is straightforward to check that $(A \oplus V, \rhd_{A \oplus V})$  and $(A \oplus V, \lhd_{A \oplus V})$ are associative  algebras. Moreover, for all $x, y,z \in A, u, v,w \in V$, we have
\begin{align*}
&(x +u) \lhd_{A\oplus V} \big((y+v)\lhd_{A\oplus V} (z+w)\big)  - (x +u) \lhd_{A\oplus V} \big((y+v) \rhd_{A\oplus V} (z+w)\big)\\
&=(x +u) \lhd_{A\oplus V} \big(y \cdot z + r(z)v\big)  - (x +u) \lhd_{A\oplus V} \big(y \cdot z + l(y)w\big)\\
&= x\cdot  ( y \cdot z) + r(y \cdot z)u   -x\cdot  ( y \cdot z) - r(y \cdot z)u =0,\\
&(x +u) \rhd_{A\oplus V} \big((y+v)\lhd_{A\oplus V} (z+w)\big)  - \big((x +u) \rhd_{A\oplus V} (y+v)\big)\lhd_{A\oplus V} (z+w) \\
&=(x +u) \rhd_{A\oplus V} \big(y \cdot z + r(z)v\big)  -  \big(x \cdot y + l(x)v\big)\lhd_{A\oplus V} (z +w)\\
&= x\cdot  ( y \cdot z) + l(x)r(  z)v   -(x\cdot y) \cdot z  - r(  z)l(x)v =0,\\
&\big((x +u) \lhd_{A\oplus V} (y+v)\big)\rhd_{A\oplus V} (z+w)  - \big((x +u) \rhd_{A\oplus V} (y+v)\big)\rhd_{A\oplus V} (z+w) \\
&=  \big(x \cdot y + r(y)u\big)\rhd_{A\oplus V} (z +w) -  \big(x \cdot y + l(x)v\big)\rhd_{A\oplus V} (z +w)\\
&= (x\cdot y) \cdot z + l(x\cdot y)w   -(x\cdot y) \cdot z  - l(x\cdot y)w  =0.
\end{align*}
Thus $(A \oplus V,\rhd_{A \oplus V}, \lhd_{A \oplus V})$ is a  dialgebra.
\end{proof}

\begin{ex}\label{amaExspe}
(1) Let $(A, \cdot)$ be an associative algebra.  Then $(A;  L_{\cdot},R_{\cdot})$ and $(A;  -R_{\cdot}^*,-L_{\cdot}^*)$ are   bimodules of  $(A, \cdot)$ \cite{B2}. Thus by Proposition \ref{difromrepAss}, there are two dialgebras $A \rightthreetimes_{L_{\cdot},R_{\cdot}} A$ and $A \rightthreetimes_{-R_{\cdot}^*,-L_{\cdot}^*} A^*$ on the direct sum  $A \oplus A$ and $A \oplus A^*$ of vector spaces, respectively.
 
(2) In Example \ref{Ex:dias} (1), let $\{e_1^*, e_2^*\}$ be the dual basis of $\{e_1, e_2\}$. By Proposition \ref{difromrepAss}, there is a 4-dimensional 
 dialgebra $A \rightthreetimes_{-R_{\cdot}^*,-L_{\cdot}^*} A^*$ explicitly given by 
 \begin{align}
 e_1 \rhd e_2 &= e_1,  \quad e_1 \lhd e_2  = e_1,\quad   e_2 \rhd e_2 = e_2 ,\quad e_2 \lhd e_2  = e_2,\label{ex:diasexZ1}\\
 e_2 \rhd  e_1^* &=e_1^*,\quad  e_2 \rhd  e_2^*  =e_2^*,\quad   e_1^* \lhd  e_1 =e_2^*,\quad e_2^* \lhd  e_2 =e_2^*.\label{ex:diasexZ2}
\end{align}
\end{ex}


{}

\subsection{Bimodules of dialgebras and dual bimodules}

In this subsection, we introduce the notion  of bimodules of dialgebras.

\begin{defi} \label{defrep} A \textbf{bimodule} of a dialgebra $(A,\rhd,\lhd)$  is a quintuple $(V;l_{\rhd}, r_{\rhd}, l_{\lhd},  r_{\lhd})$ where $(V;l_{\rhd}, r_{\rhd})$  and $(V;l_{\lhd},  r_{\lhd})$ are bimodules of associative algebras $(A,\rhd)$ and $(A,\lhd)$ respectively  satisfying  the following equations
\begin{align}
r_{\lhd}(x \lhd y) & =r_{\lhd}(x \rhd y), &&l_{\lhd}(x)r_{\lhd}(y)  =l_{\lhd}(x)r_{\rhd}(y), && l_{\lhd}(x)l_{\lhd}(y)  =l_{\lhd}(x)l_{\rhd}(y),  \\
r_{\lhd}(x ) r_{\rhd}(y)& =r_{\rhd}(y \lhd x), &&r_{\lhd}(x)l_{\rhd}(y)  =l_{\rhd}(y)r_{\lhd}(x), && l_{\lhd}(x \rhd y)  =l_{\rhd}(x)l_{\lhd}(y),  \\
r_{\rhd}(x ) r_{\lhd}(y)& =r_{\rhd}(x )r_{\rhd}(y), &&r_{\rhd}(x)l_{\lhd}(y)  =r_{\rhd}(x)l_{\rhd}(y), && l_{\rhd}(x \lhd y)  =l_{\rhd} (x \rhd y) .
\end{align}
Bimodules  $(V_1;l_{\rhd_1}, r_{\rhd_1}, l_{\lhd_1}, r_{\lhd_1})$  and $(V_2;l_{\rhd_2}, r_{\rhd_2}, l_{\lhd_2}, r_{\lhd_2})$ of a dialgebra $(A,\rhd,\lhd)$ are
\textbf{equivalent} if there exists a linear isomorphism $\varphi:
V_{1} \rightarrow V_{2}$  such that
\begin{align*}
\varphi\left(l_{\rhd_1}(x) v\right) &=l_{\rhd_2}(x) \varphi(v),&&
\varphi\left(r_{\rhd_1}(x) v\right)=r_{\rhd_2}(x) \varphi(v), \\
\varphi\left(l_{\lhd_1}(x) v\right) &=l_{\lhd_2}(x) \varphi(v),&&\varphi\left(r_{\lhd_1}(x) v\right)=r_{\lhd_2}(x) \varphi(v),
\;\;\forall x \in A, v \in V_{1}.
\end{align*}
\end{defi}

It is straightforward to  obtain the following conclusion.
\begin{pro} \label{repofdppa}
Let  $(A,\rhd,\lhd)$  be a dialgebra. Let $V$  be a vector space and  $l_{\rhd}, r_{\rhd}, l_{\lhd}, r_{\lhd}: A \rightarrow \End(V)$ be linear maps. Then $(V;l_{\rhd}, r_{\rhd}, l_{\lhd}, r_{\lhd})$ is a bimodule of $(A, \rhd,\lhd)$ if and only if the direct sum  $A \oplus V$ of vector spaces is turned into a  dialgebra (the semidirect sum) by defining multiplications in $A \oplus V$ by
\begin{align}
(x+u) \rhd_{A \oplus V}(y+v) &:=x \rhd  y+ l_{\rhd}(x) v+r_{\rhd}(y) u,\\
(x+u) \lhd_{A \oplus V}(y+v)  &:=x \lhd  y+ l_{\lhd}(x) v+r_{\lhd}(y) u, \quad \forall x, y \in A, u, v \in V.
\end{align}
We denote the dialgebra  $\left(A \oplus V,  \rhd_{A \oplus V}, \lhd_{A \oplus V} \right)$  by  $A \ltimes_{l_{\rhd}, r_{\rhd}, l_{\lhd}, r_{\lhd}} V$.
\end{pro}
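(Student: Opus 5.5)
The plan is to expand each of the five dialgebra axioms (associativity of $\rhd_{A\oplus V}$, associativity of $\lhd_{A\oplus V}$, and the compatibility conditions \eqref{dias1}--\eqref{dias3}) on general elements $x+u,\ y+v,\ z+w$ of $A\oplus V$. By linearity, and because products of two elements of $V$ vanish, each axiom splits into four pieces.
\begin{itemize}
\item[(i)] The piece involving only $x,y,z\in A$ holds because $(A,\rhd,\lhd)$ is a dialgebra.
\item[(ii)] The piece linear in $w$ (with $u=v=0$).
\item[(iii)] The piece linear in $v$.
\item[(iv)] The piece linear in $u$.
\end{itemize}
Every term containing two elements of $V$ drops out. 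Hence the semidirect sum is a dialgebra if and only if, for each axiom, the three operator identities coming from (ii), (iii) and (iv) hold for all $x,y\in A$.

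First, associativity of $\rhd_{A\oplus V}$ alone yields exactly the three identities
\[
l_{\rhd}(x\rhd y)=l_{\rhd}(x)l_{\rhd}(y),\qquad
l_{\rhd}(x)r_{\rhd}(y)=r_{\rhd}(y)l_{\rhd}(x),\qquad
r_{\rhd}(x\rhd y)=r_{\rhd}(y)r_{\rhd}(x).
\]
These say that $(V;l_\rhd,r_\rhd)$ is a bimodule of $(A,\rhd)$. The same holds for $\lhd$. This is the standard semidirect product fact for associative algebras.

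Next, I would treat each compatibility condition in the same way. For instance, applying \eqref{dias1} to $(x+u)\lhd\big((y+v)\rhd(z+w)\big)$ and to $\big((x+u)\lhd(y+v)\big)\lhd(z+w)$ gives three identities:
\begin{itemize}
\item[(a)] the $w$-component gives $l_\lhd(x)l_\rhd(y)=l_\lhd(x\lhd y)$;
\item[(b)] the $v$-component gives $l_\lhd(x)r_\rhd(z)=r_\lhd(z)l_\lhd(x)$;
\item[(c)] the $u$-component gives $r_\lhd(y\rhd z)=r_\lhd(z)r_\lhd(y)$.
\end{itemize}
Conditions \eqref{dias2} and \eqref{dias3} produce the remaining six identities in the same way.

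The third step matches these nine identities with the nine listed in Definition \ref{defrep}, and this is where the main obstacle lies. The identities obtained directly are of the form $l_\lhd(x\lhd y)=l_\lhd(x)l_\rhd(y)$, while the definition states $l_\lhd(x)l_\lhd(y)=l_\lhd(x)l_\rhd(y)$. These agree modulo the bimodule axioms for $(A,\lhd)$ already established, since $l_\lhd(x\lhd y)=l_\lhd(x)l_\lhd(y)$. Likewise, $r_\lhd(y\rhd z)=r_\lhd(z)r_\lhd(y)$ combined with $r_\lhd(y\lhd z)=r_\lhd(z)r_\lhd(y)$ gives $r_\lhd(x\lhd y)=r_\lhd(x\rhd y)$.

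So I would prove the equivalence in two directions.
\begin{itemize}
\item[($\Leftarrow$)] Assuming the conditions of Definition \ref{defrep}, rewrite each needed identity using the associative bimodule relations.
\item[($\Rightarrow$)] Assuming the semidirect sum is a dialgebra, derive each listed condition from the corresponding expanded identity together with the associative bimodule relations.
\end{itemize}
I expect careful bookkeeping of which product, $\rhd$ or $\lhd$, sits in the argument of each operator to be the only real difficulty. I would organize this as a table indexed by the axiom and by the slot in which the $V$-element sits.
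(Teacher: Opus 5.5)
Your proposal is correct and is exactly the direct component-wise verification that the paper leaves as ``straightforward'' (and carries out explicitly for the analogous Proposition~\ref{difromrepAss}). The nine identities you get from \eqref{dias1}--\eqref{dias3} match Definition~\ref{defrep} after rewriting the ones from \eqref{dias1} and \eqref{dias3} with the associative bimodule relations, while those from \eqref{dias2} coincide verbatim with the middle row of the definition.
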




\begin{lem}\label{dualbimodofdias}
Let  $(V;l_{\rhd}, r_{\rhd}, l_{\lhd}, r_{\lhd})$  be a bimodule of a dialgebra $(A,\rhd,\lhd)$.
 \begin{enumerate}
\item\label{repdi:1} $(V;l_{\rhd}, 0, l_{\lhd}, 0)$ and $(V;0,r_{\rhd},0, r_{\lhd})$   are bimodules of $(A,\rhd,\lhd)$.
\item\label{repdi:2}  $\left(V^*;-r_{\lhd}^{*}, l_{\lhd}^{*}-l_{\rhd}^{*}, r_{\rhd}^{*}-r_{\lhd}^{*},-l_{\rhd}^{*}\right)$ is a  bimodule  of $(A,\rhd,\lhd)$, called the {\bf dual bimodule} of $(V;l_{\rhd}, r_{\rhd},$ $ l_{\lhd}, r_{\lhd})$. Moreover, there is a dialgebra structrue $A \ltimes_{-r_{\lhd}^{*}, l_{\lhd}^{*}-l_{\rhd}^{*}, r_{\rhd}^{*}-r_{\lhd}^{*},-l_{\rhd}^{*}} V^*$  on the direct sum  $A \oplus V^*$ of vector spaces.
\item\label{repdi:3}  $\left(V^*;0, l_{\lhd}^{*}-l_{\rhd}^{*}, 0,-l_{\rhd}^{*}\right)$  and $\left(V^*;-r_{\lhd}^{*}, 0, r_{\rhd}^{*}-r_{\lhd}^{*},0\right)$  are   bimodules  of $(A,\rhd,\lhd)$.
 \end{enumerate}
 
\end{lem}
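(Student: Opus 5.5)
The proof is a direct verification against Definition \ref{defrep}: check the two associative-bimodule conditions and the nine compatibility identities. In each part I will split the identities into those involving only left actions, only right actions, and mixed ones.

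For part (\ref{repdi:1}), set $r_{\rhd}=r_{\lhd}=0$. Every identity containing some $r$ on both sides becomes $0=0$. The mixed identities such as $l_{\lhd}(x)r_{\lhd}(y)=l_{\lhd}(x)r_{\rhd}(y)$ and $r_{\lhd}(x)l_{\rhd}(y)=l_{\rhd}(y)r_{\lhd}(x)$ also become $0=0$. What remains are the pure-$l$ identities, namely $l(x\rhd y)=l(x)l(y)$ for each product, $l_{\lhd}(x)l_{\lhd}(y)=l_{\lhd}(x)l_{\rhd}(y)$, $l_{\lhd}(x\rhd y)=l_{\rhd}(x)l_{\lhd}(y)$ and $l_{\rhd}(x\lhd y)=l_{\rhd}(x\rhd y)$. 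These are already among the hypotheses. The case with $l$'s set to zero is symmetric.

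For part (\ref{repdi:2}), I would dualize each required identity. For a composite of operators, $\langle \varphi^*(x)\psi^*(y)v^*,u\rangle=\langle v^*,\psi(y)\varphi(x)u\rangle$, so dualizing reverses the order and the two minus signs cancel. Substituting $\tilde l_{\rhd}=-r_{\lhd}^*$, $\tilde r_{\rhd}=l_{\lhd}^*-l_{\rhd}^*$, $\tilde l_{\lhd}=r_{\rhd}^*-r_{\lhd}^*$ and $\tilde r_{\lhd}=-l_{\rhd}^*$, each of the eleven required identities (the two associative-bimodule triples plus the nine compatibilities) turns into a linear relation among compositions of the original $l$'s and $r$'s. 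Each such relation must then follow from the original axioms. For example, $\tilde l_{\rhd}(x\rhd y)=\tilde l_{\rhd}(x)\tilde l_{\rhd}(y)$ becomes $r_{\lhd}(x\rhd y)=r_{\lhd}(y)r_{\lhd}(x)$. This holds because $r_{\lhd}(x\rhd y)=r_{\lhd}(x\lhd y)$ by the first axiom, and the right-module law for $\lhd$ then gives $r_{\lhd}(y)r_{\lhd}(x)$. Identities involving differences, such as $\tilde r_{\rhd}(x\rhd y)=\tilde r_{\rhd}(y)\tilde r_{\rhd}(x)$, expand into four terms. These cancel pairwise using $l_{\lhd}(x)l_{\lhd}(y)=l_{\lhd}(x)l_{\rhd}(y)$, $l_{\lhd}(x\rhd y)=l_{\rhd}(x)l_{\lhd}(y)$ and the associative laws. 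I expect this bookkeeping for the mixed-difference terms to be the main obstacle: there are many cases and the signs are easy to get wrong. The systematic remedy is to write every term as $\langle v^*,(\text{word in } l,r)u\rangle$ and reduce words to normal forms using the axioms. Once the bimodule property is established, the statement about $A\ltimes V^*$ follows immediately from Proposition \ref{repofdppa}.

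For part (\ref{repdi:3}), I would not recompute anything. Apply part (\ref{repdi:1}) to the bimodule obtained in part (\ref{repdi:2}): zeroing its right actions or its left actions gives exactly the two stated quintuples.
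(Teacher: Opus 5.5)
Your proposal is correct and follows the paper's proof essentially verbatim: part (a) by direct inspection, part (b) by pairing each required identity with $w^*$ and $v$ to reduce it to a relation among the original operators that follows from the bimodule axioms (the semidirect product then coming from Proposition \ref{repofdppa}), and part (c) by applying (a) to the dual bimodule from (b). One cosmetic slip is that part (b) requires fifteen identities to be checked, namely two associative-bimodule triples plus nine compatibilities, not eleven; this does not affect the argument.
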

\begin{proof}\eqref{repdi:1}. It is straightforward.

 \eqref{repdi:2}. For all $x, y  \in A, v\in V,  w^* \in V^*$, we have
\begin{align*}
&\left \langle \Big( -r_{\lhd}^{*}(x \rhd y)  - r_{\lhd}^{*}(x)  r_{\lhd}^{*}(y) \Big)w^*,v \right \rangle   =  \left \langle w^*,\Big(  r_{\lhd} (x \rhd y)  -   r_{\lhd} (y)r_{\lhd} (x) \Big)v \right \rangle  =0,\\
&\left \langle \Big( (l_{\lhd}^{*}-l_{\rhd}^{*})(x \rhd y)  - (l_{\lhd}^{*}-l_{\rhd}^{*})(y)  (l_{\lhd}^{*}-l_{\rhd}^{*})(x) \Big)w^*,v \right \rangle   \\
&=  \left \langle w^*,\Big( (l_{\rhd} -l_{\lhd} )(x \rhd y)  -   (l_{\lhd} -l_{\rhd} )(x)(l_{\lhd} -l_{\rhd} )(y) \Big)v \right \rangle \\
&=  \left \langle w^*,\Big(   -l_{\lhd}  (x \rhd y)  -    l_{\lhd}(x)l_{\lhd}(y) + l_{\lhd}(x)l_{\rhd}(y) + l_{\rhd}(x)l_{\lhd}(y)   \Big)v \right \rangle =0,\\
&\left \langle \Big(-r_{\lhd}^{*}(x)  (l_{\lhd}^{*}-l_{\rhd}^{*})(y)  +(l_{\lhd}^{*}-l_{\rhd}^{*})(y) r_{\lhd}^{*}(x) \Big)w^*,v \right \rangle\\
   &=  \left \langle w^*, \Big(-  (l_{\lhd}-l_{\rhd})(y)r_{\lhd} (x)  + r_{\lhd} (x)(l_{\lhd} -l_{\rhd} )(y) \Big)v \right \rangle\\
      &=  \left \langle w^*, \Big(l_{\rhd} (y)r_{\lhd} (x)  - r_{\lhd} (x)   l_{\rhd}  (y) \Big)v \right \rangle =0.
\end{align*}
Thus $\left(V^*;-r_{\lhd}^{*}, l_{\lhd}^{*}-l_{\rhd}^{*}\right)$ is a  bimodule  of associative algebra  $(A,\rhd)$. Moreover, we have
\begin{align*}
&\left \langle \Big( (r_{\rhd}^{*}-r_{\lhd}^{*})(x \lhd y)  - (r_{\rhd}^{*}-r_{\lhd}^{*})(x)  (r_{\rhd}^{*}-r_{\lhd}^{*})(y) \Big)w^*,v \right \rangle   \\
&=  \left \langle w^*,\Big( (r_{\lhd}-r_{\rhd})(x \lhd y)  -   (r_{\rhd} -r_{\lhd} )(y)(r_{\rhd} -r_{\lhd} )(x) \Big)v \right \rangle   \\
&=  \left \langle w^*,\Big( (r_{\lhd}-r_{\rhd})(x \lhd y)  -    r_{\rhd}(y)(r_{\rhd}(x) -r_{\lhd}(x))  +r_{\lhd}(y)  (r_{\rhd}(x) -r_{\lhd}(x)) \Big)v \right \rangle   \\
&=  \left \langle w^*,\Big(  -r_{\rhd} (x \lhd y)  -    r_{\rhd}(y)(r_{\rhd}(x) -r_{\lhd}(x))  +r_{\lhd}(y)   r_{\rhd}(x)   \Big)v \right \rangle =0,   \\
&\left \langle \Big( -l_{\rhd}^{*}(x \lhd y)  -  l_{\rhd}^{*}(y)   l_{\rhd}^{*}(x) \Big)w^*,v \right \rangle = \left \langle w^*,\Big(  l_{\rhd} (x \lhd y)  -    l_{\rhd} (x)l_{\rhd} (y)  \Big)v \right \rangle   =0, \\
&\left \langle \Big(-(r_{\rhd}^{*}-r_{\lhd}^{*})(x)   l_{\rhd}^{*}(y)  + l_{\rhd}^{*}(y) (r_{\rhd}^{*}-r_{\lhd}^{*})(x) \Big)w^*,v \right \rangle \\
&=\left \langle w^*,\Big(   -l_{\rhd} (y)(r_{\rhd} -r_{\lhd} )(x)  +  (r_{\rhd} -r_{\lhd} )(x) l_{\rhd} (y)\Big)v \right \rangle =0.
\end{align*}
Thus $\left(V^*;r_{\rhd}^{*}-r_{\lhd}^{*},-l_{\rhd}^{*}\right)$ is a  bimodule  of associative algebra  $(A,\lhd)$. Futhermore,
\begin{align*}
&\left \langle \Big( -l_{\rhd}^{*}(x \lhd y) + l_{\rhd}^{*}(x \rhd y) \Big)w^*,v \right \rangle  =  \left \langle w^*, \Big(  l_{\rhd} (x \lhd y) - l_{\rhd} (x \rhd y) \Big)v \right \rangle  =0,\\
&\left \langle \Big(-(r_{\rhd}^{*}-r_{\lhd}^{*})(x) l_{\rhd}^{*}(y)  -(r_{\rhd}^{*}-r_{\lhd}^{*})(x)(l_{\lhd}^{*}-l_{\rhd}^{*})(y) \Big)w^*,v \right \rangle   \\
&= \left \langle \Big( -  (r_{\rhd}^{*}-r_{\lhd}^{*})(x) l_{\lhd}^{*}  (y) \Big)w^*,v \right \rangle  = \left \langle w^*,\Big( -  l_{\lhd}  (y)(r_{\rhd} -r_{\lhd} )(x)  \Big)v \right \rangle =0,  \\
&\left \langle \Big(  (r_{\rhd}^{*}-r_{\lhd}^{*})(x)(r_{\rhd}^{*}-r_{\lhd}^{*})(y)  +(r_{\rhd}^{*}-r_{\lhd}^{*})(x) r_{\lhd}^{*}(y) \Big)w^*,v \right \rangle   \\
&=\left \langle \Big(  (r_{\rhd}^{*}-r_{\lhd}^{*})(x) r_{\rhd}^{*} (y)    \Big)w^*,v \right \rangle =\left \langle w^*,\Big(  r_{\rhd} (y)  (r_{\rhd} -r_{\lhd} )(x)   \Big)v \right \rangle =0, \\
&\left \langle \Big(-l_{\rhd}^{*}(x ) (l_{\lhd}^{*}-l_{\rhd}^{*})(y)-(l_{\lhd}^{*}-l_{\rhd}^{*})(y \lhd x)\Big)w^*,v \right \rangle  \\
&= \left \langle w^*,\Big(- (l_{\lhd} -l_{\rhd} )(y)l_{\rhd} (x )+(l_{\lhd} -l_{\rhd} )(y \lhd x)\Big)v \right \rangle =0, \\
&\left \langle \Big( l_{\rhd}^{*}(x) r_{\lhd}^{*}(y) -r_{\lhd}^{*}(y) l_{\rhd}^{*}(x)\Big)w^*,v \right \rangle = \left \langle w^*,\Big(  r_{\lhd} (y)l_{\rhd} (x) - l_{\rhd} (x)r_{\lhd} (y)\Big)v \right \rangle =0,\\
&\left \langle \Big((r_{\rhd}^{*}-r_{\lhd}^{*})(x \rhd y)  +r_{\lhd}^{*}(x)(r_{\rhd}^{*}-r_{\lhd}^{*})(y)\Big)w^*,v \right \rangle\\
&=\left \langle w^*,\Big(-(r_{\rhd} -r_{\lhd} )(x \rhd y)  +(r_{\rhd} -r_{\lhd} )(y)r_{\lhd} (x)\Big)v \right \rangle = 0,\\
&\left \langle \Big(-(l_{\lhd}^{*}-l_{\rhd}^{*})(x ) l_{\rhd}^{*}(y)-(l_{\lhd}^{*}-l_{\rhd}^{*})(x )(l_{\lhd}^{*}-l_{\rhd}^{*})(y)\Big)w^*,v \right \rangle\\
&=\left \langle \Big(-(l_{\lhd}^{*}-l_{\rhd}^{*})(x )l_{\lhd}^{*}(y)\Big)w^*,v \right \rangle = \left \langle w^*,\Big(-l_{\lhd} (y)(l_{\lhd} -l_{\rhd} )(x )\Big)v \right \rangle=0,\\
&\left \langle \Big((l_{\lhd}^{*}-l_{\rhd}^{*})(x)(r_{\rhd}^{*}-r_{\lhd}^{*})(y)  +(l_{\lhd}^{*}-l_{\rhd}^{*})(x)r_{\lhd}^{*}(y)\Big)w^*,v \right \rangle =\left \langle \Big((l_{\lhd}^{*}-l_{\rhd}^{*})(x) r_{\rhd}^{*}  (y) \Big)w^*,v \right \rangle \\
&= \left \langle w^*,\Big( r_{\rhd}  (y) (l_{\lhd} -l_{\rhd} )(x)\Big)v \right \rangle=0,\\
&\left \langle \Big(-r_{\lhd}^{*}(x \lhd y) +r_{\lhd}^{*}(x \rhd y)\Big)w^*,v \right \rangle = \left \langle w^*,\Big( r_{\lhd} (x \lhd y)-r_{\lhd} (x \rhd y)\Big)v \right \rangle =0. 
\end{align*}
Thus $\left(V^*;-r_{\lhd}^{*}, l_{\lhd}^{*}-l_{\rhd}^{*}, r_{\rhd}^{*}-r_{\lhd}^{*},-l_{\rhd}^{*}\right)$ is a  bimodule  of $(A,\rhd,\lhd)$. The remaining statement follows from Proposition \ref{repofdppa}.

\eqref{repdi:3}. It follows from \eqref{repdi:1} and \eqref{repdi:2}.
\end{proof}

\begin{ex}\label{coregularrep}
(1) Let $(A,\rhd,\lhd)$ be a dialgebra.   Then $(A;L_{\rhd}, R_{\rhd}, L_{\lhd}, R_{\lhd})$ and $(A^*;-R_{\lhd}^{*}, $ $L_{\lhd}^{*}-L_{\rhd}^{*}, R_{\rhd}^{*}-R_{\lhd}^{*},-L_{\rhd}^{*})$ are  bimodules of $(A,\rhd,\lhd)$. The former is called a {\bf regular bimodule} of $(A,\rhd,\lhd)$, and the latter is called a {\bf coregular bimodule} of it. In this case,   there is a dialgebra structrue $A \ltimes_{-R_{\lhd}^{*},  L_{\lhd}^{*}-L_{\rhd}^{*}, R_{\rhd}^{*}-R_{\lhd}^{*},-L_{\rhd}^{*}} A^*$ on the direct sum  $A \oplus A^*$  of vector spaces. Moreover,  $(A;L_{\rhd}, 0, L_{\lhd}, 0)$, $(A;0,R_{\rhd},0, R_{\lhd})$, $(A^*;0, L_{\lhd}^{*}-L_{\rhd}^{*}, $ $0,-L_{\rhd}^{*})$  and $(A^*;-R_{\lhd}^{*}, 0, R_{\rhd}^{*}-R_{\lhd}^{*},0)$ are  bimodules of $(A,\rhd,\lhd)$, too.  

(2) In Example \ref{Ex:dias} (1), let $\{e_1^*, e_2^*\}$ be the dual basis of $\{e_1, e_2\}$. Then there is a 4-dimensional  dialgebra $A
\ltimes_{-R_{\lhd}^{*},  L_{\lhd}^{*}-L_{\rhd}^{*}, R_{\rhd}^{*}-R_{\lhd}^{*},-L_{\rhd}^{*}}
A^*$ explicitly given by  Eq.~\eqref{exfordias} and the following equations 
 \begin{align}
 e_2 \rhd e_1^*& =e_1^*,\quad  e_2 \rhd e_2^* =e_2^*,\quad  e_2 \lhd e_1^* =e_1^*-e_2^*,\quad  e_1^* \rhd e_1 =-e_2^*,\label{ex:diasexZ3}\\
 e_1^* \rhd e_2&=e_1^*,\quad  e_1^* \lhd e_2 =e_2^*,\quad e_2^* \lhd e_2 =e_2^*.\label{ex:diasexZ4}
\end{align}
\end{ex}

\subsection{Frobenius dialgebras and their constructions
}

There is an important   bilinear form  on a dialgebra given as follows.

 \begin{defi}\label{Frbendi}
A bilinear form $ \mathfrak{B} : A \otimes A \to \mathbb{F}$ on a dialgebra $(A,  \rhd,\lhd)$ is called {\bf invariant} if
\begin{align}
\mathfrak{B}(x \rhd y, z)&=\mathfrak{B}(x,y \rhd z-y \lhd z ), \quad 
\mathfrak{B}(x \lhd y, z)= \mathfrak{B}( x,y \rhd z), \quad \forall x,y,z \in A.\label{diinv1}
\end{align}
A {\bf (skew-symmetric) Frobenius dialgebra} $(A,  \rhd,\lhd,\mathfrak{B})$ is a  dialgebra $(A,  \rhd,\lhd)$ with a nondegenerate (skew-symmetric) invariant bilinear form $\mathfrak{B}$. 
\end{defi}

\begin{rmk}The  Eq.~\eqref{diinv1} first appeared in \cite{CF1} as the invariance condition for bilinear forms on dialgebras. This is motivated by the fact that the operad of associative algebras is cyclic, whereas the operad of dialgebras is anticyclic. Indeed, as observed by Chapoton in \cite{CF1} via operad theory, the appropriate invariant bilinear form on a dialgebra is precisely the skew-symmetric one mentioned above.
\end{rmk}

\begin{pro}\label{symFrodi}
Let $(A,  \rhd,\lhd,\mathfrak{B})$ be a Frobenius dialgebra. If $\mathfrak{B}$ is symmetric, then $(A,  \rhd,\lhd)$ is  a trivial dialgebra, that is, $x \rhd y = x \lhd y =0$ for all $x,y \in A$.
\end{pro}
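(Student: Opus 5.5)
The plan is to use symmetry to move elements around the invariance identities \eqref{diinv1} until $\mathfrak{B}(x\lhd y,z)$ comes back to itself with a minus sign. Characteristic $0$ then kills it, and nondegeneracy gives $x\lhd y=0$. The vanishing of $\rhd$ then follows from the second invariance identity.

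Fix $x,y,z\in A$. By the second identity in \eqref{diinv1} and symmetry of $\mathfrak{B}$,
$$\mathfrak{B}(x\lhd y,z)=\mathfrak{B}(x,y\rhd z)=\mathfrak{B}(y\rhd z,x).$$
Applying the first identity of \eqref{diinv1} and symmetry again,
$$\mathfrak{B}(y\rhd z,x)=\mathfrak{B}(y,z\rhd x-z\lhd x)=\mathfrak{B}(z\rhd x,y)-\mathfrak{B}(z\lhd x,y).$$
We now rewrite both terms with \eqref{diinv1}. The first gives $\mathfrak{B}(z\rhd x,y)=\mathfrak{B}(z,x\rhd y-x\lhd y)$ and the second gives $\mathfrak{B}(z\lhd x,y)=\mathfrak{B}(z,x\rhd y)$. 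Subtracting, the $x\rhd y$ terms cancel, and using symmetry once more we obtain
$$\mathfrak{B}(x\lhd y,z)=-\mathfrak{B}(z,x\lhd y)=-\mathfrak{B}(x\lhd y,z).$$

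Hence $2\mathfrak{B}(x\lhd y,z)=0$ for all $z$. Since $\operatorname{char}\mathbb{F}=0$ and $\mathfrak{B}$ is nondegenerate, $x\lhd y=0$ for all $x,y\in A$. Substituting this into the second identity of \eqref{diinv1} gives $\mathfrak{B}(x,y\rhd z)=\mathfrak{B}(x\lhd y,z)=0$ for all $x$. Nondegeneracy then yields $y\rhd z=0$ for all $y,z\in A$.

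The only real obstacle is choosing the order in which to apply symmetry and the two invariance identities so that the $\rhd$ contributions cancel exactly. The chain above does this, and everything else is immediate. No dialgebra axioms are needed, only \eqref{diinv1}, symmetry, nondegeneracy and characteristic $\neq 2$.
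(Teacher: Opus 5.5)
Your proof is correct and is essentially the paper's argument: both cycle the two identities in \eqref{diinv1} through the symmetry of $\mathfrak{B}$ until a factor of $2$ appears, and then conclude by nondegeneracy. The paper starts instead from $\mathfrak{B}(x\rhd y,z)$, obtains $x\rhd y=x\lhd y$, and then kills both products with the first identity. You start from $\mathfrak{B}(x\lhd y,z)$, get $x\lhd y=0$ directly, and then kill $\rhd$ with the second identity, making explicit the use of $\operatorname{char}\mathbb{F}\neq 2$ that the paper leaves implicit.
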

 \begin{proof}
For all $x, y, z \in A$, we have
\begin{align*}
\mathfrak{B}(x \rhd y, z) &=\mathfrak{B}(x,y \rhd z-y \lhd z ) = \mathfrak{B}(x \lhd  y,z ) - \mathfrak{B}(x, y \lhd z )  
 = \mathfrak{B}(x \lhd  y,z ) - \mathfrak{B}( y,z \rhd x )  \\
 & = \mathfrak{B}(x \lhd  y,z ) - \mathfrak{B}( z,x \rhd y - x \lhd y ).
\end{align*}
With the nondegeneracy of $\mathfrak{B}$ and Eq.~\eqref{diinv1}, we have $x \rhd y = x \lhd y = 0$ for all $x,y \in A$. Thus $(A,  \rhd,\lhd)$ is  a trivial dialgebra.
\end{proof}

\begin{ex} In Example \ref{dwdpp} (3), let $\mathfrak{B}$ be a  bilinear form on $A:= \mathbb{F}\left[t^{ \pm}\right] \partial_{1} \oplus \mathbb{F}\left[t^{ \pm}\right] \partial_{2} $ defined by 
 \begin{align}
\mathfrak{B}\left(t^{i}  \partial_{1}, t^{j}   \partial_{2}\right) & =-\mathfrak{B}\left(t^{j}  \partial_{2}, t^{i}   \partial_{1}\right)=\delta_{i+j, 0},\quad
\mathfrak{B}\left(t^{i}   \partial_{1}, t^{j}  \partial_{1}\right)   =\mathfrak{B}\left(t^{i}   \partial_{2}, t^{j}   \partial_{2}\right)=0, \quad \forall i, j \in \mathbb{Z}.
\end{align}
 Then  $(A,  \rhd,\lhd,\mathfrak{B}) $ is a  skew-symmetric Frobenius dialgebra with $\rhd, \lhd$ given by  Eq.~\eqref{ex:diainfinix}.
\end{ex}

There are two natural constructions of  skew-symmetric Frobenius dialgebras from associative algebras and dialgebras  respectively.

\begin{pro}\label{p:ppplgphpl}
 Let $A$ be a vector space and $\mathfrak{B}_{d}$ be the  bilinear form on the direct sum  $A \oplus A^*$  of vector spaces  defined by 
\begin{align}
\mathfrak{B}_d\left(x+a^{*}, y+b^{*}\right)=  \left\langle  y,a^{*}\right\rangle - \left\langle x, b^{*}\right\rangle, \quad  \forall x, y \in A, a^{*}, b^{*} \in A^{*}. \label{mtorbl}
\end{align}
 \begin{enumerate}
 \item\label{sfdifromAs} If $(A,\cdot)$ is an associative  algebra, then $( A \rightthreetimes_{-R_{\cdot}^*,-L_{\cdot}^*} A^* ,\mathfrak{B}_d)$ is a  skew-symmetric Frobenius dialgebra.
 \item\label{sfdifromdi} If $(A,\rhd,\lhd)$ is a  dialgebra, then $(A
\ltimes_{-R_{\lhd}^{*},  L_{\lhd}^{*}-L_{\rhd}^{*}, R_{\rhd}^{*}-R_{\lhd}^{*},-L_{\rhd}^{*}}
A^*,\mathfrak{B}_d)$ is a  skew-symmetric Frobenius dialgebra.
  \end{enumerate}
 \end{pro}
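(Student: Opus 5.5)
Both parts share the same structure. The dialgebra structures on $A\oplus A^*$ are already given: part \eqref{sfdifromAs} uses Proposition \ref{difromrepAss} applied to the bimodule $(A^*;-R_\cdot^*,-L_\cdot^*)$ of Example \ref{amaExspe}, and part \eqref{sfdifromdi} uses Lemma \ref{dualbimodofdias} and Example \ref{coregularrep}. So only two properties of $\mathfrak{B}_d$ need checking: skew-symmetry with nondegeneracy, and the two invariance identities \eqref{diinv1}.

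\textbf{Skew-symmetry and nondegeneracy.} Swapping the arguments in \eqref{mtorbl} gives $\mathfrak{B}_d(y+b^*,x+a^*)=\langle x,b^*\rangle-\langle y,a^*\rangle$, which is $-\mathfrak{B}_d(x+a^*,y+b^*)$. Now suppose $x+a^*$ pairs to zero with everything. Testing against $y\in A$ gives $a^*=0$, and testing against $b^*\in A^*$ gives $x=0$.

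\textbf{Invariance.} Both sides of each identity in \eqref{diinv1} are trilinear. It therefore suffices to check them on homogeneous triples, where each of the three entries lies in $A$ or in $A^*$. In both constructions the product of two elements of $A^*$ vanishes, and a product of an element of $A$ with an element of $A^*$ lies in $A^*$. Also, $\mathfrak{B}_d$ vanishes on $A\times A$ and on $A^*\times A^*$. Hence both sides vanish automatically unless exactly one entry lies in $A^*$. This leaves three cases per identity: the dual element sits in position $1$, $2$ or $3$.

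In each case I would rewrite both sides as a single pairing $\langle\,\cdot\,,\,\cdot\,\rangle$, using the sign convention \eqref{dualmap} for the dual maps. The resulting equation should then reduce to one of the dialgebra axioms \eqref{dias1}--\eqref{dias3}, or to associativity of $\rhd$ or $\lhd$.

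For example, in part \eqref{sfdifromdi} take $x,y\in A$ and $c^*\in A^*$ in the identity $\mathfrak{B}(x\rhd y,z)=\mathfrak{B}(x,y\rhd z-y\lhd z)$, with $z=c^*$. The left side is $-\langle x\rhd y,c^*\rangle$. On the right, $y\rhd c^*-y\lhd c^*=-R_\lhd^*(y)c^*-(R_\rhd^*-R_\lhd^*)(y)c^*=-R_\rhd^*(y)c^*$. Its pairing with $x$ gives $-\langle x\rhd y,c^*\rangle$, which agrees with the left side.

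Part \eqref{sfdifromAs} runs the same way and is simpler, since $\rhd=\lhd=\cdot$ on $A$ and the module actions are one-sided.

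\textbf{Main obstacle.} The only real difficulty is bookkeeping: tracking the minus signs coming from both the skew form and the convention \eqref{dualmap}, across the six cases in each part. The case with the dual element in the middle position is the one most likely to need a dialgebra axiom rather than plain associativity. I expect it to reduce to \eqref{dias1}, \eqref{dias2} or \eqref{dias3}; for instance, one may need $(z\lhd x)\rhd\cdot=(z\rhd x)\rhd\cdot$ in the second identity. I would verify that case first and then record all the cases compactly.
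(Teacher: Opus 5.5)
Your proposal is correct and follows the paper's proof: the paper likewise just unwinds the dual-map convention \eqref{dualmap} in the two invariance identities, but with general elements $x+a^*,\,y+b^*,\,z+c^*$ in one computation rather than your split into homogeneous cases, and it leaves skew-symmetry and nondegeneracy implicit. One correction to your forecast: no case needs a dialgebra axiom or associativity. With exactly one entry in $A^*$, each side is a single pairing of one product of two elements of $A$ against that dual element, and both sides unwind to the same pairing (e.g.\ in part (b), first identity, with $a^*$ in position $1$, both sides equal $\langle y\rhd z-y\lhd z,a^*\rangle$); the axioms enter only in knowing that $A\oplus A^*$ is a dialgebra.
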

 
\begin{proof} \eqref{sfdifromAs}. Let $x,y,z \in A, a^*,b^*,c^* \in A^*$. Then we have
\begin{align*}
&\mathfrak{B}_d((x +a^*)\rhd (y +b^*), z +c^*) 
 \\
 &=\mathfrak{B}_d( x \cdot y    -R_{\cdot}^{*}(x)b^*, z +c^*) 
 =  \left\langle z,   -R_{\cdot}^{*}(x)b^*   \right\rangle  - \left\langle x \cdot y   , c^*   \right\rangle    =  \left\langle x,   -L_{\cdot}^{*}(z)b^*   \right\rangle  + \left\langle x     ,  R_{\cdot}^{*}(y)c^*   \right\rangle  \\
 &=\mathfrak{B}_d( x +a^* ,y \cdot z   -R_{\cdot}^{*}(y)c^*   -y \cdot z  +L_{\cdot}^{*}(z)b^*    )\\
       &=\mathfrak{B}_d( x +a^* ,(y +b^*)\rhd ( z +c^*) - (y +b^*)\lhd ( z +c^*)),\\
       &\mathfrak{B}_d((x +a^*)\lhd (y +b^*), z +c^*) 
 =\mathfrak{B}_d(  x \cdot y -L_{\cdot}^{*} (y)a^*   , z +c^*) 
 =\langle  z , -L_{\cdot}^{*} (y)a^*\rangle   -  \langle   x \cdot y,c^* \rangle  \\
&=\langle  y \cdot z ,  a^*\rangle  +  \langle   x ,R_{\cdot}^{*} (y)c^* \rangle   =\mathfrak{B}_d( x +a^*,y \cdot z -R_{\cdot}^{*} (y)c^*   )\\
&=\mathfrak{B}_d( x +a^*,(y +b^*) \rhd (z +c^*)).
\end{align*}
Thus $\mathfrak{B}_d$ is invariant on $ A \rightthreetimes_{-R_{\cdot}^*,-L_{\cdot}^*} A^*$. Thus $( A \rightthreetimes_{-R_{\cdot}^*,-L_{\cdot}^*} A^* ,\mathfrak{B}_d)$ is a  skew-symmetric Frobenius dialgebra.

\eqref{sfdifromdi}. Let $x,y,z \in A, a^*,b^*,c^* \in A^*$. Then we have
\begin{align*}
&\mathfrak{B}_d((x +a^*)\rhd (y +b^*), z +c^*)\\
 &=\mathfrak{B}_d( x \rhd y    -R_{\lhd}^{*}(x)b^*+(L_{\lhd}^{*}-L_{\rhd}^{*})(y)a^*, z +c^*)\\
 &=  \langle  z, -R_{\lhd}^{*}(x)b^*+(L_{\lhd}^{*}-L_{\rhd}^{*})(y)a^* \rangle - \langle  x \rhd y,c^* \rangle    \\
  &=  \langle  x, -L_{\lhd}^{*}(z)b^*\rangle +\langle  y \rhd z - y \lhd z,  a^* \rangle - \langle  x  ,-R_{\rhd}^{*}(y)c^* \rangle    \\
    &=   \mathfrak{B}_d( x +a^* , y \rhd z - y \lhd z + L_{\lhd}^{*}(z)b^*-R_{\rhd}^{*}(y)c^* )\\
        &=   \mathfrak{B}_d( x +a^* , y \rhd z - y \lhd z + (L_{\lhd}^{*}-L_{\rhd}^{*})(z)b^*-R_{\lhd}^{*}(y)c^* -(R_{\rhd}^{*}-R_{\lhd}^{*})(y)c^* +   L_{\rhd}^{*} (z)b^*  )\\
       &=\mathfrak{B}_d( x +a^* ,(y +b^*)\rhd ( z +c^*) - (y +b^*)\lhd ( z +c^*)),\\
&\mathfrak{B}_d((x +a^*)\lhd (y +b^*), z +c^*)\\
&=\mathfrak{B}_d(  x \lhd y +(R_{\rhd}^{*}-R_{\lhd}^{*})(x)b^* -  L_{\rhd}^{*} (y)a^*  , z +c^*)\\
&=\langle  z , (R_{\rhd}^{*}-R_{\lhd}^{*})(x)b^* -  L_{\rhd}^{*} (y)a^* \rangle   -  \langle  x \lhd y,c^* \rangle  \\
&=\langle  x , (L_{\rhd}^{*}-L_{\lhd}^{*})(z)b^*  \rangle + \langle y \rhd z ,     a^* \rangle    + \langle  x  ,R_{\lhd}^{*} (y)c^* \rangle  \\
&=\mathfrak{B}_d( x +a^*,y \rhd z +(L_{\lhd}^{*}-L_{\rhd}^{*})(z)b^* - R_{\lhd}^{*} (y)c^*   )\\
&=\mathfrak{B}_d( x +a^*,(y +b^*) \rhd (z +c^*)).
\end{align*}
Thus $\mathfrak{B}_d$ is invariant on $A
\ltimes_{-R_{\lhd}^{*},  L_{\lhd}^{*}-L_{\rhd}^{*}, R_{\rhd}^{*}-R_{\lhd}^{*},-L_{\rhd}^{*}}
A^*$. Then $(A
\ltimes_{-R_{\lhd}^{*},  L_{\lhd}^{*}-L_{\rhd}^{*}, R_{\rhd}^{*}-R_{\lhd}^{*},-L_{\rhd}^{*}}
A^*,\mathfrak{B}_d)$ is a  skew-symmetric Frobenius dialgebra.
\end{proof}

\begin{ex}(1) Continuing with Example \ref{amaExspe} (2),  there is an  invariant bilinear form $\mathfrak{B}_d$ on the 4-dimensional dialgebra  $ A \rightthreetimes_{-R_{\cdot}^*,-L_{\cdot}^*} A^*$  given by 
\begin{align}\label{Exforininf}
	\mathfrak{B}_d(e_i, e_j) = \mathfrak{B}_d(e_i^*, e_j^*) = 0, \quad \mathfrak{B}_d(e_i^*, e_j) = -\mathfrak{B}_d(e_i, e_j^*) = \begin{cases}
		1, i=j\\
		0, i \neq j
	\end{cases}   i, j \in  \{1, 2\}, 
\end{align}
such that $( A \rightthreetimes_{-R_{\cdot}^*,-L_{\cdot}^*} A^* ,\mathfrak{B}_d)$ is a  skew-symmetric Frobenius dialgebra.

(2) Continuing with Example \ref{coregularrep} (2),  there is  another 4-dimensional skew-symmetric Frobenius dialgebra $(A
\ltimes_{-R_{\lhd}^{*},  L_{\lhd}^{*}-L_{\rhd}^{*}, R_{\rhd}^{*}-R_{\lhd}^{*},-L_{\rhd}^{*}}
A^*,\mathfrak{B}_d)$ with $\mathfrak{B}_d$  explicitly given by Eq.~\eqref{Exforininf}.
\end{ex}

Recall \cite{BYZ} that a {\bf quadratic perm algebra} is a triple $(A, \star,\mathfrak{B})$ where $(A,  \star )$ is a perm algebra and $\mathfrak{B}$ is a nondegenerate skew-symmetric  bilinear form satisfying the following conditions
 \begin{align*}
 \mathfrak{B}(x \star y,z) =   \mathfrak{B}(x , y \star z - z \star y),\quad \forall x,y,z \in A.
 \end{align*}
 
 Recall that a {\bf (symmetric) Frobenius  algebra} $(A,  \cdot,\mathfrak{B})$ is an  associative algebra $(A,\cdot)$ with a nondegenerate (symmetric)  bilinear form $\mathfrak{B}$ satisfying the following invariant condition
\begin{align}
\mathfrak{B}(x \cdot y, z)&= \mathfrak{B}( x,y \cdot z), \quad \forall x,y,z \in A.\label{Assinv}
\end{align}

Next we extend the construction of dialgebras in Proposition \ref{ifdualpp} to the Frobenius  case.
 
\begin{pro}\label{ifqdualpp}
 Let $(A,\circ,\mathfrak{B}_A)$ be a quadratic perm algebra and $(B, \cdot ,\mathfrak{B}_B)$ be a  symmetric  Frobenius  algebra. Then $(A \otimes B , \rhd_{A \otimes B},\lhd_{A \otimes B},\mathfrak{B}_{A \otimes B})$ is a skew-symmetric  Frobenius  dialgebra where $\rhd_{A \otimes B}$ and $\lhd_{A \otimes B}$ are   given in Proposition \ref{ifdualpp} and $\mathfrak{B}_{A \otimes B}$ is defined by 
 \begin{align}
 \mathfrak{B}_{A \otimes B}( x \otimes a,  y \otimes b) &:=  \mathfrak{B}_{A }(x ,y)\mathfrak{B}_{B}(a,b), \quad \forall x,y \in A, a,b \in B.
\end{align} 
\end{pro}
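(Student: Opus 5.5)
The dialgebra structure on $A\otimes B$ is already provided by Proposition \ref{ifdualpp}, so three properties of $\mathfrak{B}_{A\otimes B}$ remain to be checked: nondegeneracy, skew-symmetry, and the two invariance identities in Eq.~\eqref{diinv1}. All three are bilinear in their arguments, so it suffices to verify them on pure tensors $x\otimes a$, $y\otimes b$, $z\otimes c$.

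Nondegeneracy and skew-symmetry are immediate. Nondegeneracy holds because, after choosing bases, the Gram matrix of $\mathfrak{B}_{A\otimes B}$ is the Kronecker product of the Gram matrices of $\mathfrak{B}_A$ and $\mathfrak{B}_B$, and a Kronecker product of invertible matrices is invertible. Skew-symmetry follows from
\[
\mathfrak{B}_A(y,x)\mathfrak{B}_B(b,a)=-\mathfrak{B}_A(x,y)\mathfrak{B}_B(a,b),
\]
since $\mathfrak{B}_A$ is skew-symmetric and $\mathfrak{B}_B$ is symmetric.

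For the invariance identities, write the products in terms of $\circ$ and $\cdot$. Since $(x\otimes a)\lhd(y\otimes b)=y\circ x\otimes a\cdot b$, the second identity becomes
\[
\mathfrak{B}_A(y\circ x,z)\,\mathfrak{B}_B(a\cdot b,c)=\mathfrak{B}_A(x,y\circ z)\,\mathfrak{B}_B(a,b\cdot c).
\]
The $B$-factors agree by Eq.~\eqref{Assinv}. The $A$-factors require $\mathfrak{B}_A(y\circ x,z)=\mathfrak{B}_A(x,y\circ z)$, which I derive from the quadratic perm condition and skew-symmetry. Applying the quadratic condition to $\mathfrak{B}_A(y\circ x,z)$ gives $\mathfrak{B}_A(y,x\circ z-z\circ x)$. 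Also, $\mathfrak{B}_A(x,y\circ z)=-\mathfrak{B}_A(y\circ z,x)=-\mathfrak{B}_A(y,z\circ x-x\circ z)$, which is the same expression. So the second identity holds.

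For the first identity, $(y\otimes b)\rhd(z\otimes c)-(y\otimes b)\lhd(z\otimes c)=(y\circ z-z\circ y)\otimes b\cdot c$. After cancelling the $B$-factor, again via Eq.~\eqref{Assinv}, what remains is exactly the defining condition of a quadratic perm algebra,
\[
\mathfrak{B}_A(x\circ y,z)=\mathfrak{B}_A(x,y\circ z-z\circ y).
\]

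I do not expect any step to be a real obstacle. The only point needing care is the derived identity $\mathfrak{B}_A(y\circ x,z)=\mathfrak{B}_A(x,y\circ z)$ used for $\lhd$; it is a two-line manipulation combining skew-symmetry of $\mathfrak{B}_A$ with the quadratic perm condition, and it shows that the sign conventions of the two bilinear forms are compatible.
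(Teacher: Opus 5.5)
Your proof is correct and follows the same route as the paper: you check skew-symmetry, nondegeneracy and the two invariance identities on pure tensors, and reduce the $\rhd$-identity to the quadratic perm condition together with Eq.~\eqref{Assinv}. Your short derivation of $\mathfrak{B}_A(y\circ x,z)=\mathfrak{B}_A(x,y\circ z)$ from skew-symmetry and the quadratic condition supplies a step the paper uses without comment, and your skew-symmetry computation keeps the minus sign that the paper's displayed line drops.
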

\begin{proof}
By Proposition \ref{ifdualpp}, it is enough to prove that $\mathfrak{B}_{A \otimes B}$ is a nondegenerate skew-symmetric invariant bilinear form on the dialgebra $ (A \otimes B, \rhd_{A \otimes B},\lhd_{A \otimes B})$. For all $x,y,z \in A, a,b,c \in B$, we have
\begin{align*}
&\mathfrak{B}_{A \otimes B}( x \otimes a,  y \otimes b) =   \mathfrak{B}_{A }(x ,y)\mathfrak{B}_{B}(a,b) =   -\mathfrak{B}_{A }(y ,x)\mathfrak{B}_{B}(b,a) = \mathfrak{B}_{A \otimes B}( y \otimes b,  x \otimes a).
\end{align*}
Thus $\mathfrak{B}_{A \otimes B}$ is skew-symmetric and it is straightforward to verify that $\mathfrak{B}_{A \otimes B}$ is  nondegenerate. Moreover   
\begin{align*}
&\mathfrak{B}_{A \otimes B}( (x \otimes a) \rhd_{A \otimes B} (y \otimes b), z \otimes c)  =\mathfrak{B}_{A \otimes B}( (x \circ y) \otimes (a \cdot b), z \otimes c) \\
& =\mathfrak{B}_{A}(  x \circ y ,z)\mathfrak{B}_{B}( a \cdot b ,   c)  = \mathfrak{B}_{A}(x, y \circ z - z \circ y)\mathfrak{B}_{B}( a, b \cdot  c)\\
&= \mathfrak{B}_{A}(x, y \circ z  )\mathfrak{B}_{B}( a, b \cdot  c) - \mathfrak{B}_{A}(x,     z \circ y)\mathcal{B}_{B}( a, b \cdot  c)\\
&=\mathcal{B}_{A \otimes B}( x \otimes a , (y \otimes b)\rhd_{A \otimes B} (z \otimes c)  
- (y \otimes b)\lhd_{A \otimes B}(z \otimes c) ), \\
&\mathfrak{B}_{A \otimes B}( (x \otimes a) \lhd_{A \otimes B} (y \otimes b), z \otimes c)  =\mathfrak{B}_{A \otimes B}( (y \circ x) \otimes (a \cdot b), z \otimes c) \\
&= \mathfrak{B}_{A}(  y \circ x ,z)\mathfrak{B}_{B}( a \cdot b ,   c) = \mathfrak{B}_{A}(  x,y \circ z )\mathfrak{B}_{B}( a  ,   b\cdot c) = \mathfrak{B}_{A \otimes B}(  x \otimes a ,   (y \circ z ) \otimes (b \cdot c))\\
&= \mathfrak{B}_{A \otimes B}(  x \otimes a,   (y \otimes b) \rhd_{A \otimes B}     (z \otimes c) ).
\end{align*}
 That is, $\mathfrak{B}_{A \otimes B}$ is invariant on  $(A \otimes B , \rhd_{A \otimes B},\lhd_{A \otimes B})$. 
\end{proof}

\begin{ex} Continuing with Example \ref{infdualppa}, there is a  nondegenerate skew-symmetric invariant bilinear form $\mathfrak{B}_{A \otimes B}$ on the  dialgebra $ (A \otimes B, \rhd_{A \otimes B},\lhd_{A \otimes B})$ defined by
 \begin{align}
\mathfrak{B}_{A}\left(t^{i}  \partial_{1}, t^{j}   \partial_{2}\right) & =-\mathfrak{B}_{A}\left(t^{j}  \partial_{2}, t^{i}   \partial_{1}\right)=\delta_{i+j, 0},\quad
\mathfrak{B}_{A}\left(t^{i}   \partial_{1}, t^{j}  \partial_{1}\right)   =\mathfrak{B}_{A}\left(t^{i}   \partial_{2}, t^{j}   \partial_{2}\right)=0,  \\
\mathfrak{B}_{B}(\mathsf{A},\mathsf{B}) &= \mathrm{Tr}(\mathsf{A}\mathsf{B}),  \quad \forall i, j\in \mathbb{Z}, \mathsf{A},\mathsf{B} \in B.
\end{align}
 Thus   by Proposition \ref{ifqdualpp}, $(A \otimes B ,  \rhd_{A \otimes B},\lhd_{A \otimes B},\mathfrak{B}_{A \otimes B})$ is a  skew-symmetric  Frobenius dialgebra.
\end{ex}

\begin{lem}\label{invxzyil}
Let $\mathfrak{B}$ be a skew-symmetric invariant bilinear form on a dialgebra $(A, \rhd,\lhd)$. Then $\mathfrak{B}$ satisfies the following properties
\begin{align}
\mathfrak{B}(x \lhd y, z)&=\mathfrak{B}(y, z \lhd x-z \rhd x)  , \label{diinv3}\\
\mathfrak{B}(x \rhd y, z)& =\mathfrak{B}(y,z \lhd x). \label{diinv4} 
\end{align}
\end{lem}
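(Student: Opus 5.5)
Both identities come from combining the two invariance relations in Eq.~\eqref{diinv1} with skew-symmetry. The trick is to move the element $x$ (rather than $z$) to the other side by swapping arguments. No dialgebra axioms beyond the invariance conditions will be needed.

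For Eq.~\eqref{diinv4}, I start from $\mathfrak{B}(x \rhd y, z)$ and first apply skew-symmetry to get $-\mathfrak{B}(z, x \rhd y)$. Now I need an identity expressing $\mathfrak{B}(z, x\rhd y)$ with $z$ and $x$ combined. The second relation in Eq.~\eqref{diinv1}, applied with the triple $(z,x,y)$, gives
\begin{align*}
\mathfrak{B}(z\lhd x, y)=\mathfrak{B}(z, x\rhd y).
\end{align*}
Hence
\begin{align*}
\mathfrak{B}(x \rhd y, z) = -\mathfrak{B}(z\lhd x, y) = \mathfrak{B}(y, z\lhd x),
\end{align*}
where the last step uses skew-symmetry again. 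This is Eq.~\eqref{diinv4}.

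For Eq.~\eqref{diinv3}, I argue similarly: $\mathfrak{B}(x\lhd y, z) = -\mathfrak{B}(z, x\lhd y)$. The first relation in Eq.~\eqref{diinv1} with the triple $(z,x,y)$ gives
\begin{align*}
\mathfrak{B}(z\rhd x, y) = \mathfrak{B}(z, x\rhd y) - \mathfrak{B}(z, x\lhd y).
\end{align*}
Therefore
\begin{align*}
\mathfrak{B}(z, x\lhd y) = \mathfrak{B}(z, x\rhd y) - \mathfrak{B}(z\rhd x, y) = \mathfrak{B}(z\lhd x, y) - \mathfrak{B}(z\rhd x, y),
\end{align*}
where the second equality uses the second invariance relation as before. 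Negating and applying skew-symmetry then gives
\begin{align*}
\mathfrak{B}(x\lhd y, z) = \mathfrak{B}(z\rhd x - z\lhd x, y) = \mathfrak{B}(y, z\lhd x - z\rhd x),
\end{align*}
which is Eq.~\eqref{diinv3}.

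There is no real obstacle here. The only care needed is choosing which invariance relation to apply, and keeping track of the signs introduced by each use of skew-symmetry.
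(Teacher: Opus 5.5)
Your proof is correct and is essentially the paper's argument: both identities come straight from the two invariance relations in Eq.~\eqref{diinv1} together with skew-symmetry, and your derivation of Eq.~\eqref{diinv4} is exactly the intended one. For Eq.~\eqref{diinv3} the paper uses each invariance relation once, $\mathfrak{B}(x\lhd y,z)=\mathfrak{B}(x,y\rhd z)=-\mathfrak{B}(y\rhd z,x)=-\mathfrak{B}(y,z\rhd x-z\lhd x)$, which is a slightly shorter chain than yours, but it is only a cosmetic reordering of the same ingredients.
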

\begin{proof}
By applying Eq.~\eqref{diinv1},  it implies that Eq.~\eqref{diinv4} holds.  For all $x, y, z \in A$, we have
\begin{align*}
\mathfrak{B}(x \lhd y, z) &= \mathfrak{B}(x,y \rhd z) = - \mathfrak{B}(y \rhd z,x) =-\mathfrak{B}( y,z \rhd x-z \lhd x)=\mathfrak{B}(y, z \lhd x-z \rhd x).
\end{align*}
This completes the proof.
\end{proof}

Recall \cite{TS} that a {\bf quadratic Leibniz algebra} is a triple $(A, [\cdot,\cdot],\mathfrak{B})$ where $(A, [\cdot,\cdot])$ is a Leibniz algebra and $\mathfrak{B}$ is a nondegenerate skew-symmetric  bilinear form satisfying the following conditions
 \begin{align*}
 \mathfrak{B}([x , y],z) =   \mathfrak{B}(x , [y ,z ] +  [z ,y]),\quad \forall x,y,z \in A.
 \end{align*}
 
 \begin{pro}\label{subjecntqla}
 Let $(A, \rhd,\lhd,\mathfrak{B})$  be a  skew-symmetric  Frobenius dialgebra and $(A, [\cdot,\cdot])$  be the  sub-adjacent Leibniz algebra  of $(A,  \rhd,\lhd)$.  Then $(A, [\cdot,\cdot],\mathfrak{B})$ is a quadratic Leibniz algebra.
 \end{pro}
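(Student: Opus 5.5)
Since Proposition \ref{diatoLeib} already makes $(A,[\cdot,\cdot])$ a Leibniz algebra, and $\mathfrak{B}$ is nondegenerate and skew-symmetric by hypothesis, only the invariance identity
\begin{align*}
\mathfrak{B}([x,y],z)=\mathfrak{B}(x,[y,z]+[z,y]),\quad \forall x,y,z\in A,
\end{align*}
remains to be checked. I will expand both sides with $[x,y]=x\rhd y-y\lhd x$ and reduce every term to the shape $\mathfrak{B}(x,\,\cdot\,)$, using Eq.~\eqref{diinv1} and Lemma \ref{invxzyil}.

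The right-hand side becomes
\begin{align*}
\mathfrak{B}(x,\, y\rhd z - z\lhd y + z\rhd y - y\lhd z).
\end{align*}
On the left, the first term is handled directly by the first identity of Eq.~\eqref{diinv1}:
\begin{align*}
\mathfrak{B}(x\rhd y,z)=\mathfrak{B}(x,\,y\rhd z-y\lhd z).
\end{align*}
The second left-hand term is $-\mathfrak{B}(y\lhd x,z)$. Applying Eq.~\eqref{diinv3} with the roles of $x$ and $y$ swapped gives
\begin{align*}
\mathfrak{B}(y\lhd x,z)=\mathfrak{B}(x,\,z\lhd y-z\rhd y),
\end{align*}
so $-\mathfrak{B}(y\lhd x,z)=\mathfrak{B}(x,\,z\rhd y - z\lhd y)$.

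Adding the two contributions gives exactly $\mathfrak{B}(x,\,y\rhd z-y\lhd z+z\rhd y-z\lhd y)$, which is the right-hand side. Hence $\mathfrak{B}$ is invariant on the sub-adjacent Leibniz algebra, and $(A,[\cdot,\cdot],\mathfrak{B})$ is a quadratic Leibniz algebra.

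The main care point is moving the variable $y$ out of the first slot in the term $\mathfrak{B}(y\lhd x,z)$. This is exactly what Eq.~\eqref{diinv3} of Lemma \ref{invxzyil} does, and it is the only step that uses the skew-symmetry of $\mathfrak{B}$. Beyond that, the remaining work is sign bookkeeping.
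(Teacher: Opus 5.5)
Your proof is correct and is essentially the paper's argument: you expand $[x,y]=x\rhd y-y\lhd x$, treat $\mathfrak{B}(x\rhd y,z)$ with Eq.~\eqref{diinv1} and $\mathfrak{B}(y\lhd x,z)$ with Eq.~\eqref{diinv3} of Lemma~\ref{invxzyil} (with $x,y$ swapped), and regroup the result as $\mathfrak{B}(x,[y,z]+[z,y])$. That matches the paper's one-line computation step for step.
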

\begin{proof}
By Lemma \ref{invxzyil}, for all $x, y, z \in A$, we have
\begin{align*}
 \mathfrak{B}([x , y],z)  &=  \mathfrak{B}(x \rhd y - y \lhd x,z)  =\mathfrak{B}(x,y \rhd z-y \lhd z ) - \mathfrak{B}(x, z \lhd y-z \rhd y)\\
 &=  \mathfrak{B}(x , [y ,z ] +  [z ,y]).
\end{align*}
Thus $(A, [\cdot,\cdot],\mathfrak{B})$ is a quadratic Leibniz algebra.
\end{proof}

\begin{pro}\label{dualdj:x}
 Let $(A,  \rhd,\lhd)$  be a dialgebra. If there is a nondegenerate skew-symmetric invariant bilinear form  $\mathfrak{B}$  such that $(A, \rhd,\lhd,\mathfrak{B})$ is a skew-symmetric  Frobenius dialgebra, then the two bimodules $(A;L_{\rhd}, R_{\rhd}, L_{\lhd}, R_{\lhd})$ and $(A^*;-R_{\lhd}^{*},  L_{\lhd}^{*}-L_{\rhd}^{*}, R_{\rhd}^{*}-R_{\lhd}^{*},-L_{\rhd}^{*})$  of the dialgebra $(A,  \rhd,\lhd)$ are equivalent. 
 
 Conversely, if the the two bimodules $(A;L_{\rhd}, R_{\rhd}, L_{\lhd}, R_{\lhd})$ and $(A^*;-R_{\lhd}^{*},  L_{\lhd}^{*}-L_{\rhd}^{*}, R_{\rhd}^{*}-R_{\lhd}^{*},-L_{\rhd}^{*})$  of the dialgebra $(A,  \rhd,\lhd)$ are equivalent, then there exists a nondegenerate invariant bilinear form  $\mathfrak{B}$  on $A$.
\end{pro}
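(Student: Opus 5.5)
Define $\varphi: A \to A^*$ by $\langle \varphi(x), y\rangle = \mathfrak{B}(x,y)$ for all $x,y\in A$. Nondegeneracy and finite-dimensionality make $\varphi$ a linear isomorphism. The plan is to show that $\varphi$ intertwines the four structure maps of the regular bimodule with those of the coregular bimodule; this is just the invariance identities \eqref{diinv1}, \eqref{diinv3}, \eqref{diinv4} rewritten. For the converse, we reverse the construction: given an equivalence $\varphi$, set $\mathfrak{B}(x,y):=\langle \varphi(x),y\rangle$ and read the intertwining relations back as invariance.

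For the forward direction there are four identities to check, each paired against an arbitrary $z\in A$.

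\textbf{Left $\rhd$-action.} We need $\varphi(x\rhd y) = -R_{\lhd}^*(x)\varphi(y)$. Pairing with $z$, the right side is $\langle \varphi(y), z\lhd x\rangle = \mathfrak{B}(y,z\lhd x)$. The left side is $\mathfrak{B}(x\rhd y,z)$, and these agree by \eqref{diinv4}.

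\textbf{Right $\rhd$-action.} We need $\varphi(y\rhd x) = (L_{\lhd}^*-L_{\rhd}^*)(x)\varphi(y)$. The right side pairs to $\mathfrak{B}(y, x\rhd z - x\lhd z)$, which equals $\mathfrak{B}(y\rhd x,z)$ by the first identity in \eqref{diinv1}.

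\textbf{Left $\lhd$-action.} We need $\varphi(x\lhd y) = (R_{\rhd}^*-R_{\lhd}^*)(x)\varphi(y)$. The right side pairs to $\mathfrak{B}(y, z\lhd x - z\rhd x)$, which equals $\mathfrak{B}(x\lhd y,z)$ by \eqref{diinv3}.

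\textbf{Right $\lhd$-action.} We need $\varphi(y\lhd x) = -L_{\rhd}^*(x)\varphi(y)$. The right side pairs to $\mathfrak{B}(y,x\rhd z)$, which equals $\mathfrak{B}(y\lhd x,z)$ by the second identity in \eqref{diinv1}.

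Skew-symmetry enters only through Lemma \ref{invxzyil}, which supplies \eqref{diinv3} and \eqref{diinv4}. The main obstacle is keeping the sign convention \eqref{dualmap} and the left/right slot conventions straight, so I would write out each pairing explicitly.

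For the converse, let $\varphi$ be an equivalence and set $\mathfrak{B}(x,y):=\langle\varphi(x),y\rangle$; this form is nondegenerate because $\varphi$ is bijective. The right $\rhd$-intertwining relation, $\varphi(y\rhd x) = (L_{\lhd}^*-L_{\rhd}^*)(x)\varphi(y)$, pairs against $z$ to give exactly $\mathfrak{B}(y\rhd x,z)=\mathfrak{B}(y,x\rhd z-x\lhd z)$. The right $\lhd$-intertwining relation gives $\mathfrak{B}(y\lhd x,z)=\mathfrak{B}(y,x\rhd z)$. These are precisely the two identities of \eqref{diinv1}, so $\mathfrak{B}$ is invariant. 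The statement asserts only a nondegenerate invariant form, not a skew-symmetric one, so no symmetry argument is needed and the converse is complete.
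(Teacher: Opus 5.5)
Your proposal is correct and follows the paper's proof. You define $\varphi_{\mathfrak{B}}$ by $\langle\varphi_{\mathfrak{B}}(x),y\rangle=\mathfrak{B}(x,y)$ and check the same four intertwining identities via Eqs.~\eqref{diinv1}, \eqref{diinv3} and \eqref{diinv4}. For the converse, which the paper omits, you note that the two right-action intertwining relations are exactly the two invariance identities of Eq.~\eqref{diinv1}; this is correct and suffices, since only a nondegenerate invariant form, not necessarily skew-symmetric, is claimed.
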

\begin{proof}
With the nondegeneracy of $\mathfrak{B}$, there exists a linear isomorphism $\varphi_\mathfrak{B}: A \to A^*$  defined by
\begin{align}
\langle \varphi_\mathfrak{B}(x) ,y\rangle := \mathfrak{B}(x,y),\quad \forall x,y \in A.\label{mapinbyB}
\end{align}
By Lemma \ref{invxzyil}, for all $x, y, z \in A$, we have
\begin{align*}
\langle \varphi_\mathfrak{B}(L_{\rhd}(x)y) ,z\rangle &= \mathfrak{B}(x \rhd y,z) =\mathfrak{B}(y,z \lhd x)=   \langle -R^*_{\lhd}(x)\varphi_\mathfrak{B}(y),  z\rangle ,\\
\langle \varphi_\mathfrak{B}(R_{\rhd}(y)x) ,z\rangle &= \mathfrak{B}(x \rhd y,z) =\mathfrak{B}(x,y \rhd z-y \lhd z )
 = \langle (L_{\lhd}^*-L_{\rhd}^* )(y)\varphi_\mathfrak{B}(x), z\rangle,\\
\langle \varphi_\mathfrak{B}(L_{\lhd}(x)y) ,z\rangle &= \mathfrak{B}(x \lhd y,z)=\mathfrak{B}(y, z \lhd x-z \rhd x) = \langle (R_{\rhd}^*-R_{\lhd}^* )(x)\varphi_\mathfrak{B}(y), z\rangle,\\
\langle \varphi_\mathfrak{B}(R_{\lhd}(y)x) ,z\rangle &= \mathfrak{B}(x \lhd y,z) = \mathfrak{B}( x,y \rhd z)=   \langle  -L^*_{\rhd} (y)\varphi_\mathfrak{B}(x),  z\rangle.
\end{align*}
Thus the two bimodules $(A;L_{\rhd}, R_{\rhd}, L_{\lhd}, R_{\lhd})$ and $(A^*;-R_{\lhd}^{*},  L_{\lhd}^{*}-L_{\rhd}^{*}, R_{\rhd}^{*}-R_{\lhd}^{*},-L_{\rhd}^{*})$  of the dialgebra $(A,  \rhd,\lhd)$ are equivalent.  The converse can be proved similarly. We omit the details.
\end{proof}

Next  we introduce an invariant element for a dialgebra. Before proceeding, we list the necessary notations as follows.
 
 \begin{enumerate}
\item Let $A$ be  a vector space. Any $r \in A \otimes A$ can be identified as maps from $A^*$ to $A$, which we denote by $\widetilde{r}: A^* \to A$ and $\widetilde{r}^t: A^* \to A$, respectively and explicitly,
\begin{align*}
\left\langle \widetilde{r}\left(u^{*}\right), v^{*}\right\rangle :=\left\langle r, u^{*} \otimes v^{*}\right\rangle =:\left\langle \widetilde{r}^t\left(v^{*}\right), u^{*}\right\rangle   ,     \quad \forall u^{*}, v^{*} \in A^{*} .
\end{align*}
Note that $\widetilde{\tau(r)} =  \widetilde{r}^t$. We say that   $r \in V \otimes V$  is {\bf nondegenerate}  if the linear map  $\widetilde{r}$  is an isomorphism. 

\item Let $(A,\rhd,\lhd)$ be a dialgebra and define a binary operation $\star: A \otimes A \to A$ by
\begin{align}
x \star y = x \rhd y - x \lhd y,\quad \forall x,y \in A. \label{jianfayuns}
 \end{align}
 For all  $r \in A \otimes A$, we have
\begin{align*}
r = \frac{r + \tau(r)}{2} + \frac{r - \tau(r)}{2}: = \frac{\beta + \alpha}{2}
\end{align*}
where $\alpha = r - \tau(r)$, $\beta = r + \tau(r)$. That is, $\alpha$ and $\beta$ are the {\bf  skew-symmetric part} and {\bf symmetric part} of $r$ respectively. Note that $\widetilde{\alpha} = \widetilde{r} - \widetilde{r}^t$ and $\widetilde{\beta} = \widetilde{r} + \widetilde{r}^t$.

\item Let $A$ be a vector space and $\mathfrak{B}$  be a nondegenerate bilinear form. Denote by  $\varphi_{\mathfrak{B}}: A\rightarrow A^{*}$   the induced linear isomorphism defined by   Eq.~\eqref{mapinbyB}.  Moreover, denote by $r_{\mathfrak{B}} \in  A \otimes A$ the 2-tensor form of $\varphi_{\mathfrak{B}}^{-1}$, that is,
 \begin{align}
\langle r_{\mathfrak{B}}, a^{*} \otimes b^{*} \rangle := \langle \varphi_{\mathfrak{B}}^{-1}(a^*), b^*\rangle, \quad \forall a^{*}, b^{*} \in A^{*}.  \label{inducedr}
\end{align}
 \end{enumerate}
 
 Now we introduce the notion of  invariance of a 2-tensor  $r \in A \otimes A$, which is the main ingredient in the definition of a quasi-triangular bi-dialgebra in section \ref{Bialgebra}.

\begin{defi}\label{invele1}
Let $(A,  \rhd,\lhd)$  be a dialgebra. Define two linear maps $E, F: A \rightarrow \operatorname{End}(A \otimes A)$  by
\begin{align}
E(x)&:=    R_{\star}(x) \otimes  \mathrm{id}+ \mathrm{id} \otimes L_{\lhd}(x) , \label{eq:e}\\
F(x)&:= R_{\rhd}(x) \otimes \mathrm{id} - \mathrm{id} \otimes L_{\star}(x).\label{eq:f}
\end{align}
An element  $r \in A \otimes A$  is called {\bf invariant} if
\begin{align}
E(x) r = F(x) r=0, \quad \forall x \in A .
\end{align}
\end{defi}

Now we give another characterization of the  invariant condition.

\begin{lem}\label{semiinequ}
Let $(A,  \rhd,\lhd)$  be a dialgebra and  $r \in A \otimes A$. Let $\alpha$ be the skew-symmetric  part of $r$. Then the following conditions are equivalent:
	\begin{enumerate}
	\item \label{semi:1}   $\alpha$  is invariant.
	\item \label{semi:2}   The following equations hold:
	\begin{align*}
	 R_{\lhd}^{*}\left(\widetilde{\alpha}\left(a^{*}\right)\right) b^{*} = L_{\star}^{*}\left(\widetilde{\alpha}\left(b^{*}\right)\right) a^{*},\quad  R_{\star}^{*}\left(\widetilde{\alpha}\left(a^{*}\right)\right) b^{*} = -L_{\rhd}^{*}\left(\widetilde{\alpha}\left(b^{*}\right)\right) a^{*},\quad \forall   a^{*}, b^{*} \in A^{*}.
	\end{align*}
	\item \label{semi:3}   The following equations hold:
	\begin{align*}
	 \widetilde{\alpha}\left( L_{\lhd}^*(x) a^{*}\right) =\widetilde{\alpha}\left(a^{*}\right) \star x ,\quad  \widetilde{\alpha}\left(L_{\star}^*(x)a^{*}\right)= -\widetilde{\alpha}\left(a^{*}\right)\rhd x ,\quad  \forall x \in A,  a^{*}  \in A^{*}.
	\end{align*}
		\item \label{semi:4}    The following equations hold:
	\begin{align*}
	 \widetilde{\alpha}\left( R_{\star}^*(x) a^{*}\right) =x \lhd \widetilde{\alpha}\left(a^{*}\right)  ,\quad  \widetilde{\alpha}\left(R_{\rhd}^*(x)a^{*}\right)=  -x 
	 \star \widetilde{\alpha}\left(a^{*}\right)  , \quad \forall x \in A,  a^{*}  \in A^{*}.
	\end{align*}
		\end{enumerate}
\end{lem}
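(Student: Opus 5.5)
The plan is to pair each condition with test vectors and translate the tensor identities $E(x)\alpha=0$, $F(x)\alpha=0$ into statements about $\widetilde{\alpha}$. The only structural input beyond the definition of the dual action \eqref{dualmap} is skew-symmetry: $\widetilde{\alpha}^t=-\widetilde{\alpha}$, which gives $\langle \widetilde{\alpha}(a^*),b^*\rangle=-\langle a^*,\widetilde{\alpha}(b^*)\rangle$.

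Step 1: show (a)$\Leftrightarrow$(c). For $x\in A$ and $a^*,b^*\in A^*$, evaluate $\langle E(x)\alpha, a^*\otimes b^*\rangle$. Writing $\alpha=\sum_i a_i\otimes b_i$, this equals
\[
\sum_i\langle a_i\star x,a^*\rangle\langle b_i,b^*\rangle+\sum_i\langle a_i,a^*\rangle\langle x\lhd b_i,b^*\rangle .
\]
\begin{itemize}
\item The second sum is $\langle \widetilde{\alpha}(a^*), L_{\lhd}^*(x)$-dual$\rangle$-type: it equals $-\langle \widetilde{\alpha}(a^*),L_{\lhd}^*(x)b^*\rangle$, which becomes $\langle \widetilde{\alpha}(L_{\lhd}^*(x)b^*),a^*\rangle$ after one use of skew-symmetry.
\item The first sum is $\langle \widetilde{\alpha}^t(b^*)\star x,a^*\rangle=-\langle \widetilde{\alpha}(b^*)\star x,a^*\rangle$.
\end{itemize}
Hence $E(x)\alpha=0$ for all $x$ is equivalent to $\widetilde{\alpha}(L_{\lhd}^*(x)b^*)=\widetilde{\alpha}(b^*)\star x$, which is the first identity of (c). The same computation with $F(x)=R_{\rhd}(x)\otimes\mathrm{id}-\mathrm{id}\otimes L_\star(x)$ yields $\widetilde{\alpha}(L_\star^*(x)b^*)=-\widetilde{\alpha}(b^*)\rhd x$, the second identity of (c).

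Step 2: show (a)$\Leftrightarrow$(d). Reorganize the same pairings by isolating the first tensor factor instead of the second. The term $R_\star(x)\otimes\mathrm{id}$ then produces $\widetilde{\alpha}(R_\star^*(x)a^*)$, while $\mathrm{id}\otimes L_{\lhd}(x)$ produces $x\lhd\widetilde{\alpha}(a^*)$. Similarly, $F$ gives $\widetilde{\alpha}(R_{\rhd}^*(x)a^*)=-x\star\widetilde{\alpha}(a^*)$.

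Step 3: show (b)$\Leftrightarrow$(c) by pairing (c) with $x$. For the first identity of (c), pair it with an arbitrary $b^*$:
\[
\langle \widetilde{\alpha}(L_{\lhd}^*(x)a^*),b^*\rangle=-\langle L_{\lhd}^*(x)a^*,\widetilde{\alpha}(b^*)\rangle=\langle a^*,x\lhd\widetilde{\alpha}(b^*)\rangle=-\langle R_{\lhd}^*(\widetilde{\alpha}(b^*))a^*,x\rangle .
\]
On the other side, $\langle \widetilde{\alpha}(a^*)\star x,b^*\rangle=-\langle L_\star^*(\widetilde{\alpha}(a^*))b^*,x\rangle$. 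Since $x$ is arbitrary, the first identity of (c) is equivalent to $R_{\lhd}^*(\widetilde{\alpha}(b^*))a^*=L_\star^*(\widetilde{\alpha}(a^*))b^*$, which is the first equation of (b) with $a^*,b^*$ swapped. The second identity is analogous. Every step above is a chain of equalities, so each implication reverses and all the equivalences follow.

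The main obstacle is purely bookkeeping: the signs coming from \eqref{dualmap} and from $\widetilde{\alpha}^t=-\widetilde{\alpha}$, and keeping track of which argument is swapped. The first thing to do is fix the convention $\langle \widetilde{\alpha}(a^*),b^*\rangle=\langle\alpha,a^*\otimes b^*\rangle$ and check Step 1 carefully, since Steps 2 and 3 then follow the same pattern.
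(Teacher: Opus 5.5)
Your proposal is correct and follows essentially the paper's argument: pair $E(x)\alpha$ and $F(x)\alpha$ against $a^*\otimes b^*$, use $\widetilde{\alpha}^t=-\widetilde{\alpha}$ together with the sign convention \eqref{dualmap}, and pass between the operator forms by pairing with an arbitrary $x$. The only difference is organizational. You prove (a)$\Leftrightarrow$(c), (a)$\Leftrightarrow$(d) and (b)$\Leftrightarrow$(c), whereas the paper runs the cycle (a)$\Rightarrow$(b)$\Rightarrow$(c)$\Rightarrow$(d)$\Rightarrow$(a). The signs in your Steps 1--3 all check out.
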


 \begin{proof}
 $\eqref{semi:1}   \Longrightarrow  \eqref{semi:2}$. For all $x \in A$, $a^{*}, b^{*} \in A^{*}$, we have
\begin{align*}
\left\langle(E(x) \alpha, a^{*} \otimes b^{*}\right\rangle & =-\left\langle\alpha,\left(R_{\star}^{*}(x)a^{*}\right)  \otimes  b^{*} + a^{*}  \otimes \left(L_{\lhd}^{*}(x)b^{*}\right)\right\rangle   
 = \left\langle\alpha,  b^{*}  \otimes  \left(R_{\star}^{*}(x)a^{*}\right) -a^{*}  \otimes \left(L_{\lhd}^{*}(x)b^{*}\right)\right\rangle \\
& =- \left\langle a^{*},  \widetilde{\alpha}\left(b^{*}\right) \star x \right\rangle+\left\langle x \lhd \widetilde{\alpha}\left(a^{*}\right) , b^{*}\right\rangle  
 =\left\langle L_{\star}^{*}\left(\widetilde{\alpha}\left(b^{*}\right)\right) a^{*}-R_{\lhd}^{*}\left(\widetilde{\alpha}\left(a^{*}\right)\right) b^{*}, x\right\rangle.\\
\left\langle(F(x) \alpha, a^{*} \otimes b^{*}\right\rangle & =-\left\langle\alpha,\left(R_{\rhd}^{*}(x)a^{*}\right)  \otimes  b^{*} - a^{*}  \otimes \left(L_{\star}^{*}(x)b^{*}\right)\right\rangle  
  = \left\langle\alpha, b^{*}  \otimes  \left(R_{\rhd}^{*}(x)a^{*}\right) +a^{*}  \otimes \left(L_{\star}^{*}(x)b^{*}\right)\right\rangle\\
& =-\left\langle a^{*}, \widetilde{\alpha}\left(b^{*}\right)\rhd x \right\rangle-\left\langle    x \star \widetilde{\alpha}\left(a^{*}\right), b^{*}\right\rangle  
 =\left\langle L_{\rhd}^{*}\left(\widetilde{\alpha}\left(b^{*}\right)\right) a^{*}+R_{\star}^{*}\left(\widetilde{\alpha}\left(a^{*}\right)\right) b^{*}, x\right\rangle.
\end{align*}
 $\eqref{semi:2}   \Longrightarrow  \eqref{semi:3} $. For all $x \in A$, $a^{*}, b^{*} \in A^{*}$, we have
\begin{align*}
\left\langle L_{\star}^{*}\left(\widetilde{\alpha}\left(b^{*}\right)\right) a^{*}-R_{\lhd}^{*}\left(\widetilde{\alpha}\left(a^{*}\right)\right) b^{*}, x\right\rangle &=- \left\langle a^{*},  \widetilde{\alpha}\left(b^{*}\right) \star x \right\rangle+\left\langle x \lhd \widetilde{\alpha}\left(a^{*}\right) , b^{*}\right\rangle \\
&=- \left\langle a^{*},  \widetilde{\alpha}\left(b^{*}\right) \star x \right\rangle-\left\langle   \widetilde{\alpha}\left(a^{*}\right) , L_{\lhd}^*(x) b^{*}\right\rangle \\
&=   \left\langle a^{*},  \widetilde{\alpha}\left( L_{\lhd}^*(x) b^{*}\right) -\widetilde{\alpha}\left(b^{*}\right) \star x \right\rangle, \\
\left\langle L_{\rhd}^{*}\left(\widetilde{\alpha}\left(b^{*}\right)\right) a^{*}+R_{\star}^{*}\left(\widetilde{\alpha}\left(a^{*}\right)\right) b^{*}, x\right\rangle  &=-\left\langle a^{*}, \widetilde{\alpha}\left(b^{*}\right)\rhd x \right\rangle-\left\langle    x \star \widetilde{\alpha}\left(a^{*}\right), b^{*}\right\rangle \\
&=-\left\langle a^{*}, \widetilde{\alpha}\left(b^{*}\right)\rhd x \right\rangle+\left\langle      \widetilde{\alpha}\left(a^{*}\right), L_{\star}^*(x)b^{*}\right\rangle \\
&= \left\langle a^{*}, -\widetilde{\alpha}\left(b^{*}\right)\rhd x - \widetilde{\alpha}\left(L_{\star}^*(x)b^{*}\right) \right\rangle.
\end{align*}
Similarly,   $\eqref{semi:3}   \Longrightarrow  \eqref{semi:4}$ and   $\eqref{semi:4}   \Longrightarrow  \eqref{semi:1}$. This completes the proof.
\end{proof}

Finally, we present the tensor forms of invariant bilinear forms on dialgebras.

\begin{pro}\label{tensorformIndias}
Let $(A,  \rhd,\lhd)$  be a dialgebra and $\mathfrak{B}$ be a nondegenerate bilinear form on $(A,  \rhd,\lhd)$.  Let $\varphi_{\mathfrak{B}}: A\rightarrow A^{*}$  be the induced linear isomorphism by $\mathfrak{B}$ and $r_{\mathfrak{B}} \in A \otimes A$ be the 2-tensor form of $\varphi_{\mathfrak{B}}^{-1}$ given by Eq. \eqref{inducedr}. Then    $(A,\rhd,\lhd,\mathfrak{B})$ is a   skew-symmetric  Frobenius dialgebra if and only if   $r_{\mathfrak{B}}$ is skew-symmetric and invariant.
\end{pro}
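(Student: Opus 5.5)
Set $\phi:=\varphi_{\mathfrak{B}}^{-1}=\widetilde{r_{\mathfrak{B}}}$. Indeed, by Eq.~\eqref{inducedr}, $\langle \widetilde{r_{\mathfrak{B}}}(a^*),b^*\rangle=\langle r_{\mathfrak{B}},a^*\otimes b^*\rangle=\langle\varphi_{\mathfrak{B}}^{-1}(a^*),b^*\rangle$. Writing $a^*=\varphi_{\mathfrak{B}}(x)$ and $b^*=\varphi_{\mathfrak{B}}(y)$, we get
\[
\langle r_{\mathfrak{B}},\varphi_{\mathfrak{B}}(x)\otimes\varphi_{\mathfrak{B}}(y)\rangle=\langle x,\varphi_{\mathfrak{B}}(y)\rangle=\mathfrak{B}(y,x).
\]
Hence $r_{\mathfrak{B}}$ is skew-symmetric if and only if $\mathfrak{B}$ is skew-symmetric. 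This settles skew-symmetry in both directions. From now on assume both are skew-symmetric, so that $\alpha:=r_{\mathfrak{B}}-\tau(r_{\mathfrak{B}})=2r_{\mathfrak{B}}$ and $\widetilde{\alpha}=2\varphi_{\mathfrak{B}}^{-1}$. Then $r_{\mathfrak{B}}$ is invariant exactly when its skew-symmetric part $\alpha$ is invariant, and we may apply Lemma~\ref{semiinequ}.

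I would use condition \eqref{semi:4} (or \eqref{semi:3}) of Lemma~\ref{semiinequ}, since it is linear in $\widetilde{\alpha}$ and converts directly into statements about $\varphi_{\mathfrak{B}}$. Since $\widetilde{\alpha}$ is a multiple of $\varphi_{\mathfrak{B}}^{-1}$, condition \eqref{semi:4} reads
\[
\varphi_{\mathfrak{B}}^{-1}(R_\star^*(x)a^*)=x\lhd\varphi_{\mathfrak{B}}^{-1}(a^*),\qquad \varphi_{\mathfrak{B}}^{-1}(R_\rhd^*(x)a^*)=-x\star\varphi_{\mathfrak{B}}^{-1}(a^*).
\]
Substituting $a^*=\varphi_{\mathfrak{B}}(y)$ and applying $\varphi_{\mathfrak{B}}$ turns these into intertwining relations
\[
\varphi_{\mathfrak{B}}(L_\lhd(x)y)=R_\star^*(x)\varphi_{\mathfrak{B}}(y),\qquad \varphi_{\mathfrak{B}}(L_\star(x)y)=-R_\rhd^*(x)\varphi_{\mathfrak{B}}(y).
\]
Pairing each with $z$, using $\langle R^*(x)a^*,z\rangle=-\langle a^*,z\cdot x\rangle$, and rewriting with skew-symmetry gives
\[
\mathfrak{B}(x\lhd y,z)=\mathfrak{B}(y,z\lhd x-z\rhd x),\qquad \mathfrak{B}(x\star y,z)=\mathfrak{B}(y,z\rhd x).
\]
The first identity is Eq.~\eqref{diinv3}. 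The second is equivalent, modulo the first, to Eq.~\eqref{diinv4}, namely $\mathfrak{B}(x\rhd y,z)=\mathfrak{B}(y,z\lhd x)$.

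It then remains to show that, for skew-symmetric $\mathfrak{B}$, the pair \eqref{diinv3}--\eqref{diinv4} is equivalent to the invariance conditions \eqref{diinv1}. One direction is Lemma~\ref{invxzyil}. For the converse, assume \eqref{diinv3} and \eqref{diinv4}. Then
\[
\mathfrak{B}(x\lhd y,z)=-\mathfrak{B}(z\lhd x-z\rhd x,y)=\mathfrak{B}(z\rhd x,y)-\mathfrak{B}(z\lhd x,y),
\]
and applying \eqref{diinv4} and \eqref{diinv3} to the right-hand side should give $\mathfrak{B}(x,y\rhd z)$. The $\rhd$-condition in \eqref{diinv1} should follow similarly from \eqref{diinv4} together with skew-symmetry. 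Since every step is reversible, the whole argument gives an if-and-only-if.

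The main obstacle I expect is sign and placement bookkeeping in two places. The first is the dual-map convention \eqref{dualmap}, which introduces a minus sign. The second is the reconstruction of \eqref{diinv1} from \eqref{diinv3}--\eqref{diinv4}, where a naive rewrite may just cycle back to the hypothesis. If that happens, I would instead verify the invariance equations $E(x)r_{\mathfrak{B}}=0$ and $F(x)r_{\mathfrak{B}}=0$ directly. To do this, pair with $\varphi_{\mathfrak{B}}(y)\otimes\varphi_{\mathfrak{B}}(z)$ and use the formula $\langle r_{\mathfrak{B}},\varphi_{\mathfrak{B}}(u)\otimes\varphi_{\mathfrak{B}}(v)\rangle=\mathfrak{B}(v,u)$ from the first paragraph. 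This reduces each equation to a linear identity in $\mathfrak{B}$, which one then matches term by term against \eqref{diinv1}.
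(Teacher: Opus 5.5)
Your proposal is correct and takes essentially the paper's route: you match the skew-symmetry of $r_{\mathfrak{B}}$ and $\mathfrak{B}$ via $\langle r_{\mathfrak{B}},\varphi_{\mathfrak{B}}(x)\otimes\varphi_{\mathfrak{B}}(y)\rangle=\mathfrak{B}(y,x)$, translate condition \eqref{semi:4} of Lemma~\ref{semiinequ} for $\widetilde{\alpha}=2\varphi_{\mathfrak{B}}^{-1}$ into Eqs.~\eqref{diinv3}--\eqref{diinv4}, and pass to Eq.~\eqref{diinv1} via Lemma~\ref{invxzyil} and its converse. Your explicit converse, which the paper leaves as ``reversing the argument'', works with one small correction: the $\rhd$-condition uses Eq.~\eqref{diinv3} as well as Eq.~\eqref{diinv4}, namely $\mathfrak{B}(x\rhd y,z)=\mathfrak{B}(y,z\lhd x)=-\mathfrak{B}(z\lhd x,y)=\mathfrak{B}(x,y\rhd z-y\lhd z)$, so the fallback of checking $E(x)r_{\mathfrak{B}}=F(x)r_{\mathfrak{B}}=0$ directly is not needed.
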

\begin{proof}
Suppose $(A,\rhd,\lhd,\mathfrak{B})$ is a   skew-symmetric  Frobenius dialgebra, then 
 \begin{align*}
\langle r_{\mathfrak{B}}, a^{*} \otimes b^{*} \rangle  = \langle \varphi_{\mathfrak{B}}^{-1}(a^*), b^*\rangle = -\langle \varphi_{\mathfrak{B}}^{-1}(b^*), a^*\rangle = -\langle r_{\mathfrak{B}}, b^{*} \otimes a^{*} \rangle  = -\langle \tau(r_{\mathfrak{B}}), a^{*} \otimes b^{*} \rangle.
\end{align*}
Thus $r_{\mathfrak{B}}$ is skew-symmetric. Moreover,  for all $a^*,b^*,c^* \in A^*$, there exist $x,y,z \in A$ such that $a^* = \varphi_{\mathfrak{B}}(x)$,  $b^* = \varphi_{\mathfrak{B}}(y)$, $c^* = \varphi_{\mathfrak{B}}(z)$. Thus we have
\begin{align*}
\mathfrak{B}(x \lhd y, z)- \mathfrak{B}(y, z \lhd x-z \rhd x) &= \langle\varphi_{\mathfrak{B}}(x \lhd y)  -  R_{\star}^*(x)\varphi_{\mathfrak{B}}(  y), z\rangle   = \langle \varphi_{\mathfrak{B}}(x \lhd y) -  R_{\star}^*(x)b^*, \varphi_{\mathfrak{B}}^{-1}(c^*)\rangle      \\
&= -\langle  c^*, \varphi_{\mathfrak{B}}^{-1}(\varphi_{\mathfrak{B}}(x \lhd y) \!-\!  R_{\star}^*(x)b^*)\rangle      
 =  \langle  c^*,   \varphi_{\mathfrak{B}}^{-1}(R_{\star}^*(x)b^*)\!-\! x \lhd \varphi_{\mathfrak{B}}^{-1}(b^*)    \rangle,      \\
&=0.\\
\mathfrak{B}(x \rhd y, z)- \mathfrak{B}(y,z \lhd x) &= \langle\varphi_{\mathfrak{B}}(x \rhd y)  \!+\!  R_{\lhd}^*(x)b^*, \varphi_{\mathfrak{B}}^{-1}(c^*)\rangle   
 = \!-\langle c^* ,  x \rhd \varphi_{\mathfrak{B}}^{-1}(b^*)   \!+ \! \varphi_{\mathfrak{B}}^{-1}(R_{\lhd}^*(x)b^*)\rangle\\
&=0.
\end{align*}
Then we obtain $\varphi_{\mathfrak{B}}^{-1}(R_{\star}^*(x)a^*)= x \lhd \varphi_{\mathfrak{B}}^{-1}(a^*)$ and $    \varphi_{\mathfrak{B}}^{-1}(R_{\lhd}^*(x)a^*) =- x \rhd \varphi_{\mathfrak{B}}^{-1}(a^*)  $ for all $a^* \in A^*$. Furthermore, 
\begin{align*}
 \varphi_{\mathfrak{B}}^{-1} \left(R_{\rhd}^*(x)a^{*}\right) &=     \varphi_{\mathfrak{B}}^{-1}\left( R_{\star}^*(x) a^{*}\right)  +  \varphi_{\mathfrak{B}}^{-1}(R_{\lhd}^*(x)a^*) = x \lhd \varphi_{\mathfrak{B}}^{-1}\left(a^{*}\right)  -x \rhd \varphi_{\mathfrak{B}}^{-1}(a^*)  
  =-x \star \varphi_{\mathfrak{B}}^{-1}(a^*).
\end{align*}
Then by Proposition \ref{semiinequ}, $r_{\mathfrak{B}}$  is   invariant. The converse statement can be proved by reversing the argument. 
\end{proof} 

 \section{Diassociative bialgebras and diassociative  Yang-Baxter equations}\label{Bialgebra}

In this section, we introduce the double construction of Frobenius dialgebras associated to an invariant bilinear form, as well as the notion of a diassociative bialgebra. The equivalence between them is characterized in terms of certain matched pairs of dialgebras. Furthermore, we prove that a diassociative bialgebra naturally gives rise to a Leibniz bialgebra, which is the bialgebra structure of Leibniz algebras through the sub-adjacent way.
The study of coboundary diassociative bialgebras leads to the introduction of the diassociative Yang–Baxter equation (DAYBE) in a dialgebra. Explicitly, a solution whose skew-symmetric part is invariant gives a special class of diassociative bialgebras, referred to as quasi-triangular diassociative bialgebras. Moreover, we introduce the notion of $\mathcal{O}$-operators on dialgebras with   weights,  which provides an operator form interpretation for solutions  of the DAYBE whose skew-symmetric part is invariant.

   \subsection{Double constructions of Frobenius dialgebras and bi-dialgebras }

  We first give the concept of a matched pair of dialgebras. Recall that  a {\bf matched pair} of associative algebras is a sextuple $(A,B,l_A,r_A,l_B,r_B)$ where $(B;l_A,r_A)$ is a bimodule of associative algebra  $(A,\cdot_A)$  and $(A;l_B,r_B)$  is a bimodule of associative algebra  $(B,\cdot_B)$ satisfying the following equations.
\begin{align}
l_{A}(x)(a \cdot_B b) & =l_{A}\left(r_{B}(a) x\right) b+\left(l_{A}(x) a\right) \cdot_B b,\label{mpofAss1}\\
r_{A}(x)(a \cdot_B b) & =r_{A}\left(l_{B}(b) x\right) a+a \cdot_B\left(r_{A}(x) b\right), \label{mpofAss2}\\
l_{B}(a)(x \cdot_A y) & =l_{B}\left(r_{A}(x) a\right) y+\left(l_{B}(a) x\right) \cdot_A y, \label{mpofAss3}\\
r_{B}(a)(x \cdot_A y) & =r_{B}\left(l_{A}(y) a\right) x+x \cdot_A\left(r_{B}(a) y\right), \label{mpofAss4}\\
l_{A}\left(l_{B}(a) x\right) b+\left(r_{A}(x) a\right) \cdot_B b & = r_{A}\left(r_{B}(b) x\right) a +a \cdot_B\left(l_{A}(x) b\right) , \label{mpofAss5}\\
l_{B}\left(l_{A}(x) a\right) y+\left(r_{B}(a) x\right) \cdot_A y & = r_{B}\left(r_{A}(y) a\right) x + x \cdot_A\left(l_{B}(a) y\right),\label{mpofAss6} \quad \forall x,y \in A, a,b \in B.
\end{align}

 \begin{defi}\label{mpofdiass}
 Let $(A,\rhd_A, \lhd_A)$ and $(B,\rhd_B, \lhd_B)$  be two dialgebras. Suppose that there are linear maps  $l_{\rhd_{A}}, r_{\rhd_{A}},l_{\lhd_{A}}, r_{\lhd_{A}}: A \rightarrow \operatorname{End}\left(B\right)$  and  $l_{\rhd_{B}}, r_{\rhd_{B}},l_{\lhd_{B}}, r_{\lhd_{B}}: B \rightarrow \operatorname{End}\left(A\right)$   such that  $(B;l_{\rhd_{A}}, r_{\rhd_{A}},l_{\lhd_{A}}, r_{\lhd_{A}})$  is a bimodule of  dialgebra $(A,\rhd_A, \lhd_A)$,  $(A;l_{\rhd_{B}}, r_{\rhd_{B}},l_{\lhd_{B}}, r_{\lhd_{B}})$  is a bimodule of  $(B,\rhd_B, \lhd_B)$, $(A,B,l_{\rhd_{A}}, r_{\rhd_{A}},$ $l_{\rhd_{B}}, r_{\rhd_{B}})$ and $(A,B,l_{\lhd_{A}}, r_{\lhd_{A}},l_{\lhd_{B}}, r_{\lhd_{B}})$ are matched pairs of associative algebras and they satisfy the following
conditions:
  \begin{align}
x \lhd_A \big(l_{\lhd_{B}}(a)(y)\big) + r_{\lhd_B}(r_{\lhd_A}(y)a)x &= x \lhd_A \big(l_{\rhd_{B}}(a)(y)\big) + r_{\lhd_B}(r_{\rhd_A}(y)a)x, \label{mpofDias1} \\
x \lhd_A \big(r_{\lhd_{B}}(a)(y)\big) + r_{\lhd_B}(l_{\lhd_A}(y)a)x &= x \lhd_A \big(r_{\rhd_{B}}(a)(y)\big) + r_{\lhd_B}(l_{\rhd_A}(y)a)x, \label{mpofDias2} \\
l_{\lhd_A}(x)(a \lhd_B b) &= l_{\lhd_A}(x)(a \rhd_B b),\label{mpofDias3} \\
a \lhd_B \big(l_{\lhd_{A}}(x)(b)\big) + r_{\lhd_A}(r_{\lhd_B}(b)x)a &= a \lhd_B \big(l_{\rhd_{A}}(x)(b)\big) + r_{\lhd_A}(r_{\rhd_B}(b)x)a, \label{mpofDias4} \\
a \lhd_B \big(r_{\lhd_{A}}(x)(b)\big) + r_{\lhd_A}(l_{\lhd_B}(b)x)a &= a \lhd_B \big(r_{\rhd_{A}}(x)(b)\big) + r_{\lhd_A}(l_{\rhd_B}(b)x)a, \label{mpofDias5} \\
l_{\lhd_B}(a)(x \lhd_A y) &= l_{\lhd_B}(a)(x \rhd_A y),\label{mpofDias6} \\
l_{\rhd_B}(a)(x \lhd_A y)&= \big(l_{\rhd_B}(a)x\big) \lhd_A y + l_{\lhd_B}(r_{\rhd_A}(x)a)y,\label{mpofDias7} \\
\big(r_{\rhd_B}(a)x\big) \lhd_A y+ l_{\lhd_{B}}(l_{\rhd_A}(x)a)y &= x \rhd_A \big(l_{\lhd_B}(a)y\big) +  r_{\rhd_{B}}(r_{\lhd_A}(y)a)x,\label{mpofDias8} \\ 
r_{\lhd_B}(a)(x \rhd_A y) &=  x \rhd_A \big(r_{\lhd_B}(a)y\big)  + r_{\rhd_{B}}(l_{\lhd_A}(y)a)x,\label{mpofDias9} \\
l_{\rhd_A}(x)(a \lhd_B b)&= \big(l_{\rhd_A}(x)a\big) \lhd_B b + l_{\lhd_A}(r_{\rhd_B}(a)x)b,\label{mpofDias10} \\
\big(r_{\rhd_A}(x)a\big) \lhd_B b+ l_{\lhd_{A}}(l_{\rhd_B}(a)x)b &= a \rhd_B \big(l_{\lhd_A}(x)b\big) +  r_{\rhd_{A}}(r_{\lhd_B}(b)x)a, \label{mpofDias11} \\ 
r_{\lhd_A}(x)(a \rhd_B b) &=  a \rhd_B \big(r_{\lhd_A}(x)b\big)  + r_{\rhd_{A}}(l_{\lhd_B}(b)x)a,\label{mpofDias12} \\
\big(l_{\lhd_B}(a)x\big) \rhd_A y+ l_{\rhd_{B}}(r_{\lhd_A}(x)a)y &=\big(l_{\rhd_B}(a)x\big) \rhd_A y+ l_{\rhd_{B}}(r_{\rhd_A}(x)a)y , \label{mpofDias13} \\ 
\big(r_{\lhd_B}(a)x\big) \rhd_A y+ l_{\rhd_{B}}(l_{\lhd_A}(x)a)y &=\big(r_{\rhd_B}(a)x\big) \rhd_A y+ l_{\rhd_{B}}(l_{\rhd_A}(x)a)y , \label{mpofDias14}  \\ 
r_{\rhd_B}(a)(x \lhd_A y) &= r_{\rhd_B}(a)(x \rhd_A y), \label{mpofDias15} \\
\big(l_{\lhd_A}(x)a\big) \rhd_B b+ l_{\rhd_{A}}(r_{\lhd_B}(a)x)b &=\big(l_{\rhd_A}(x)a\big) \rhd_B b+ l_{\rhd_{A}}(r_{\rhd_B}(a)x)b , \label{mpofDias16} \\ 
\big(r_{\lhd_A}(x)a\big) \rhd_B b+ l_{\rhd_{A}}(l_{\lhd_B}(a)x)b &=\big(r_{\rhd_A}(x)a\big) \rhd_B b+ l_{\rhd_{A}}(l_{\rhd_B}(a)x)b , \label{mpofDias17}  \\ 
r_{\rhd_A}(x)(a \lhd_B b) &= r_{\rhd_A}(x)(a \rhd_B b), \quad  \forall x,y \in A, a,b \in B. \label{mpofDias18} 
\end{align}
 Then  $(A,B,l_{\rhd_{A}}, r_{\rhd_{A}},l_{\lhd_{A}}, r_{\lhd_{A}},l_{\rhd_{B}}, r_{\rhd_{B}},l_{\lhd_{B}}, r_{\lhd_{B}})$  is called a   {\bf matched pair of dialgebras}.
 \end{defi}
 
 \begin{pro}\label{djdandm}
 Let $(A,\rhd_A, \lhd_A)$ and $(B,\rhd_B, \lhd_B)$  be two dialgebras. Suppose that there are linear maps  $l_{\rhd_{A}}, r_{\rhd_{A}},l_{\lhd_{A}}, r_{\lhd_{A}}: A \rightarrow \operatorname{End}\left(B\right)$  and  $l_{\rhd_{B}}, r_{\rhd_{B}},l_{\lhd_{B}}, r_{\lhd_{B}}: B \rightarrow \operatorname{End}\left(A\right)$. Define two binary operations  $\rhd_{A\oplus B},\lhd_{A\oplus B}:\left(A  \oplus B\right) \otimes\left(A  \oplus B\right) \rightarrow A \oplus B$  on  $A  \oplus B$  by
\begin{align}
(x+a) \rhd_{A\oplus B} (y+b) &=x \rhd_A y+l_{\rhd_B}(a) y+r_{\rhd_B}(b) x+l_{\rhd_A}(x) b+r_{\rhd_A}(y) a+a \rhd_B b,\\
(x+a) \lhd_{A\oplus B} (y+b) &=x \lhd_A y+l_{\lhd_B}(a) y+r_{\lhd_B}(b) x+l_{\lhd_A}(x) b+r_{\lhd_A}(y) a+a \lhd_B b, 
\end{align}
where  $x, y \in A, a, b \in B$. Then $(A  \oplus B , \rhd_{A\oplus B},\lhd_{A\oplus B})$ is a dialgebra  if and only if $(A,B,l_{\rhd_{A}}, r_{\rhd_{A}},l_{\lhd_{A}}, r_{\lhd_{A}},$ $l_{\rhd_{B}}, r_{\rhd_{B}},l_{\lhd_{B}}, r_{\lhd_{B}})$  is  a   matched pair of dialgebras. We denote this dialgebra by  $A  \bowtie_{l_{\rhd_{A}},  r_{\rhd_{A}},l_{\lhd_{A}}, r_{\lhd_{A}}}^{l_{\rhd_{B}}, r_{\rhd_{B}},l_{\lhd_{B}}, r_{\lhd_{B}}} B$  or simply  $A  \bowtie B$. On the other hand, every dialgebra with a decomposition into the direct sum of the underlying vector spaces of two diassociative subalgebras can be obtained in this way.
 \end{pro}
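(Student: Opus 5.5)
We verify the five defining identities of a dialgebra for $\rhd_{A\oplus B}$ and $\lhd_{A\oplus B}$ directly: associativity of each product and the three compatibilities \eqref{dias1}--\eqref{dias3}. Each identity is trilinear in its arguments $X,Y,Z\in A\oplus B$. We may therefore test it on homogeneous triples, with each of $X,Y,Z$ lying in $A$ or in $B$, giving $2^3=8$ types. In each type both sides are then split into their $A$-component and their $B$-component.

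First, associativity of $\rhd_{A\oplus B}$. The types $(A,A,A)$ and $(B,B,B)$ hold because $A$ and $B$ are dialgebras. The mixed types are handled by the standard theory for associative algebras. Associativity of $\rhd_{A\oplus B}$ is equivalent to two facts: $(B;l_{\rhd_A},r_{\rhd_A})$ and $(A;l_{\rhd_B},r_{\rhd_B})$ are bimodules, and $(A,B,l_{\rhd_A},r_{\rhd_A},l_{\rhd_B},r_{\rhd_B})$ is a matched pair of associative algebras, i.e.\ \eqref{mpofAss1}--\eqref{mpofAss6}. Concretely, types such as $(A,A,B)$ yield the bimodule axioms of $A$ on $B$ in the $B$-component and \eqref{mpofAss4}-type conditions in the $A$-component. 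The same holds for $\lhd_{A\oplus B}$.

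Next, the three compatibilities \eqref{dias1}--\eqref{dias3}. For each of the six mixed types, the $A$- and $B$-components produce the conditions \eqref{mpofDias1}--\eqref{mpofDias18}, together with the remaining bimodule axioms of Definition \ref{defrep}. By the symmetry $A\leftrightarrow B$ these conditions come in pairs. For example, \eqref{dias1} on $(x,y,b)$ gives, in the $B$-component, $r_{\lhd_A}(x\lhd_A y)=r_{\lhd_A}(x\rhd_A y)$, a bimodule axiom. Its $A$-component gives $r_{\lhd_B}(b)(x\lhd_A y)$ versus $x\lhd_A (r_{\rhd_B}(b)y)+r_{\lhd_B}(l_{\rhd_A}(y)b)x$, which is one of the listed conditions combined with the matched-pair identity \eqref{mpofAss4}. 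Likewise, \eqref{dias1} on $(x,a,y)$ yields \eqref{mpofDias1}, and on $(a,x,y)$ it yields \eqref{mpofDias6} and a bimodule axiom. I would tabulate all $3\times 6\times 2$ components and match each with exactly one condition in Definition \ref{mpofdiass}, the bimodule axioms, or the associative matched-pair axioms. Where needed, I would use an already-established associative matched-pair identity to rewrite it into the listed form. The main obstacle is this bookkeeping. Some listed conditions, such as \eqref{mpofDias7}--\eqref{mpofDias12}, appear only after combining a raw component with an associative matched-pair identity. So I would carry out the $(A,A,B)$-type checks for all three compatibilities first, then obtain the $(B,B,A)$ types by symmetry.

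Since every step is an equivalence of component identities, the same computation also proves the converse. If $A\oplus B$ is a dialgebra with the given products, each component identity holds, which yields the bimodule, matched-pair and dias conditions.

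Finally, for the last assertion, let $D=A\oplus B$ be a dialgebra with $A,B$ subalgebras. For $x\in A$ and $a\in B$, decompose $a\rhd x$, $x\rhd a$, $a\lhd x$, $x\lhd a$ into their $A$- and $B$-components. This defines the maps: for instance $x\rhd a=r_{\rhd_B}(a)x+l_{\rhd_A}(x)a$ and $a\rhd x=l_{\rhd_B}(a)x+r_{\rhd_A}(x)a$, and similarly for $\lhd$. The products on $D$ then have exactly the displayed form. By the equivalence already proved, these maps form a matched pair of dialgebras.
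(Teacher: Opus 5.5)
Your proposal is correct and follows essentially the paper's route. The paper simply defers to the componentwise verification of \cite[Theorem 2.1.4]{B2}, and splitting the five dialgebra identities over the homogeneous types of triples does produce exactly the bimodule axioms, the associative matched-pair axioms and \eqref{mpofDias1}--\eqref{mpofDias18}, with the converse and the decomposition statement following as you describe.

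There are two bookkeeping slips to fix when you tabulate:
\begin{enumerate}
\item The $B$-component of \eqref{dias1} on $(x,y,b)$ is $l_{\lhd_A}(x\lhd_A y)=l_{\lhd_A}(x)l_{\rhd_A}(y)$, i.e.\ $l_{\lhd_A}(x)l_{\lhd_A}(y)=l_{\lhd_A}(x)l_{\rhd_A}(y)$. The identity $r_{\lhd_A}(x\lhd_A y)=r_{\lhd_A}(x\rhd_A y)$ comes instead from the type $(a,x,y)$.
\item Conditions \eqref{mpofDias7}--\eqref{mpofDias12} are the raw components of \eqref{dias2}. It is \eqref{mpofDias1}--\eqref{mpofDias6} and \eqref{mpofDias13}--\eqref{mpofDias18}, coming from \eqref{dias1} and \eqref{dias3}, that must be combined with \eqref{mpofAss1}--\eqref{mpofAss6}.
\end{enumerate}
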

\begin{proof}
The proof is similar to the one of \cite[Theorem 2.1.4]{B2}.
\end{proof}

Next we introduce a notion of double construction of Frobenius dialgebras which is an analogue of the notion of double construction for Frobenius  algebras \cite{B2}.

\begin{defi} 
Let  $ (A, \rhd_A,\lhd_A)$ and $(A^{*},\rhd_{A^*},\lhd_{A^*})$ be two dialgebras. Suppose that there is  a dialgebra structure $\rhd_{A \oplus A^{*}}$, $\lhd_{A \oplus A^{*}}$ on the direct sum  $A \oplus A^{*}$  of the underlying vector spaces of  $A$  and  $A^{*}$  which contains both $ (A, \rhd_A,\lhd_A)$ and $(A^{*},\rhd_{A^*},\lhd_{A^*})$ as diassociative  subalgebras.  
If the bilinear form $\mathfrak{B}_{d}$ given by Eq.~\eqref{mtorbl}  is invariant  such that $(A \oplus A^{*}, \rhd_{A \oplus A^{*}},\lhd_{A \oplus A^{*}}, \mathfrak{B}_{d})$  is a skew-symmetric Frobenius dialgebra, then the triple $((A \oplus A^{*}, \rhd_{A \oplus A^{*}}, \lhd_{A \oplus A^{*}}, \mathfrak{B}_{d}), (A, \rhd_A,\lhd_A), (A^{*},\rhd_{A^*},\lhd_{A^*}))$ is called a {\bf double construction of Frobenius dialgebra} associated to $ (A, \rhd_A,\lhd_A)$ and $(A^{*},\rhd_{A^*},\lhd_{A^*})$. We denote it by  $(A \oplus A^{*},A,A^*, \mathfrak{B}_{d})$.
\end{defi}

\begin{ex} \label{doublefromdias}
In Proposition \ref{p:ppplgphpl},  $(A
\ltimes_{-R_{\lhd}^{*},  L_{\lhd}^{*}-L_{\rhd}^{*}, R_{\rhd}^{*}-R_{\lhd}^{*},-L_{\rhd}^{*}}
A^*,\mathfrak{B}_d)$ is a skew-symmetric Frobenius dialgebra  with  $\mathfrak{B}_d$  given by Eq.~\eqref{mtorbl}. Moreover, $(A
\ltimes_{-R_{\lhd}^{*},  L_{\lhd}^{*}-L_{\rhd}^{*}, R_{\rhd}^{*}-R_{\lhd}^{*},-L_{\rhd}^{*}}
A^*,A,A^*,\mathfrak{B}_d)$ is a double construction of Frobenius dialgebra associated to $ (A, \rhd_A,\lhd_A)$ and $(A^{*},\rhd_{A^*},\lhd_{A^*})$ where $(A^{*},\rhd_{A^*},\lhd_{A^*})$ is regarded as a  trivial dialgebra.
\end{ex}

\begin{pro}\label{mpandmtofdppa}
Let  $ (A, \rhd_A,\lhd_A)$ and $(A^{*},\rhd_{A^*},\lhd_{A^*})$ be two dialgebras.  Then   there is a double
construction of a Frobenius dialgebra associated to $(A, \rhd_A,\lhd_A)$ and $(A^{*},\rhd_{A^*},\lhd_{A^*})$ if and only if $(A,A^*,-R_{\lhd}^{*},  L_{\lhd}^{*}-L_{\rhd}^{*}, R_{\rhd}^{*}-R_{\lhd}^{*},-L_{\rhd}^{*},-\mathcal{R}_{\lhd}^{*},  \mathcal{L}_{\lhd}^{*}-\mathcal{L}_{\rhd}^{*}, \mathcal{R}_{\rhd}^{*}-\mathcal{R}_{\lhd}^{*},-\mathcal{L}_{\rhd}^{*})$  is a matched pair of dialgebras.
\end{pro}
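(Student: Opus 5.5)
We establish both directions by comparing the bimodule actions on $A\oplus A^*$ forced by invariance of $\mathfrak{B}_d$ with the coregular actions. Proposition \ref{djdandm} identifies matched pairs of dialgebras with dialgebra structures on $A\oplus A^*$ containing $A$ and $A^*$ as diassociative subalgebras.

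($\Leftarrow$) Suppose the stated tuple is a matched pair. Proposition \ref{djdandm} then gives a dialgebra $A\bowtie A^*$ containing $A$ and $A^*$ as subalgebras. Since $\mathfrak{B}_d$ is clearly nondegenerate and skew-symmetric, it remains to check the two invariance identities \eqref{diinv1}. These are trilinear, so they can be checked on homogeneous triples from $A$ and $A^*$. If all three arguments lie in $A$, or all three in $A^*$, both sides vanish, because $A$ and $A^*$ are subalgebras and are isotropic for $\mathfrak{B}_d$. For triples with two arguments in $A$ and one in $A^*$, the $A^*$-component of each product comes only from the coregular maps $-R_{\lhd}^*$, $L_{\lhd}^*-L_{\rhd}^*$, $R_{\rhd}^*-R_{\lhd}^*$, $-L_{\rhd}^*$. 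The needed identities are then exactly the computation already done in the proof of Proposition \ref{p:ppplgphpl}(b), which I will reuse verbatim. Triples with two arguments in $A^*$ and one in $A$ are handled by the same computation with the roles of $A$ and $A^*$ interchanged: the actions of $A^*$ on $A$ are the coregular maps $-\mathcal{R}_{\lhd}^*,\dots,-\mathcal{L}_{\rhd}^*$ of the dialgebra $A^*$, and $\mathfrak{B}_d$ changes only by a sign under the swap $A\leftrightarrow A^*$, which both sides of \eqref{diinv1} absorb.

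($\Rightarrow$) Suppose a double construction exists. By the last statement of Proposition \ref{djdandm}, the product on $A\oplus A^*$ is $A\bowtie A^*$ for some eight action maps, and the conclusion of that proposition already makes these data a matched pair. It therefore suffices to show the maps are the coregular ones. For $x,y\in A$ and $b^*\in A^*$, write $x\rhd b^*=l_{\rhd_A}(x)b^*+r_{\rhd_{A^*}}(b^*)x$ and pair it against elements of $A$ and $A^*$ using invariance. For instance,
\[
\mathfrak{B}_d(x\rhd b^*,z)=\mathfrak{B}_d(b^*,z\lhd x)
\]
by Lemma \ref{invxzyil}, which gives $l_{\rhd_A}(x)=-R_{\lhd}^*(x)$. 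Similarly, pairing against $c^*\in A^*$ yields
\[
\mathfrak{B}_d(x\rhd b^*,c^*)=\mathfrak{B}_d(x,b^*\rhd c^*-b^*\lhd c^*),
\]
which identifies $r_{\rhd_{A^*}}(b^*)=\mathcal{L}_{\lhd}^*(b^*)-\mathcal{L}_{\rhd}^*(b^*)$. Running through the remaining products in the same way, using Eqs. \eqref{diinv1}, \eqref{diinv3} and \eqref{diinv4}, identifies all eight maps with the stated coregular ones. This mirrors the argument of Proposition \ref{dualdj:x}.

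The main obstacle is sign bookkeeping. The four identities from \eqref{diinv1} and Lemma \ref{invxzyil}, combined with the sign convention \eqref{dualmap} and the skew-symmetry of $\mathfrak{B}_d$, must reproduce exactly the combinations $L_{\lhd}^*-L_{\rhd}^*$ and $R_{\rhd}^*-R_{\lhd}^*$. I would first fix the $A$-side identifications against Proposition \ref{dualdj:x}, then obtain the $A^*$-side by the $A\leftrightarrow A^*$ symmetry, checking the sign flip of $\mathfrak{B}_d$ carefully.
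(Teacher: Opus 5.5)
Your proposal is correct and follows essentially the same route as the paper. For ($\Leftarrow$) you build $A\bowtie A^*$ via Proposition \ref{djdandm} and check invariance of $\mathfrak{B}_d$ as in Proposition \ref{p:ppplgphpl}; for ($\Rightarrow$) you read off the eight action maps by pairing against $\mathfrak{B}_d$ using \eqref{diinv1}, \eqref{diinv3} and \eqref{diinv4}, exactly as the paper does. Splitting into homogeneous triples, with the $A\leftrightarrow A^*$ swap changing $\mathfrak{B}_d$ only by a sign, is a sound way to make precise the invariance check that the paper leaves as straightforward.
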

\begin{proof}If  $(A,A^*,-R_{\lhd}^{*},  L_{\lhd}^{*}-L_{\rhd}^{*}, R_{\rhd}^{*}-R_{\lhd}^{*},-L_{\rhd}^{*},-\mathcal{R}_{\lhd}^{*},  \mathcal{L}_{\lhd}^{*}-\mathcal{L}_{\rhd}^{*}, \mathcal{R}_{\rhd}^{*}-\mathcal{R}_{\lhd}^{*},-\mathcal{L}_{\rhd}^{*})$  is a matched pair of dialgebras, then there is a dialgebra  $A \bowtie  A^{*}$  obtained from Proposition \ref{djdandm}, which includes  $ (A, \rhd_A,\lhd_A)$ and $(A^{*},\rhd_{A^*},\lhd_{A^*})$  as  diassociative subalgebras. Moreover, it
is straightforward to show that the skew-symmetric  bilinear form $\mathfrak{B}_d$ given by Eq.~\eqref{mtorbl} is invariant on $A \bowtie  A^{*}$, following a proof similar to that of Proposition \ref{p:ppplgphpl}.

Conversely, if $((A \oplus A^{*}, \rhd_{A \oplus A^{*}}, \lhd_{A \oplus A^{*}}, \mathfrak{B}_{d}), (A, \rhd_A,\lhd_A), (A^{*},\rhd_{A^*},\lhd_{A^*}))$  is a double construction of Frobenius dialgebra associated to $ (A, \rhd_A,\lhd_A)$ and $(A^{*},\rhd_{A^*},\lhd_{A^*})$, then we set 
\begin{align*}
x \rhd a^{*}&=l_{\rhd_{A}}(x) a^{*}+r_{\rhd_{A^{*}}}(a^{*}) x, && a^{*}\rhd x=l_{\rhd_{A^{*}}}(a^{*}) x+r_{\rhd_A}(x) a^{*}, \\
x \lhd a^{*}&=l_{\lhd_{A}}(x) a^{*}+r_{\lhd_{A^{*}}}(a^{*}) x, &&  a^{*} \lhd x =l_{\lhd_{A^{*}}}(a^{*}) x+r_{\lhd_A}(x) a^{*},\; \forall x \in A, a^{*} \in A^{*}.
\end{align*}
By Proposition \ref{djdandm},  $(A, A^{*}, l_{\rhd_{A}}, r_{\rhd_{A}},l_{\lhd_{A}}, r_{\lhd_{A}}, l_{\rhd_{A^*}}, r_{\rhd_{A^*}},l_{\lhd_{A^*}}, r_{\lhd_{A^*}})$  is a matched pair of dialgebras. Note that
\begin{align*}
\left\langle  y,l_{\rhd_{A}}(x) a^{*}\right\rangle &= \mathfrak{B}_d( l_{\rhd_{A}}(x) a^{*}, y) =\mathfrak{B}_d( x \rhd a^{*} - r_{\rhd_{A^{*}}}(a^{*}) x, y)=\mathfrak{B}_d( x \rhd a^{*}, y) \\
&= \mathfrak{B}_d(  a^{*},   y  \lhd_{A} x) =\left\langle    y,  -R_{\lhd}^{*}(x)(a^{*}) \right\rangle.\\
\left\langle  y,r_{\rhd_{A}}(x) a^{*}\right\rangle &= \mathfrak{B}_d( r_{\rhd_{A}}(x) a^{*}, y) =\mathfrak{B}_d(a^{*} \rhd x - l_{\rhd_{A^{*}}}(a^{*}) x, y)=\mathfrak{B}_d( a^{*} \rhd x , y) \\
&= \mathfrak{B}_d( a^{*}  , x \rhd_{A}  y - x \lhd_{A} y)   =  \left\langle  y, ( L_{\lhd}^*-L_{\rhd}^*)(x)a^{*}\right\rangle.\\
\left\langle  y,l_{\lhd_{A}}(x) a^{*}\right\rangle &= \mathfrak{B}_d( l_{\lhd_{A}}(x) a^{*}, y) =\mathfrak{B}_d( x \lhd a^{*} - r_{\lhd_{A^{*}}}(a^{*}) x, y)=\mathfrak{B}_d( x \lhd a^{*}, y) \\
&= \mathfrak{B}_d(   a^{*}, y\lhd x - y \rhd x) =  \left\langle  y, ( R_{\rhd}^*-R_{\lhd}^*)(x)a^{*}\right\rangle.\\
\left\langle  y,r_{\lhd_{A}}(x) a^{*}\right\rangle &= \mathfrak{B}_d( r_{\lhd_{A}}(x) a^{*}, y) =\mathfrak{B}_d(a^{*} \lhd x- l_{\lhd_{A^{*}}}(a^{*}) x, y)=\mathfrak{B}_d( a^{*} \lhd x , y) \\
&=\mathfrak{B}_d( a^{*} ,x\rhd   y) =  \left\langle  y,-  L_{\rhd}^* (x)a^{*}\right\rangle,
\end{align*}
where $x, y \in A$, $a^{*}  \in A^{*}$. Hence  $l_{\rhd_{A}}=-R_{\lhd}^{*},\; r_{\rhd_{A}}=L_{\lhd}^{*}-L_{\rhd}^{*},\; l_{\lhd_{A}}=R_{\rhd}^{*}-R_{\lhd}^{*},\; r_{\lhd_{A}}=-L_{\rhd}^{*}$. Similarly, we have
 $l_{\rhd_{A^{*}}}=-\mathfrak{R}_{\lhd}^{*},\; r_{\rhd_{A^{*}}}=\mathfrak{L}_{\lhd}^{*}-\mathfrak{L}_{\rhd}^{*},\; l_{\lhd_{A^{*}}}=\mathfrak{R}_{\rhd}^{*}-\mathfrak{R}_{\lhd}^{*},\; r_{\lhd_{A^{*}}}=-\mathfrak{L}_{\rhd}^{*}$. Thus $(A,A^*,-R_{\lhd}^{*},  L_{\lhd}^{*}-L_{\rhd}^{*}, R_{\rhd}^{*}-R_{\lhd}^{*},-L_{\rhd}^{*},-\mathfrak{R}_{\lhd}^{*},  \mathfrak{L}_{\lhd}^{*}-\mathfrak{L}_{\rhd}^{*}, \mathfrak{R}_{\rhd}^{*}-\mathfrak{R}_{\lhd}^{*},-\mathfrak{L}_{\rhd}^{*})$  is a matched pair of dialgebras.
\end{proof}

Now we give the definitions of a diassociative coalgebra and a diassociative bialgebra.
   

 Recall that an {\bf associative coalgebra} is pair $(A,\delta)$ that  $A$ is a vector space  and $\delta: A \to A \otimes A$ satisfying the coassociativity condition.
\begin{align}
(\id \otimes \delta)\delta = (\delta  \otimes \id )\delta.
\end{align}
 
\begin{defi} \label{defco1}
A   \textbf{diassociative coalgebra} (or {\bf co-dialgebra}) is a triple $(A, \delta_{\rhd},\delta_{\lhd})$ where  $(A, \delta_{\rhd})$ and $(A, \delta_{\lhd})$ are associative coalgebras satisfying  the following conditions.
\begin{align}
(\id \otimes \delta_{\lhd})\delta_{\lhd}  &= (\id \otimes \delta_{\rhd})\delta_{\lhd} ,\label{codiass1} \\
( \delta_{\rhd} \otimes \id)\delta_{\lhd}  &= (\id \otimes \delta_{\lhd})\delta_{\rhd}, \label{codiass2}\\
( \delta_{\lhd} \otimes \id)\delta_{\rhd}   &= ( \delta_{\rhd} \otimes \id)\delta_{\rhd}.\label{codiass3}
\end{align}
\end{defi}

\begin{pro}
Let $A$ be a finite-dimensional vector space and $\delta_{\rhd},\delta_{\lhd}: A \to A \otimes A$ be linear maps. If  the binary operations $\rhd_{A^*}, \lhd_{A^*}: A^* \otimes A^* \to A^*$ are the linear duals of $\delta_{\rhd},\delta_{\lhd}$ respectively, that is, $\rhd_{A^*}$ and $\lhd_{A^*}$ are respectively defined by 
\begin{align*}
\langle a^* \rhd_{A^{*}} b^*, x \rangle := \langle a^* \otimes b^*, \delta_{\rhd}(x) \rangle, \; \langle a^* \lhd_{A^{*}} b^*, x \rangle := \langle a^* \otimes b^*, \delta_{\lhd}(x) \rangle,\;\forall x \in A, a^*,b^* \in A^{*}.
\end{align*}
Then $(A,\delta_{\rhd},\delta_{\lhd})$ is a  co-dialgebra if and only if $(A^*,\rhd_{A^*},\lhd_{A^*})$ is a dialgebra.
\end{pro}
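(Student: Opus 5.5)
The plan is to dualize each axiom separately. Because $A$ is finite-dimensional, the pairing $\langle\ ,\ \rangle$ extends to a nondegenerate pairing between $(A^*)^{\otimes 3}$ and $A^{\otimes 3}$. An identity between two maps $A\to A^{\otimes 3}$ therefore holds if and only if both sides agree after pairing with every $a^*\otimes b^*\otimes c^*$. We translate each composite $(\id\otimes\delta_{\bullet})\delta_{\circ}$ and $(\delta_{\bullet}\otimes\id)\delta_{\circ}$, for $\bullet,\circ\in\{\rhd,\lhd\}$, into an iterated product on $A^*$.

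\emph{Key computation.} For $x\in A$ and $a^*,b^*,c^*\in A^*$, the definition of $\rhd_{A^*},\lhd_{A^*}$ gives
\begin{align*}
\langle a^*\otimes b^*\otimes c^*, (\id\otimes\delta_{\bullet})\delta_{\circ}(x)\rangle &= \langle a^*\circ_{A^*}(b^*\bullet_{A^*}c^*), x\rangle,\\
\langle a^*\otimes b^*\otimes c^*, (\delta_{\bullet}\otimes\id)\delta_{\circ}(x)\rangle &= \langle (a^*\bullet_{A^*}b^*)\circ_{A^*}c^*, x\rangle .
\end{align*}
To see the first line, write $\delta_{\circ}(x)=\sum x_{(1)}\otimes x_{(2)}$. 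Pairing $a^*$ with $x_{(1)}$ and $b^*\otimes c^*$ with $\delta_\bullet(x_{(2)})$ gives $\sum\langle a^*,x_{(1)}\rangle\langle b^*\bullet_{A^*} c^*,x_{(2)}\rangle=\langle a^*\otimes(b^*\bullet_{A^*} c^*),\delta_\circ(x)\rangle$. The second line is the symmetric argument.

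\emph{Matching the axioms.} Applying the key computation with $\bullet=\circ=\rhd$ and with $\bullet=\circ=\lhd$, coassociativity of $\delta_\rhd$ and $\delta_\lhd$ is equivalent to associativity of $\rhd_{A^*}$ and $\lhd_{A^*}$. The remaining three axioms match one to one:
\begin{itemize}
\item Eq.~\eqref{codiass1} becomes $a^*\lhd(b^*\lhd c^*)=a^*\lhd(b^*\rhd c^*)$. By associativity of $\lhd$ this is Eq.~\eqref{dias1}.
\item Eq.~\eqref{codiass2} becomes $(a^*\rhd b^*)\lhd c^*=a^*\rhd(b^*\lhd c^*)$, which is Eq.~\eqref{dias2}.
\item Eq.~\eqref{codiass3} becomes $(a^*\lhd b^*)\rhd c^*=(a^*\rhd b^*)\rhd c^*$. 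By associativity of $\rhd$ this is Eq.~\eqref{dias3}.
\end{itemize}
Nondegeneracy of the pairing turns each of these into an equivalence, which proves both directions at once.

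The only delicate step is the bookkeeping of which tensor slot each factor occupies in the key computation. Once that is right, the rest is routine. Finite-dimensionality is used only for the identification $(A\otimes A)^*\cong A^*\otimes A^*$ and for nondegeneracy on triple tensors.
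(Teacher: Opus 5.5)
Your proof is correct. The tensor-slot bookkeeping in your key computation is right, and the three remaining axioms match \eqref{dias1}--\eqref{dias3} exactly as you say, using the already-established associativity of $\lhd_{A^*}$ and $\rhd_{A^*}$ for the first and third. The paper states this proposition without proof, as the routine dualization you carry out. One minor quibble concerns your closing remark: $\delta_{\rhd},\delta_{\lhd}$ are given, and $A^*\otimes A^*\otimes A^*$ separates points of $A\otimes A\otimes A$ in any dimension, so finite-dimensionality is not actually needed for this equivalence. It only matters when one wants to recover $\delta$ from a product on $A^*$.
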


 For a vector space $A$, let  $\tau: A \otimes A \rightarrow A \otimes A$  be the exchange operator defined as  
$$\tau(x \otimes y)=y \otimes x,\quad \forall x, y \in A.$$ 

Now we give the definition of a diassociative bialgebra as an analogue of a Lie bialgebra of Drinfeld.

   \begin{defi} \label{defbi1}
A   \textbf{diassociative bialgebra } (or {\bf bi-dialgebra})  is a quintuple $(A,\rhd,\lhd, \delta_{\rhd},$ $\delta_{\lhd})$ where $(A,\rhd,\lhd)$ is a dialgebra and $(A,\delta_{\rhd},\delta_{\lhd})$ is a co-dialgebra satisfying the following compatibility conditions.
\begin{align}
 &  (\operatorname{id} \otimes L_{\lhd}(x)  ) \delta_{\lhd}(y) = ( R_{\rhd}(y) \otimes \operatorname{id}) \delta_{\rhd}(x),\label{bidicd1}\\
 &( L_{\lhd}(x) \otimes \operatorname{id}  ) \delta_{\rhd}(y)=
 (\operatorname{id} \otimes L_{\lhd}(y)  ) \tau \delta_{\rhd}(x),\label{bidicd2}\\ 
 & ( R_{\rhd}(x) \otimes \operatorname{id}  ) \tau\delta_{\lhd}(y)= (\operatorname{id} \otimes R_{\rhd}(y)  )  \delta_{\lhd}(x),\label{bidicd3}\\
&\delta_{\rhd}(x \rhd y)=  (\operatorname{id}  \otimes  L_{\rhd}(x)  ) \delta_{\rhd}(y) -( R_{\star}(y)  \otimes    \operatorname{id}) \delta_{\star}(x),\label{bidicd4}\\
&\delta_{\rhd}(x \lhd y)=  (\operatorname{id} \otimes L_{\lhd}(x)  ) \delta_{\star}(y)+( R_{\lhd}(y) \otimes \operatorname{id}) \delta_{\rhd}(x),\label{bidicd5}\\
&\delta_{\lhd}(x \rhd y)=  (\operatorname{id} \otimes L_{\star}(x)  ) \delta_{\lhd}(y)+( R_{\rhd}(y) \otimes \operatorname{id}) \delta_{\lhd}(x),\label{bidicd6}\\
 &\delta_{\lhd}(x \lhd y)=   ( R_{\lhd}(y)  \otimes    \operatorname{id}) \delta_{\lhd}(x)-(\operatorname{id}  \otimes  L_{\star}(x)  ) \delta_{\star}(y),\label{bidicd7}\\
&( L_{\star}(x) \otimes \operatorname{id}  ) \delta_{\rhd}(y)+(\operatorname{id} \otimes L_{\lhd}(y)  ) \tau\delta_{\lhd}(x) = 
( R_{\star}(y) \otimes \operatorname{id}  ) \tau\delta_{\star}(x)+ (\operatorname{id} \otimes R_{\lhd}(x)  ) \delta_{\rhd}(y),\label{bidicd8}\\
 &( R_{\lhd}(x) \otimes \operatorname{id}  ) \tau\delta_{\lhd}(y)+(\operatorname{id} \otimes R_{\rhd}(y)  ) \delta_{\star}(x) = 
 ( L_{\star}(y) \otimes \operatorname{id}  )  \delta_{\star}(x)+ (\operatorname{id} \otimes L_{\rhd}(x)  ) \tau\delta_{\lhd}(y),\label{bidicd9}
\end{align}
where $x \star y = x \rhd y -  x \lhd y$,  $\delta_{\star} = \delta_{\rhd}- \delta_{\lhd}$  for all  $x,y \in A$.

\end{defi}

\begin{pro}\label{mpdppba}
Let  $ (A, \rhd_A,\lhd_A)$ and $(A^{*},\rhd_{A^*},\lhd_{A^*})$ be two dialgebras. Let  linear maps  $\delta_{\rhd},\delta_{\lhd}: A \rightarrow A \otimes A $ be the linear duals of  $\rhd_{A^{*}}$ and  $\lhd_{A^{*}}$ respectively. Then $(A,A^*,-R_{\lhd}^{*},  L_{\lhd}^{*}-L_{\rhd}^{*}, R_{\rhd}^{*}-R_{\lhd}^{*},-L_{\rhd}^{*},-\mathcal{R}_{\lhd}^{*},  \mathcal{L}_{\lhd}^{*}-\mathcal{L}_{\rhd}^{*}, \mathcal{R}_{\rhd}^{*}-\mathcal{R}_{\lhd}^{*},-\mathcal{L}_{\rhd}^{*})$  is a matched pair of dialgebras if and only if  $(A,\rhd_{A},\lhd_{A}, \delta_{\rhd},\delta_{\lhd})$  is a bi-dialgebra. 
\end{pro}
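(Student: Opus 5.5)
The plan is to unpack Definition \ref{mpofdiass} for the specific representations involved and dualize each condition. Since $\delta_{\rhd},\delta_{\lhd}$ are the linear duals of $\rhd_{A^*},\lhd_{A^*}$, the requirement that $(A^*,\rhd_{A^*},\lhd_{A^*})$ be a dialgebra is exactly the requirement that $(A,\delta_{\rhd},\delta_{\lhd})$ be a co-dialgebra. Both sides of the equivalence presuppose this, so what remains is to match the remaining matched-pair axioms with the compatibility conditions \eqref{bidicd1}--\eqref{bidicd9}.

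First, by Lemma \ref{dualbimodofdias} applied to the regular bimodules of $A$ and of $A^*$, the two coregular quintuples are automatically bimodules of $(A,\rhd_A,\lhd_A)$ and $(A^*,\rhd_{A^*},\lhd_{A^*})$ respectively. Those hypotheses of Definition \ref{mpofdiass} are therefore free.

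Next, I would pair each remaining identity with test vectors. There are:
\begin{itemize}
\item the six identities \eqref{mpofAss1}--\eqref{mpofAss6} for the $\rhd$-matched pair $(A,A^*,-R_{\lhd}^*,L_{\lhd}^*-L_{\rhd}^*,\dots)$;
\item the six identities \eqref{mpofAss1}--\eqref{mpofAss6} for the $\lhd$-matched pair;
\item the eighteen identities \eqref{mpofDias1}--\eqref{mpofDias18}.
\end{itemize}
An identity with values in $A^*$, in variables $x\in A$ and $a^*,b^*\in A^*$, is paired with $y\in A$. An identity with values in $A$, in $x,y\in A$ and $a^*\in A^*$, is paired with $b^*\in A^*$. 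In each case, use $\langle \varphi^*(x)u^*,v\rangle=-\langle u^*,\varphi(x)v\rangle$ and $\langle a^*\rhd_{A^*}b^*,x\rangle=\langle a^*\otimes b^*,\delta_{\rhd}(x)\rangle$ to move all operations onto $A$. Each identity then becomes $\langle a^*\otimes b^*, \Phi(x,y)\rangle=0$, where $\Phi$ is a tensor expression built from $\delta_{\rhd},\delta_{\lhd}$, left/right multiplications in $A$, and $\tau$.

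I expect the conditions that are linear in $\delta$ and involve a product in $A$ of the form $\delta(x\cdot y)$ to yield \eqref{bidicd4}--\eqref{bidicd7}. For example, \eqref{mpofAss3}/\eqref{mpofAss4} for the $\rhd$-pair should give \eqref{bidicd4}, and the $\lhd$-pair analogues should give \eqref{bidicd7}. The mixed conditions \eqref{mpofDias7}, \eqref{mpofDias9}, \eqref{mpofDias10}, \eqref{mpofDias12} should give \eqref{bidicd5} and \eqref{bidicd6}. The conditions \eqref{mpofAss5}/\eqref{mpofAss6} and \eqref{mpofDias1}, \eqref{mpofDias2}, \eqref{mpofDias8}, \eqref{mpofDias11}, \eqref{mpofDias13}, \eqref{mpofDias14}, \eqref{mpofDias16}, \eqref{mpofDias17} should reduce to \eqref{bidicd1}--\eqref{bidicd3}, \eqref{bidicd8} and \eqref{bidicd9}.

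As in the proof of the analogous result for antisymmetric infinitesimal bialgebras in \cite{B2}, many of these conditions turn out to be equivalent to one another. The $A$-valued and $A^*$-valued versions of the same identity coincide after dualization, because the roles of $A$ and $A^*$ are symmetric under $\langle\ ,\ \rangle$. Some of the conditions become trivial, namely \eqref{mpofDias3}, \eqref{mpofDias6}, \eqref{mpofDias15} and \eqref{mpofDias18}. These should follow from the dialgebra axioms \eqref{dias1}--\eqref{dias3} for $A$ and $A^*$ combined with the definition of the coregular bimodule; for instance, $-L_{\rhd}^*(x\lhd y)=-L_{\rhd}^*(x\rhd y)$ by \eqref{dias3}. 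The key organizational step is to reduce the full list to the nine independent conditions, and I would record the correspondence in a table.

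The main obstacle is bookkeeping. One must track the signs coming from the dual maps and the combinations $L_{\lhd}^*-L_{\rhd}^*$ and $R_{\rhd}^*-R_{\lhd}^*$, which generate $\star$ and $\delta_\star$, as well as the placement of $\tau$. One must also verify that every matched-pair condition really does collapse to one of \eqref{bidicd1}--\eqref{bidicd9} or to an identity already implied by the (co)dialgebra axioms. For the hardest mixed conditions, such as \eqref{mpofDias8} and \eqref{mpofDias11}, I would first expand both sides completely in terms of $\delta_{\rhd},\delta_{\lhd}$. I would then use the already-established equivalences \eqref{bidicd1}--\eqref{bidicd3} to cancel terms before identifying the result with \eqref{bidicd8} or \eqref{bidicd9}.
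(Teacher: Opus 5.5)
Your overall route is the paper's: pair each matched-pair identity with a test vector, transfer everything to $A$ by duality, and sort the resulting tensor identities into \eqref{bidicd1}--\eqref{bidicd9}. However, one concrete claim in your plan is false, and it leaves four conditions unproved in the direction ``bi-dialgebra $\Rightarrow$ matched pair''. You say that \eqref{mpofDias3}, \eqref{mpofDias6}, \eqref{mpofDias15}, \eqref{mpofDias18} follow from \eqref{dias1}--\eqref{dias3} and the form of the coregular bimodule, and you cite $L_{\rhd}^{*}(x\lhd y)=L_{\rhd}^{*}(x\rhd y)$. That identity is a \emph{bimodule} axiom, with the product in the acting algebra, and Lemma~\ref{dualbimodofdias} already covers it. In the four matched-pair conditions the product sits in the \emph{module}. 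With the given maps they read $R_{\star}^{*}(x)(a^{*}\star_{A^{*}}b^{*})=0$, $\mathcal{R}_{\star}^{*}(a^{*})(x\star y)=0$, $\mathcal{L}_{\star}^{*}(a^{*})(x\star y)=0$ and $L_{\star}^{*}(x)(a^{*}\star_{A^{*}}b^{*})=0$. After pairing, each one is equivalent to
\[
\delta_{\star}(x\star y)=0,\qquad \forall x,y\in A .
\]
This is a genuine compatibility condition between product and coproduct, not a consequence of the (co)dialgebra axioms. Take the dialgebra of Example~\ref{dwdpp}~(2) with basis $e_1,e_2$, $\varphi(e_1)=0$, $\varphi(e_2)=1$. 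The maps $\delta_{\lhd}=0$, $\delta_{\rhd}(e_1)=e_2\otimes e_2$, $\delta_{\rhd}(e_2)=0$ form a co-dialgebra. Yet $e_2\star e_1=e_1$ and $\delta_{\star}(e_1)=e_2\otimes e_2\neq 0$.

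The repair is short. Expand $\delta_{\star}(x\star y)=\delta_{\rhd}(x\rhd y)-\delta_{\rhd}(x\lhd y)-\delta_{\lhd}(x\rhd y)+\delta_{\lhd}(x\lhd y)$ using \eqref{bidicd4}--\eqref{bidicd7}. Then apply $L_{\rhd}-L_{\star}=L_{\lhd}$, $R_{\lhd}+R_{\star}=R_{\rhd}$ and $\delta_{\lhd}+\delta_{\star}=\delta_{\rhd}$, and everything collapses to
\[
\delta_{\star}(x\star y)=(\id\otimes L_{\lhd}(x))\delta_{\lhd}(y)-(R_{\rhd}(y)\otimes\id)\delta_{\rhd}(x),
\]
which vanishes by \eqref{bidicd1}. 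The paper likewise derives these four conditions from the compatibility conditions rather than from the axioms, noting \eqref{mpofDias3}$\Leftrightarrow$\eqref{mpofDias18} and \eqref{mpofDias6}$\Leftrightarrow$\eqref{mpofDias15}.

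Separately, several of your predicted pairings will shift once computed, though this is harmless bookkeeping. For the $\rhd$-pair, \eqref{mpofAss1} and \eqref{mpofAss3} dualize to \eqref{bidicd5} and \eqref{bidicd6}. Conditions \eqref{mpofDias7} and \eqref{mpofDias10} give \eqref{bidicd7}, and \eqref{mpofDias9} and \eqref{mpofDias12} give \eqref{bidicd4}.
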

\begin{proof}Define  binary operation $\star:  A \otimes A \rightarrow A$ and linear map $ \delta_{\star}: A \rightarrow A \otimes A $ by Eq.~\eqref{jianfayuns}. It is straightforward that  $(A^{*};-R_{\lhd}^{*},  -L_{\star}^{*}, R_{\star}^{*},-L_{\rhd}^{*})$  and $(A;-\mathcal{R}_{\lhd}^{*}, - \mathcal{L}_{\star}^{*}, \mathcal{R}_{\star}^{*},-\mathcal{L}_{\rhd}^{*})$ are bimodules of dialgebras $(A,\rhd_A, \lhd_A)$ and $(A^{*},\rhd_{A^{*}}, \lhd_{A^{*}})$ respectively.

We rewrite Eqs.~(\ref{mpofAss1})--(\ref{mpofAss6}) in the cases that
$l_{A}=-R_{\triangleleft}^{*}$, $r_{A}=-L_{\star}^{*}$, $l_{B}=-\mathcal{R}_{\triangleleft}^{*}$, $r_{B}=-\mathcal{L}_{\star}^{*}
$
and
$
l_{A}=R_{\star}^{*}$, $r_{A}=-L_{\triangleright}^{*}$, $l_{B}=\mathcal{R}_{\star}^{*}$, $r_{B}=-\mathcal{L}_{\triangleright}^{*}
$ and label the corresponding equations as Eqs.~(\ref{mpofAss1}$^\prime$)--(\ref{mpofAss6}$^\prime$) and Eqs.~(\ref{mpofAss1}$^{\prime\prime}$)--(\ref{mpofAss6}$^{\prime\prime}$) respectively.  Let $x,y \in A$, $a^*,b^* \in A^*$, we have
\begin{align*}
&\left \langle -R_{\lhd}^*(x)(a^{*} \rhd_{A^{*}} b^{*}) - R_{\lhd}^*\left( \mathcal{L}_{\star}^*(a^{*}) x\right) b^{*}+\left( R_{\lhd}^*(x) a^{*}\right) \rhd_{A^{*}} b^{*} , y \right \rangle\\
&=\left \langle   a^{*} \rhd_{A^{*}} b^{*}, y \lhd_A x \right \rangle + \left \langle a^{*}  \star_{A^*} L_{\lhd}^*(y)b^{*},    x  \right \rangle  +\left \langle\left( R_{\lhd}^*(x) a^{*}\right) \otimes b^{*} , \delta_{\rhd}(y) \right \rangle\\
&=\left \langle   a^{*} \otimes b^{*}, \delta_{\rhd}(y \lhd_A x)-(\id \otimes L_{\lhd} (y))\delta_{\star}(x)-(R_{\lhd}(x) \otimes \id) \delta_{\rhd}(y)\right \rangle,\\
&\left \langle -L_{\star}^*(x)(a^{*} \rhd_{A^{*}} b^{*}) - L_{\star}^*\left( \mathcal{R}_{\lhd}^*(b^{*}) x\right) a^{*}+a^{*} \rhd_{A^{*}}\left( L_{\star}^*(x) b^{*}\right), y \right \rangle\\
&=\left \langle   a^{*} \otimes b^{*}, \delta_{\rhd}(x \star_A y)-(R_{\star}(x) \otimes \id) \delta_{\lhd}(y)-(\id \otimes L_{\star} (x))\delta_{\rhd}(y)\right \rangle,\\
&\left \langle -\mathcal{R}_{\lhd}^*(a^{*})(x \rhd_A y) - \mathcal{R}_{\lhd}^*\left( L_{\star}^*(x) a^{*}\right) y+\left( \mathcal{R}_{\lhd}^*(a^{*}) x\right) \rhd_A y, b\right \rangle\\
&=\left \langle   a^{*} \otimes b^{*}, \delta_{\lhd}(x \rhd_A y)-(\id \otimes L_{\star} (x))\delta_{\lhd}(y)-(R_{\rhd}(y) \otimes \id) \delta_{\lhd}(x)\right \rangle,\\
&\left \langle -\mathcal{L}_{\star}^*(a^{*})(x \rhd_A y) - \mathcal{L}_{\star}^*\left( R_{\lhd}^*(y) a^{*}\right) x +x \rhd_A\left( \mathcal{L}_{\star}^*(a^{*}) y\right), b\right \rangle\\
&=\left \langle   a^{*} \otimes b^{*}, \delta_{\star}(x \rhd_A y)-(R_{\lhd}(y) \otimes \id) \delta_{\star}(x)-(\id \otimes L_{\rhd} (x))\delta_{\star}(y)\right \rangle,\\
&\left \langle  R_{\lhd}^*\left( \mathcal{R}_{\lhd}^*(a^{*}) x\right) b^{*}-\left( L_{\star}^*(x) a^{*}\right) \rhd_{A^{*}} b^{*} -  L_{\star}^*\left( \mathcal{L}_{\star}^*(b^{*}) x\right) a^{*} +a^{*} \rhd_{A^{*}}\left( R_{\lhd}^*(x) b^{*}\right) , y\right \rangle\\
&=\left \langle   a^{*} \!\otimes\! b^{*}, (L_{\star}(x) \!\otimes\! \id) \delta_{\rhd}(y)+( \id \!\otimes\! L_{\lhd}(y) ) \tau\delta_{\lhd}(x)-(R_{\star}(y) \!\otimes\! \id) \tau\delta_{\star}(x)-( \id \!\otimes\! R_{\lhd}(x) )  \delta_{\rhd}(y)\right \rangle,\\
&\left \langle   \mathcal{R}_{\lhd}^*\left( R_{\lhd}^*(x) a^{*}\right) y-\left( \mathcal{L}_{\star}^*(a^{*}) x\right) \rhd_A y -  \mathcal{L}_{\star}^*\left( L_{\star}^*(y) a^{*}\right) x + x \rhd_A\left( \mathcal{R}_{\lhd}^*(a^{*}) y\right) , b\right \rangle\\
&=\left \langle   a^{*} \!\otimes\! b^{*}, (R_{\lhd}(x) \!\otimes\! \id) \tau\delta_{\lhd}(y)+( \id \!\otimes\! R_{\rhd}(y) )  \delta_{\star}(x)-(L_{\star}(y) \!\otimes\! \id)  \delta_{\star}(x)-( \id \!\otimes\! L_{\rhd}(x) )  \tau\delta_{\lhd}(y)\right \rangle.
\end{align*}
Futhermore, we also have
\begin{align*}
&\left \langle R_{\star}^*(x)(a^{*} \lhd_{A^{*}} b^{*})  +R_{\star}^*\left( \mathcal{L}_{\rhd}^*(a^{*}) x\right) b^{*}-\left(R_{\star}^*(x) a^{*}\right) \lhd_{A^{*}} b^{*}, y \right \rangle\\
&=-\left \langle  a^{*} \otimes b^{*}  ,\delta_{\lhd}(y \star x) \right \rangle  -\left \langle     a^{*}  \rhd_{A^{*}} L_{\star}^*(y) b^{*}, x \right \rangle +\left \langle  a^{*}  \otimes b^{*}, (R_{\star} (x)\otimes  \id) \delta_{\lhd}(y)\right \rangle\\
&= \left \langle  a^{*} \otimes b^{*}  ,-\delta_{\lhd}(y \star x)+(\id \otimes L_{\star} (y))\delta_{\rhd}(x)  +(R_{\star} (x)\otimes  \id) \delta_{\lhd}(y)\right \rangle,\\
&\left \langle -L_{\rhd}^*(x)(a^{*} \lhd_{A^{*}} b^{*}) +L_{\rhd}^*\left(\mathcal{R}_{\star}^*(b^{*}) x\right) a^{*}+a^{*} \lhd_{A^{*}}\left( L_{\rhd}^*(x) b^{*}\right), y \right \rangle\\
&= \left \langle  a^{*} \otimes b^{*}  , \delta_{\lhd}(x \rhd y) +(R_{\rhd} (y)\otimes  \id) \delta_{\star}(x)-(\id \otimes L_{\rhd} (x))\delta_{\lhd}(y) \right \rangle,\\
&\left \langle \mathcal{R}_{\star}^*(a^{*})(x \lhd_A y) +\mathcal{R}_{\star}^*\left(L_{\rhd}^*(x) a^{*}\right) y-\left(\mathcal{R}_{\star}^*(a^{*}) x\right) \lhd_A y, b \right \rangle\\
&= \left \langle  a^{*} \otimes b^{*}  , -\tau\delta_{\star}(x \lhd y) +(L_{\rhd} (x) \otimes \id )\tau\delta_{\star}(y) +( \id \otimes  R_{\lhd} (y)) \tau\delta_{\star}(x)\right \rangle,\\
&\left \langle -\mathcal{L}_{\rhd}^*(a^{*})(x \lhd_A y) +\mathcal{L}_{\rhd}^*\left(R_{\star}^*(y) a^{*}\right) x+x \lhd_A\left( \mathcal{L}_{\rhd}^*(a^{*}) y\right), b \right \rangle\\
&= \left \langle  a^{*} \otimes b^{*}  ,\delta_{\rhd}(x \lhd y) +(R_{\star} (y)\otimes  \id) \delta_{\rhd}(x)-(\id \otimes L_{\lhd} (x))\delta_{\rhd}(y) \right \rangle,\\
&\left \langle R_{\star}^*\left(\mathcal{R}_{\star}^*(a^{*}) x\right) b^{*}-\left( L_{\rhd}^*(x) a^{*}\right) \lhd_{A^{*}} b^{*} - L_{\rhd}^*\left(\mathcal{L}_{\rhd}^*(b^{*}) x\right) a^{*} -a^{*} \lhd_{A^{*}}\left(R_{\star}^*(x) b^{*}\right), y \right \rangle\\
&=\left \langle   a^{*} \!\otimes\! b^{*}, ( \id \!\otimes\! L_{\star}(y) ) \tau \delta_{\star}(x)+(L_{\rhd}(x) \!\otimes\! \id) \delta_{\lhd}(y)-(R_{\rhd}(y) \!\otimes\! \id) \tau\delta_{\rhd}(x) + ( \id \!\otimes\! R_{\star}(x) )  \delta_{\lhd}(y)\right \rangle,\\
&\left \langle \mathcal{R}_{\star}^*\left(R_{\star}^*(x) a^{*}\right) y-\left( \mathcal{L}_{\rhd}^*(a^{*}) x\right) \lhd_A y -  \mathcal{L}_{\rhd}^*\left( L_{\rhd}^*(y) a^{*}\right) x - x \lhd_A\left(\mathcal{R}_{\star}^*(a^{*}) y\right),   b \right \rangle\\
&=\left \langle   a^{*} \!\otimes\! b^{*}, (R_{\star}(x) \!\otimes\! \id) \tau\delta_{\star}(y)+( \id \!\otimes\! R_{\lhd}(y) ) \delta_{\rhd}(x) -(L_{\rhd}(y) \!\otimes\! \id)  \delta_{\rhd}(x) + ( \id \!\otimes\!L_{\lhd}(x) )  \tau\delta_{\star}(y)\right \rangle.
\end{align*}
Note that
 \begin{align*}
&\text { Eq. }(\ref{mpofAss1}^\prime)   \Longleftrightarrow  \text { Eq. }(\ref{bidicd5}), &&\text { Eq. }(\ref{mpofAss3}^\prime)   \Longleftrightarrow  \text { Eq. }(\ref{bidicd6}), \\
& \text { Eq. }(\ref{mpofAss5}^\prime)   \Longleftrightarrow \text { Eq. }(\ref{bidicd8}), &&\text { Eq. }(\ref{mpofAss6}^\prime)  \Longleftrightarrow \text { Eq. }(\ref{bidicd9}).
 \end{align*}
 On one hand,
 \begin{align*}
 &\text { Eq. }(\ref{mpofAss1}^\prime) \text { and Eq. }(\ref{mpofAss4}^{\prime\prime})  \Longrightarrow \text { Eq. }(\ref{bidicd1}) \Longleftarrow  \text { Eq. }(\ref{mpofAss3}^\prime) \text { and Eq. }(\ref{mpofAss2}^{\prime\prime}),\\
 &\text { Eq. }(\ref{mpofAss5}^\prime) \text { and Eq. }(\ref{mpofAss6}^{\prime\prime})  \Longrightarrow \text { Eq. }(\ref{bidicd2}), \\
 & \text { Eq. }(\ref{mpofAss6}^\prime) \text { and Eq. }(\ref{mpofAss5}^{\prime\prime})  \Longrightarrow \text { Eq. }(\ref{bidicd3}), \\
 &\text { Eq. }(\ref{mpofAss1}^\prime) \text {, Eq. }(\ref{mpofAss2}^{\prime})   \text { and Eq. }(\ref{mpofAss4}^{\prime\prime})   \Longrightarrow \text { Eq. }(\ref{bidicd4})  \Longleftarrow \text { Eq. }(\ref{mpofAss3}^\prime) \text {, Eq. }(\ref{mpofAss4}^{\prime})   \text { and Eq. }(\ref{mpofAss2}^{\prime\prime}), \\
 &\text { Eq. }(\ref{mpofAss1}^{\prime\prime}) \text {, Eq. }(\ref{mpofAss2}^{\prime\prime})   \text { and Eq. }(\ref{mpofAss3}^\prime)   \Longrightarrow \text { Eq. }(\ref{bidicd7})\Longleftarrow \text { Eq. }(\ref{mpofAss3}^{\prime\prime}) \text {, Eq. }(\ref{mpofAss4}^{\prime\prime})   \text { and Eq. }(\ref{mpofAss1}^\prime).
\end{align*}
 On the other hand,
 \begin{align*}
 \text { Eq. }(\ref{bidicd1}) \text {, Eq. }(\ref{bidicd4}) \text { and Eq. }(\ref{bidicd5})  &\Longrightarrow \text { Eq. }(\ref{mpofAss2}^{\prime}), \\
\text { Eq. }(\ref{bidicd1}) \text {, Eq. }(\ref{bidicd4}) \text { and Eq. }(\ref{bidicd6})  &\Longrightarrow \text { Eq. }(\ref{mpofAss4}^{\prime}) ,\\
\text { Eq. }(\ref{bidicd1}) \text {, Eq. }(\ref{bidicd6})   \text { and Eq. }(\ref{bidicd7})  &\Longrightarrow \text { Eq. }(\ref{mpofAss1}^{\prime\prime}),\\
\text { Eq. }(\ref{bidicd1}) \text { and Eq. }(\ref{bidicd6}) &\Longrightarrow \text { Eq. }(\ref{mpofAss2}^{\prime\prime}), \\
  \text { Eq. }(\ref{bidicd1}) \text {, Eq. }(\ref{bidicd5})   \text { and Eq. }(\ref{bidicd7}) &\Longrightarrow \text { Eq. }(\ref{mpofAss3}^{\prime\prime}),\\
\text { Eq. }(\ref{bidicd1}) \text { and Eq. }(\ref{bidicd5}) &\Longrightarrow \text { Eq. }(\ref{mpofAss4}^{\prime\prime}), \\
\text { Eq. }(\ref{bidicd3}) \text { and Eq. }(\ref{bidicd9}) &\Longrightarrow \text { Eq. }(\ref{mpofAss5}^{\prime\prime}), \\
\text { Eq. }(\ref{bidicd2}) \text { and Eq. }(\ref{bidicd8})  &\Longrightarrow \text { Eq. }(\ref{mpofAss6}^{\prime\prime}).
\end{align*}
Thus $(A,A^*,-R_{\lhd}^{*},  -L_{\star}^{*},  -\mathcal{R}_{\lhd}^{*}, - \mathcal{L}_{\star}^{*})$  and $(A,A^*, R_{\star}^{*},-L_{\rhd}^{*}, \mathcal{R}_{\star}^{*},-\mathcal{L}_{\rhd}^{*})$  are matched pairs of  associative algebras if and only if Eqs.~\eqref{bidicd1}-\eqref{bidicd9} holds.
 
Moreover, in Eqs.~\eqref{mpofDias1}-\eqref{mpofDias18},  we take 
 \begin{align*}
 &l_{\rhd_{A}} =  -R_{\lhd}^{*}, &&r_{\rhd_{A}} =  -L_{\star}^{*}, && l_{\lhd_{A}}   = R_{\star}^{*},  && r_{\lhd_{A}} = -L_{\rhd}^{*},  \\
 &l_{\rhd_{B}} =  -\mathcal{R}_{\lhd}^{*}, && r_{\rhd_{B}} = - \mathcal{L}_{\star}^{*}, && l_{\lhd_{B}}   = \mathcal{R}_{\star}^{*},&&  r_{\lhd_{B}} =-\mathcal{L}_{\rhd}^{*},
 \end{align*}
which will be also  labeled by Eqs.~\eqref{mpofDias1}-\eqref{mpofDias18} respectively.  By a direct calculation, we have
  \begin{align*}
 \text { Eq. }(\ref{bidicd2}) \Longleftrightarrow \text { Eq. }(\ref{mpofDias1})&\Longleftrightarrow \text { Eq. }(\ref{mpofDias17}), \\
  \text { Eq. }(\ref{bidicd1}) \Longleftrightarrow \text { Eq. }(\ref{mpofDias2}) &\Longleftrightarrow \text { Eq. }(\ref{mpofDias5}) \Longleftrightarrow \text { Eq. }(\ref{mpofDias13}) \Longleftrightarrow \text { Eq. }(\ref{mpofDias16}), \\
   \text { Eqs. }(\ref{bidicd4}) \text {-}(\ref{bidicd7}) \Longrightarrow \text { Eq. }(\ref{mpofDias3}) &\Longleftrightarrow  \text { Eq. }(\ref{mpofDias18}), \\
    \text { Eq. }(\ref{bidicd3}) \Longleftrightarrow \text { Eq. }(\ref{mpofDias4}) &\Longleftrightarrow \text { Eq. }(\ref{mpofDias14}), \\ 
       \text { Eqs. }(\ref{bidicd4}) \text {-}(\ref{bidicd7}) \Longrightarrow \text { Eq. }(\ref{mpofDias6}) &\Longleftrightarrow \text { Eq. }(\ref{mpofDias15}), \\
         \text { Eq. }(\ref{bidicd7}) \Longleftrightarrow \text { Eq. }(\ref{mpofDias7}) &\Longleftrightarrow \text { Eq. }(\ref{mpofDias10}), \\   
   \text { Eqs. }(\ref{bidicd2}),\; (\ref{bidicd8}) \text {-}(\ref{bidicd9}) \Longrightarrow \text { Eq. }(\ref{mpofDias8})&, \\
       \text { Eq. }(\ref{bidicd4}) \Longleftrightarrow \text { Eq. }(\ref{mpofDias9})&\Longleftrightarrow \text { Eq. }(\ref{mpofDias12}), \\   
         \text { Eqs. }(\ref{bidicd3}),\; (\ref{bidicd8}) \text {-}(\ref{bidicd9}) \Longrightarrow \text { Eq. }(\ref{mpofDias11})&.     
\end{align*}
Thus $(A,A^*,-R_{\lhd}^{*},  -L_{\star}^{*}, R_{\star}^{*},-L_{\rhd}^{*},-\mathcal{R}_{\lhd}^{*}, - \mathcal{L}_{\star}^{*}, \mathcal{R}_{\star}^{*},-\mathcal{L}_{\rhd}^{*})$  is a matched pair of dialgebras if and only if  $(A,\rhd_{A},$ $\lhd_{A}, \delta_{\rhd},\delta_{\lhd})$  is a bi-dialgebra.  
\end{proof}

 \begin{thm}\label{sandengjia}
Let  $(A,\rhd_A,\lhd_A)$ and $(A^*,\rhd_{A^*},\lhd_{A^*})$  be   dialgebras. Let  linear maps  $\delta_{\rhd},\delta_{\lhd}: A \rightarrow A \otimes A $ be the linear duals of  $\rhd_{A^{*}}$ and  $\lhd_{A^{*}}$ respectively.  Then the following conditions are equivalent.
\begin{enumerate}
\item There is a double
construction of  Frobenius dialgebra associated to $(A, \rhd_A,\lhd_A)$ and $(A^{*},\rhd_{A^*},\lhd_{A^*})$.
\item $(A,A^*,-R_{\lhd}^{*},  L_{\lhd}^{*}-L_{\rhd}^{*}, R_{\rhd}^{*}-R_{\lhd}^{*},-L_{\rhd}^{*},-\mathcal{R}_{\lhd}^{*},  \mathcal{L}_{\lhd}^{*}-\mathcal{L}_{\rhd}^{*}, \mathcal{R}_{\rhd}^{*}-\mathcal{R}_{\lhd}^{*},-\mathcal{L}_{\rhd}^{*})$  is a matched pair of dialgebras.
    \item $(A,\rhd_{A},\lhd_{A}, \delta_{\rhd},\delta_{\lhd})$  is a bi-dialgebra.
\end{enumerate}
 \end{thm}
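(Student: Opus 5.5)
The theorem is obtained by chaining two results already proved in this section, so no new computation should be needed.

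The equivalence (a)$\Leftrightarrow$(b) is exactly Proposition \ref{mpandmtofdppa}. In one direction, a matched pair yields the dialgebra $A\bowtie A^*$ of Proposition \ref{djdandm}, which contains $A$ and $A^*$ as diassociative subalgebras, and $\mathfrak{B}_d$ is invariant on it; the invariance check runs as in Proposition \ref{p:ppplgphpl}. In the other direction, we decompose the mixed products of a given double construction into their $A$- and $A^*$-components. Pairing each component against $\mathfrak{B}_d$ and using invariance, in the form of Lemma \ref{invxzyil}, identifies the actions uniquely as $-R_{\lhd}^*$, $L_{\lhd}^*-L_{\rhd}^*$, $R_{\rhd}^*-R_{\lhd}^*$, $-L_{\rhd}^*$, together with their analogues on the $A^*$ side.

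The equivalence (b)$\Leftrightarrow$(c) is exactly Proposition \ref{mpdppba}. There one rewrites the associative matched-pair identities (\ref{mpofAss1})--(\ref{mpofAss6}) for the two associative matched pairs, and the extra identities (\ref{mpofDias1})--(\ref{mpofDias18}), with the coregular actions. Each is then dualized by pairing against $a^*\otimes b^*$ and an element of $A$, and the result is matched with Eqs.~(\ref{bidicd1})--(\ref{bidicd9}). Here one uses $L_{\lhd}^*-L_{\rhd}^*=-L_{\star}^*$ and $R_{\rhd}^*-R_{\lhd}^*=R_{\star}^*$.

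The proof therefore reduces to citing these two propositions: (a)$\Leftrightarrow$(b) by Proposition \ref{mpandmtofdppa} and (b)$\Leftrightarrow$(c) by Proposition \ref{mpdppba}. The only point to check is that both propositions use the same identification of $\delta_{\rhd},\delta_{\lhd}$ as the linear duals of $\rhd_{A^*},\lhd_{A^*}$, and the same coregular bimodules, so that the two equivalences compose. They do, since both are stated for the same data.

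The substantive step is the bookkeeping inside Proposition \ref{mpdppba}, specifically checking that the diassociative compatibility conditions (\ref{mpofDias1})--(\ref{mpofDias18}) reduce to consequences of (\ref{bidicd1})--(\ref{bidicd9}). That work has already been done in the paper, so at this point it is only invoked.
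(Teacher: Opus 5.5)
Your proposal is correct and takes the same approach as the paper. The paper also proves the theorem by composing (a)$\Leftrightarrow$(b) from Proposition~\ref{mpandmtofdppa} with (b)$\Leftrightarrow$(c) from Proposition~\ref{mpdppba}, with no additional computation.
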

 \begin{proof}
 It follows from Propositions \ref{mpandmtofdppa} and \ref{mpdppba}.
  \end{proof}
  
To conclude this subsection, we establish the connection between Leibniz bialgebras and bi-dialgebras, lifting the well-known relation that a dialgebra gives rise to a Leibniz algebra to the bialgebra level.

Recall \cite{BLST} that a \textbf{Leibniz coalgebra} is a pair $(A, \delta_{[\cdot,\cdot]})$ where $A$ is a vector space and $\delta_{[\cdot,\cdot]}: A \to A \otimes A$ is a linear map satisfying
 \begin{eqnarray}
&(\id \otimes \delta_{[\cdot,\cdot]})\delta_{[\cdot,\cdot]} = ( \delta_{[\cdot,\cdot]}  \otimes \id )\delta_{[\cdot,\cdot]}+(\tau \otimes \id)(\id \otimes \delta_{[\cdot,\cdot]})\delta_{[\cdot,\cdot]}.
 \label{Leibnizco}
\end{eqnarray}
  
\begin{defi} \cite{BLST,TS} 
A \textbf{Leibniz bialgebra} is a triple $(A,[\cdot,\cdot],\delta_{[\cdot,\cdot]})$ such that 
  $(A,[\cdot,\cdot])$ is a Leibniz  algebra and  $(A,\delta_{[\cdot,\cdot]})$ is a Leibniz coalgebra satisfying the following compatibility conditions:
  \begin{align}
& \left(\mathrm{id} \otimes R_{[\cdot,\cdot]}(x)\right) \tau\delta_{[\cdot,\cdot]}(y)=\left(R_{[\cdot,\cdot]}(y) \otimes \mathrm{id}\right) \delta_{[\cdot,\cdot]}(x), \label{Leibbi1}\\
&\delta_{[\cdot,\cdot]}\left([x, y]\right)=\left(\mathrm{id}   \otimes   R_{[\cdot,\cdot]}(y)-L_{\square}(y)   \otimes   \mathrm{id}\right)  \delta_{\square}(x)+\left(\mathrm{id}   \otimes   L_{[\cdot,\cdot]}(x)+L_{[\cdot,\cdot]}(x)   \otimes   \mathrm{id}\right)\delta_{[\cdot,\cdot]}(y),\label{Leibbi2}
\end{align}
where $x \square y = [x ,y] + [y, x]$,  $\delta_{\square} = \delta_{[\cdot,\cdot]}+\tau\delta_{[\cdot,\cdot]}$  for all $x,y \in A$.
 \end{defi} 

We now extend Proposition \ref{diatoLeib} to the bialgebraic case.

\begin{thm}  \label{bidiatoLeibbi}
    Let $(A,  \rhd,\lhd,\delta_{\rhd},\delta_{\lhd})$ be a bi-dialgebra. Then $(A,[\cdot, \cdot],\delta_{[\cdot,\cdot]})$ is a Leibniz bialgebra where $[\cdot,\cdot]$ is given by Eq.~\eqref{inducedLeib} and  $\delta_{[\cdot,\cdot]}$ is defined by
  \begin{align}
 \delta_{[\cdot,\cdot]} =\delta_{\rhd} - \tau\delta_{\lhd}.
\end{align}  
We call $(A,[\cdot, \cdot],\delta_{[\cdot,\cdot]})$  the {\bf sub-adjacent Leibniz bialgebra} of $(A,  \rhd,\lhd,\delta_{\rhd},\delta_{\lhd})$.
\end{thm}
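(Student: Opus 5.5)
The cleanest route is through the double construction rather than through the nine compatibility conditions directly. By Theorem \ref{sandengjia}, the bi-dialgebra $(A,\rhd,\lhd,\delta_{\rhd},\delta_{\lhd})$ is the same as a double construction of Frobenius dialgebra $(A\oplus A^*,A,A^*,\mathfrak{B}_d)$. Here the dialgebra structure on $A^*$ is dual to $(\delta_{\rhd},\delta_{\lhd})$, and $\mathfrak{B}_d$ from Eq.~\eqref{mtorbl} is skew-symmetric and invariant. Applying Proposition \ref{subjecntqla} to the skew-symmetric Frobenius dialgebra $(A\oplus A^*,\mathfrak{B}_d)$ gives a quadratic Leibniz algebra $(A\oplus A^*,[\cdot,\cdot],\mathfrak{B}_d)$. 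In it, $A$ and $A^*$ are Leibniz subalgebras, since they are diassociative subalgebras and the bracket of Proposition \ref{diatoLeib} is built from $\rhd,\lhd$. Both are isotropic for $\mathfrak{B}_d$. This is exactly a (quadratic/Manin triple type) double construction for Leibniz algebras. I would then invoke the known equivalence from \cite{TS} between such Manin triples of Leibniz algebras, with the skew-symmetric form $\mathfrak{B}_d$ and invariance $\mathfrak{B}([x,y],z)=\mathfrak{B}(x,[y,z]+[z,y])$, and Leibniz bialgebras. The equivalence is taken with respect to the comultiplication dual to the Leibniz bracket on $A^*$.

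It remains to identify that comultiplication. The Leibniz bracket on $A^*$ is $[a^*,b^*]=a^*\rhd_{A^*}b^*-b^*\lhd_{A^*}a^*$. Dualizing gives
\begin{align*}
\langle [a^*,b^*],x\rangle=\langle a^*\otimes b^*,\delta_{\rhd}(x)\rangle-\langle b^*\otimes a^*,\delta_{\lhd}(x)\rangle=\langle a^*\otimes b^*,(\delta_{\rhd}-\tau\delta_{\lhd})(x)\rangle,
\end{align*}
so the dual comultiplication is $\delta_{[\cdot,\cdot]}=\delta_{\rhd}-\tau\delta_{\lhd}$, as claimed. The Leibniz coalgebra axiom Eq.~\eqref{Leibnizco} follows immediately, because $(A^*,[\cdot,\cdot])$ is a Leibniz algebra by Proposition \ref{diatoLeib}. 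The statement then follows once Eqs.~\eqref{Leibbi1} and \eqref{Leibbi2} are obtained from the Manin triple.

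The main obstacle is this last step. The conventions for Leibniz Manin triples and bialgebras in \cite{TS} (sign of the form, left versus right Leibniz, $\tau$ placements) may not match the paper's exactly. If the citation does not apply verbatim, I would derive Eqs.~\eqref{Leibbi1} and \eqref{Leibbi2} directly from the double. I would compute $[x,a^*]$ and $[a^*,x]$ in $A\oplus A^*$ using the coregular actions appearing in Theorem \ref{sandengjia}(b), and check that the $A^*$-components coincide with the coadjoint-type actions of the Leibniz algebra, using $\mathfrak{B}_d$-invariance. I would then write the Leibniz identity in $A\oplus A^*$ for the triples $(x,y,a^*)$, $(x,a^*,y)$ and $(a^*,x,y)$, and pair the result with $b^*$ to obtain tensor identities.

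A fallback is to verify Eqs.~\eqref{Leibbi1} and \eqref{Leibbi2} directly from Eqs.~\eqref{bidicd1}--\eqref{bidicd9}, and I would check this on the side. Eq.~\eqref{Leibbi1} should come from combining Eqs.~\eqref{bidicd1} and \eqref{bidicd3} with $R_{[\cdot,\cdot]}(x)=L_{\rhd}... $-type expansions $R_{[\cdot,\cdot]}(x)=R_{\rhd}(x)-L_{\lhd}(x)$. Eq.~\eqref{Leibbi2} should come from expanding $\delta_{[\cdot,\cdot]}([x,y])=\delta_{\rhd}(x\rhd y)-\delta_{\rhd}(y\lhd x)-\tau\delta_{\lhd}(x\rhd y)+\tau\delta_{\lhd}(y\lhd x)$ using Eqs.~\eqref{bidicd4}--\eqref{bidicd7}, then regrouping with Eqs.~\eqref{bidicd2}, \eqref{bidicd8} and \eqref{bidicd9}. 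I expect the regrouping to be the lengthy part.
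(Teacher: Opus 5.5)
Your proof is correct, but it follows a different route from the paper.

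The paper stays at the level of the compatibility conditions. It expands $R_{[\cdot,\cdot]}=R_{\rhd}-L_{\lhd}$ and $\delta_{[\cdot,\cdot]}=\delta_{\rhd}-\tau\delta_{\lhd}$, and cancels terms using Eqs.~\eqref{bidicd1}--\eqref{bidicd3} to obtain Eq.~\eqref{Leibbi1}. It only asserts that the Leibniz coalgebra axiom and Eq.~\eqref{Leibbi2} follow by direct verification.

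You instead pass the bi-dialgebra through Theorem~\ref{sandengjia} and Proposition~\ref{subjecntqla} to a Leibniz Manin triple $(A\oplus A^*,[\cdot,\cdot],\mathfrak{B}_d)$. The convention mismatch you worry about does not occur:
\begin{itemize}
\item Invariance of $\mathfrak{B}_d$, together with isotropy of $A$ and $A^*$, forces $[x,a^*]=L^*_{[\cdot,\cdot]}(x)a^*-(\mathcal{L}^*_{[\cdot,\cdot]}+\mathcal{R}^*_{[\cdot,\cdot]})(a^*)x$ and $[a^*,x]=\mathcal{L}^*_{[\cdot,\cdot]}(a^*)x-(L^*_{[\cdot,\cdot]}+R^*_{[\cdot,\cdot]})(x)a^*$. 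Here $\mathcal{L}_{[\cdot,\cdot]}$ and $\mathcal{R}_{[\cdot,\cdot]}$ refer to the sub-adjacent bracket on $A^*$.
\item Pairing the $A$-component of the Leibniz identity for the triple $(x,a^*,y)$ with $b^*$ gives exactly Eq.~\eqref{Leibbi2}.
\item Subtracting the corresponding identity for $(a^*,x,y)$ gives Eq.~\eqref{Leibbi1} with $x$ and $y$ interchanged.
\end{itemize}
So your ``fallback on the double'' is already a short, complete proof, and you do not need the citation to \cite{TS}.

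What your route buys:
\begin{itemize}
\item The Leibniz coalgebra axiom comes for free, as the dual of Proposition~\ref{diatoLeib} applied to $(A^*,\rhd_{A^*},\lhd_{A^*})$.
\item The lengthy regrouping of Eq.~\eqref{Leibbi2} via Eqs.~\eqref{bidicd4}--\eqref{bidicd9} is replaced by a few lines on the double.
\item It explains conceptually why the sub-adjacent construction lifts to bialgebras.
\end{itemize}
The cost is that it depends on the implication (c)$\Rightarrow$(a) of Theorem~\ref{sandengjia}. That theorem's proof, via Proposition~\ref{mpdppba}, is itself mostly left to ``direct calculation''. So the computational burden is relocated rather than removed, whereas the paper's proof does not use that theorem at all.

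A minor correction to your side check: Eq.~\eqref{Leibbi1} also requires Eq.~\eqref{bidicd2}, to cancel $(\id\otimes L_{\lhd}(x))\tau\delta_{\rhd}(y)$ against $(L_{\lhd}(y)\otimes\id)\delta_{\rhd}(x)$. Also, Eq.~\eqref{bidicd1} is used twice, once with $x$ and $y$ interchanged and then composed with $\tau$.
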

\begin{proof}
It follows from Proposition \ref{diatoLeib} that $(A,[\cdot, \cdot])$ is a Leibniz algebra. Furthermore, one can easily verify that   $(A, \delta_{[\cdot,\cdot]})$ is a Leibniz coalgebra. Let $x, y \in A$. Applying Eqs.~\eqref{bidicd1}-\eqref{bidicd3}, we have
\begin{align*}
&\left(\mathrm{id} \otimes R_{[\cdot,\cdot]}(x)\right) \tau\delta_{[\cdot,\cdot]}(y)-\left(R_{[\cdot,\cdot]}(y) \otimes \mathrm{id}\right) \delta_{[\cdot,\cdot]}(x)\\
 &=\left(\mathrm{id} \otimes (R_{\rhd}-L_{\lhd})(x)\right) (\tau\delta_{\rhd} -  \delta_{\lhd})(y)-\left((R_{\rhd}-L_{\lhd})(y) \otimes \mathrm{id}\right) (\delta_{\rhd} - \tau\delta_{\lhd})(x) =0.
\end{align*}
Thus Eq.~\eqref{Leibbi1}  holds. Similarly, by a direct verification, Eq.~\eqref{Leibbi2} also holds. This completes the proof.
\end{proof}

\begin{rmk}
In \cite{B2}, the author introduce antisymmetric infinitesimal bialgebras, the bialgebra structure for associative algebras, and prove that such a structure naturally induces a Lie bialgebra, thereby lifting the classcial relation from associative algebras to Lie algebras to the bialgebra level. As demonstrated by Theorem \ref{bidiatoLeibbi}, a bi-dialgebra is precisely the bialgebra structure for dialgebras, and it naturally gives rise to a Leibniz bialgebra. Consequently, this extends the classical correspondence between dialgebras and Leibniz algebras to the bialgebra context.
\end{rmk}
  
   \subsection{Coboundary  bi-dialgebras and diassociative  Yang-Baxter equations}  In this subsection, we investigate coboundary bi-dialgebras, which motivates us to introduce the diassociative Yang-Baxter equation (DAYBE). The DAYBE serves as an analogue of the classical Yang-Baxter equation for Lie algebras and the associative Yang-Baxter equation for associative algebras.

Let  $V$  be a vector space and  $r=\sum_{i} a_{i} \otimes b_{i} \in V \otimes V$. Set
\begin{align*}
r_{12}=\sum_{i} a_{i} \otimes b_{i} \otimes 1, \quad r_{13}=\sum_{i} a_{i} \otimes 1 \otimes b_{i},  \quad 
r_{23}=\sum_{i} 1 \otimes a_{i} \otimes b_{i}, 
\end{align*}
where $1$ is a symbol playing a similar role of the unit. If in addition, there exists a binary operation  $\diamond: V \otimes V \rightarrow V$  on  $V$, then the operation between two  $r$s is in an obvious way. For example,
\begin{align*}
r_{12} \diamond r_{13}=\sum_{i, j} a_{i} \diamond a_{j} \!\otimes\! b_{i} \!\otimes\! b_{j},& \;\;r_{13} \diamond r_{23}=\sum_{i, j} a_{i} \!\otimes\! a_{j} \!\otimes\! b_{i} \diamond b_{j},\;\;
r_{23} \diamond r_{12}=\sum_{i, j}  a_{j} \!\otimes\! a_{i} \diamond b_{j} \!\otimes\! b_{i}.
\end{align*}
Note that the above equation is independent of the existence of a unit.

\begin{defi}
Let  $(A,\rhd,\lhd)$   be a dialgebra and   $r  \in A \otimes A$.   Set
\begin{align}
\mathbf{A} (r):&=     r_{13} \rhd r_{23} -  r_{23} \lhd r_{12} -  r_{12} \star r_{13}.\label{DAYBE1}
\end{align}
 Then $r$ is called a solution of the {\bf diassociative Yang-Baxter equation (DAYBE)} in $(A,\rhd,\lhd)$ if $\mathbf{A} (r) = 0$.
\end{defi}

\begin{lem}\label{ryutaur}
Let  $(A,\rhd,\lhd)$   be a dialgebra and $r  \in A \otimes A$. Suppose the skew-symmetric part of $r$ is invariant. Then the following statements are equivalent:
\begin{enumerate}
\item\label{pqr:1} $r$ is a solution of the diassociative Yang-Baxter equation in  $(A,\rhd,\lhd)$.
\item\label{pqr:1.5} $\tau(r)$ is a solution of the diassociative Yang-Baxter equation in  $(A,\rhd,\lhd)$.
\item\label{pqr:2}  $\mathbf{B} (r):=    r_{13} \lhd r_{23} +  r_{23} \star r_{12} -  r_{12} \rhd r_{13} = 0$.
\item\label{pqr:3}  $\mathbf{C} (r):=    r_{13} \star r_{23} -  r_{23} \rhd r_{12} +  r_{12} \lhd r_{13}= 0$.
\end{enumerate}
\end{lem}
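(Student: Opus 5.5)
The plan is to write $r=\tau(r)+\alpha$ with $\alpha=r-\tau(r)$ the skew-symmetric part, which is invariant by hypothesis. Each of $\mathbf{A}$, $\mathbf{B}$, $\mathbf{C}$ is bilinear in two copies of $r$. Expanding one factor via $r=\tau(r)+\alpha$ will produce the corresponding expression for $\tau(r)$, possibly with legs permuted, plus correction terms. These correction terms are linear in $\alpha$, with the other leg involving $r$.

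\textbf{Invariance identities.} The invariance of $\alpha$ says $E(x)\alpha=F(x)\alpha=0$. Contracting with a leg of $r$, this gives identities of the type
\[
\sum_i \alpha_{13}\text{-terms acted on by } a_i \;=\; \sum_i \alpha_{23}\text{-terms acted on by } a_i ,
\]
placed in the appropriate tensor slot. Concretely, writing $\alpha=\sum_j c_j\otimes d_j$:
\begin{itemize}
\item $E(x)\alpha=0$ reads $\sum_j c_j\star x\otimes d_j=-\sum_j c_j\otimes x\lhd d_j$;
\item $F(x)\alpha=0$ reads $\sum_j c_j\rhd x\otimes d_j=\sum_j c_j\otimes x\star d_j$.
\end{itemize}
Using the skew-symmetry $\alpha=-\tau(\alpha)$, we also get the flipped versions, namely Lemma \ref{semiinequ} (3),(4) in tensor form. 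With these, every term such as $\alpha_{12}\star r_{13}$ can be moved to a term like $r_{13}\lhd\alpha_{23}$ with the product landing in a different slot. The diassociative identities \eqref{dias1}--\eqref{dias3} should not be needed, since only the module-type invariance enters.

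\textbf{Steps.} (a) Show $\mathbf{A}(r)=-\sigma\,\mathbf{A}(\tau(r))$ for a suitable permutation $\sigma$ of the three tensor factors (likely $\sigma=\tau_{13}$). Substitute $r=\tau(r)+\alpha$ in each of the three terms and expand. The terms quadratic in $\tau(r)$ assemble into $\sigma\mathbf{A}(\tau(r))$ up to sign. The mixed terms in $\alpha$ cancel in pairs by the two invariance identities together with their $\tau$-flipped forms. This gives (1)$\Leftrightarrow$(2).

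(b) Similarly, compare $\mathbf{A}(r)$ with $\mathbf{B}(r)$ and $\mathbf{C}(r)$. Expanding only the $r_{12}$ (respectively $r_{23}$) factor and applying invariance should yield identities of the form $\mathbf{B}(r)=\pm\sigma'\mathbf{A}(\tau(r))$ and $\mathbf{C}(r)=\pm\sigma''\mathbf{A}(\tau(r))$. The operations $\rhd,\lhd,\star$ get redistributed because $E$ couples $\star$ with $\lhd$ and $F$ couples $\rhd$ with $\star$. This explains why $\mathbf{B}$ and $\mathbf{C}$ have exactly the shapes stated. Combined with (a), we get (2)$\Leftrightarrow$(3) and (2)$\Leftrightarrow$(4).

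\textbf{Main obstacle.} The main difficulty is bookkeeping: choosing which factor to expand in each term, and identifying the correct permutation $\sigma$ with its sign, so that the $\alpha$-terms cancel exactly. I would first test the identities on a pure tensor $r=a\otimes b$, and then fix $\sigma$ by matching the term $r_{13}\rhd r_{23}$.
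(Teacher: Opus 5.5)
Your plan is correct and is essentially the paper's argument. Both compare $\mathbf{A}(r)$, $\mathbf{A}(\tau(r))$, $\mathbf{B}(r)$, $\mathbf{C}(r)$ through the cyclic leg permutations $\sigma_{123}(x\otimes y\otimes z)=z\otimes x\otimes y$ and $\sigma_{132}=\sigma_{123}^{-1}$, with the discrepancies being invariance terms such as $\sum_i a_i\otimes E(b_i)\alpha$ and no use of \eqref{dias1}--\eqref{dias3}. The bookkeeping comes out differently from your guesses, though. Write $s=\tau(r)$; then $\mathbf{A}(r)=\mathbf{A}(s)$ simply, with no permutation and no sign, and telescoping as $\alpha_{13}\rhd r_{23}+s_{13}\rhd\alpha_{23}$, $\alpha_{23}\lhd s_{12}+r_{23}\lhd\alpha_{12}$, $\alpha_{12}\star r_{13}+s_{12}\star\alpha_{13}$ makes the six corrections cancel in pairs via the flipped $F$, $E-F$ and $E$. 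Likewise $\mathbf{B}(r)=-\sigma_{123}\mathbf{A}(s)$ and $\mathbf{C}(r)=-\sigma_{132}\mathbf{A}(s)$ arise by flipping only the $r_{23}$, respectively $r_{12}$, factor, the reverse of what you wrote.
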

\begin{proof}
Let $r=\sum_{i} a_{i} \otimes b_{i}   \in A \otimes A$ and   $\alpha:= r -\tau(r)$ be the skew-symmetric part of $r$. Then  $\alpha$  is invariant.

\eqref{pqr:1} $ \Longleftrightarrow $ \eqref{pqr:1.5}. Let $\sigma_{123} \in \operatorname{End}(A \otimes A \otimes A)$ be the linear map  defined by  $\sigma_{123} (x \otimes y \otimes z) = z \otimes x \otimes y$ for all $x,y,z \in A$.  
\begin{align*}
    \sigma_{123}(\mathbf{A} (\tau(r)) )
    &= \sum_{i,j} a_i \rhd a_j\otimes b_i \otimes b_j -  a_j \otimes b_i \otimes b_j \lhd  a_i -   a_j \otimes b_i \star b_j \otimes a_i\\
      &=  \sigma_{123}(\mathbf{A} (r)) + \sum_{i}a_i \otimes  E(b_i)\alpha \\
       &=  \sigma_{123}(\mathbf{A} (r)).
\end{align*}
Thus $\mathbf{A} (r) = 0$ if and only if $\mathbf{A} (\tau(r)) = 0$.

\eqref{pqr:1} $ \Longleftrightarrow $ \eqref{pqr:2}. In fact, we have
\begin{align*}
    \sigma_{123}(\mathbf{A} (r)) 
    &= \sum_{i,j} b_i \rhd b_j\otimes a_i \otimes a_j -  b_i \otimes a_j \otimes a_i \lhd  b_j -   b_j \otimes a_i \star a_j \otimes b_i\\
      &=(\tau \otimes \id)(a_i \otimes \tau( (E(b_i)-F(b_i) )\alpha)-\mathbf{B} (r) -\sum_{i}F(a_i)\alpha \otimes b_i\\
       &= -\mathbf{B} (r).
\end{align*}
Thus $\mathbf{A} (r) = 0$  if and only if $\mathbf{B}(r) = 0$. 
	
	\eqref{pqr:2} $ \Longleftrightarrow $ \eqref{pqr:3}. It is straightforward to verify that  $\sigma_{132}( \mathbf{C} (r)) =  \mathbf{B} (r)$ where $\sigma_{132} \in \operatorname{End}(A \otimes A \otimes A)$ is defined by  $\sigma_{132} (x \otimes y \otimes z) = y \otimes z \otimes x$ for all $x,y,z \in A$.  Thus $\mathbf{B} (r) = 0$ if and only if $\mathbf{C} (r) = 0$.
\end{proof}

\begin{defi}A  bi-dialgebra $(A,\rhd,\lhd, \delta_{\rhd},\delta_{\lhd})$   is  called {\bf coboundary} if there exists an $r \in  A \otimes A$ such that 
\begin{align}
{\delta_{\rhd}}(x) :={\delta_{\rhd,r}}(x) := E(x)r ,\quad  {\delta_{\lhd}}(x) :={\delta_{\lhd,r}}(x):=
F(x)r, \quad \forall x \in A, \label{EF}
\end{align}
where  the linear maps  $E,F: A \rightarrow
\operatorname{End}(A \otimes A)$ are defined
respectively by Eqs.~\eqref{eq:e}-\eqref{eq:f}. We also denote the coboundary bi-dialgebra by $(A,\rhd,\lhd, \delta_{\rhd,r},\delta_{\lhd,r})$.
\end{defi}

\begin{pro}\label{cobdppb}
Let  $(A,\rhd,\lhd)$   be a dialgebra and  $r \in A \otimes A$. Define  linear maps ${\delta_{\rhd,r}}, {\delta_{\lhd,r}}: A \rightarrow A \otimes A$ by Eq.~\eqref{EF}.
\begin{enumerate}
\item\label{tdb:co1} $(A,\delta_{\rhd,r}, \delta_{\lhd,r})$ is a co-dialgebra if and only if the following equations hold:
    \begin{align}
    ( \id \otimes \id \otimes  L_{\lhd}(x)  )\mathbf{A} (r)+(  R_{\star}(x)\otimes   \id \otimes \id   )\mathbf{B} (r) &= 0, \label{tdb:eqco1}\\
          ( \id \otimes \id \otimes  L_{\star}(x)  )\mathbf{B} (r)+(  R_{\rhd}(x)\otimes   \id \otimes \id   )\mathbf{C} (r) &= 0,\label{tdb:eqco2}\\
   (  R_{\rhd}(x)\otimes   \id \otimes \id   )\mathbf{A} (r) &= 0,\label{tdb:eqco3}\\
       ( \id \otimes \id \otimes  L_{\star}(x)  )\mathbf{A} (r)+(  R_{\star}(x)\otimes   \id \otimes \id   )\mathbf{C} (r) &= 0,\label{tdb:eqco4}\\
             ( \id \otimes \id \otimes  L_{\lhd}(x)  )\mathbf{C} (r) &= 0,\quad \forall x \in A.\label{tdb:eqco5}
    \end{align}
    \item\label{tdb:co2} $(A, \rhd,\lhd, \delta_{\rhd,r}, \delta_{\lhd,r})$ is a coboundary bi-dialgebra if and only if Eqs.~\eqref{tdb:eqco1}-\eqref{tdb:eqco5} and the following equations hold:
    \begin{align}
      (     L_{\lhd}(x)\otimes \id   )E(y)(r -\tau(r)) &= 0,\label{bitdb:eqco1}\\
      (    \id \otimes    R_{\rhd}(x)  )F(y)(r -\tau(r)) &= 0,\label{bitdb:eqco2}\\
      (     L_{\rhd}(x)\otimes \id  -  \id \otimes  R_{\lhd}(x))E(y)(r -\tau(r)) &= 0,\label{bitdb:eqco3}\\
      (     R_{\star}(x)\otimes \id  -  \id \otimes  R_{\rhd}(x))(E(y)-F(y))(r -\tau(r)) &= 0,\quad \forall x,y \in A, \label{bitdb:eqco4}
    \end{align}
    where   $E,F: A \rightarrow
\operatorname{End}(A \otimes A)$ are given
respectively by Eqs.~\eqref{eq:e}-\eqref{eq:f}. 
 \end{enumerate}

    \end{pro}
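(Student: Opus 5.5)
For part \eqref{tdb:co1}, I would check directly the eight defining identities of a co-dialgebra: coassociativity of $\delta_{\rhd,r}$ and of $\delta_{\lhd,r}$, plus Eqs.~\eqref{codiass1}--\eqref{codiass3}. Since both coproducts are built from $E$ and $F$, it is convenient to work instead with $\delta_{\star,r}=\delta_{\rhd,r}-\delta_{\lhd,r}=(E-F)(x)r$. This equals
$$\big(R_{\star}(x)-R_{\rhd}(x)\big)\otimes\id+\id\otimes\big(L_{\lhd}(x)+L_{\star}(x)\big),$$
that is, $-R_{\lhd}(x)\otimes \id+\id\otimes L_{\rhd}(x)$ applied to $r$. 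Using $\delta_\star$ lets each condition be replaced by a linear combination that isolates one structure at a time.

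The computation is done in Sweedler form. Write $r=\sum_i a_i\otimes b_i$ and expand, for example,
$$(\id\otimes\delta_{\lhd,r})\delta_{\lhd,r}(x)-(\delta_{\lhd,r}\otimes\id)\delta_{\lhd,r}(x)$$
as a sum of terms $\sum_{i,j}(\cdots)\otimes(\cdots)\otimes(\cdots)$ involving two copies of $r$ and the single element $x$.

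First, the terms in which $x$ is multiplied onto one leg and the other two legs carry no product cancel in pairs. This uses the associativity of $\rhd$ and $\lhd$ and the three dialgebra axioms \eqref{dias1}--\eqref{dias3}, in the same way as for the associative Yang--Baxter equation.

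The surviving terms are those in which $x$ sits in an outer leg, either on the left of the third factor or on the right of the first. Grouping these leaves exactly expressions of the form $(\id\otimes\id\otimes L_{\bullet}(x))$ or $(R_{\bullet}(x)\otimes\id\otimes\id)$ applied to $r_{13}\,\bullet\, r_{23}$, $r_{23}\,\bullet\, r_{12}$ and $r_{12}\,\bullet\, r_{13}$. These reassemble into $\mathbf{A}(r)$, $\mathbf{B}(r)$ and $\mathbf{C}(r)$.

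Doing this for each of the eight conditions should produce Eqs.~\eqref{tdb:eqco1}--\eqref{tdb:eqco5}, several of them more than once. Some conditions are linear combinations of others; for instance, coassociativity of $\delta_\rhd$ should combine \eqref{tdb:eqco3} with \eqref{tdb:eqco4}. So the final list is just the independent ones.

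For part \eqref{tdb:co2}, given part \eqref{tdb:co1}, it remains to translate the compatibility conditions \eqref{bidicd1}--\eqref{bidicd9} when $\delta_\rhd=E(\cdot)r$ and $\delta_\lhd=F(\cdot)r$.

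\begin{itemize}
\item For the derivation-type identities \eqref{bidicd4}--\eqref{bidicd7}, substitute and expand. Both sides are built from $r$ with two elements $x,y$ acting on different legs, and I expect them to cancel identically using only the dialgebra axioms and bimodule-type identities for $L,R$. In effect, the coboundary is a "1-coboundary" for the coregular bimodule of Example \ref{coregularrep}, so these hold for every $r$.
\item The remaining identities \eqref{bidicd1}--\eqref{bidicd3}, \eqref{bidicd8} and \eqref{bidicd9} involve $\tau$. After substitution, terms in $r$ and $\tau(r)$ appear. All terms built on the symmetric combination should cancel, and what remains should be an operator applied to $E(y)\alpha$ or $F(y)\alpha$, with $\alpha=r-\tau(r)$.
\end{itemize}

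Concretely, I expect \eqref{bidicd1} to become \eqref{bitdb:eqco1}, \eqref{bidicd3} to become \eqref{bitdb:eqco2}, and \eqref{bidicd2} to become \eqref{bitdb:eqco3}. The pair \eqref{bidicd8}--\eqref{bidicd9} should reduce to \eqref{bitdb:eqco4}, possibly together with consequences of the earlier ones. Lemma \ref{semiinequ} is available if a dual reformulation helps to recognise these expressions.

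The main obstacle is bookkeeping: correctly matching the many cubic terms into $\mathbf{A},\mathbf{B},\mathbf{C}$ in part \eqref{tdb:co1}, and recognising the residual terms in part \eqref{tdb:co2} as exactly $E(y)\alpha$ and $(E-F)(y)\alpha$ rather than some other combination. I would first do the single case \eqref{bidicd1}, to calibrate the signs coming from $\star$ and $\tau$, and then treat the others analogously.
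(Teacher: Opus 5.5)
Your strategy is the same as the paper's: expand in Sweedler notation, regroup the cubic terms into $\mathbf{A}(r),\mathbf{B}(r),\mathbf{C}(r)$, and substitute $\delta_{\rhd,r}=E(\cdot)r$, $\delta_{\lhd,r}=F(\cdot)r$ into \eqref{bidicd1}--\eqref{bidicd9}. However, your proposal consists of predictions about which identity yields which condition. That bookkeeping is the entire content of the proposition, and several of your predictions are wrong.

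In (a) there are five axioms, not eight, and they correspond one-to-one to the five equations:
\begin{itemize}
\item coassociativity of $\delta_{\rhd,r}$ gives exactly \eqref{tdb:eqco1}, not a combination of \eqref{tdb:eqco3} and \eqref{tdb:eqco4};
\item coassociativity of $\delta_{\lhd,r}$ gives \eqref{tdb:eqco2};
\item \eqref{codiass1}--\eqref{codiass3} give \eqref{tdb:eqco3}--\eqref{tdb:eqco5}.
\end{itemize}
Outer-leg terms carrying both $x$ and a product of legs of $r$ do not cancel. For example, $a_j\star(a_i\star x)\otimes b_j\otimes b_i$ survives, via $a\star(b\star x)=(a\rhd b)\star x$, as part of $(R_\star(x)\otimes\id\otimes\id)\mathbf{B}(r)$. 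Sometimes a vanishing term must also be inserted to complete $\mathbf{A}$ or $\mathbf{C}$, e.g.\ $(R_\rhd(x)\otimes\id\otimes\id)(r_{12}\star r_{13})=0$ by \eqref{dias3}.

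In (b), \eqref{bidicd1} contains no $\tau$ and holds for every $r$: it reduces to $x\lhd(y\star b_i)=0$ and $(a_i\star x)\rhd y=0$. Condition \eqref{bitdb:eqco1} comes from \eqref{bidicd2}, and \eqref{bitdb:eqco2} from \eqref{bidicd3}.

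The genuine failure is your last step. Note that $\tau F(y)=(\id\otimes R_\rhd(y)-L_\star(y)\otimes\id)\tau$. This operator commutes with $E(x)-F(x)=-R_\lhd(x)\otimes\id+\id\otimes L_\rhd(x)$, using $(y\star z)\lhd x=y\star(z\lhd x)$ and associativity of $\rhd$. Hence, with \eqref{bidicd8}, \eqref{bidicd9} as defined, one gets exactly
\begin{align*}
\eqref{bidicd8}&\iff (L_\star(x)\otimes\id-\id\otimes R_\lhd(x))E(y)(r-\tau(r))=0,\\
\eqref{bidicd9}&\iff (L_\star(y)\otimes\id-\id\otimes R_\rhd(y))(E(x)-F(x))(r-\tau(r))=0 .
\end{align*}
The first agrees with \eqref{bitdb:eqco3} only modulo \eqref{bitdb:eqco1}. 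The second has $L_\star$ where \eqref{bitdb:eqco4} has $R_\star$, and these are not equivalent.

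A counterexample for the ``only if'' direction:
\begin{itemize}
\item Take basis $a,b,c,d$ with $a\rhd c=c\rhd b=d$, all other $\rhd$-products of basis vectors zero, $\lhd=0$, and $r=a\otimes b$.
\item Then $\mathbf{A}(r)=\mathbf{B}(r)=\mathbf{C}(r)=0$.
\item The only nonzero coproducts are $\delta_{\rhd,r}(c)=d\otimes b$ and $\delta_{\lhd,r}(c)=d\otimes b-a\otimes d$.
\item A direct check confirms \eqref{bidicd1}--\eqref{bidicd9}, so this is a coboundary bi-dialgebra.
\item Yet $(E(c)-F(c))(r-\tau(r))=a\otimes d$, and
$(R_\star(c)\otimes\id-\id\otimes R_\rhd(c))(a\otimes d)=d\otimes d\neq 0$, so \eqref{bitdb:eqco4} fails.
\end{itemize}
The converse also fails. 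On the span of $e_1,e_2,e_3,f_1,f_2,f_3$ with $e_1\rhd e_2=e_3$, $f_1\rhd f_2=f_3$, $\lhd=0$ and $r=e_2\otimes f_2$, all printed conditions hold, but \eqref{bidicd9} fails at $x=f_1$, $y=e_1$.

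So your step ``\eqref{bidicd8}--\eqref{bidicd9} reduce to \eqref{bitdb:eqco4}'' cannot be carried out. The paper's ``by a similar calculation'' has the same defect. Part (b) becomes true only after replacing $R_\star(x)$ by $L_\star(x)$ in \eqref{bitdb:eqco4}.
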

    \begin{proof} Let  $r = \sum\limits_i a_i \otimes b_i \in A \otimes A$.

 \eqref{tdb:co1}. Let $x \in A$. Then we have
\begin{align*}
&    (\id \otimes \delta_{\rhd,r})\delta_{\rhd,r}(x) - ( \delta_{\rhd,r} \otimes \id )\delta_{\rhd,r}(x) \\
&=\!\sum_{i,j}\!\Big( a_i  \star x \!\otimes\! a_j \star b_i \!\otimes\! b_j \!+\!   a_i  \star x \!\otimes\! a_j  \!\otimes\!  b_i  \lhd b_j \!+ \!a_i \!\otimes \! a_j  \star (x \lhd b_i) \!\otimes \!b_j \!+  \! a_i \!\otimes \!a_j \!\otimes \!  (x \lhd b_i)  \lhd b_j  \\
&\quad- a_j \star (a_i  \star x) \otimes   b_j\!\otimes\! b_i \!- \!  a_j   \! \otimes\! (a_i \star x) \lhd b_j \!\otimes\!  b_i   \!- \!a_j \star a_i \!\otimes \! b_j \!\otimes\!   x \lhd  b_i \!-\!  a_j \!\otimes \!a_i \lhd b_j \otimes    x \lhd b_i   \Big) \\
&=S_{1}+ S_{2}+S_{3},
 \end{align*}
where
\begin{align*}
S_{1} &=\sum_{i,j} \Big(a_i  \star x  \otimes  a_j \star b_i  \otimes  b_j  +    a_i  \star x  \otimes  a_j   \otimes   b_i  \lhd b_j   - a_j \star (a_i  \star x) \otimes   b_j \otimes  b_i  \Big)\\
&=\sum_{i,j} \Big((R_{\star}(x)\otimes \id \otimes \id)(r_{23}\star r_{12}+r_{13}\lhd r_{23})- (a_j \rhd a_i ) \star x \otimes   b_j \otimes  b_i  \Big)\\
&= (R_{\star}(x)\otimes \id \otimes \id)(r_{23}\star r_{12}+r_{13}\lhd r_{23}-r_{12}\rhd r_{13})  \\
&= (R_{\star}(x)\otimes \id \otimes \id)\mathbf{B} (r),  \\
S_{2} &=\sum_{i,j} \Big(     a_i  \otimes  a_j  \otimes    (x \lhd b_i)  \lhd b_j -  a_j \star a_i  \otimes   b_j  \otimes    x \lhd  b_i  -   a_j  \otimes  a_i \lhd b_j \otimes    x \lhd b_i  \Big)\\
&=\sum_{i,j} \Big(( \id \otimes \id \otimes L_{\lhd}(x))(r_{13}\rhd r_{23}- r_{12}\star r_{13} - r_{23}\lhd r_{12})  \Big)\\
&= ( \id \otimes \id \otimes L_{\lhd}(x))\mathbf{A} (r),\\
S_{3} &=\sum_{i,j} \Big( a_i  \otimes   a_j  \star (x \lhd b_i)  \otimes  b_j   -    a_j     \otimes  (a_i \star x) \lhd b_j  \otimes   b_i  \Big) =0.
\end{align*}
Then
 \begin{align*}
S_{1}+S_{2}+S_{3} &= ( \id \otimes \id \otimes  L_{\lhd}(x)  )\mathbf{A} (r)+(  R_{\star}(x)\otimes   \id \otimes \id   )\mathbf{B} (r) . 
 \end{align*}
Thus $(A,\delta_{\rhd,r} )$ is an associative  coalgebra if and only if Eq.~\eqref{tdb:eqco1} holds. Moreover, we have
\begin{align*}
&    (\id \otimes \delta_{\lhd,r})\delta_{\lhd,r}(x) - ( \delta_{\lhd,r} \otimes \id )\delta_{\lhd,r}(x) \\
&=\!\sum_{i,j}\!\Big( a_i  \rhd x \!\otimes\! a_j \rhd b_i \!\otimes\! b_j \!-\!   a_i  \rhd x \!\otimes\! a_j  \!\otimes\!  b_i  \star b_j \!- \!a_i \!\otimes \! a_j  \rhd (x \star b_i) \!\otimes \!b_j \!+  \! a_i \!\otimes \!a_j \!\otimes \!  (x \star b_i)  \star b_j  \\
&\quad- a_j \rhd (a_i \rhd  x) \!\otimes  \! b_j\!\otimes\! b_i \!+ \!  a_j   \! \otimes\! (a_i \rhd x) \star b_j \!\otimes\!  b_i   \!+ \!a_j \rhd a_i \!\otimes \! b_j \!\otimes\!   x \star  b_i \!-\!  a_j \!\otimes \!a_i \star b_j \otimes    x \star b_i   \Big) \\
&=U_{1}+ U_{2}+U_{3},
 \end{align*}
where
\begin{align*}
U_{1} &=\sum_{i,j} \Big(a_i  \rhd x  \otimes  a_j \rhd b_i  \otimes  b_j  -    a_i  \rhd x  \otimes  a_j   \otimes   b_i  \star b_j   - a_j \rhd (a_i \rhd  x)  \otimes    b_j \otimes  b_i  \Big)\\
&=\sum_{i,j} \Big((R_{\rhd}(x)\otimes \id \otimes \id)(r_{23}\rhd r_{12} - r_{13}\star r_{23})- (a_j \lhd a_i ) \rhd   x \otimes   b_j \otimes  b_i  \Big)\\
&= (R_{\rhd}(x)\otimes \id \otimes \id)(r_{23}\rhd r_{12} - r_{13}\star r_{23}-r_{12}\lhd r_{13})  \\
&= -(R_{\rhd}(x)\otimes \id \otimes \id)\mathbf{C} (r),  \\
U_{2} &=\sum_{i,j} \Big(     a_i  \otimes  a_j  \otimes    (x \star b_i)  \star b_j  +  a_j \rhd a_i  \otimes   b_j  \otimes    x \star  b_i  -   a_j  \otimes  a_i \star b_j \otimes    x \star b_i  \Big)\\
&=\sum_{i,j} \Big(( \id \otimes \id \otimes L_{\star}(x))(  r_{12}\rhd r_{13} - r_{23}\star r_{12}) -   a_i  \otimes  a_j  \otimes    x \star (b_i  \lhd b_j ) \Big)\\
&= ( \id \otimes \id \otimes L_{\star}(x))(  r_{12}\rhd r_{13} - r_{23}\star r_{12} - r_{13}\lhd r_{23})  \\
&=- ( \id \otimes \id \otimes L_{\star}(x))\mathbf{B} (r),\\
U_{3} &=\sum_{i,j} \Big(    a_j     \otimes  (a_i \rhd x) \star b_j  \otimes   b_i   -  a_i  \otimes   a_j  \rhd (x \star b_i)  \otimes  b_j   \Big) =0.
\end{align*}
Then
 \begin{align*}
U_{1}+U_{2}+U_{3} &= - ( \id \otimes \id \otimes L_{\star}(x))\mathbf{B} (r)-(R_{\rhd}(x)\otimes \id \otimes \id)\mathbf{C} (r). 
 \end{align*}
Thus $(A,\delta_{\lhd,r} )$ is an associative  coalgebra if and only if Eq.~\eqref{tdb:eqco2} holds.  By a similar calculation, Eqs.~\eqref{codiass1}-\eqref{codiass3} hold   if and only if Eqs.~\eqref{tdb:eqco3}-\eqref{tdb:eqco5} hold,  respectively.

 \eqref{tdb:co2}. Let $x,y \in A$. Then we have
\begin{align*}
 &  (\operatorname{id} \otimes L_{\lhd}(x)  ) \delta_{\lhd,r}(y) -( R_{\rhd}(y) \otimes \operatorname{id}) \delta_{\rhd,r}(x)\\
 & =a_i \rhd y \otimes x \lhd b_i - a_i  \otimes x \lhd (y \star b_i)  - (a_i \star x )\rhd y \otimes b_i - a_i \rhd y \otimes x \lhd b_i  
 =0,\\
 &( L_{\lhd}(x) \otimes \operatorname{id}  ) \delta_{\rhd,r}(y)-
 (\operatorname{id} \otimes L_{\lhd}(y)  ) \tau \delta_{\rhd,r}(x) \\ 
  & = x \lhd (a_i \star y) \otimes b_i + x \lhd a_i \otimes y \lhd b_i -  x \lhd b_i \otimes y \lhd a_i\\
  &= (L_{\lhd}(x)  \otimes \id)E(y)(r -\tau(r)) = 0.
\end{align*}
Hence Eq. ~\eqref{bidicd1} holds automatically,   Eq. ~\eqref{bidicd2} holds  if and only if   Eq. ~\eqref{bitdb:eqco1} holds. By a similar calculation, we find that Eq.~\eqref{bidicd3}  holds  if and only if   Eq. ~\eqref{bitdb:eqco2} holds, Eqs.~\eqref{bidicd4}-\eqref{bidicd7}  holds  automatically, Eqs.~\eqref{bidicd8}-\eqref{bidicd9}  hold  if and only if   Eq. ~\eqref{bitdb:eqco3}-\eqref{bitdb:eqco4} hold,  respectively. 
\end{proof}
   

Consequently, we arrive at the following main result of this paper.

\begin{thm}\label{tcdppab:1}
Let  $(A,\rhd,\lhd)$   be a dialgebra and $r \in A \otimes A$. Define  linear maps ${\delta_{\rhd,r}}, {\delta_{\lhd,r}}: A \rightarrow A \otimes A$ by Eq.~\eqref{EF}. 
If $r$ is a solution of the DAYBE and the skew-symmetric part of $r$ is invariant, then $(A,\rhd,\lhd,{\delta_{\rhd,r}}, {\delta_{\lhd,r}})$ is a bi-dialgebra. In particular, if  $r$ is a symmetric solution of the DAYBE, then $(A,\rhd,\lhd,{\delta_{\rhd,r}}, {\delta_{\lhd,r}})$ is a bi-dialgebra.
\end{thm}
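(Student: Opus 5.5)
The plan is to apply Proposition \ref{cobdppb} directly. Part \eqref{tdb:co2} of that proposition says that $(A,\rhd,\lhd,\delta_{\rhd,r},\delta_{\lhd,r})$ is a bi-dialgebra exactly when two groups of identities hold: Eqs.~\eqref{tdb:eqco1}--\eqref{tdb:eqco5}, which make $(A,\delta_{\rhd,r},\delta_{\lhd,r})$ a co-dialgebra, and Eqs.~\eqref{bitdb:eqco1}--\eqref{bitdb:eqco4}, which give the compatibility conditions. It therefore suffices to show that, under the two hypotheses ($\mathbf{A}(r)=0$ and $\alpha=r-\tau(r)$ invariant), every one of these nine identities holds.

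\emph{Compatibility identities.} Eqs.~\eqref{bitdb:eqco1}--\eqref{bitdb:eqco4} involve $r$ only through $E(y)(r-\tau(r))=E(y)\alpha$ and $F(y)(r-\tau(r))=F(y)\alpha$. By Definition \ref{invele1}, invariance of $\alpha$ means $E(y)\alpha=F(y)\alpha=0$ for all $y\in A$. Hence every left-hand side in \eqref{bitdb:eqco1}--\eqref{bitdb:eqco4} is an operator applied to zero, and these four identities hold. This step needs no computation.

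\emph{Co-dialgebra identities.} Eqs.~\eqref{tdb:eqco1}--\eqref{tdb:eqco5} are linear in the three tensors $\mathbf{A}(r)$, $\mathbf{B}(r)$ and $\mathbf{C}(r)$. Since $\alpha$ is invariant, Lemma \ref{ryutaur} applies. It says that $\mathbf{A}(r)=0$ is equivalent to $\mathbf{B}(r)=0$ and to $\mathbf{C}(r)=0$. So the DAYBE hypothesis forces all three tensors to vanish, and each of \eqref{tdb:eqco1}--\eqref{tdb:eqco5} holds trivially. Combining the two steps, Proposition \ref{cobdppb}\eqref{tdb:co2} gives the bi-dialgebra structure.

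\emph{Symmetric case.} If $r$ is symmetric, then $\alpha=r-\tau(r)=0$, and $0$ is invariant because $E(x)0=F(x)0=0$. The particular case is therefore a special instance of the general statement.

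The real content lies in the two earlier results invoked here: the equivalences in Lemma \ref{ryutaur} and the reduction carried out in Proposition \ref{cobdppb}. If I had to re-verify one step, it would be the claim, made in the proof of Proposition \ref{cobdppb}, that Eqs.~\eqref{bidicd4}--\eqref{bidicd7} hold automatically. That claim is what allows the compatibility conditions to depend only on $E(y)\alpha$ and $F(y)\alpha$. Since it is stated earlier in the paper, I take it as given.
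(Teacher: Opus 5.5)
Your proposal is correct and follows the paper's own proof, which reads in full: ``It follows from Lemma \ref{ryutaur} and Proposition \ref{cobdppb}.'' Your argument is that one-line proof written out: invariance of $\alpha$ makes $E(y)\alpha=F(y)\alpha=0$, which kills Eqs.~\eqref{bitdb:eqco1}--\eqref{bitdb:eqco4}, and Lemma \ref{ryutaur} gives $\mathbf{A}(r)=\mathbf{B}(r)=\mathbf{C}(r)=0$, which kills Eqs.~\eqref{tdb:eqco1}--\eqref{tdb:eqco5}.
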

\begin{proof}
It follows from Lemma \ref{ryutaur} and  Proposition \ref{cobdppb}.
\end{proof}

   \subsection{Quasi-triangular bi-dialgebra}

\begin{defi}\label{quasidb}
	Let  $(A,\rhd,\lhd)$   be a dialgebra.  If $r$ is a solution of the DAYBE in $(A,\rhd,\lhd)$  and the skew-symmetric part of $r \in A \otimes  A$ is  invariant, then the coboundary bi-dialgebra $(A,\rhd,\lhd,{\delta_{\rhd,r}}, {\delta_{\lhd,r}})$ induced by $r$ is called a \textbf{quasi-triangular bi-dialgebra}.
	In particular, if $r$ is a symmetric solution of the DAYBE in $(A,\rhd,\lhd)$, then   $(A,\rhd,\lhd,{\delta_{\rhd,r}}, {\delta_{\lhd,r}})$  is called a \textbf{triangular bi-dialgebra}.
\end{defi}

\begin{pro}\label{tquasidba}
	Let  $(A,\rhd,\lhd)$   be a dialgebra and $r \in A \otimes A$.  
	Then $(A,\rhd,\lhd,{\delta_{\rhd,r}}, {\delta_{\lhd,r}})$ is a  quasi-triangular bi-dialgebra if and only if $(A,\rhd,\lhd,{\delta_{\rhd,\tau(r)}}, {\delta_{\lhd,\tau(r)}})$ is a quasi-triangular bi-dialgebra.
\end{pro}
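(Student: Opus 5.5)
By Definition \ref{quasidb}, a quasi-triangular bi-dialgebra induced by $r$ means exactly two conditions. First, $r$ is a solution of the DAYBE. Second, the skew-symmetric part $\alpha=r-\tau(r)$ is invariant. The coboundary maps are then given by Eq.~\eqref{EF}, and Theorem \ref{tcdppab:1} guarantees that they form a bi-dialgebra. So the plan is to check that both conditions transfer from $r$ to $\tau(r)$. By symmetry, since $\tau(\tau(r))=r$, the converse direction follows by the same argument, so one implication suffices.

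\emph{Invariance.} The skew-symmetric part of $\tau(r)$ is $\tau(r)-r=-\alpha$. The conditions $E(x)\alpha=F(x)\alpha=0$ of Definition \ref{invele1} are linear in $\alpha$. Hence $-\alpha$ is invariant whenever $\alpha$ is. I expect this step to need no computation.

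\emph{DAYBE.} Assume $r$ solves the DAYBE and its skew-symmetric part is invariant. Lemma \ref{ryutaur}, via the equivalence of \eqref{pqr:1} and \eqref{pqr:1.5}, then states that $\tau(r)$ also solves the DAYBE. The hypothesis of that lemma is exactly the invariance of $\alpha$, which we have.

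Combining the two steps, $\tau(r)$ is a solution of the DAYBE with invariant skew-symmetric part. By Theorem \ref{tcdppab:1}, $(A,\rhd,\lhd,\delta_{\rhd,\tau(r)},\delta_{\lhd,\tau(r)})$ is therefore a bi-dialgebra, and hence a quasi-triangular one. The only substantive ingredient is the identity $\sigma_{123}(\mathbf{A}(\tau(r)))=\sigma_{123}(\mathbf{A}(r))+\sum_i a_i\otimes E(b_i)\alpha$ used in Lemma \ref{ryutaur}, which has already been established. No real obstacle remains beyond citing it correctly, together with the observation that the invariance hypothesis is symmetric under $r\mapsto\tau(r)$.
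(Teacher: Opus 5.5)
Your argument is correct and matches the paper's proof, which simply cites Lemma~\ref{ryutaur}; your remark that the skew-symmetric part of $\tau(r)$ is $-\alpha$, hence invariant by linearity, fills in the one detail the paper leaves implicit. One caution: the identity you call the substantive ingredient, $\sigma_{123}(\mathbf{A}(\tau(r)))=\sigma_{123}(\mathbf{A}(r))+\sum_i a_i\otimes E(b_i)\alpha$, does not hold for general $r$ (it fails for $r=e_2\otimes e_1$ in Example~\ref{Ex:dias}~(1)), so do not present it as the mechanism---what actually holds is $\mathbf{A}(\tau(r))=\mathbf{A}(r)$ whenever $\alpha$ is invariant, because the cross terms between $\alpha$ and $r+\tau(r)$ cancel using both $E(x)\alpha=0$ and $F(x)\alpha=0$.
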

\begin{proof}
	It follows from Lemma~\ref{ryutaur}.
\end{proof}

Next we turn to study the operator form of the DAYBE.

\begin{pro}\label{operaforDAYBE}
 Let  $(A,\rhd,\lhd)$   be a dialgebra and $r \in A \otimes A$ whose skew-symmetric part is
invariant. Then   $r$  is a solution of the DAYBE on $(A,\rhd,\lhd)$  if and only if  the following equations hold:
 \begin{align}
   \widetilde{r}\left(a^{*}\right) \rhd \widetilde{r}\left(b^{*}\right)&=\widetilde{r}\left(-R_{\lhd}^{*}\left(\widetilde{r}\left(a^{*}\right)\right) b^{*}-L_{\star}^{*}\left(\widetilde{r}^t\left(b^{*}\right)\right) a^{*}\right),\label{szxsDAYBE1}\\
\widetilde{r}\left(a^{*}\right) \lhd \widetilde{r}\left(b^{*}\right)&=\widetilde{r}\left(R_{\star}^{*}\left(\widetilde{r}\left(a^{*}\right)\right) b^{*}-L_{\rhd}^{*}\left(\widetilde{r}^t\left(b^{*}\right)\right) a^{*}\right), \quad \forall a^{*}, b^{*} \in A^*. \label{szxsDAYBE2}
   \end{align}
\end{pro}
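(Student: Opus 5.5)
The plan is to pair $\mathbf{A}(r)$, or one of the equivalent tensors from Lemma \ref{ryutaur}, against $a^*\otimes b^*\otimes c^*$. We then rewrite each of the three terms through $\widetilde{r}$ and $\widetilde{r}^t$. Write $r=\sum_i a_i\otimes b_i$, so that $\widetilde{r}(a^*)=\sum_i\langle a^*,a_i\rangle b_i$ and $\widetilde{r}^t(b^*)=\sum_i\langle b^*,b_i\rangle a_i$. By Lemma \ref{ryutaur}, when the skew-symmetric part $\alpha$ is invariant, $\mathbf{A}(r)=0$ is equivalent to $\mathbf{B}(r)=0$ and to $\mathbf{C}(r)=0$. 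The first step is to see which of these puts the product $\widetilde{r}(a^*)\diamond\widetilde{r}(b^*)$ in the third slot. In $\mathbf{A}(r)$ the term $r_{13}\rhd r_{23}=\sum a_i\otimes a_j\otimes b_i\rhd b_j$ gives
\[
\langle r_{13}\rhd r_{23},a^*\otimes b^*\otimes c^*\rangle=\langle \widetilde{r}(a^*)\rhd\widetilde{r}(b^*),c^*\rangle .
\]
The term $r_{23}\lhd r_{12}=\sum a_j\otimes a_i\lhd b_j\otimes b_i$ pairs to $\langle \widetilde{r}^t(b^*)\lhd \ldots\rangle$, which after moving operators to the dual gives $-\langle\widetilde{r}(R^*_{\lhd}(\widetilde{r}(a^*))b^*),c^*\rangle$. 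The term $r_{12}\star r_{13}=\sum a_i\star a_j\otimes b_i\otimes b_j$ pairs to $-\langle \widetilde{r}(L_\star^*(\widetilde{r}^t(b^*))a^*),c^*\rangle$, after checking that $\langle a^*,\widetilde{r}^t(b^*)\star\widetilde{r}^t(c^*)\rangle$ can be reorganized into this form. Thus $\langle\mathbf{A}(r),a^*\otimes b^*\otimes c^*\rangle$ should equal the difference of the two sides of Eq.~\eqref{szxsDAYBE1} paired with $c^*$, up to sign bookkeeping. Hence $\mathbf{A}(r)=0$ if and only if Eq.~\eqref{szxsDAYBE1} holds for all $a^*,b^*$. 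Here $\widetilde{r}$ versus $\widetilde{r}^t$ is fixed by which slot each factor of $r$ occupies.

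The second step is the same computation for $\mathbf{B}(r)$ or $\mathbf{C}(r)$. I expect one of them, after a permutation of tensor slots, to produce a $\lhd$ product in the third slot. It should give $\langle \widetilde{r}(a^*)\lhd\widetilde{r}(b^*)-\widetilde{r}(R_\star^*(\widetilde{r}(a^*))b^*-L_\rhd^*(\widetilde{r}^t(b^*))a^*),c^*\rangle$. My first try is $\mathbf{B}(r)$, since it contains $r_{13}\lhd r_{23}$. Its other two terms are $r_{23}\star r_{12}$, which should yield the $R_\star^*$ term, and $r_{12}\rhd r_{13}$, which should yield the $L_\rhd^*$ term.

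This gives one direction: if $r$ solves the DAYBE, Lemma \ref{ryutaur} gives $\mathbf{A}(r)=\mathbf{B}(r)=0$, and hence both Eqs.~\eqref{szxsDAYBE1} and \eqref{szxsDAYBE2} hold. Conversely, Eq.~\eqref{szxsDAYBE1} alone already gives $\mathbf{A}(r)=0$.

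The main obstacle is the sign and slot bookkeeping in the dual pairings. In particular we must make sure each cross term comes out with $\widetilde{r}$ rather than $\widetilde{r}^t$ in the right place. If a term appears with the wrong one, I would fix it using $\widetilde{r}^t=\widetilde{r}-\widetilde{\alpha}$ together with the invariance identities for $\widetilde{\alpha}$ in Lemma \ref{semiinequ}\,(c)–(d). This is exactly where the hypothesis that $\alpha$ is invariant should enter beyond Lemma \ref{ryutaur}.
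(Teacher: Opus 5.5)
Your plan is correct and is essentially the paper's proof: pair $\mathbf{A}(r)$ and $\mathbf{B}(r)$ with $a^*\otimes b^*\otimes c^*$ to identify them with the defects of Eqs.~\eqref{szxsDAYBE1} and \eqref{szxsDAYBE2}, then use Lemma~\ref{ryutaur} for $\mathbf{A}(r)=0\Leftrightarrow\mathbf{B}(r)=0$. The pairings come out exactly as needed (for instance $r_{23}\lhd r_{12}$ pairs to $\langle b^*,\widetilde{r}^t(c^*)\lhd\widetilde{r}(a^*)\rangle$, not to something involving $\widetilde{r}^t(b^*)$), so no correction via $\widetilde{r}^t=\widetilde{r}-\widetilde{\alpha}$ is required, and invariance of $\alpha$ enters only through Lemma~\ref{ryutaur}.
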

\begin{proof}Let $r=\sum\limits_{i} a_i \otimes b_i$. For all $a^{*},b^{*},c^{*} \in A^*$, we have
\begin{align*}
\left\langle \widetilde{r}\left(a^{*}\right) \rhd\widetilde{r}\left(b^{*}\right), c^{*}\right\rangle &= -\left\langle  \widetilde{r}\left(b^{*}\right), L_{\rhd}^*(\widetilde{r}(a^{*}))c^{*}\right\rangle =  -\sum_{i}\left\langle  b^{*} ,a_i\right\rangle  \left\langle   a^{*} \otimes R_{\rhd}^{*}(b_i)c^{*} ,r  \right\rangle\\
&= -\sum_{i,j} \left\langle   a^{*}  ,a_j  \right\rangle \left\langle  b^{*} ,a_i\right\rangle \left\langle R_{\rhd}^{*}(b_i)c^{*} ,b_j \right\rangle  
 = \left\langle  a^{*}  \otimes b^{*} \otimes c^{*},r_{13}\rhd r_{23}\right\rangle,\\
\left\langle \widetilde{r}(-R_{\lhd}^{*}\left(\widetilde{r}\left(a^{*}\right)\right) b^{*}), c^{*}\right\rangle &= \left\langle  -R_{\lhd}^{*}\left(\widetilde{r}\left(a^{*}\right)\right) b^{*} \otimes c^{*}, r\right\rangle  =  -\sum_{i}\left\langle  L_{\lhd}^{*}\left(a_i\right)b^{*}  , \widetilde{r}\left(a^{*}\right)\right\rangle \left\langle   c^{*}, b_i\right\rangle \\
&=  -\sum_{i,j}\left\langle   a^{*} , a_j\right\rangle  \left\langle   L_{\lhd}^{*}\left(a_i\right)b^{*}  , b_j\right\rangle \left\langle   c^{*}, b_i\right\rangle 
 = \left\langle   a^{*} \otimes b^{*}\otimes c^*  , r_{23} \lhd r_{12} \right\rangle, \\
\left\langle \widetilde{r}(-L_{\star}^{*}\left(\widetilde{r}^t\left(b^{*}\right)\right) a^{*}), c^{*}\right\rangle &= -\left\langle  L_{\star}^{*}\left(\widetilde{r}^t\left(b^{*}\right)\right) a^{*} \otimes c^{*}, r\right\rangle 
 =  -\sum_{i}\left\langle   R_{\star}^{*}\left(a_i\right)a^{*}  , \widetilde{r}^t\left(b^{*}\right)\right\rangle \left\langle   c^{*}, b_i\right\rangle  \\
 &=   \sum_{i,j} \left\langle    a^{*}  , a_j \star a_i\right\rangle \left\langle   b^{*} , b_j\right\rangle \left\langle   c^{*}, b_i\right\rangle =   \left\langle   a^{*} \otimes b^{*}\otimes c^*  ,  r_{12} \star r_{13}\right\rangle.
\end{align*}
Then  $\mathbf{A} (r) = 0$ if and only if   Eq.~\eqref{szxsDAYBE1} holds. Similarly, we have
\begin{align*}
\left\langle \widetilde{r}\left(a^{*}\right) \lhd\widetilde{r}\left(b^{*}\right), c^{*}\right\rangle &=    \langle  a^{*}  \otimes b^{*} \otimes c^{*},\sum_{i,j} a_j \otimes a_i \otimes  b_j \lhd b_i  \rangle
=: \left\langle  a^{*}  \otimes b^{*} \otimes c^{*}, r_{13}  \lhd r_{23} \right\rangle,\\
\left\langle
\widetilde{r}( R_{\star}^{*}\left(\widetilde{r}\left(a^{*}\right)\right)
b^{*}), c^{*}\right\rangle &=  -\langle  a^{*}  \otimes b^{*}
\otimes c^{*}, \sum_{i,j} a_j \otimes   a_i \star b_j  \otimes b_i
 \rangle
=: \left\langle   a^{*} \otimes b^{*}\otimes c^*  ,  -r_{23}\star  r_{12}  \right\rangle, \\
 \left\langle \widetilde{r}(-L_{\rhd}^{*}\left(\widetilde{r}^t \left(b^{*}\right)\right) a^{*}), c^{*}\right\rangle &=   \langle  a^{*}
\otimes b^{*} \otimes c^{*},\sum_{i,j} a_j \rhd  a_i \otimes b_j
\otimes  b_i \rangle=:  \left\langle   a^{*} \otimes
b^{*}\otimes c^*  , r_{12} \rhd r_{13}\right\rangle.
\end{align*}
Thus   $\mathbf{B} (r) = 0$ if and only if   Eq.~\eqref{szxsDAYBE2} holds.   Hence it follows from  Lemma~\ref{ryutaur}.
\end{proof}

\begin{lem}\label{wrsqodi}
 Let  $(A,\rhd,\lhd)$   be a dialgebra and $r \in A \otimes A$ whose skew-symmetric part is
invariant.  Define  linear maps ${\delta_{\rhd,r}}, {\delta_{\lhd,r}}: A \rightarrow A \otimes A$ by Eq.~\eqref{EF} and $\rhd_{r},\lhd_{r}: A^* \otimes A^* \to A^*$ be the linear duals of  $\delta_{\rhd,r}$ and $\delta_{\lhd,r}$ respectively. Suppose that $r$ is a solution of the DAYBE in $(A,\rhd,\lhd)$. Then the dialgebra structure $\rhd_{r},\lhd_{r}$ on  $A^{*}$ are respectively given by
\begin{align}
a^{*} \rhd_{r} b^{*}&=-R_{\lhd}^{*}(\widetilde{r}(a^{*})) b^{*}-L_{\star}^{*}(\widetilde{r}^t(b^{*})) a^{*},  \label{wsddiybe1}\\
a^{*} \lhd_{r} b^{*}&=R_{\star}^{*}(\widetilde{r}(a^{*})) b^{*}-L_{\rhd}^{*}(\widetilde{r}^t(b^{*})) a^{*}, \quad \forall a^{*}, b^{*} \in A^{*}.\label{wsddiybe2}
\end{align}
Moreover, $\widetilde{r}: (A^*,\rhd_{r},\lhd_{r}) \to(A, \rhd,\lhd) $  is a  homomorphism of dialgebras.
\end{lem}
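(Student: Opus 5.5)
The plan has two parts: first compute the dual products $\rhd_r,\lhd_r$ directly from the definitions of $\delta_{\rhd,r}=E(\cdot)r$ and $\delta_{\lhd,r}=F(\cdot)r$; then obtain the homomorphism property by comparing with the operator form of the DAYBE in Proposition~\ref{operaforDAYBE}. The dialgebra axioms for $(A^*,\rhd_r,\lhd_r)$ need no separate check. By Theorem~\ref{tcdppab:1}, $(A,\rhd,\lhd,\delta_{\rhd,r},\delta_{\lhd,r})$ is a bi-dialgebra, so $(A,\delta_{\rhd,r},\delta_{\lhd,r})$ is a co-dialgebra, and hence its dual is a dialgebra.

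For Eq.~\eqref{wsddiybe1}, write $r=\sum_i a_i\otimes b_i$ and pair against $x\in A$:
\begin{align*}
\langle a^*\rhd_r b^*,x\rangle=\langle a^*\otimes b^*,E(x)r\rangle
=\sum_i\langle a^*,a_i\star x\rangle\langle b^*,b_i\rangle+\sum_i\langle a^*,a_i\rangle\langle b^*,x\lhd b_i\rangle .
\end{align*}
The first sum equals $\langle a^*, \widetilde{r}^t(b^*)\star x\rangle=-\langle L_\star^*(\widetilde{r}^t(b^*))a^*,x\rangle$, using $\sum_i\langle b^*,b_i\rangle a_i=\widetilde{r}^t(b^*)$. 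The second sum equals $\langle b^*,x\lhd\widetilde{r}(a^*)\rangle=-\langle R_\lhd^*(\widetilde{r}(a^*))b^*,x\rangle$. This gives Eq.~\eqref{wsddiybe1}.

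The same computation with $F(x)=R_\rhd(x)\otimes\id-\id\otimes L_\star(x)$ gives $\langle a^*,\widetilde{r}^t(b^*)\rhd x\rangle-\langle b^*,x\star\widetilde{r}(a^*)\rangle$. This is $-\langle L_\rhd^*(\widetilde{r}^t(b^*))a^*,x\rangle+\langle R_\star^*(\widetilde{r}(a^*))b^*,x\rangle$, which is Eq.~\eqref{wsddiybe2}. The only care needed is the sign convention in Eq.~\eqref{dualmap} and keeping track of which of $\widetilde r$ or $\widetilde r^t$ appears. Neither the DAYBE nor invariance is used in this step.

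For the homomorphism property, apply $\widetilde r$ to Eqs.~\eqref{wsddiybe1}--\eqref{wsddiybe2}. This gives
$$\widetilde r(a^*\rhd_r b^*)=\widetilde r\big(-R_\lhd^*(\widetilde r(a^*))b^*-L_\star^*(\widetilde r^t(b^*))a^*\big),$$
and the analogous identity for $\lhd_r$. Since $r$ solves the DAYBE and its skew-symmetric part is invariant, Proposition~\ref{operaforDAYBE} identifies the right-hand sides with $\widetilde r(a^*)\rhd\widetilde r(b^*)$ and $\widetilde r(a^*)\lhd\widetilde r(b^*)$, respectively. The main obstacle is only bookkeeping: checking that the expressions obtained in the first step match the arguments of $\widetilde r$ in Eqs.~\eqref{szxsDAYBE1}--\eqref{szxsDAYBE2} term by term, including signs.
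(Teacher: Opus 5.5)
Your proposal is correct and follows the paper's own proof: you dualize $E(x)r$ and $F(x)r$ against $a^*\otimes b^*$ to obtain Eqs.~\eqref{wsddiybe1}--\eqref{wsddiybe2}, and then read off the homomorphism property of $\widetilde{r}$ from Proposition~\ref{operaforDAYBE}. Your appeal to Theorem~\ref{tcdppab:1} to justify that $(A^*,\rhd_r,\lhd_r)$ is a dialgebra makes explicit a point the paper leaves implicit.
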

\begin{proof}For all $x \in A$, $a^{*}, b^{*} \in A^{*}$, we have
       \begin{align*}
\left\langle a^{*} \rhd_{r} b^{*}, x\right\rangle=&\left\langle a^{*} \otimes b^{*}, \delta_{\rhd,r}(x)\right\rangle 
= -\left\langle a^{*} \otimes L_{\lhd}^*(x)b^{*} ,  r\right\rangle  - \left\langle  R_{\star}^*(x) a^{*} \otimes b^{*},  r \right\rangle\\
=&-\left\langle L_{\lhd}^*(x)b^{*} ,  \widetilde{r}\left(a^{*}\right)\right\rangle   - \left\langle  R_{\star}^*(x) a^{*},  \widetilde{r}^t\left(b^{*}\right) \right\rangle 
= \left\langle -R_{\lhd}^{*}(\widetilde{r}(a^{*})) b^{*}-L_{\star}^{*}(\widetilde{r}^t(b^{*})) a^{*},  x \right\rangle,\\
\left\langle a^{*} \lhd_{r} b^{*}, x\right\rangle=&\left\langle a^{*} \otimes b^{*}, \delta_{\rhd,r}(x)\right\rangle 
=  \left\langle a^{*} \otimes L_{\star}^*(x)b^{*} ,  r\right\rangle  - \left\langle  R_{\rhd}^*(x) a^{*} \otimes b^{*},  r \right\rangle\\
=& \left\langle L_{\star}^*(x)b^{*},  \widetilde{r}\left(a^{*}\right)\right\rangle   - \left\langle  R_{\rhd}^*(x) a^{*} ,  \widetilde{r}^t\left(b^{*}\right) \right\rangle 
= \left\langle  R_{\star}^{*}(\widetilde{r}(a^{*})) b^{*}-L_{\rhd}^{*}(\widetilde{r}^t(b^{*})) a^{*},  x \right\rangle.
\end{align*}
Thus Eqs.~\eqref{wsddiybe1}-\eqref{wsddiybe2} holds. Moreover by Proposition \ref{operaforDAYBE}, we  have 
\begin{align*}
\widetilde{r}(a^{*} \rhd_{r} b^{*})&=\widetilde{r}\left(-R_{\lhd}^{*}(\widetilde{r}(a^{*})) b^{*}-L_{\star}^{*}(\widetilde{r}^t(b^{*})) a^{*}\right) = \widetilde{r}\left(a^{*}\right) \rhd \widetilde{r}\left(b^{*}\right),\\
\widetilde{r}(a^{*} \lhd_{r} b^{*})&=\widetilde{r}\left( R_{\star}^{*}(\widetilde{r}(a^{*})) b^{*}-L_{\rhd}^{*}(\widetilde{r}^t(b^{*})) a^{*}\right) = \widetilde{r}\left(a^{*}\right) \lhd \widetilde{r}\left(b^{*}\right).
\end{align*}
This completes the proof.
\end{proof}

\begin{thm}\label{thm:daybe}
	 Let  $(A,\rhd,\lhd)$   be a dialgebra  and $r \in A \otimes A$. Suppose  the skew-symmetric part $\alpha$ of $r$  is invariant.  
	Then $r$ is a solution of the DAYBE in $(A,\rhd,\lhd)$ if and only if  $(A^*,\rhd_r,\lhd_r)$ is a dialgebra and the linear maps $\widetilde{r}, \widetilde{r}^t: (A^*,\rhd_r,\lhd_r) \to (A,\rhd,\lhd)$ are homomorphisms of dialgebras, where $\rhd_r,\lhd_r: A^* \otimes A^* \to A^*$ are defined by Eqs.~\eqref{wsddiybe1} and \eqref{wsddiybe2}, respectively.
\end{thm}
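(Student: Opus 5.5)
Both directions rest on Proposition \ref{operaforDAYBE}. When $\alpha=r-\tau(r)$ is invariant, that proposition says $r$ solves the DAYBE exactly when Eqs.~\eqref{szxsDAYBE1}--\eqref{szxsDAYBE2} hold. Comparing with Eqs.~\eqref{wsddiybe1}--\eqref{wsddiybe2}, those two equations say precisely that $\widetilde{r}(a^*\rhd_r b^*)=\widetilde{r}(a^*)\rhd\widetilde{r}(b^*)$ and $\widetilde{r}(a^*\lhd_r b^*)=\widetilde{r}(a^*)\lhd\widetilde{r}(b^*)$. So the DAYBE is equivalent to $\widetilde{r}$ being multiplicative for the operations $\rhd_r,\lhd_r$. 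It remains to handle $\widetilde{r}^t$ and the dialgebra axioms on $A^*$.

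\emph{($\Rightarrow$)} Suppose $r$ solves the DAYBE.
\begin{enumerate}
\item By Theorem \ref{tcdppab:1}, $(A,\rhd,\lhd,\delta_{\rhd,r},\delta_{\lhd,r})$ is a bi-dialgebra. In particular $(A,\delta_{\rhd,r},\delta_{\lhd,r})$ is a co-dialgebra, so its linear dual $(A^*,\rhd_r,\lhd_r)$ is a dialgebra.
\item Lemma \ref{wrsqodi} identifies these dual operations with Eqs.~\eqref{wsddiybe1}--\eqref{wsddiybe2} and shows that $\widetilde{r}$ is a homomorphism.
\item For $\widetilde{r}^t$, write $\widetilde{r}^t=\widetilde{r}-\widetilde{\alpha}$. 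It then suffices to show that $\widetilde{\alpha}:(A^*,\rhd_r,\lhd_r)\to(A,\rhd,\lhd)$ satisfies
\[
\widetilde{\alpha}(a^*\rhd_r b^*)=\widetilde{r}(a^*)\rhd\widetilde{r}(b^*)-\widetilde{r}^t(a^*)\rhd\widetilde{r}^t(b^*),
\]
together with the analogous identity for $\lhd$.
\item To prove this, expand $a^*\rhd_r b^*$ by Eq.~\eqref{wsddiybe1}. Apply $\widetilde{\alpha}$ and use Lemma \ref{semiinequ}\,(c),(d) to move $\widetilde{\alpha}$ past the coregular actions:
\[
\widetilde{\alpha}(R^*_{\lhd}(x)b^*)=\widetilde{\alpha}(R^*_{\rhd}(x)b^*)-\widetilde{\alpha}(R^*_\star(x)b^*)=-x\star\widetilde{\alpha}(b^*)-x\lhd\widetilde{\alpha}(b^*)=-x\rhd\widetilde{\alpha}(b^*),
\]
and $\widetilde{\alpha}(L^*_\star(x)a^*)=-\widetilde{\alpha}(a^*)\rhd x$. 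This gives
\[
\widetilde{\alpha}(a^*\rhd_r b^*)=\widetilde{r}(a^*)\rhd\widetilde{\alpha}(b^*)+\widetilde{\alpha}(a^*)\rhd\widetilde{r}^t(b^*).
\]
Substituting $\widetilde{\alpha}=\widetilde{r}-\widetilde{r}^t$ turns the right-hand side into $\widetilde{r}(a^*)\rhd\widetilde{r}(b^*)-\widetilde{r}^t(a^*)\rhd\widetilde{r}^t(b^*)$, as needed.
\item The $\lhd$ case is handled the same way, using $\widetilde{\alpha}(R^*_\star(x)b^*)=x\lhd\widetilde{\alpha}(b^*)$ and
\[
\widetilde{\alpha}(L^*_{\rhd}(x)a^*)=\widetilde{\alpha}(L^*_\star(x)a^*)+\widetilde{\alpha}(L^*_{\lhd}(x)a^*)=-\widetilde{\alpha}(a^*)\rhd x+\widetilde{\alpha}(a^*)\star x=-\widetilde{\alpha}(a^*)\lhd x.
\]
\end{enumerate}

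\emph{($\Leftarrow$)} If $\widetilde{r}$ is a homomorphism for the operations \eqref{wsddiybe1}--\eqref{wsddiybe2}, then Eqs.~\eqref{szxsDAYBE1}--\eqref{szxsDAYBE2} hold verbatim. Proposition \ref{operaforDAYBE} then gives $\mathbf{A}(r)=0$. The hypotheses on $\widetilde{r}^t$ and on the dialgebra structure of $A^*$ are not needed for this direction.

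The main obstacle is the sign and side bookkeeping in step 4 (and step 5). This is where the invariance of $\alpha$ enters, and the step must produce exactly the difference $\widetilde{r}(a^*)\rhd\widetilde{r}(b^*)-\widetilde{r}^t(a^*)\rhd\widetilde{r}^t(b^*)$. If this bookkeeping becomes awkward, there is an alternative. Lemma \ref{ryutaur} shows $\tau(r)$ is also a solution, and its skew part $-\alpha$ is invariant. Applying Lemma \ref{wrsqodi} to $\tau(r)$ makes $\widetilde{r}^t$ a homomorphism for $\rhd_{\tau(r)},\lhd_{\tau(r)}$. One would then verify, again via Lemma \ref{semiinequ}, that $\rhd_{\tau(r)}=\rhd_r$ and $\lhd_{\tau(r)}=\lhd_r$; the difference of the two operations reduces to expressions in $\widetilde{\alpha}$ that vanish by Lemma \ref{semiinequ}\,(b).
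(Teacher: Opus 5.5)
Your proposal is correct. The ($\Leftarrow$) direction and the first two steps of ($\Rightarrow$) match the paper: the dialgebra structure on $A^*$ comes from the co-dialgebra, and $\widetilde r$ is a homomorphism by Lemma~\ref{wrsqodi}.

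For $\widetilde r^t$, the paper takes exactly what you list as your fallback. It substitutes $\widetilde r(a^*)=\widetilde\alpha(a^*)+\widetilde r^t(a^*)$ and $\widetilde r^t(b^*)=\widetilde r(b^*)-\widetilde\alpha(b^*)$ into \eqref{wsddiybe1}--\eqref{wsddiybe2}. It cancels the $\widetilde\alpha$-terms by Lemma~\ref{semiinequ}(b), which amounts to $\rhd_r=\rhd_{\tau(r)}$ and $\lhd_r=\lhd_{\tau(r)}$. It then applies Proposition~\ref{operaforDAYBE} to $\tau(r)$, which solves the DAYBE by Lemma~\ref{ryutaur} and has invariant skew part $-\alpha$.

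Your main route is genuinely different. You use parts (c) and (d) of Lemma~\ref{semiinequ} to obtain $\widetilde\alpha(a^*\rhd_r b^*)=\widetilde r(a^*)\rhd\widetilde\alpha(b^*)+\widetilde\alpha(a^*)\rhd\widetilde r^t(b^*)$ and its $\lhd$-analogue, then subtract this from the multiplicativity of $\widetilde r$. The bookkeeping checks out: $R^*_\lhd=R^*_\rhd-R^*_\star$ and $L^*_\rhd=L^*_\star+L^*_\lhd$ give exactly your two auxiliary formulas, and the cross terms telescope.

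This identity uses only invariance of $\alpha$, not the DAYBE. So for any such $r$, the defects $\widetilde r(a^*\rhd_r b^*)-\widetilde r(a^*)\rhd\widetilde r(b^*)$ and $\widetilde r^t(a^*\rhd_r b^*)-\widetilde r^t(a^*)\rhd\widetilde r^t(b^*)$ coincide, and likewise for $\lhd$. In effect this is an operator-form proof of the $r\leftrightarrow\tau(r)$ symmetry in Lemma~\ref{ryutaur}.

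What each route buys:
\begin{itemize}
\item The paper's route is shorter, since Lemma~\ref{ryutaur} and Proposition~\ref{operaforDAYBE} are already available.
\item Yours needs neither the fact that $\tau(r)$ is again a solution nor a second application of Proposition~\ref{operaforDAYBE} at this step. The gain is local, though: your step~1, via Theorem~\ref{tcdppab:1}, still rests on Lemma~\ref{ryutaur}.
\end{itemize}
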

\begin{proof}
	($\Longrightarrow$). By Definition \ref{quasidb}, $(A,\rhd,\lhd,{\delta_{\rhd,r}}, {\delta_{\lhd,r}})$ is a quasi-triangular bi-dialgebra with ${\delta_{\rhd,r}}$ and  ${\delta_{\lhd,r}}$ given by Eq.~\eqref{EF}. Thus by Lemma \ref{wrsqodi}, $(A^*,\rhd_r,\lhd_r)$ is a dialgebra and $\widetilde{r}$  is a  homomorphism of dialgebras. Applying Lemma \ref{semiinequ}, Lemma \ref{ryutaur} and Proposition \ref{operaforDAYBE}, for all $a^*,b^* \in A^*$, we have
\begin{align*}
\widetilde{r}^t(a^{*} \rhd_{r} b^{*})&=\widetilde{r}^t\left(-R_{\lhd}^{*}(\widetilde{r}(a^{*})) b^{*}-L_{\star}^{*}(\widetilde{r}^t(b^{*})) a^{*}\right)\\
&= \widetilde{r}^t\left(-R_{\lhd}^{*}(\widetilde{\alpha}(a^{*})) b^{*}-R_{\lhd}^{*}(\widetilde{r}^t(a^{*})) b^{*}-L_{\star}^{*}(\widetilde{r}(b^{*})) a^{*}+L_{\star}^{*}(\widetilde{\alpha}(b^{*})) a^{*}\right)\\
&= \widetilde{r}^t\left( -R_{\lhd}^{*}(\widetilde{r}^t(a^{*})) b^{*}-L_{\star}^{*}(\widetilde{r}(b^{*})) a^{*}\right)\\
& = \widetilde{r}^t\left(a^{*}\right) \rhd \widetilde{r}^t\left(b^{*}\right),\\
\widetilde{r}^t(a^{*} \lhd_{r} b^{*})&=\widetilde{r}^t\left(R_{\star}^{*}(\widetilde{r}(a^{*})) b^{*}-L_{\rhd}^{*}(\widetilde{r}^t(b^{*})) a^{*}\right)\\
& =\widetilde{r}^t\left(R_{\star}^{*}(\widetilde{r}^t(a^{*})) b^{*} + R_{\star}^{*}(\widetilde{\alpha}(a^{*})) b^{*}-L_{\rhd}^{*}(\widetilde{r}(b^{*})) a^{*}+L_{\rhd}^{*}(\widetilde{\alpha} (b^{*})) a^{*}\right)\\
& =\widetilde{r}^t\left(R_{\star}^{*}(\widetilde{r}^t(a^{*})) b^{*}  -L_{\rhd}^{*}(\widetilde{r}(b^{*})) a^{*} \right)\\
& = \widetilde{r}^t\left(a^{*}\right) \lhd \widetilde{r}^t\left(b^{*}\right).
\end{align*}
	Thus $\widetilde{r}^t: (A^*,\rhd_r,\lhd_r) \to (A,\rhd,\lhd)$ is also a homomorphism of dialgebra.

	($\Longleftarrow$).
	It follows from Proposition \ref{operaforDAYBE}. 
\end{proof}

We investigate $\mathcal{O}$-operators of arbitrary weight on dialgebras, which serve as the operator form of solutions to the DAYBE whose skew-symmetric parts are invariant.

Recall \cite{BGN2013} that an {\bf $A$-bimodule algebra} of an associative algebra $(A,\cdot_A)$  is a quadruple $(V,\cdot_V;l,r)$ such that $(V,\cdot_V)$ is an associative algebra, $(V;l,r)  $ is a bimodule of $(A,\cdot_A)$ and the following equations hold:
\begin{align}
l (x)(a \cdot_V  b) & = \left(l (x) a\right) \cdot_V  b,\quad 
r (x)(a \cdot_V  b)   = a \cdot_V \left(r (x) b\right), \quad 
 \left(r (x) a\right) \cdot_V  b   =   a \cdot_V \left(l (x) b\right), \label{ass:mpofAss3}
\end{align}
for all $x \in A, a,b \in V$.
\begin{defi} \label{defArep} An \textbf{$A$-bimodule dialgebra} of a dialgebra $(A,\rhd_A,\lhd_A)$  is a septuple $(V,\rhd_V,\lhd_V;l_{\rhd}, r_{\rhd},$ $ l_{\lhd},  r_{\lhd})$ such that $(V,\rhd_V,\lhd_V)$ is a dialgebra, $(V;l_{\rhd}, r_{\rhd}, l_{\lhd},  r_{\lhd})$ is a bimodule of $(A,\rhd_A,\lhd_A)$, $(V,\rhd_V;$ $l_{\rhd}, r_{\rhd})$  and $(V,\lhd_V;l_{\lhd},  r_{\lhd})$ are $A$-bimodule algebras of associative algebras $(A,\rhd_A)$ and $(A,\lhd_A)$ respectively, and the following equations hold:
\begin{align}
l_{\lhd}(x)(a \lhd_V b) &= l_{\lhd}(x)(a \rhd_V b), &
a \lhd_V \bigl(l_{\lhd}(x)(b)\bigr) &= a \lhd_V \bigl(l_{\rhd}(x)(b)\bigr), \label{di:mpofDias1}\\
a \lhd_V \bigl(r_{\lhd}(x)(b)\bigr) &= a \lhd_V \bigl(r_{\rhd}(x)(b)\bigr), &
l_{\rhd}(x)(a \lhd_V b) &= \bigl(l_{\rhd}(x)a\bigr) \lhd_V b, \label{di:mpofDias2}\\
\bigl(r_{\rhd}(x)a\bigr) \lhd_V b &= a \rhd_V \bigl(l_{\lhd}(x)b\bigr), &
r_{\lhd}(x)(a \rhd_V b) &= a \rhd_V \bigl(r_{\lhd}(x)b\bigr), \label{di:mpofDias3}\\
\bigl(l_{\lhd}(x)a\bigr) \rhd_V b &= \bigl(l_{\rhd}(x)a\bigr) \rhd_V b, &
\bigl(r_{\lhd}(x)a\bigr) \rhd_V b &= \bigl(r_{\rhd}(x)a\bigr) \rhd_V b, \label{di:mpofDias4}\\
r_{\rhd}(x)(a \lhd_V b) &= r_{\rhd}(x)(a \rhd_V b), \label{di:mpofDias5}
\end{align}
for all $x \in A, a,b \in V$.
\end{defi}
\begin{rmk}
Similar to bimodules of dialgebras, $A$-bimodule dialgebra  $(V,\rhd_V,\lhd_V;  l_{\rhd}, $ $r_{\rhd}, l_{\lhd},  r_{\lhd})$ of a dialgebra $(A,\rhd_A,\lhd_A)$ has an equivalent characterization in terms of the dialgebra  structure $\rhd_{A \oplus V}, \lhd_{A \oplus V}$ on the direct sum  $A \oplus V$  of vector spaces with $\rhd_{A \oplus V}, \lhd_{A \oplus V}$ given by
\begin{align*}
(x+a) \rhd_{A \oplus V}(y+b)&=x \rhd_{A} y+l_{\rhd}(x) b+r_{\rhd}(y) a+a  \rhd_{V} b, \\
(x+a) \lhd_{A \oplus V}(y+b)&=x \lhd_{A} y+l_{\lhd}(x) b+l_{\lhd}(y) a+a \lhd_{V} b, \quad \forall x, y \in A, a, b \in V.
\end{align*}
\end{rmk}

\begin{ex}
Let $(A,\rhd,\lhd)$ be a dialgebra. Then $(A,\rhd,\lhd;L_{\rhd}, R_{\rhd}, L_{\lhd}, R_{\lhd})$ is an  $A$-bimodule dialgebra.
\end{ex}

\begin{defi}Let $(A,\rhd_A,\lhd_A)$ be a dialgebra and $(V,\rhd_V,\lhd_V;l_{\rhd}, r_{\rhd}, l_{\lhd},  r_{\lhd})$ be an $A$-bimodule dialgebra of $(A,\rhd_A,\lhd_A)$. A linear map $T: V \rightarrow A$ is called an \textbf{$\mathcal{O}$-operator of weight $\lambda \in \mathbb{F}$ on $(A,\rhd_A,\lhd_A)$ associated to $(V,\rhd_V,\lhd_V;l_{\rhd}, r_{\rhd}, l_{\lhd},  r_{\lhd})$} if 
\begin{align}
T(a) \rhd_A T(b) &= T\left(l_{\rhd}(T(a))b+r_{\rhd}(T(b)) a+\lambda a \rhd_V b\right), \label{wofn1}\\
T(a) \lhd_A T(b) &= T\left(l_{\lhd}(T(a))b+r_{\lhd}(T(b)) a+\lambda a \lhd_V b\right),\quad \forall a, b \in V. \label{wofn2} 
\end{align}
In particular, if $V$ is a  trivial dialgebra, then we simply say $T:V\rightarrow A$ is an {\bf $\mathcal{O}$-operator on $(A,\rhd_A,\lhd_A)$ associated to the bimodule $(V;l_{\rhd}, r_{\rhd}, l_{\lhd},  r_{\lhd})$}.
\end{defi}

\begin{ex}\label{RBfna}
In particular, if $P$ is an $\mathcal{O}$-operator  of weight $\lambda$ on a dialgebra $(A,\rhd_A,\lhd_A)$ associated to $(A,\rhd_A,\lhd_A;L_{\rhd}, R_{\rhd}, L_{\lhd}, R_{\lhd})$, then  $P$  is called a {\bf Rota-Baxter operator of weight $\lambda$ on $(A,\rhd_A,\lhd_A)$}, that is, $P$ satisfies
\begin{align}
P(x) \rhd_A P (y)&=P \left(P  (x) \rhd_A y+ x \rhd_A P(y) + \lambda x\rhd_A y \right), \label{RBofdi1}\\
P(x) \lhd_A P (y)&=P \left(P  (x) \lhd_A y+ x \lhd_A P(y) + \lambda x\lhd_A y \right),\quad \forall x, y \in A.\label{RBofdi2} 
\end{align}
In particular,  $P$ is simply called a {\bf Rota-Baxter operator on $(A,\rhd_A,\lhd_A)$} when $\lambda=0$. 
\end{ex}

\begin{rmk}
Recall \cite{B2} that a {\bf Rota-Baxter operator  of weight $\lambda$ } on an associative algebra $(A,\cdot)$ is a linear map  $P$  such that the following condition holds:
\begin{align}
P(x) \cdot P (y)&=P \left(P  (x) \cdot  y+ x \cdot  P(y) + \lambda x \cdot  y \right),\quad \forall x,y\in A. \label{RBofass} 
\end{align}
Thus, if $P$ is a Rota-Baxter operator of weight $\lambda$ on a dialgebra $(A,\rhd,\lhd)$, then $P$  is also a Rota-Baxter operator of weight $\lambda$ on the associative algebras  $(A,\rhd)$ and $(A,\lhd)$, respectively.
\end{rmk}

\begin{pro}
 Let  $(A,\rhd_A,\lhd_A)$   be a dialgebra and $r \in A \otimes A$ whose skew-symmetric part $\alpha$ is
invariant.  Define   binary operations $\rhd_{A^*},\lhd_{A^*}$ on $A^*$ by
\begin{align}
a^* \rhd_{A^*} b^* = L_{\star}^*(\widetilde{\alpha}(b^*))a^*, \quad a^* \lhd_{A^*} b^* = L_{\rhd}^*(\widetilde{\alpha}(b^*))a^*,\quad \forall a^*, b^* \in A^*. \label{dualdistruchen}
\end{align}
Then $(A^*,\rhd_{A^*},\lhd_{A^*};-R_{\lhd}^{*},  -L_{\star}^{*}, R_{\star}^{*},-L_{\rhd}^{*})$  is an $A$-bimodule dialgebra of $(A,\rhd_A,\lhd_A)$.
\end{pro}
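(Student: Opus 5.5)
The statement has three parts to check: the bimodule axioms, the dialgebra axioms for $(A^*,\rhd_{A^*},\lhd_{A^*})$, and the compatibility identities of Definition \ref{defArep}.

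\emph{Bimodule part.} The quadruple $(A^*;-R_{\lhd}^{*},-L_{\star}^{*},R_{\star}^{*},-L_{\rhd}^{*})$ is exactly the coregular bimodule of Example \ref{coregularrep}. Indeed $L_{\lhd}^*-L_{\rhd}^*=-L_\star^*$ and $R_{\rhd}^*-R_{\lhd}^*=R_\star^*$. So the bimodule axioms hold by Lemma \ref{dualbimodofdias}, and nothing remains to prove there.

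\emph{Key tool.} For the rest, the plan is to use Lemma \ref{semiinequ} to trade every operator applied to $\widetilde{\alpha}(\cdot)$ for an operator applied to the argument. Item (c) gives
\begin{align*}
\widetilde{\alpha}(L_{\lhd}^*(x)a^*)=\widetilde{\alpha}(a^*)\star x,\qquad \widetilde{\alpha}(L_{\star}^*(x)a^*)=-\widetilde{\alpha}(a^*)\rhd x .
\end{align*}
Item (d) gives
\begin{align*}
\widetilde{\alpha}(R_{\star}^*(x)a^*)=x\lhd\widetilde{\alpha}(a^*),\qquad \widetilde{\alpha}(R_{\rhd}^*(x)a^*)=-x\star\widetilde{\alpha}(a^*).
\end{align*}
Subtracting the two identities of (c), or of (d), also yields formulas for $L_\rhd^*$ and $R_\lhd^*$:
\begin{align*}
\widetilde{\alpha}(L_{\rhd}^*(x)a^*)=-\widetilde{\alpha}(a^*)\lhd x,\qquad \widetilde{\alpha}(R_{\lhd}^*(x)a^*)=-x\rhd\widetilde{\alpha}(a^*).
\end{align*}
These use $\star=\rhd-\lhd$, so that $\widetilde{\alpha}(a^*)\star x-\widetilde{\alpha}(a^*)\rhd x=-\widetilde{\alpha}(a^*)\lhd x$, and similarly for the $R$-formulas. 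It follows that $\widetilde{\alpha}(a^*\rhd_{A^*}b^*)=-\widetilde{\alpha}(a^*)\rhd\widetilde{\alpha}(b^*)$ and $\widetilde{\alpha}(a^*\lhd_{A^*}b^*)=-\widetilde{\alpha}(a^*)\lhd\widetilde{\alpha}(b^*)$.

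\emph{Dialgebra part.} Each axiom is an identity of the form $\langle\cdot,x\rangle$. Consider associativity of $\rhd_{A^*}$:
\begin{align*}
(a^*\rhd b^*)\rhd c^*=L_\star^*(\widetilde{\alpha}(c^*))L_\star^*(\widetilde{\alpha}(b^*))a^*,\qquad a^*\rhd(b^*\rhd c^*)=L_\star^*\bigl(\widetilde{\alpha}(L_\star^*(\widetilde{\alpha}(c^*))b^*)\bigr)a^*=-L_\star^*\bigl(\widetilde{\alpha}(b^*)\rhd\widetilde{\alpha}(c^*)\bigr)a^*.
\end{align*}
Pairing with $x$ reduces the equality to $(x\star u)\star v=-x\star(u\rhd v)$ with $u=\widetilde{\alpha}(b^*)$, $v=\widetilde{\alpha}(c^*)$, where the composite of two dual maps reverses order. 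I expect this to follow from the dialgebra axioms \eqref{dias1}--\eqref{dias3}. In fact the check is needed in the form $(x\star u)\star v=x\star(u\rhd v)$ up to the sign convention of \eqref{dualmap}, and I would expand $\star$ into $\rhd,\lhd$ to verify it.

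The same reduction applies to the other four axioms: associativity of $\lhd_{A^*}$ and \eqref{dias1}--\eqref{dias3}. Each becomes an identity in $A$ of the form $(x\circ_1u)\circ_2v=\pm x\circ_3(u\circ_4v)$ with $\circ_i\in\{\rhd,\lhd,\star\}$. Each such identity is a consequence of \eqref{dias1}--\eqref{dias3} together with associativity of $\rhd$ and $\lhd$. For example, $x\rhd(u\lhd v)=x\rhd(u\rhd v)$ holds because both sides equal $(x\rhd u)\lhd v$ and $(x\lhd u)\rhd v$ adjusted by \eqref{dias2}--\eqref{dias3}.

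\emph{Compatibility part.} Next come the $A$-bimodule algebra axioms \eqref{ass:mpofAss3} for $(A^*,\rhd_{A^*};-R_\lhd^*,-L_\star^*)$ and $(A^*,\lhd_{A^*};R_\star^*,-L_\rhd^*)$, and the nine identities \eqref{di:mpofDias1}--\eqref{di:mpofDias5}. Each term is a composite of two dual operators, possibly with one $\widetilde{\alpha}$ nested inside. For instance $-R_\lhd^*(x)(a^*\rhd_{A^*}b^*)=-R_\lhd^*(x)L_\star^*(\widetilde{\alpha}(b^*))a^*$. For a term like $a^*\rhd_{A^*}(-L_\star^*(x)b^*)$, the inner $\widetilde{\alpha}(L_\star^*(x)b^*)$ is first converted by the formulas above. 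Pairing with $y\in A$ then turns every identity into a three-variable identity in $(A,\rhd,\lhd)$, which is checked from \eqref{dias1}--\eqref{dias3}.

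\emph{Main obstacle.} The main difficulty is bookkeeping. There are roughly sixteen identities, each involving order reversal of dual maps and the sign conventions of \eqref{dualmap}. I would organise it by writing every term as $\langle a^*, y\,\circ\,(\ldots)\rangle$ or $\langle a^*,(\ldots)\circ y\rangle$, and by grouping identities that coincide after applying the three dialgebra axioms. Several identities should collapse to the same relation; for example, the ``$\lhd$ versus $\rhd$'' identities \eqref{di:mpofDias1}, \eqref{di:mpofDias4} and \eqref{di:mpofDias5} all reduce to $u\lhd(v\lhd w)=u\lhd(v\rhd w)$ or to $(u\lhd v)\rhd w=(u\rhd v)\rhd w$. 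This grouping keeps the verification manageable.
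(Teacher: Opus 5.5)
Your overall plan is sound and close to the paper's. The bimodule part is indeed the coregular bimodule of Example~\ref{coregularrep}. Your consequences of Lemma~\ref{semiinequ} are also correct, namely $\widetilde{\alpha}(L_{\rhd}^*(x)a^*)=-\widetilde{\alpha}(a^*)\lhd x$ and $\widetilde{\alpha}(R_{\lhd}^*(x)a^*)=-x\rhd\widetilde{\alpha}(a^*)$, hence $\widetilde{\alpha}(a^*\rhd_{A^*}b^*)=-\widetilde{\alpha}(a^*)\rhd\widetilde{\alpha}(b^*)$ and $\widetilde{\alpha}(a^*\lhd_{A^*}b^*)=-\widetilde{\alpha}(a^*)\lhd\widetilde{\alpha}(b^*)$.

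However, the one reduction you actually carry out is mis-dualized, and as written it would fail. With convention \eqref{dualmap} one has $\langle L_{\star}^*(v)w^*,x\rangle=-\langle w^*,v\star x\rangle$, so the test element $x$ ends up on the \emph{right}, not the left. The identity you propose to check, $(x\star u)\star v=-x\star(u\rhd v)$, is false in general dialgebras, and so is the variant without the sign. Take Example~\ref{dwdpp}(2) with $x=u$, $\varphi(u)=1$ and $0\neq v\in\ker\varphi$. Then $(x\star u)\star v=0$ while $x\star(u\rhd v)=v$.

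Your sample identity $x\rhd(u\lhd v)=x\rhd(u\rhd v)$ fails in the same example ($0$ versus $v$). Axioms \eqref{dias2} and \eqref{dias3} identify the two sides with $(x\rhd u)\lhd v$ and $(x\lhd u)\rhd v$ respectively, which are different elements in general. So your blanket claim that each reduced identity follows from \eqref{dias1}--\eqref{dias3} is false for the identities in the form you wrote them.

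The repair is to dualize correctly. Put $u=\widetilde{\alpha}(b^*)$ and $v=\widetilde{\alpha}(c^*)$.
\begin{itemize}
\item $(a^*\rhd_{A^*}b^*)\rhd_{A^*}c^*=L_\star^*(v)L_\star^*(u)a^*$ pairs with $x$ to $\langle a^*,u\star(v\star x)\rangle$.
\item $a^*\rhd_{A^*}(b^*\rhd_{A^*}c^*)=-L_\star^*(u\rhd v)a^*$ pairs with $x$ to $\langle a^*,(u\rhd v)\star x\rangle$.
\end{itemize}
So associativity of $\rhd_{A^*}$ needs $u\star(v\star x)=(u\rhd v)\star x$, and this holds. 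First, $u\lhd(v\rhd x)=u\lhd(v\lhd x)$ by \eqref{dias1} and associativity of $\lhd$. Second, $u\rhd(v\rhd x)=(u\rhd v)\rhd x$ by associativity of $\rhd$, and $u\rhd(v\lhd x)=(u\rhd v)\lhd x$ by \eqref{dias2}.

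In the same way the other four axioms become the following identities, all of which are valid:
\begin{itemize}
\item $u\rhd(v\rhd x)=(u\lhd v)\rhd x$;
\item $u\rhd(v\rhd x)=(u\rhd v)\rhd x$;
\item $u\star(v\rhd x)=(u\lhd v)\star x$;
\item $u\rhd(v\star x)=(u\rhd v)\star x$.
\end{itemize}
The compatibility conditions also reduce to true identities once dualized correctly. Your grouping of \eqref{di:mpofDias1}, \eqref{di:mpofDias4} and \eqref{di:mpofDias5} happens to be right.

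Once corrected, your route differs from the paper's only in bookkeeping. The paper stays inside $A^*$ and also uses item (b) of Lemma~\ref{semiinequ} to swap arguments, e.g.\ $L_\star^*(\widetilde{\alpha}(b^*))a^*=R_\lhd^*(\widetilde{\alpha}(a^*))b^*$. You use only items (c) and (d) and reduce every axiom to a ternary identity valid for all elements of $A$. That is more systematic, but it only works if the side on which $x$ lands is tracked correctly.
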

\begin{proof}Let $a^*, b^*,c^*\in A^*$. By Lemma \ref{semiinequ}, we have
\begin{align*}
a^* \rhd_{A^*}  (b^* \rhd_{A^*}  c^*) &= L_{\star}^*(\widetilde{\alpha}\left(   L_{\star}^*(\widetilde{\alpha}(c^*))b^*)\right)a^* = R_{\lhd}^*(\widetilde{\alpha}\left( a^* \right))  R_{\lhd}^*(\widetilde{\alpha}(b^*))c^* \\
&= -R_{\lhd}^*(\widetilde{\alpha} ( a^*  ) \rhd_A \widetilde{\alpha}(b^*))c^*  
 =  R_{\lhd}^*(\widetilde{\alpha} ( L_{\star}^*(\widetilde{\alpha}(b^*))a^*))c^* \\
&=  L_{\star}^*(\widetilde{\alpha}(c^*))     L_{\star}^*(\widetilde{\alpha}(b^*))a^* =(a^* \rhd_{A^*}   b^*) \rhd_{A^*}  c^*,\\
a^* \lhd_{A^*}  (b^* \lhd_{A^*}  c^*) &=L_{\rhd}^*(\widetilde{\alpha}( L_{\rhd}^*(\widetilde{\alpha}(c^*))b^* ))a^* = -R_{\star}^*(\widetilde{\alpha}(a^* )) L_{\rhd}^*(\widetilde{\alpha}(c^*))b^* \\
& =  R_{\star}^*(\widetilde{\alpha}(a^* )) R_{\star}^*(\widetilde{\alpha}(b^*))c^* =  R_{\star}^*(\widetilde{\alpha}( a^*) \lhd_A \widetilde{\alpha}(b^*  ))  c^*    \\
&= -R_{\star}^*(\widetilde{\alpha}( L_{\rhd}^*(\widetilde{\alpha}(b^*))a^*  ))  c^*   =  L_{\rhd}^*(\widetilde{\alpha}(c^*))     L_{\rhd}^*(\widetilde{\alpha}(b^*))a^*   \\
&=(a^* \lhd_{A^*}   b^*) \lhd_{A^*}  c^*.
\end{align*}
Thus $(A^*,\rhd_{A^*})$ and $(A^*,\lhd_{A^*})$ are associative algebras. Similarly, Eqs.~\eqref{dias1}-\eqref{dias3} also hold for $(A^*,\rhd_{A^*},\lhd_{A^*})$. Thus $(A^*,\rhd_{A^*},\lhd_{A^*})$ is a dialgebra. Furthermore, let $x \in A$, we have
\begin{align*}
-R_{\lhd}^{*}(x)(a^* \rhd_{A^*}  b^*) & =  -R_{\lhd}^{*}(x)L_{\star}^*(\widetilde{\alpha}(b^*))a^* = -R_{\lhd}^{*}(x)R_{\lhd}^*(\widetilde{\alpha}(a^*))b^*= R_{\lhd}^*(x \rhd \widetilde{\alpha}( a^*)     ) b^* \\
&=-R_{\lhd}^*(\widetilde{\alpha}(R_{\lhd}^{*}(x) a^*)     ) b^*  =-L_{\star}^*(\widetilde{\alpha}(b^*))\left( R_{\lhd}^{*}(x) a^*\right)  =\left(-R_{\lhd}^{*}(x) a^*\right) \rhd_{A^*}  b^*,\\
-L_{\star}^{*} (x)(a^* \rhd_{A^*}  b^*) &=  -L_{\star}^{*} (x)L_{\star}^*(\widetilde{\alpha}(b^*))a^* = L_{\star}^*(\widetilde{\alpha}(  b^* ) \rhd_{A} x)a^* =-L_{\star}^*(\widetilde{\alpha}( L_{\star}^{*} (x) b^* ))a^*  \\
& = a^* \rhd_{A^*} \left(-L_{\star}^{*} (x) b^*\right), \\
 \left(-L_{\star}^{*} (x) a^*\right) \rhd_{A^*}  b^*   &= -L_{\star}^*(\widetilde{\alpha}(b^*)) L_{\star}^{*} (x) a^* = L_{\star}^{*} (x \rhd_{A} \widetilde{\alpha}(  b^*))a^* = L_{\star}^{*} (\widetilde{\alpha}(-R_{\lhd}^{*} (x) b^*))a^* \\
 &=   a^* \rhd_{A^*} \left(-R_{\lhd}^{*} (x) b^*\right), \\
R_{\star}^{*} (x)(a^* \lhd_{A^*}  b) & = R_{\star}^{*} (x)L_{\rhd}^*(\widetilde{\alpha}(b^*))a^* =L_{\rhd}^*(\widetilde{\alpha}(b^*))   R_{\star}^{*} (x) a^* = \left(R_{\star}^{*} (x) a^*\right) \lhd_{A^*}  b^*,\\
-L_{\rhd}^{*} (x)(a^* \lhd_{A^*}  b) &  =  -L_{\rhd}^{*} (x)L_{\rhd}^*(\widetilde{\alpha}(b^*))a^*  
  = L_{\rhd}^*(\widetilde{\alpha}( b^*) \lhd_{A} x)a^*=  - L_{\rhd}^*(\widetilde{\alpha}( L_{\rhd}^{*} (x) b^*))a^*  \\
  &    = a^* \lhd_{A^*} \left(-L_{\rhd}^{*} (x) b^*\right), \\
 \left(-L_{\rhd}^{*} (x) a^*\right) \lhd_{A^*}  b^*  & =-L_{\rhd}^*(\widetilde{\alpha}(b^*))   L_{\rhd}^{*} (x) a^*  =  L_{\rhd}^*(x \lhd \widetilde{\alpha}(b^*)) a^*= L_{\rhd}^*(\widetilde{\alpha}(R_{\star}^{*} (x) b^*)) a^*   \\
 &=a^* \lhd_{A^*} \left(R_{\star}^{*} (x) b^*\right).
\end{align*}
Therefore, $(A^*,\rhd_{A^*}; -R_{\lhd}^{*}, -L_{\star}^{*})$  and $(A^*,\lhd_{A^*};R_{\star}^{*} , -L_{\rhd}^{*})$ are $A$-bimodule algebras of associative algebras $(A^*,\rhd_{A^*})$ and $(A^*,\lhd_{A^*})$ respectively. Moreover, it is straightforward to show that Eqs.~\eqref{di:mpofDias1}-\eqref{di:mpofDias5} hold. So $(A^*,\rhd_{A^*},$ $\lhd_{A^*};-R_{\lhd}^{*},  -L_{\star}^{*}, R_{\star}^{*},-L_{\rhd}^{*})$  is an $A$-bimodule dialgebra of $(A,\rhd_A,\lhd_A)$.
\end{proof}

\begin{thm}\label{ilovethisthm}
 Let  $(A,\rhd_A,\lhd_A)$   be a dialgebra and $r \in A \otimes A$ whose skew-symmetric part $\alpha$ is
invariant. Define  linear maps ${\delta_{\rhd,r}}, {\delta_{\lhd,r}}: A \rightarrow A \otimes A$ by Eq.~\eqref{EF} and binary operations $\rhd_{A^*},\lhd_{A^*}$ on $A^*$ by Eq.~\eqref{dualdistruchen}. Then
the following conditions are equivalent: 
\begin{enumerate}
\item\label{quasiTri:1} $r$ is a solution of the DAYBE in $(A,\rhd_A,\lhd_A)$  such that $(A,\rhd_A,\lhd_A,{\delta_{\rhd,r}}, {\delta_{\lhd,r}})$ is a quasi-triangular bi-dialgebra.
\item\label{quasiTri:2} $\widetilde{r}: A^* \to A$ is an $\mathcal{O}$-operator of weight $1$ on $(A,\rhd_A,\lhd_A)$ associated to $A$-bimodule dialgebra $(A^*,\rhd_{A^*},\lhd_{A^*};-R_{\lhd}^{*},  -L_{\star}^{*}, R_{\star}^{*},-L_{\rhd}^{*})$.
\item\label{quasiTri:3} $\widetilde{r}^t: A^* \to A$ is an $\mathcal{O}$-operator of weight $-1$ on $(A,\rhd_A,\lhd_A)$ associated to $A$-bimodule dialgebra $(A^*,\rhd_{A^*},\lhd_{A^*};-R_{\lhd}^{*},  -L_{\star}^{*}, R_{\star}^{*},-L_{\rhd}^{*})$.
\end{enumerate}
\end{thm}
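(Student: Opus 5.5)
The plan is to reduce everything to the operator form of the DAYBE in Proposition \ref{operaforDAYBE} and to split $\widetilde{r}^t=\widetilde{r}-\widetilde{\alpha}$. Since $\alpha$ is invariant, Theorem \ref{tcdppab:1} and Definition \ref{quasidb} show that (a) is equivalent to $r$ solving the DAYBE. By Proposition \ref{operaforDAYBE}, this in turn is equivalent to Eqs.~\eqref{szxsDAYBE1}--\eqref{szxsDAYBE2}. So it suffices to show that these two equations coincide with Eqs.~\eqref{wofn1}--\eqref{wofn2} for $T=\widetilde{r}$, $\lambda=1$, with the bimodule maps $l_{\rhd}=-R_{\lhd}^*$, $r_{\rhd}=-L_{\star}^*$, $l_{\lhd}=R_{\star}^*$, $r_{\lhd}=-L_{\rhd}^*$ and the products \eqref{dualdistruchen} on $A^*$.

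For (a)$\Leftrightarrow$(b), substitute $\widetilde{r}^t(b^*)=\widetilde{r}(b^*)-\widetilde{\alpha}(b^*)$ into the right-hand side of \eqref{szxsDAYBE1}:
\begin{align*}
-R_{\lhd}^*(\widetilde{r}(a^*))b^*-L_{\star}^*(\widetilde{r}^t(b^*))a^*
=-R_{\lhd}^*(\widetilde{r}(a^*))b^*-L_{\star}^*(\widetilde{r}(b^*))a^*+L_{\star}^*(\widetilde{\alpha}(b^*))a^*.
\end{align*}
The last term is exactly $1\cdot a^*\rhd_{A^*}b^*$, so \eqref{szxsDAYBE1} is \eqref{wofn1} with $\lambda=1$. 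In the same way, substituting into \eqref{szxsDAYBE2} produces the extra term $+L_{\rhd}^*(\widetilde{\alpha}(b^*))a^*=a^*\lhd_{A^*}b^*$, giving \eqref{wofn2}. This part is pure bookkeeping.

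For (a)$\Leftrightarrow$(c), I would first invoke Lemma \ref{ryutaur}: $r$ solves the DAYBE if and only if $\tau(r)$ does. The skew-symmetric part of $\tau(r)$ is $-\alpha$, which is still invariant, and $\widetilde{\tau(r)}=\widetilde{r}^t$ with $\widetilde{r}^t$ of $\tau(r)$ equal to $\widetilde{r}$. Applying Proposition \ref{operaforDAYBE} to $\tau(r)$ gives the equations with $\widetilde{r}$ and $\widetilde{r}^t$ interchanged. Writing $\widetilde{r}=\widetilde{r}^t+\widetilde{\alpha}$ then produces the extra terms $-L_{\star}^*(\widetilde{\alpha}(b^*))a^*$ and $-L_{\rhd}^*(\widetilde{\alpha}(b^*))a^*$, that is, $-a^*\rhd_{A^*}b^*$ and $-a^*\lhd_{A^*}b^*$. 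This yields the $\mathcal{O}$-operator identities of weight $-1$ for $\widetilde{r}^t$. Alternatively, one could mimic the computation in the proof of Theorem \ref{thm:daybe} using Lemma \ref{semiinequ}.

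The main obstacle I expect is sign bookkeeping in the transposed case. I would need to confirm that the invariant-element identities of Lemma \ref{semiinequ} are stable under $\alpha\mapsto-\alpha$; they are, since they are linear in $\alpha$. I would also need to check that the $A$-bimodule dialgebra in (c) is the same one as in (b), built from $\alpha$ rather than $-\alpha$. The fact that the weight is $-1$ is precisely what absorbs that sign.
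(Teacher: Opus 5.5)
Your proposal is correct and follows the paper's own proof essentially step for step. You reduce to the operator form of Proposition \ref{operaforDAYBE} and substitute $\widetilde{r}^t=\widetilde{r}-\widetilde{\alpha}$ to obtain the weight $1$ identities for $\widetilde{r}$. For (c) you pass to $\tau(r)$ via Lemma \ref{ryutaur} and substitute $\widetilde{r}=\widetilde{r}^t+\widetilde{\alpha}$ to obtain weight $-1$. You also check explicitly that the skew-symmetric part $-\alpha$ of $\tau(r)$ is still invariant, which is needed to apply Proposition \ref{operaforDAYBE} to $\tau(r)$; the paper leaves this detail implicit.
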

\begin{proof}\eqref{quasiTri:1} $ \Longleftrightarrow $ \eqref{quasiTri:2}.  Suppose $r$ is a solution of the DAYBE in $(A,\rhd_A,\lhd_A)$. Let $a^*,b^* \in A^*$. Then by Proposition \ref{operaforDAYBE}, we have
 \begin{align*}
   \widetilde{r}\left(a^{*}\right) \rhd_A \widetilde{r}\left(b^{*}\right)&=\widetilde{r}\left(-R_{\lhd}^{*}\left(\widetilde{r}\left(a^{*}\right)\right) b^{*}-L_{\star}^{*}\left(\widetilde{r}^t\left(b^{*}\right)\right) a^{*}\right)  
    = \widetilde{r}\left(-R_{\lhd}^{*}\left(\widetilde{r}\left(a^{*}\right)\right) b^{*}-L_{\star}^{*}\left(\widetilde{r} \left(b^{*}\right)\right) a^{*}+L_{\star}^{*}\left(\widetilde{\alpha} \left(b^{*}\right)\right) a^{*}\right) \\
      &= \widetilde{r}\left(-R_{\lhd}^{*}\left(\widetilde{r}\left(a^{*}\right)\right) b^{*}-L_{\star}^{*}\left(\widetilde{r} \left(b^{*}\right)\right) a^{*}+a^* \rhd_{A^*} b^*\right),\\
\widetilde{r}\left(a^{*}\right) \lhd_A \widetilde{r}\left(b^{*}\right)&=\widetilde{r}\left(R_{\star}^{*}\left(\widetilde{r}\left(a^{*}\right)\right) b^{*}-L_{\rhd}^{*}\left(\widetilde{r}^t\left(b^{*}\right)\right) a^{*}\right) 
 =\widetilde{r}\left(R_{\star}^{*}\left(\widetilde{r}\left(a^{*}\right)\right) b^{*}-L_{\rhd}^{*}\left(\widetilde{r}\left(b^{*}\right)\right) a^{*}+L_{\rhd}^{*}\left(\widetilde{\alpha}\left(b^{*}\right)\right) a^{*} \right)\\
&=\widetilde{r}\left(R_{\star}^{*}\left(\widetilde{r}\left(a^{*}\right)\right) b^{*}-L_{\rhd}^{*}\left(\widetilde{r}\left(b^{*}\right)\right) a^{*}+a^* \lhd_{A^*} b^* \right).
   \end{align*}
Thus $\widetilde{r}: A^* \to A$ is an $\mathcal{O}$-operator of weight $1$ on $(A,\rhd_A,\lhd_A)$ associated to $A$-bimodule dialgebra $(A^*,\rhd_{A^*},\lhd_{A^*};-R_{\lhd}^{*},  -L_{\star}^{*}, R_{\star}^{*},-L_{\rhd}^{*})$. The converse statement can be proved by reversing the argument.

\eqref{quasiTri:1} $ \Longleftrightarrow $ \eqref{quasiTri:3}.  If $r$ is a solution of the DAYBE in $(A,\rhd_A,\lhd_A)$, then so is $\tau(r)$. Let $a^*,b^* \in A^*$. Then by Proposition \ref{operaforDAYBE}, we have
   \begin{align*}
\widetilde{r}^t\left(a^{*}\right) \rhd_A \widetilde{r}^t\left(b^{*}\right)&=\widetilde{r}^t\left(-R_{\lhd}^{*}\left(\widetilde{r}^t\left(a^{*}\right)\right) b^{*}-L_{\star}^{*}\left(\widetilde{r}\left(b^{*}\right)\right) a^{*}\right)  \\
   &= \widetilde{r}^t\left(-R_{\lhd}^{*}\left(\widetilde{r}^t\left(a^{*}\right)\right) b^{*}-L_{\star}^{*}\left(\widetilde{r}^t \left(b^{*}\right)\right) a^{*}-L_{\star}^{*}\left(\widetilde{\alpha} \left(b^{*}\right)\right) a^{*}\right) \\
      &= \widetilde{r}^t\left(-R_{\lhd}^{*}\left(\widetilde{r}^t\left(a^{*}\right)\right) b^{*}-L_{\star}^{*}\left(\widetilde{r}^t \left(b^{*}\right)\right) a^{*}- a^* \rhd_{A^*} b^*\right), \\
      \widetilde{r}^t\left(a^{*}\right) \lhd_A \widetilde{r}^t\left(b^{*}\right)&=\widetilde{r}^t\left(R_{\star}^{*}\left(\widetilde{r}^t\left(a^{*}\right)\right) b^{*}-L_{\rhd}^{*}\left(\widetilde{r} \left(b^{*}\right)\right) a^{*}\right)\\
&=\widetilde{r}^t\left(R_{\star}^{*}\left(\widetilde{r}^t\left(a^{*}\right)\right) b^{*}-L_{\rhd}^{*}\left(\widetilde{r}^t\left(b^{*}\right)\right) a^{*}-L_{\rhd}^{*}\left(\widetilde{\alpha}\left(b^{*}\right)\right) a^{*} \right)\\
&=\widetilde{r}^t\left(R_{\star}^{*}\left(\widetilde{r}^t\left(a^{*}\right)\right) b^{*}-L_{\rhd}^{*}\left(\widetilde{r}^t\left(b^{*}\right)\right) a^{*}- a^* \lhd_{A^*} b^* \right).
      \end{align*}
      Thus $\widetilde{r}^t: A^* \to A$ is an $\mathcal{O}$-operator of weight $-1$ on $(A,\rhd_A,\lhd_A)$ associated to $A$-bimodule dialgebra $(A^*,\rhd_{A^*},\lhd_{A^*};-R_{\lhd}^{*},  -L_{\star}^{*}, R_{\star}^{*},-L_{\rhd}^{*})$. The converse statement can be proved by reversing the argument.
\end{proof}

\begin{pro} \label{rbfna:X}
Let $(A, \rhd,\lhd,\mathfrak{B})$ be a skew-symmetric Frobenius dialgebra  and  $\varphi_{\mathfrak{B}}: A\rightarrow A^{*}$  be the induced linear isomorphism  by $\mathfrak{B}$.  Let $r \in A \otimes A$ whose skew-symmetric part $\alpha$ is
invariant. Then $r$ is a solution of the DAYBE if and only if $P_{r}:= \widetilde{r}\circ \varphi_{\mathfrak{B}}$ satisfies the following equations
\begin{align}
P_r(x) \rhd   P_r (y) &= P_r (P_r  (x) \rhd   y + x \rhd  P_{r}(y)-x \rhd  P_{\alpha}(y)),\label{infordiBBF1}\\
 P_r(x) \lhd   P_r (y)& = P_r (P_r  (x) \lhd   y +   x \lhd  P_{r}(y)-x \lhd  P_{\alpha}(y)),\label{infordiBBF2} \quad \forall x,y \in A.
\end{align}
\end{pro}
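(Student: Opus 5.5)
The plan is to reduce the statement to the operator form of the DAYBE in Proposition \ref{operaforDAYBE}, by substituting $a^*=\varphi_{\mathfrak{B}}(x)$ and $b^*=\varphi_{\mathfrak{B}}(y)$ and translating each dual operator term back into $A$. Since $\varphi_{\mathfrak{B}}$ is a linear isomorphism, Eqs.~\eqref{szxsDAYBE1}--\eqref{szxsDAYBE2} for all $a^*,b^*\in A^*$ are equivalent to the same identities for all $x,y\in A$. The left-hand sides then become $P_r(x)\rhd P_r(y)$ and $P_r(x)\lhd P_r(y)$ directly.

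For the right-hand sides, I will use the identities already obtained in the proof of Proposition \ref{tensorformIndias} (equivalently Proposition \ref{dualdj:x}), which convert $\varphi_{\mathfrak{B}}$-conjugated coregular actions into products. From Lemma \ref{invxzyil}, for $z\in A$,
\begin{align*}
&-R_{\lhd}^*(z)\varphi_{\mathfrak{B}}(y)=\varphi_{\mathfrak{B}}(z\rhd y),\quad (L_{\lhd}^*-L_{\rhd}^*)(z)\varphi_{\mathfrak{B}}(x)=\varphi_{\mathfrak{B}}(x\rhd z),\\
&(R_{\rhd}^*-R_{\lhd}^*)(z)\varphi_{\mathfrak{B}}(y)=\varphi_{\mathfrak{B}}(z\lhd y),\quad -L_{\rhd}^*(z)\varphi_{\mathfrak{B}}(x)=\varphi_{\mathfrak{B}}(x\lhd z).
\end{align*}
That is, $-L_{\star}^*(z)\varphi_{\mathfrak{B}}(x)=\varphi_{\mathfrak{B}}(x\rhd z)$ and $R_\star^*(z)\varphi_{\mathfrak{B}}(y)=\varphi_{\mathfrak{B}}(z\lhd y)$. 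Substituting gives
\begin{align*}
-R_{\lhd}^{*}(\widetilde r(a^*))b^*-L_\star^*(\widetilde r^t(b^*))a^*&=\varphi_{\mathfrak{B}}\big(P_r(x)\rhd y+x\rhd \widetilde r^t\varphi_{\mathfrak{B}}(y)\big),\\
R_{\star}^{*}(\widetilde r(a^*))b^*-L_\rhd^*(\widetilde r^t(b^*))a^*&=\varphi_{\mathfrak{B}}\big(P_r(x)\lhd y+x\lhd \widetilde r^t\varphi_{\mathfrak{B}}(y)\big).
\end{align*}
Finally, $\widetilde r^t=\widetilde r-\widetilde\alpha$, so $\widetilde r^t\varphi_{\mathfrak{B}}=P_r-P_\alpha$, where $P_\alpha:=\widetilde\alpha\circ\varphi_{\mathfrak{B}}$. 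Applying $\widetilde r$ then turns Eqs.~\eqref{szxsDAYBE1}--\eqref{szxsDAYBE2} exactly into Eqs.~\eqref{infordiBBF1}--\eqref{infordiBBF2}. Because the hypothesis that $\alpha$ is invariant is exactly what Proposition \ref{operaforDAYBE} requires, this gives the equivalence in both directions.

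The main obstacle is keeping signs and sides straight in the four conversion identities, since the dual map carries a minus sign by Eq.~\eqref{dualmap} and $\mathfrak{B}$ is skew-symmetric. I will check each one by pairing with an arbitrary $z'$ and applying Eqs.~\eqref{diinv1}, \eqref{diinv3} and \eqref{diinv4}. For example,
\[
\langle -R_\lhd^*(z)\varphi_{\mathfrak{B}}(y),z'\rangle=\mathfrak{B}(y,z'\lhd z)=\mathfrak{B}(z\rhd y,z')
\]
by Eq.~\eqref{diinv4}, and the other three follow the same way.
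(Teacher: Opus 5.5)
Your proposal is correct and follows the paper's proof. Both substitute $a^*=\varphi_{\mathfrak{B}}(x)$ and $b^*=\varphi_{\mathfrak{B}}(y)$, then use the regular/coregular bimodule equivalence of Proposition \ref{dualdj:x} (your four conversion identities, whose signs check out) together with $\widetilde{r}^t=\widetilde{r}-\widetilde{\alpha}$, and conclude by Proposition \ref{operaforDAYBE}. The only cosmetic difference is direction: you translate the dual-side expressions into $A$, whereas the paper rewrites the defect $P_r(x)\rhd P_r(y)-P_r(\cdots)$ in terms of $a^*,b^*$; your choice $P_\alpha:=\widetilde{\alpha}\circ\varphi_{\mathfrak{B}}$ is the one the paper uses implicitly.
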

\begin{proof}
For all  $x,y \in A$, there exist $a^*, b^* \in A^*$ such that $x=\varphi_{\mathfrak{B}}^{-1}(a^{*})$, $y=\varphi_{\mathfrak{B}}^{-1}(b^{*})$. By  Proposition \ref{dualdj:x}, we have
\begin{align*}
&P_r(x) \rhd   P_r (y) - P_r (P_r  (x) \rhd   y)-P_r (x \rhd  P_{r}(y))+P_r (x \rhd  P_{\alpha}(y))\\
&= \widetilde{r}(a^{*}) \rhd \widetilde{r}(b^{*})-\widetilde{r}\circ \varphi_{\mathfrak{B}}(L_{\rhd}(\widetilde{r}(a^{*}))\varphi_{\mathfrak{B}}^{-1}(b^{*}))-\widetilde{r}\circ \varphi_{\mathfrak{B}}(R_{\rhd}(\widetilde{r}(b^{*}))\varphi_{\mathfrak{B}}^{-1}(a^{*}))+P_r (x \rhd  P_{\alpha}(y))\\
&= \widetilde{r}(a^{*}) \rhd \widetilde{r}(b^{*}) +\widetilde{r}( R_{\lhd}^{*}(\widetilde{r}(a^{*}))b^{*}) +\widetilde{r}(L_{\star}^*(\widetilde{r}(b^{*}))a^{*})-\widetilde{r}  (L_{\star}^*(\widetilde{\alpha}(b^{*})) a^{*} ) \\
&= \widetilde{r}(a^{*}) \rhd \widetilde{r}(b^{*}) +\widetilde{r}( R_{\lhd}^{*}(\widetilde{r}(a^{*}))b^{*}) +\widetilde{r}(L_{\star}^*(\widetilde{r}^t(b^{*}))a^{*}) ,\\
&P_r(x) \lhd   P_r (y) - P_r (P_r  (x) \lhd   y)-P_r (x \lhd  P_{r}(y))+P_r (x \lhd  P_{\alpha}(y))\\
&= \widetilde{r}(a^{*}) \lhd \widetilde{r}(b^{*})-\widetilde{r}\circ \varphi_{\mathfrak{B}}(L_{\lhd}(\widetilde{r}(a^{*}))\varphi_{\mathfrak{B}}^{-1}(b^{*}))-\widetilde{r}\circ \varphi_{\mathfrak{B}}(R_{\lhd}(\widetilde{r}(b^{*}))\varphi_{\mathfrak{B}}^{-1}(a^{*}))+P_r (x \lhd  P_{\alpha}(y))\\
&= \widetilde{r}(a^{*}) \lhd \widetilde{r}(b^{*}) -\widetilde{r}( R_{\star}^{*}(\widetilde{r}(a^{*}))b^{*}) +\widetilde{r}(L_{\rhd}^*(\widetilde{r}(b^{*}))a^{*})-\widetilde{r}(L_{\rhd}^*(\widetilde{\alpha}(b^{*}))a^{*})\\
&= \widetilde{r}(a^{*}) \lhd \widetilde{r}(b^{*}) -\widetilde{r}( R_{\star}^{*}(\widetilde{r}(a^{*}))b^{*}) +\widetilde{r}(L_{\rhd}^*(\widetilde{r}^t(b^{*}))a^{*}).
\end{align*}
By Proposition \ref{operaforDAYBE},   $r$ is a solution of the DAYBE if and only if Eqs.~\eqref{infordiBBF1}-\eqref{infordiBBF2}.
\end{proof}

 \section{Triangular and factorizable diassociative  bialgebras}\label{CBialgebra}
 
 In this section, we study two special classes of quasi-triangular bi-dialgebras. One is the triangular bi-dialgebra, which can be constructed from symmetric solutions to the DAYBE. Moreover, the notion of a pre-dialgebra is introduced to give symmetric solutions of the DAYBE in certain larger dialgebras. The other is the factorizable bi-dialgebra, which gives rise to a factorization of the underlying dialgebra. We show that the Drinfeld classical double of a bi-dialgebra naturally admits a factorizable bi-dialgebra structure. Finally, we introduce the notion of skew-symmetric Rota-Baxter Frobenius dialgebras and prove that a skew-symmetric Rota-Baxter Frobenius dialgebra of weight zero induces a triangular bi-dialgebra. Furthermore, we establish a one-to-one correspondence between factorizable bi-dialgebras and skew-symmetric Rota-Baxter Frobenius dialgebras of nonzero weights.   As an application, we give distinct factorizable bi-dialgebras arising from associative algebras and dialgebras, respectively.

  \subsection{Triangular  bi-dialgebras} 
  
We now turn to the general symmetric solutions of the DAYBE. Then Proposition \ref{operaforDAYBE} can be restated as follows.

\begin{pro}\label{wssqof:0}
 Let  $(A,\rhd,\lhd)$   be a dialgebra and $r \in A\otimes A$ be symmetric.  Then   $r$  is a solution of the DAYBE if and only if $\widetilde{r}$ is an $\mathcal{O}$-operator  on  $(A,\rhd,\lhd)$  associated to the bimodule $(A^*;-R_{\lhd}^{*},  -L_{\star}^{*}, R_{\star}^{*},-L_{\rhd}^{*})$.
\end{pro}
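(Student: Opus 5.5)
The plan is to specialize Proposition \ref{operaforDAYBE} to the symmetric case. If $r$ is symmetric, then $\tau(r)=r$. Hence the skew-symmetric part is $\alpha=r-\tau(r)=0$, which is trivially invariant because $E(x)0=F(x)0=0$. So the hypothesis of Proposition \ref{operaforDAYBE} is automatically satisfied. Symmetry also gives $\widetilde{r}^t=\widetilde{\tau(r)}=\widetilde{r}$, which I will use to replace every $\widetilde{r}^t$ by $\widetilde{r}$.

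Proposition \ref{operaforDAYBE} then says that $r$ solves the DAYBE if and only if, for all $a^*,b^*\in A^*$,
\begin{align*}
\widetilde{r}(a^*)\rhd\widetilde{r}(b^*)&=\widetilde{r}\big(-R_{\lhd}^*(\widetilde{r}(a^*))b^*-L_{\star}^*(\widetilde{r}(b^*))a^*\big),\\
\widetilde{r}(a^*)\lhd\widetilde{r}(b^*)&=\widetilde{r}\big(R_{\star}^*(\widetilde{r}(a^*))b^*-L_{\rhd}^*(\widetilde{r}(b^*))a^*\big).
\end{align*}
I will compare these with Eqs.~\eqref{wofn1}--\eqref{wofn2} for weight $\lambda=0$, or equivalently with $V=A^*$ regarded as a trivial dialgebra. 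The maps to substitute are $l_{\rhd}=-R_{\lhd}^*$, $r_{\rhd}=-L_{\star}^*$, $l_{\lhd}=R_{\star}^*$ and $r_{\lhd}=-L_{\rhd}^*$. With these choices, the two displayed identities are literally the $\mathcal{O}$-operator identities for $T=\widetilde{r}$.

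One more thing has to be checked: that $(A^*;-R_{\lhd}^*,-L_{\star}^*,R_{\star}^*,-L_{\rhd}^*)$ really is a bimodule of $(A,\rhd,\lhd)$, so that the notion of an $\mathcal{O}$-operator makes sense. This is exactly the coregular bimodule of Example \ref{coregularrep}. Indeed, $L_{\lhd}^*-L_{\rhd}^*=-L_{\star}^*$ and $R_{\rhd}^*-R_{\lhd}^*=R_{\star}^*$, both by linearity of $x\mapsto L(x)^*$, and Lemma \ref{dualbimodofdias} then applies.

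I do not expect any real obstacle. The only points needing care are the sign identifications $L_{\lhd}^*-L_{\rhd}^*=-L_{\star}^*$ and $R_{\rhd}^*-R_{\lhd}^*=R_{\star}^*$, and the observation that $\alpha=0$ makes the invariance hypothesis vacuous.
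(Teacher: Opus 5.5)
Your proposal is correct and follows the paper's route: the paper gives no separate proof and presents this proposition directly as a restatement of Proposition \ref{operaforDAYBE} in the symmetric case, where $\alpha=0$ is trivially invariant and $\widetilde{r}^t=\widetilde{r}$. Your identification of $(A^*;-R_{\lhd}^{*},-L_{\star}^{*},R_{\star}^{*},-L_{\rhd}^{*})$ with the coregular bimodule, via $L_{\lhd}^*-L_{\rhd}^*=-L_{\star}^*$ and $R_{\rhd}^*-R_{\lhd}^*=R_{\star}^*$, is the one detail the paper leaves implicit.
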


\begin{thm}\label{tcdppab:3}
 Let  $(A,\rhd,\lhd)$   be a dialgebra  and $(V;l_{\rhd},r_{\rhd},l_{\lhd},r_{\lhd})$ be a bimodule of $(A,\rhd,\lhd)$. Let $T: V \rightarrow A$  be a linear map which can be identified as an element in $A \otimes V^{*} \subseteq\left(A \ltimes_{-r_{\lhd}^{*}, l_{\lhd}^{*}-l_{\rhd}^{*}, r_{\rhd}^{*}-r_{\lhd}^{*},-l_{\rhd}^{*}} V^*\right) \otimes\left(A \ltimes_{-r_{\lhd}^{*}, l_{\lhd}^{*}-l_{\rhd}^{*}, r_{\rhd}^{*}-r_{\lhd}^{*},-l_{\rhd}^{*}} V^*\right)$  through  $\operatorname{Hom}(V, A) \cong A \otimes V^{*}$. 
Then  $r=T + \tau(T)$  is a solution of the DAYBE in the dialgebra  $A \ltimes_{-r_{\lhd}^{*}, l_{\lhd}^{*}-l_{\rhd}^{*}, r_{\rhd}^{*}-r_{\lhd}^{*},-l_{\rhd}^{*}} V^*$ if and only if  $T$ is an  $\mathcal{O}$-operator on   $(A,\rhd,\lhd)$  associated to the bimodule  $(V;l_{\rhd},r_{\rhd},l_{\lhd},r_{\lhd})$.
\end{thm}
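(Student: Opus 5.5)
Since $r=T+\tau(T)$ is symmetric, I will apply Proposition \ref{wssqof:0} to the dialgebra $\hat A:=A \ltimes_{-r_{\lhd}^{*}, l_{\lhd}^{*}-l_{\rhd}^{*}, r_{\rhd}^{*}-r_{\lhd}^{*},-l_{\rhd}^{*}} V^*$ (a dialgebra by Lemma \ref{dualbimodofdias}). Its dual is $\hat A^*=A^*\oplus V$. The condition becomes: $\widetilde{r}:\hat A^*\to \hat A$ is an $\mathcal{O}$-operator associated to the coregular bimodule $(\hat A^*;-R_{\lhd}^{*},-L_{\star}^{*},R_{\star}^{*},-L_{\rhd}^{*})$ of $\hat A$. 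Equivalently, by Proposition \ref{operaforDAYBE} with $\widetilde r=\widetilde r^t$, the two identities \eqref{szxsDAYBE1}--\eqref{szxsDAYBE2} must hold for all $a^*+u,\ b^*+v\in A^*\oplus V$.

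First I compute $\widetilde{r}$ explicitly. Regard $T\in A\otimes V^*$ with the convention $\langle \widetilde T(v),a^*\rangle=\langle T(v),a^*\rangle$ (up to ordering conventions), so that
\[
\widetilde r(a^*+u)=T(u)+T^*(a^*),
\]
where $T^*:A^*\to V^*$ is the transpose. Thus $\widetilde r$ sends $V$ into $A$ and $A^*$ into $V^*$.

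Next I write out the coregular actions of $\hat A$ on $\hat A^*$ in components. For $x+\xi\in \hat A$ (with $\xi\in V^*$) acting on $a^*+u$, one term is $-R_{\lhd}^*(x+\xi)(a^*+u)$. Its $A^*$-part involves $-R^*_{\lhd_A}(x)a^*$ together with a term pairing $\xi$ with $u$ through the module maps, and its $V$-part involves $r_{\lhd}(x)u$-type terms coming from the dual of the dual action. The other three maps decompose the same way. I then split \eqref{szxsDAYBE1}--\eqref{szxsDAYBE2} by input type: $(u,v)$, $(a^*,v)$, $(u,b^*)$, $(a^*,b^*)$.

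The case $(u,v)$ is the key one. The $A$-component yields exactly
\[
T(u)\rhd T(v)=T\bigl(l_\rhd(T u)v+r_\rhd(Tv)u\bigr)
\]
and its $\lhd$ analogue. Here I use that $R_{\lhd}^*$ restricted to the action of $A$ on $V\subset\hat A^*$ is the double dual, which gives back $l_\rhd$-type operators. For instance, the combinations $-(-l_\rhd^*)^{*}$ and $-( l^*_\lhd-l^*_\rhd -(-r_\lhd^*))^{*}$ should simplify to $l_\rhd$ and $r_\rhd$. The cases $(a^*,v)$ and $(u,b^*)$ give $V^*$-valued identities of the form $T^*(\dots)=\dots$. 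Pairing them with an arbitrary element of $V$ should make them transposes of the same $\mathcal O$-operator identities, and linearity lets the mixed cases be checked separately. In the case $(a^*,b^*)$ both sides vanish, because $V^*\cdot V^*=0$ in $\hat A$ and $\widetilde r(A^*)\subseteq V^*$ lands the right-hand side in $T(\text{terms in }A^*)$, which are also zero.

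The main obstacle is the sign bookkeeping in the double-dual step: matching $-R_{\lhd}^{*}$, $-L_{\star}^{*}$, $R_{\star}^{*}$, $-L_{\rhd}^{*}$ of $\hat A$, restricted to $V^*\times V$, with $l_\rhd,r_\rhd,l_\lhd,r_\lhd$. I would first verify that the coregular bimodule of $\hat A$ restricted to $A$ acting on $V\subset\hat A^*$ is precisely $(V;l_\rhd,r_\rhd,l_\lhd,r_\lhd)$. This amounts to checking that the dual-bimodule construction of Lemma \ref{dualbimodofdias} is an involution, using $\star$-combinations such as $(l_\lhd^*-l_\rhd^*)-(-r_\lhd^*)$. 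With that established, each component identity reduces cleanly, and both directions of the equivalence follow, since every component condition is implied by the $(u,v)$ one.
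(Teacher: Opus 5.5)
Your proof is correct, but it takes a different route from the paper. The paper simply defers to the proof of \cite[Theorem 2.5.5]{B2}: there one writes $T=\sum_i T(v_i)\otimes v_i^*$ and expands $\mathbf{A}(r)$ directly in $\hat A^{\otimes 3}$. The dual actions on the $v_j^*$ are rewritten in the dual basis, and $\mathbf{A}(r)$ is collected as a sum of defects of the two $\mathcal{O}$-operator identities, tensored with $v_i^*\otimes v_j^*$ in the various slot positions.

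You instead pass through the operator form (Proposition \ref{wssqof:0}, valid for symmetric $r$), so everything reduces to reading off the coregular bimodule of $\hat A$ componentwise on $A^*\oplus V$. This is more conceptual and shows why the theorem holds:
\begin{itemize}
\item The $(u,v)$ component is literally the $\mathcal{O}$-operator condition, because the dual-bimodule construction of Lemma \ref{dualbimodofdias} is an involution. With the paper's sign convention one has $(\varphi^*)^*=\varphi$.
\item The $(a^*,b^*)$ component is trivial. The left side vanishes since $V^*\cdot V^*=0$, and the right side vanishes since $V^*$ is an ideal of $\hat A$, so the coregular action of $V^*$ on $A^*$ is zero.
\item The mixed components, after pairing with $V$, become the $\mathcal{O}$-identities with $u,v$ swapped. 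For the $\rhd$-equation at $(a^*,v)$ and the $\lhd$-equation at $(u,b^*)$, they become the difference $T(v)\star T(u)=T\bigl((l_\rhd-l_\lhd)(T(v))u+(r_\rhd-r_\lhd)(T(u))v\bigr)$.
\end{itemize}
So the converse direction indeed needs only the $(u,v)$ component. The basis computation is self-contained but bookkeeping-heavy; yours reuses machinery already proved in the paper.

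One correction to the step you flagged as the main obstacle. The operator $-L_\star^*$ of $\hat A$, with $x\in A$ acting on $V$, is governed by the two left actions of $A$ on $V^*$. It equals $-\bigl((-r_\lhd^*)-(r_\rhd^*-r_\lhd^*)\bigr)^*=(r_\rhd^*)^*=r_\rhd$. Your expression $-(l_\lhd^*-l_\rhd^*+r_\lhd^*)^*$ subtracts a left action from a right action and evaluates to $l_\rhd-l_\lhd-r_\lhd$, not $r_\rhd$.

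Similarly, $R_\star^*$ restricted to $A$ acting on $V$ is $\bigl((l_\lhd^*-l_\rhd^*)-(-l_\rhd^*)\bigr)^*=l_\lhd$. Meanwhile $-R_\lhd^*$ and $-L_\rhd^*$ give $l_\rhd$ and $r_\lhd$. With these combinations, your component identities go through exactly as you describe.
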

\begin{proof}
It is similar to the proof of \cite[Theorem 2.5.5]{B2}.
\end{proof}

Let $A$ be a vector space and $r   \in A \otimes A$ be nondegenerate. Then we can define a nondegenerate bilinear form $\mathfrak{B}$ on $A$  by
  \begin{align}
 \mathfrak{B}(x,y):=\left\langle \widetilde{r}^{-1}(x) ,y \right\rangle,\quad \forall x,y \in A. \label{BF:inducebyr}
\end{align}
The bilinear form $\mathfrak{B}$  is called the {\bf  induced bilinear form}  by $r$.

\begin{pro}\label{wssqof}
 Let  $(A,\rhd,\lhd)$   be a dialgebra    and  $r   \in A \otimes A$ be symmetric and nondegenerate. Let $\mathfrak{B}$ be the induced bilinear form by $r$.  Then   $r$  is a solution of the DAYBE in $(A,\rhd,\lhd)$  if and only if  $\mathfrak{B}$ satisfies the “closed” conditions
\begin{align}
\mathfrak{B}(x \star y,z) &= \mathfrak{B}(y,z \rhd x) - \mathfrak{B}(x,y \lhd z),\label{closed} \quad \forall x,y,z \in A.
\end{align}
\end{pro}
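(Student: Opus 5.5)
Since $r$ is symmetric, its skew-symmetric part is $\alpha=r-\tau(r)=0$, which is trivially invariant. So both Proposition \ref{operaforDAYBE} and Lemma \ref{ryutaur} apply. The plan is to turn the DAYBE into an identity for $\mathfrak{B}$ by pairing with dual vectors and using the nondegeneracy of $r$.

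\emph{Setup.} Since $r$ is symmetric, $\widetilde{r}=\widetilde{r}^t$. For $x=\widetilde{r}(a^*)$ and $y=\widetilde{r}(b^*)$ we get
$$\mathfrak{B}(x,y)=\langle a^*,\widetilde{r}(b^*)\rangle=\langle r,a^*\otimes b^*\rangle,$$
so $\mathfrak{B}$ is symmetric. Every $x,y,z\in A$ can be written as $x=\widetilde{r}(a^*)$, $y=\widetilde{r}(b^*)$, $z=\widetilde{r}(c^*)$, because $\widetilde{r}$ is bijective.

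\emph{Step 1 (two equivalent forms of the DAYBE).} Apply $\widetilde{r}^{-1}$ to Eq.~\eqref{szxsDAYBE1} and pair with $z$. By the definition \eqref{dualmap} of dual maps, $\mathbf{A}(r)=0$ becomes
$$\mathfrak{B}(x\rhd y,z)=\mathfrak{B}(y,z\lhd x)+\mathfrak{B}(x,y\star z).$$
Doing the same with Eq.~\eqref{szxsDAYBE2} expresses $\mathbf{B}(r)=0$ as
$$\mathfrak{B}(x\lhd y,z)=-\mathfrak{B}(y,z\star x)+\mathfrak{B}(x,y\rhd z).$$
Lemma \ref{ryutaur} tells us these two identities are each equivalent to the DAYBE, and hence to each other.

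\emph{Step 2 (necessity).} Assume $r$ solves the DAYBE, so both identities hold. Subtract the second from the first and use $z\lhd x+z\star x=z\rhd x$ and $y\star z-y\rhd z=-y\lhd z$. This gives exactly the closed condition \eqref{closed}.

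\emph{Step 3 (sufficiency).} This is the main obstacle, since the closed condition is only the difference of the two identities. I would resolve it through the third equivalent form $\mathbf{C}(r)=0$ in Lemma \ref{ryutaur}. The plan is to compute $\langle \mathbf{C}(r),a^*\otimes b^*\otimes c^*\rangle$ term by term, as in the proof of Proposition \ref{operaforDAYBE}. Each of $r_{13}\star r_{23}$, $r_{23}\rhd r_{12}$ and $r_{12}\lhd r_{13}$ should pair to one of $\mathfrak{B}(x\star y,z)$, $\mathfrak{B}(y,z\rhd x)$ and $\mathfrak{B}(x,y\lhd z)$, possibly after a permutation of $x,y,z$ and after using the symmetry of $\mathfrak{B}$. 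I expect the upshot to be that $\mathbf{C}(r)=0$ is \eqref{closed} itself, up to relabelling the variables.

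If this holds, the closed condition gives $\mathbf{C}(r)=0$, and Lemma \ref{ryutaur} then gives $\mathbf{A}(r)=0$, which completes the equivalence. The delicate part is tracking signs and the index placement in the pairing. If the identification with $\mathbf{C}(r)$ fails, the fallback is to apply the closed condition at suitably permuted triples and combine them, using the symmetry of $\mathfrak{B}$, to recover the first identity of Step 1 directly.
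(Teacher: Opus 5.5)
Your proposal is correct, and Step 3 works exactly as you hope, with no relabelling. Write $x=\widetilde r(a^*)$, $y=\widetilde r(b^*)$, $z=\widetilde r(c^*)$ and use $\widetilde r=\widetilde r^t$. Then $r_{13}\star r_{23}$, $r_{23}\rhd r_{12}$ and $r_{12}\lhd r_{13}$ pair with $a^*\otimes b^*\otimes c^*$ to $\mathfrak{B}(x\star y,z)$, $\mathfrak{B}(y,z\rhd x)$ and $\mathfrak{B}(x,y\lhd z)$ respectively. The first of these uses the symmetry of $\mathfrak{B}$. Hence $\langle \mathbf{C}(r),a^*\otimes b^*\otimes c^*\rangle=\mathfrak{B}(x\star y,z)-\mathfrak{B}(y,z\rhd x)+\mathfrak{B}(x,y\lhd z)$, and your fallback is unnecessary. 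More directly, $\mathbf{C}(r)=\mathbf{A}(r)-\mathbf{B}(r)$ holds identically for every $r$, because $\rhd-\lhd=\star$. So this pairing is just the difference of your two Step 1 identities, and $\mathbf{C}(r)=0$ is literally the closed condition.

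The paper takes a shortcut in the converse direction. It rewrites the closed condition as the single identity $\widetilde r(a^*)\star\widetilde r(b^*)=\widetilde r\bigl(-R_{\rhd}^*(\widetilde r(a^*))b^*+L_{\lhd}^*(\widetilde r(b^*))a^*\bigr)$. This is the difference of the two $\mathcal{O}$-operator identities in Proposition \ref{wssqof:0}, and the paper then simply cites that proposition. It never explains why this one difference forces both identities. Your route supplies exactly that missing step: the difference identity is $\mathbf{C}(r)=0$, and Lemma \ref{ryutaur} gives $\mathbf{C}(r)=0\iff\mathbf{A}(r)=0$.

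For symmetric $r$ you can check the needed part of that lemma directly. Swapping $a_i\leftrightarrow b_i$ inside the sums gives $\sigma_{123}(\mathbf{A}(r))=-\mathbf{B}(r)$ and $\sigma_{132}(\mathbf{C}(r))=\mathbf{B}(r)$. So only index permutations are involved, with no invariance argument.
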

\begin{proof}
Since $r   \in A \otimes A$ is symmetric, then  $\mathfrak{B}$ is symmetric.  Since $\widetilde{r}: A^* \to A$  is invertible, for all $x, y,z \in A$, there are $a^*,b^*,c^* \in A^*$ such that  $\widetilde{r}(a^*) = x$, $\widetilde{r}(b^*) = y$. Then  we have
\begin{align*}
 \mathfrak{B}(x \star y,z) &=   \mathfrak{B}(\widetilde{r}(a^*) \star \widetilde{r}(b^*),z),\\
\mathfrak{B}(y,z \rhd x) - \mathfrak{B}(x,y \lhd z) &= \left\langle    b^{*},z \rhd x \right\rangle  - \left\langle   a^{*}, y \lhd z \right\rangle  =\left\langle  -R_{\rhd}^{*}\left(\widetilde{r}\left(a^{*}\right)\right) b^{*}+L_{\lhd}^{*}\left(\widetilde{r}\left(b^{*}\right)\right) a^{*} ,z \right\rangle \\
&= \mathfrak{B}(\widetilde{r}\left(-R_{\rhd}^{*}\left(\widetilde{r}\left(a^{*}\right)\right) b^{*}+L_{\lhd}^{*}\left(\widetilde{r}\left(b^{*}\right)\right) a^{*}\right),z).
\end{align*}
By  Proposition \ref{wssqof:0},  $r$  is a solution of the DAYBE in $(A,\rhd,\lhd)$ if and only if  $\mathfrak{B}$ satisfies Eq.~\eqref{closed}.
\end{proof}

Building on Semenov-Tian-Shansky’s work \cite{STS}, we generalize the corresponding result to the framework of dialgebras.

\begin{pro} \label{rbfna1}
Let $(A, \rhd,\lhd,\mathfrak{B})$ be a skew-symmetric Frobenius dialgebra  and  $\varphi_{\mathfrak{B}}: A\rightarrow A^{*}$  be the induced linear isomorphism  by $\mathfrak{B}$.  Suppose $r \in A \otimes A$ is symmetric.  Then $r$ is a solution of the DAYBE if and only if $P_{r}:= \widetilde{r}\circ \varphi_{\mathfrak{B}}$ is a Rota-Baxter operator on dialgebra  $(A, \rhd,\lhd)$.
\end{pro}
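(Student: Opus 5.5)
The plan is to read Proposition \ref{rbfna1} as the special case $\alpha=0$ of Proposition \ref{rbfna:X}. If $r$ is symmetric, its skew-symmetric part is $\alpha=r-\tau(r)=0$. The zero tensor is trivially invariant, since $E(x)0=F(x)0=0$, so the hypotheses of Proposition \ref{rbfna:X} are satisfied. Moreover $P_\alpha=\widetilde{\alpha}\circ\varphi_{\mathfrak{B}}=0$.

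With $P_\alpha=0$, Eqs.~\eqref{infordiBBF1}--\eqref{infordiBBF2} become
\begin{align*}
P_r(x)\rhd P_r(y)=P_r\big(P_r(x)\rhd y+x\rhd P_r(y)\big),\quad P_r(x)\lhd P_r(y)=P_r\big(P_r(x)\lhd y+x\lhd P_r(y)\big)
\end{align*}
for all $x,y\in A$. These are exactly Eqs.~\eqref{RBofdi1}--\eqref{RBofdi2} with $\lambda=0$. So Proposition \ref{rbfna:X} says directly that $r$ solves the DAYBE if and only if $P_r$ is a Rota-Baxter operator of weight zero on $(A,\rhd,\lhd)$.

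As an independent check, one can argue from Proposition \ref{wssqof:0}. It says that a symmetric $r$ solves the DAYBE if and only if $\widetilde{r}$ is an $\mathcal{O}$-operator associated to the coregular bimodule $(A^*;-R_{\lhd}^{*},-L_{\star}^{*},R_{\star}^{*},-L_{\rhd}^{*})$. By Proposition \ref{dualdj:x}, the map $\varphi_{\mathfrak{B}}$ is an equivalence from the regular bimodule to this coregular one. Its four intertwining identities are written out in the proof of Proposition \ref{dualdj:x}, and one uses $L_{\lhd}^*-L_{\rhd}^*=-L_\star^*$ and $R_{\rhd}^*-R_{\lhd}^*=R_\star^*$ to match the forms. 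Substituting $a^*=\varphi_{\mathfrak{B}}(x)$ and $b^*=\varphi_{\mathfrak{B}}(y)$ then turns the $\mathcal{O}$-operator identities for $\widetilde{r}$ into the Rota-Baxter identities for $\widetilde r\circ\varphi_{\mathfrak{B}}$. The only real care needed is in this step, matching the four actions to $L_\rhd,R_\rhd,L_\lhd,R_\lhd$ correctly, and that computation has already been done in Proposition \ref{rbfna:X}.
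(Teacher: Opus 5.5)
Your proposal is correct and is the paper's proof: the paper also just invokes Proposition \ref{rbfna:X}, where symmetry of $r$ gives $\alpha=r-\tau(r)=0$ (trivially invariant) and hence $P_\alpha=0$, so Eqs.~\eqref{infordiBBF1}--\eqref{infordiBBF2} become exactly the weight-zero Rota--Baxter identities. Your second check, via Proposition \ref{wssqof:0} and the bimodule equivalence of Proposition \ref{dualdj:x}, is also sound but not needed.
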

\begin{proof}
It follows from Proposition \ref{rbfna:X}.
\end{proof}

 Recall \cite{L4} that a  {\bf dendriform algebra } is a triple  $(A, \vdash,\dashv)$ where $A$ is a vector space with two binary operations  $\vdash,\dashv : A \otimes A \to A$ such that  
\begin{align*}
 x \vdash (y \vdash z)  =   (x \vdash y + x \dashv y)  \vdash   z ,\;    \;    
  x \vdash (y \dashv z)  =    (x \vdash y)  \dashv z ,  \;    \; 
(  x \dashv y)  \dashv  z  = x \dashv  (y \vdash z + y \dashv z),
\end{align*}
for all $x,y,z \in A$. 
  
 \begin{defi}\label{predppadefi}
A  \textbf{pre-dialgebra} is a quintuple $(A, \nearrow,\searrow,\swarrow, \nwarrow )$  such that  $(A, \nearrow,\searrow)$ and $(A,\swarrow,\nwarrow )$ are dendriform algebras  satisfying the following conditions:
{\small
\begin{align}
 x \nwarrow (y \lhd z) & \!=\!x \nwarrow (y \rhd z), && \!x \swarrow(y \nwarrow z)   \!=\!  x \swarrow(y\! \searrow \!z) , &&\!
 x \swarrow (y \swarrow z )  \!=\! x \!\swarrow \!(y \!\nearrow\! z )  ,  \label{predi1}\\
 (x \searrow y) \nwarrow z  & \!=\!x\searrow (y \lhd z), &&
 ( x \nearrow  y) \nwarrow  z    \!=\! x\nearrow (y \nwarrow z)  ,  &&
 (x \rhd y)\swarrow z   \!=\! x \nearrow (y \swarrow z),  \label{predi2}\\
(x\nwarrow y)  \searrow z  & =(x\searrow y)  \searrow z , &&
 (x \swarrow y) \searrow z    =(x \nearrow y) \searrow z   , &&
 (x \lhd y) \nearrow z   =  (x \rhd y) \nearrow z,\label{predi3} 
\end{align}}
where $x \rhd y = x \nearrow y + x \searrow y $ and $x \lhd y = x \swarrow y + x \nwarrow y $ for all $x,y,z \in A$.
\end{defi}

{}

\begin{rmk}
In fact, the operad of pre-dialgebras is the arity splitting (into two pieces) of the operad of dialgebras in the sense of \cite{PBG}, or equivalently the disuccessor of the operad of dialgebras in the sense of \cite{BBG}. 
\end{rmk}
 
  Recall \cite{TS} that a  {\bf Leibniz-dendriform algebra (or pre-Leibniz algebra)} is a triple  $(A, \succ,\prec )$ where $A$ is a vector space with two binary operations  $\succ,\prec : A \otimes A \to A$ such that  
\begin{align}
(x \succ y + x \prec y ) \succ z  & =x \succ(y \succ z)- y \succ(x \succ z), \label{Leibd1}\\
(x \succ y) \prec z & =- ( y \prec x) \prec z, \label{Leibd2}\\
x \prec( y \succ z + y \prec z) & =(x \prec y) \prec z+y \succ(x \prec z), \quad \forall x,y,z \in A. \label{Leibd3}
\end{align}

Let $(A, \succ,\prec )$ be  a Leibniz-dendriform algebra. Then  there is a Leibniz algebra structure $(A,[\cdot,\cdot])$  on $A$ with $[\cdot,\cdot]$ given by
\begin{align*}
[x,y]:= x \succ y + x \prec y, \quad \forall x,y \in A.
\end{align*}
We call $(A,[\cdot,\cdot])$ the {\bf sub-adjacent Leibniz algebra} of the Leibniz-dendriform algebra $(A, \succ,\prec)$.

\begin{pro}\label{saforpd}
    Let $(A, \nearrow,\searrow,\swarrow, \nwarrow )$  be a pre-dialgebra. 
    \begin{enumerate}
\item\label{preditoDi:1} The binary operations $\rhd$ and $\lhd$ respectively given by 
\begin{align}
x \rhd y := x \nearrow y + x \searrow y,\quad x \lhd y := x \swarrow y + x \nwarrow y, \quad \forall x,y \in A, \label{pdipalj}
\end{align}
define a dialgebra $(A, \rhd,\lhd )$,  called the {\bf sub-adjacent dialgebra} of $(A, \nearrow,\searrow,\swarrow, \nwarrow )$ and $(A, \nearrow,\searrow,\swarrow, \nwarrow )$ is called the {\bf compatible pre-dialgebra structure} on $(A, \rhd,\lhd )$. Moreover, $(A; L_{\nearrow}, R_{\searrow},$ $ L_{\swarrow}, R_{\nwarrow})$ is a bimodule   of  $(A, \rhd,\lhd )$. 
\item\label{preditopreLeib:2} The binary operations $\succ$ and $\prec$ respectively given by 
\begin{align} 
x \succ y := x \nearrow y - y \nwarrow x,\quad x \prec y := x \searrow y - y \swarrow x, \quad \forall x,y \in A, \label{pditoLDa}
\end{align}
define a Leibniz-dendriform algebra $(A, \succ,\prec)$,  called the {\bf associated Leibniz-dendriform algebra} of $(A, \nearrow,\searrow,\swarrow, \nwarrow )$ and $(A, \nearrow,\searrow,\swarrow, \nwarrow )$ is called the {\bf compatible pre-dialgebra structure} on $(A, \succ,\prec)$. 
    \end{enumerate}
\end{pro}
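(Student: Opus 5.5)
We argue by direct verification from the axioms, splitting each claim into the identities that define the target structure and checking each against the defining identities of a pre-dialgebra (Definition \ref{predppadefi}) and of the two underlying dendriform algebras.

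For part \eqref{preditoDi:1}, associativity of $\rhd=\nearrow+\searrow$ and of $\lhd=\swarrow+\nwarrow$ is the classical fact that the sum of the two operations of a dendriform algebra is associative: we add the three dendriform axioms of $(A,\nearrow,\searrow)$, respectively $(A,\swarrow,\nwarrow)$. For Eqs.~\eqref{dias1}--\eqref{dias3} we expand both sides into four terms each. For \eqref{dias1}, $(x\lhd y)\lhd z=(x\lhd y)\swarrow z+(x\swarrow y)\nwarrow z+(x\nwarrow y)\nwarrow z$. The dendriform axioms of $(\swarrow,\nwarrow)$ rewrite this as $x\swarrow(y\lhd z)+x\nwarrow(y\lhd z)$ in the appropriate split. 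Then the first and third identities of \eqref{predi1} convert $y\lhd z$ into $y\rhd z$ piece by piece, which yields $x\lhd(y\rhd z)$. Identities \eqref{dias2} and \eqref{dias3} are handled the same way, using \eqref{predi2} and \eqref{predi3} respectively.

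The bimodule claim is equivalent, by Proposition \ref{repofdppa}, to showing that the semidirect sum $A\oplus A$ (second copy acted on by $L_\nearrow,R_\searrow,L_\swarrow,R_\nwarrow$) is a dialgebra. Alternatively, we check the nine identities of Definition \ref{defrep} together with the associative-bimodule axioms directly. Each of these is literally one of the pre-dialgebra axioms read as an operator identity:
\begin{itemize}
\item $l_\rhd(x\rhd y)=l_\rhd(x)l_\rhd(y)$ is the first dendriform axiom of $(\nearrow,\searrow)$;
\item $l_\lhd(x)r_\lhd(y)=l_\lhd(x)r_\rhd(y)$ is the second identity of \eqref{predi1};
\item the remaining identities match the other axioms in the same one-to-one fashion.
\end{itemize}
So this step is a bookkeeping dictionary between axioms and operator identities.

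For part \eqref{preditopreLeib:2} we verify Eqs.~\eqref{Leibd1}--\eqref{Leibd3} for $x\succ y=x\nearrow y-y\nwarrow x$ and $x\prec y=x\searrow y-y\swarrow x$. First note that $x\succ y+x\prec y=x\rhd y-y\lhd x=[x,y]$, the Leibniz bracket of Proposition \ref{diatoLeib}. Each identity then expands into eight terms of the form $u\,\square\,(v\,\square\,w)$ or $(u\,\square\,v)\,\square\,w$. We will group them by the position of the variables and cancel them pairwise, using the dendriform axioms and the nine identities \eqref{predi1}--\eqref{predi3}.

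The main obstacle is this cancellation bookkeeping. The check of \eqref{Leibd1} and \eqref{Leibd3} mixes all four operations, and the right choice of axiom for each pair is not visible a priori. To keep it manageable, we first express everything in the bimodule language of part \eqref{preditoDi:1}. For example, \eqref{Leibd1} becomes $L_\succ([x,y])=[L_\succ(x),L_\succ(y)]$, with $L_\succ=L_\nearrow-R_\nwarrow$. We can then derive it from the bimodule axioms of $(A;L_\nearrow,R_\searrow,L_\swarrow,R_\nwarrow)$ together with the dialgebra identities, mirroring the standard passage from a dialgebra bimodule to a Leibniz representation. \eqref{Leibd2} is shorter and should follow from \eqref{predi2}--\eqref{predi3} combined with dendriform associativity.
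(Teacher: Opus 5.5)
Your part (a) is the paper's argument: expand \eqref{dias1}--\eqref{dias3} and cancel using the dendriform axioms and \eqref{predi1}--\eqref{predi3}. Your dictionary for the bimodule claim is exactly right. The six associative-bimodule axioms are the six dendriform axioms, and the nine mixed identities of Definition~\ref{defrep} are the nine identities \eqref{predi1}--\eqref{predi3}, in the order listed. One slip: turning $x\swarrow(y\lhd z)$ into $x\swarrow(y\rhd z)$ needs all three identities of \eqref{predi1}, not only the first and third. The summand $x\swarrow(y\nwarrow z)$ is converted by the second one, $x\swarrow(y\nwarrow z)=x\swarrow(y\searrow z)$.

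For part (b), the paper expands \eqref{Leibd1} into basic products, cancels, and asserts \eqref{Leibd2}--\eqref{Leibd3} ``similarly''. Your bimodule reformulation is a genuinely different route, and it proves more than you use it for.

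By (a) and Proposition~\ref{repofdppa}, the semidirect sum of $A$ with the bimodule $(A;L_\nearrow,R_\searrow,L_\swarrow,R_\nwarrow)$ is a dialgebra. So by Proposition~\ref{diatoLeib} its commutator is Leibniz. On triples with exactly one entry in the module, this says that $\rho:=L_\nearrow-R_\nwarrow$ and $\mu:=R_\searrow-L_\swarrow$ satisfy
\[ \rho([x,y])=[\rho(x),\rho(y)],\quad \rho(x)\mu(z)-\mu(z)\rho(x)=\mu([x,z]),\quad \mu([y,z])=\mu(z)\mu(y)+\rho(y)\mu(z). \]
Since $\rho(x)y=x\succ y$, $\mu(y)x=x\prec y$ and $x\succ y+x\prec y=[x,y]$:
\begin{itemize}
\item the first identity is \eqref{Leibd1};
\item the third is \eqref{Leibd3};
\item comparing the second with the third at $y=x$ gives $\mu(z)\bigl(\rho(x)+\mu(x)\bigr)=0$, which is \eqref{Leibd2}.
\end{itemize}
So all of (b) follows from (a) with no further bookkeeping. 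It also shows that $(\succ,\prec)$ is the structure induced by the identity map, viewed as an $\mathcal{O}$-operator for this Leibniz representation. The paper's route is self-contained but purely computational.

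If you instead check \eqref{Leibd2} by hand, your guessed ingredients are wrong. The eight terms of $(x\succ y+y\prec x)\prec z$ cancel in pairs via:
\begin{itemize}
\item $(y\nwarrow x)\searrow z=(y\searrow x)\searrow z$ and $(x\swarrow y)\searrow z=(x\nearrow y)\searrow z$, from \eqref{predi3};
\item $z\swarrow(y\nwarrow x)=z\swarrow(y\searrow x)$ and $z\swarrow(x\swarrow y)=z\swarrow(x\nearrow y)$, from \eqref{predi1}.
\end{itemize}
Neither \eqref{predi2} nor any dendriform axiom is used.
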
 
\begin{proof} \eqref{preditoDi:1}. It is well-known that $(A, \nearrow,\searrow)$ and $(A, \swarrow, \nwarrow)$ are dendriform algebras \cite{L4}.   Let $x,y,z \in A$, we have
\begin{align*}
 &x \lhd (y  \lhd z)-x \lhd (y \rhd z) \\
 &= x \swarrow (y \swarrow z + y \nwarrow z - y \nearrow z - y \searrow z) + x \nwarrow (y \swarrow z + y \nwarrow z - y \nearrow z - y \searrow z)    =0,\\
&(x \rhd y) \lhd z-x \rhd(y \lhd z) \\
&= (x \nearrow y + x \searrow y) \swarrow z + (x \nearrow y + x \searrow y) \nwarrow z 
 - x \nearrow (y \swarrow z + y \nwarrow z)- x \searrow (y \swarrow z + y \nwarrow z) 
 = 0,\\
&(x \lhd y) \rhd z-x \rhd(y\rhd z)\\
&= (x \swarrow y + x \nwarrow y - x \nearrow y - x \searrow y) \nearrow z + (x \swarrow y + x \nwarrow y - x \nearrow y - x \searrow y) \searrow z =0.
\end{align*}
Thus $(A, \rhd,\lhd )$ is a dialgebra. Moreover,  it is straightforward to verify that $(A; L_{\nearrow}, R_{\searrow},$ $ L_{\swarrow}, R_{\nwarrow})$ is a bimodule   of  $(A, \rhd,\lhd )$.

\eqref{preditopreLeib:2}.  Let $x,y,z \in A$, we have
\begin{align*}
&(x \succ y + x \prec y ) \succ z -x \succ(y \succ z)+ y \succ(x \succ z)  \\
&=(x \nearrow y - y \nwarrow x+ x \searrow y - y \swarrow x)\nearrow z - z \nwarrow (x \nearrow y - y \nwarrow + x \searrow y - y \swarrow x) -x \nearrow (y \nearrow z - z \nwarrow y) \\
&\quad + (y \nearrow z - z \nwarrow y)  \nwarrow x + y \nearrow (x \nearrow z - z \nwarrow x) - (x \nearrow z - z \nwarrow x)  \nwarrow y\\
&=  (y \nearrow z - z \nwarrow y)  \nwarrow x- z \nwarrow (x \nearrow y - y \nwarrow + x \searrow y - y \swarrow x)      - y \nearrow (   z \nwarrow x) + (   z \nwarrow x)  \nwarrow y\\
&=    (y \nearrow z  )  \nwarrow x - y \nearrow (   z \nwarrow x)  =0.
\end{align*}
Thus Eq.~\eqref{Leibd1} holds. Similarly, by a direct verification, Eqs.~\eqref{Leibd2}-\eqref{Leibd3} also hold. Thus $(A, \succ,\prec)$ is a Leibniz-dendriform algebra.
\end{proof}

\begin{pro}
  Let $(A, \nearrow,\searrow,\swarrow, \nwarrow )$  be a pre-dialgebra and $(A, \rhd,\lhd )$ be the sub-adjacent dialgebra of $(A, \nearrow,\searrow,\swarrow, \nwarrow )$. Let $(A, \succ,\prec)$ be the associated Leibniz-dendriform algebra  of $(A, \nearrow,\searrow,\swarrow, \nwarrow )$ and $(A, [\cdot,\cdot])$ be the sub-adjacent Leibniz algebra  of $(A, \succ,\prec)$. Then the sub-adjacent Leibniz algebra $(A,[\cdot,\cdot]')$ of  $(A, \rhd,\lhd )$ is  consistent with the sub-adjacent Leibniz algebra $(A, [\cdot,\cdot])$  of $(A, \succ,\prec)$. Hence we have the following commutative diagram:
   \begin{equation*}
\begin{gathered}
	\xymatrix@C=2.4cm{  \txt{ pre-dialgebra\\ $(A, \nearrow,\searrow,\swarrow, \nwarrow )$} \ar@{->}[d]_-{\mathrm{Prop.\;}{\ref{saforpd}}}  \ar@{->}[r]^{\hspace{1.4cm}\mathrm{Prop.\;} \ref{saforpd}   \hspace{1cm}}&   \txt{ dialgebra \\$(A, \rhd,\lhd )$}   \ar@{->}[d]_-{\mathrm{Prop.\;}{\ref{diatoLeib}}} \\
	\txt{  Leibniz-dendriform algebra \\ $(A, \succ,\prec)$}  \ar@{->}[r]^{ \quad\quad\mathrm{sub-adjacent} }  &  \txt{  Leibniz algebra  \\ $(A, [\cdot,\cdot])$ }}
\end{gathered}
\end{equation*}
\end{pro}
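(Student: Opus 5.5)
The plan is a direct computation comparing the two brackets on arbitrary elements. For $x,y\in A$, the sub-adjacent Leibniz algebra of $(A,\rhd,\lhd)$ in Proposition \ref{diatoLeib} has bracket $[x,y]'=x\rhd y-y\lhd x$. Substituting Eq.~\eqref{pdipalj} gives
\begin{align*}
[x,y]'=x\nearrow y+x\searrow y-y\swarrow x-y\nwarrow x .
\end{align*}

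On the other side, the sub-adjacent Leibniz algebra of the associated Leibniz-dendriform algebra has bracket $[x,y]=x\succ y+x\prec y$. Substituting Eq.~\eqref{pditoLDa} gives
\begin{align*}
[x,y]=x\nearrow y-y\nwarrow x+x\searrow y-y\swarrow x .
\end{align*}
The two expressions agree term by term, so $[\cdot,\cdot]'=[\cdot,\cdot]$ on all of $A$.

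Commutativity of the diagram then follows. Both routes start from the same pre-dialgebra and end at a Leibniz structure on the same underlying space $A$. We have just shown that these two Leibniz structures have the same bracket, so the composites coincide.

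The well-definedness of each arrow is already available from earlier results. Proposition \ref{saforpd}(a) shows that $(A,\rhd,\lhd)$ is a dialgebra, and Proposition \ref{saforpd}(b) shows that $(A,\succ,\prec)$ is a Leibniz-dendriform algebra. Proposition \ref{diatoLeib} and the remark after Eqs.~\eqref{Leibd1}--\eqref{Leibd3} show that each bracket is a Leibniz bracket.

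There is no genuine obstacle; the only step needing care is keeping the order of arguments right in the terms $y\nwarrow x$ and $y\swarrow x$, which arise from $y\lhd x$ with the arguments swapped.
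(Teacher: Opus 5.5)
Your proposal is correct and is essentially the paper's own proof. The paper likewise expands $[x,y]=x\succ y+x\prec y$ via Eq.~\eqref{pditoLDa}, regroups the four terms as $x\rhd y-y\lhd x$ using Eq.~\eqref{pdipalj}, and so obtains $[x,y]=[x,y]'$ for all $x,y\in A$.
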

\begin{proof}
Suppose that $(A,[\cdot,\cdot]')$ is the sub-adjacent Leibniz algebra of $(A, \rhd,\lhd )$ and $(A,[\cdot,\cdot])$ is the sub-adjacent Leibniz algebra of $(A, \succ,\prec)$. Then for all  $x,y \in A$, we have
 \begin{align*}
[x,y] &=  x \succ y + x \prec y = x \nearrow y - y \nwarrow x  + x \searrow y - y \swarrow x = x \rhd y  - y \lhd x = [x,y]'.
 \end{align*}
This completes the proof.
\end{proof}

\begin{pro} \label{OonVector}
Let $(A, \rhd_A,\lhd_A)$ be a dialgebra.
\begin{enumerate}
\item\label{item1} Let  $T: V \rightarrow A$  be an  $\mathcal{O}$-operator on $(A,\rhd_A,\lhd_A)$ associated to the bimodule $(V;l_{\rhd},$ $ r_{\rhd}, l_{\lhd},  r_{\lhd})$. Then there exists a pre-dialgebra structure $(V, \nearrow_V,\searrow_V,\swarrow_V, \nwarrow_V)$ on  $V$, where $\nearrow_V$, $\searrow_V$, $\swarrow_V$ and  $\nwarrow_V$ are respectively defined by
\begin{align}
 &u \nearrow_{V} v:=l_{\rhd}(T(u)) v,  && u \searrow_{V} v:=r_{\rhd}(T(v)) u, \\
 &u \swarrow_{V} v:=l_{\lhd}(T(u)) v, &&  u \nwarrow_{V} v:=r_{\lhd}(T(v)) u, \quad  \forall u, v \in V.
\end{align}
Moreover, $T$ is a homomorphism of dialgebras from the sub-adjacent dialgebra $(V, \rhd_V,$ $\lhd_V)$ of  $(V, \nearrow_V,\searrow_V,\swarrow_V, \nwarrow_V)$ to $(A, \rhd_A,\lhd_A)$.
\item\label{item2} Let $P: A \to A$ be a Rota-Baxter operator on $(A, \rhd_A,\lhd_A)$. Define  binary operations $\nearrow$, $\searrow$, $\swarrow$ and  $\nwarrow$  on  $A$  by
\begin{align*}
&x \nearrow y := P(x) \rhd_A  y, && x \searrow y := x \rhd_A  P(y), \\
&x \swarrow y := P(x) \lhd_A  y, &&  x \nwarrow y := x \lhd_A  P(y), \quad  \forall x, y \in A.
\end{align*}
Then  $(A, \nearrow,\searrow,\swarrow, \nwarrow )$  is a pre-dialgebra. Moreover, $T$ is a homomorphism of dialgebras from the sub-adjacent dialgebra   of  $(A, \nearrow,\searrow,\swarrow, \nwarrow )$  to $(A, \rhd_A,\lhd_A)$.
\end{enumerate}
\end{pro}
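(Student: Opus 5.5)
The plan is to prove part (a) first and then obtain part (b) as the special case of the regular bimodule. In both parts the dialgebra axioms are reduced to the bimodule identities of Definition~\ref{defrep} together with the $\mathcal{O}$-operator identities \eqref{wofn1}--\eqref{wofn2} at $\lambda=0$.

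For part (a), the first step is the homomorphism property. Adding the four operations gives
\[
u\rhd_V v=l_{\rhd}(T(u))v+r_{\rhd}(T(v))u
\quad\text{and}\quad
u\lhd_V v=l_{\lhd}(T(u))v+r_{\lhd}(T(v))u .
\]
Hence the $\mathcal{O}$-operator equations say exactly that $T(u\rhd_V v)=T(u)\rhd_A T(v)$ and $T(u\lhd_V v)=T(u)\lhd_A T(v)$. This is the key tool for everything that follows: whenever $T$ of a composite product appears inside an action, it can be replaced by a product in $A$.

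Next I check that $(V,\nearrow_V,\searrow_V)$ and $(V,\swarrow_V,\nwarrow_V)$ are dendriform algebras. Since $(V;l_{\rhd},r_{\rhd})$ is a bimodule of $(A,\rhd_A)$ and $T$ is an $\mathcal{O}$-operator on $(A,\rhd_A)$, the classical argument of Loday and Aguiar applies. For instance,
\[
u\nearrow_V(v\nearrow_V w)=l_{\rhd}(T(u))l_{\rhd}(T(v))w=l_{\rhd}(T(u)\rhd_A T(v))w=l_{\rhd}(T(u\rhd_V v))w=(u\rhd_V v)\nearrow_V w .
\]
The mixed axiom is handled the same way, using $l_{\rhd}r_{\rhd}=r_{\rhd}l_{\rhd}$. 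The same argument works for $\lhd$.

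The remaining step, and the main bookkeeping, is to verify the nine compatibility relations \eqref{predi1}--\eqref{predi3}. Each of them translates into one of the nine mixed bimodule identities of Definition~\ref{defrep}, after using the homomorphism property to rewrite $T(y\lhd_V z)$ and $T(y\rhd_V z)$.
\begin{itemize}
\item $x\nwarrow(y\lhd z)=x\nwarrow(y\rhd z)$ becomes $r_{\lhd}(T(y)\lhd T(z))=r_{\lhd}(T(y)\rhd T(z))$.
\item $(x\searrow y)\nwarrow z=x\searrow(y\lhd z)$ becomes $r_{\lhd}(T(z))r_{\rhd}(T(y))=r_{\rhd}(T(y)\lhd T(z))$.
\item $(x\rhd y)\swarrow z=x\nearrow(y\swarrow z)$ becomes $l_{\lhd}(T(x)\rhd T(y))=l_{\rhd}(T(x))l_{\lhd}(T(y))$.
\item $(x\nwarrow y)\searrow z=(x\searrow y)\searrow z$ becomes $r_{\rhd}(T(z))r_{\lhd}(T(y))=r_{\rhd}(T(z))r_{\rhd}(T(y))$.
\end{itemize}
The other five relations match up in the same way. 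The only real risk is a mismatch in argument order between the operations and the actions, so I will go through the list systematically, pairing each relation with the corresponding entry of the $3\times3$ table in Definition~\ref{defrep}.

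Once these are verified, Proposition~\ref{saforpd}~\eqref{preditoDi:1} shows that the sub-adjacent operations form a dialgebra. The homomorphism property was already established in the first step, which completes part (a).

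For part (b), I take $V=A$ with the regular bimodule $(A;L_{\rhd},R_{\rhd},L_{\lhd},R_{\lhd})$ from Example~\ref{coregularrep}. A Rota-Baxter operator of weight $0$ is precisely an $\mathcal{O}$-operator associated to this bimodule (Example~\ref{RBfna}). With $T=P$, the operations of part (a) are $x\nearrow y=P(x)\rhd_A y$, $x\searrow y=x\rhd_A P(y)$, and so on, which are exactly the stated ones. Part (a) then gives both the pre-dialgebra structure and the fact that $P$ (written $T$ in the statement) is a dialgebra homomorphism.
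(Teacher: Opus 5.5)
Your proposal is correct and follows essentially the same route as the paper: invoke the classical dendriform result for $(V,\nearrow_V,\searrow_V)$ and $(V,\swarrow_V,\nwarrow_V)$, then reduce each of the nine compatibility axioms to the matching mixed bimodule identity via $T(u\rhd_V v)=T(u)\rhd_A T(v)$ and $T(u\lhd_V v)=T(u)\lhd_A T(v)$, and obtain (b) by specializing to the regular bimodule with $T=P$. Your one-to-one pairing of the nine axioms with the nine identities of Definition~\ref{defrep} is correct; the paper writes out only the three cases in \eqref{predi1} and calls the remaining ones straightforward.
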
  
\begin{proof}
(\ref{item1}). By \cite[Theorem 5.2]{BGN2009}, $(V, \nearrow_{V},\searrow_{V})$ and  $(V, \swarrow_{V},\nwarrow_{V})$ are dendriform algebras.  Moreover, let $u,v,w \in A$, we have
\begin{align*}
 u\nwarrow_V (v \lhd_V w)   - u \nwarrow_V (v \rhd_V w) &= \!r_{\lhd}(T(l_{\lhd}(T(\!v\!)\!) w \!+\! r_{\lhd}(T(\!w\!)) v  \! -\! l_{\rhd}(T(\!v\!)) w\! -\! r_{\rhd}(T(\!w\!)) \!v)\!) u, \\
 &= r_{\lhd}(T(v) \lhd_A T(w)  - T(v) \rhd_A T(w) ) u = 0,\\
 u \swarrow_V(v \nwarrow_V w)   -   u \swarrow_V(v  \searrow_V  w)&=l_{\lhd}(T(u)) r_{\lhd}(T(w)) v -l_{\lhd}(T(u)) r_{\rhd}(T(w)) v =0, \\
 u \swarrow_V (v \swarrow_V w)  - u  \swarrow_V  (v  \nearrow_V  w)&=l_{\lhd}(T(u)) l_{\lhd}(T(v)) w - l_{\lhd}(T(u)) l_{\rhd}(T(v)) w =0.
\end{align*}
Thus Eq.~\eqref{predi1} hold. It is straightforward to show that   Eqs.~\eqref{predi2}-\eqref{predi3} also hold. Thus  $(V, \nearrow_V,\searrow_V,\swarrow_V, \nwarrow_V)$ is a  pre-dialgebra.  Moreover, we have
\begin{align*}
T(u \rhd_V v) &= T(u \nearrow_V v + u \searrow_V v) = T(l_{\rhd}(T(u)) v+ r_{\rhd}(T(v)) u) =  T(u) \rhd_A  T(v), \\
T(u \lhd_V v) &= T(u \swarrow_V v + u \nwarrow_V v) = T(l_{\lhd}(T(u)) v+ r_{\lhd}(T(v)) u) =  T(u) \lhd_A   T(v).
\end{align*}

(\ref{item2}). It follows from Example \ref{RBfna} and Item \eqref{item1}.
\end{proof}

\begin{ex} \label{ex:predppa}
 Continuing with Example \ref{Ex:dias} (1), let $(A, \rhd,\lhd)$  be the 2-dimensional dialgebra  with a basis $\{e_1,e_2\}$ whose nonzero products are given by Eq.~\eqref{exfordias}.  Let $P : A \to A$ be a linear map given by
\begin{align*}
P(e_1) = 0, \quad P(e_2) = e_{1}.
\end{align*}
Then  $P$ is a Rota-Baxter   operator on $(A, \rhd,\lhd)$. Thus by  Proposition \ref{OonVector} \eqref{item2}, there is a 2-dimensional pre-dialgebra $(A, \nearrow,\searrow,\swarrow, \nwarrow )$  whose nonzero products are explicitly given by 
\begin{align}
e_2 \swarrow e_2 =   e_1. \label{ex:predppa:cfb}
\end{align}

\end{ex}

Next we give a necessary and sufficient condition on a dialgebra admitting a compatible pre-dialgebra structure.

\begin{thm}\label{iogpdpp:0}
Let $(A, \rhd,\lhd)$   be a  dialgebra. There is a compatiable pre-dialgebra structure on $(A, \rhd,\lhd)$  if and only if there exists an invertible  $\mathcal{O}$-operator on  $(A, \rhd,\lhd)$.
\end{thm}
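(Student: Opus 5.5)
The plan is to argue both directions directly, modelled on the classical argument for dendriform algebras and Rota-Baxter operators (cf. \cite{BGN2009}). The forward direction uses the regular-type bimodule of Proposition \ref{saforpd}(a); the converse transports the pre-dialgebra of Proposition \ref{OonVector}(a) back along $T$.

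($\Longleftarrow$) Suppose $T: V \to A$ is an invertible $\mathcal{O}$-operator on $(A,\rhd,\lhd)$ associated to a bimodule $(V;l_{\rhd},r_{\rhd},l_{\lhd},r_{\lhd})$. By Proposition \ref{OonVector}(a), $V$ carries the pre-dialgebra $(V,\nearrow_V,\searrow_V,\swarrow_V,\nwarrow_V)$, and $T$ is a dialgebra homomorphism from its sub-adjacent dialgebra to $(A,\rhd,\lhd)$. Since $T$ is bijective, I transport the structure to $A$ by
\begin{align*}
x\nearrow y := T\big(l_{\rhd}(x)T^{-1}(y)\big),\quad x\searrow y := T\big(r_{\rhd}(y)T^{-1}(x)\big),\\
x\swarrow y := T\big(l_{\lhd}(x)T^{-1}(y)\big),\quad x\nwarrow y := T\big(r_{\lhd}(y)T^{-1}(x)\big).
\end{align*}
Because $T$ is then an isomorphism of quadruple-operation structures $V\to A$, the axioms of Definition \ref{predppadefi} hold on $A$. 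The sub-adjacent operations are
\begin{align*}
x\nearrow y+x\searrow y = T\big(l_{\rhd}(T(u))v+r_{\rhd}(T(v))u\big) = T(u)\rhd T(v) = x\rhd y,
\end{align*}
where $u=T^{-1}(x)$ and $v=T^{-1}(y)$, using Eq.~\eqref{wofn1} with $\lambda=0$. Similarly $x\swarrow y+x\nwarrow y=x\lhd y$ by Eq.~\eqref{wofn2}. So this is a compatible pre-dialgebra structure on $(A,\rhd,\lhd)$.

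($\Longrightarrow$) Suppose $(A,\nearrow,\searrow,\swarrow,\nwarrow)$ is a compatible pre-dialgebra structure on $(A,\rhd,\lhd)$, i.e. Eq.~\eqref{pdipalj} holds. By Proposition \ref{saforpd}(a), $(A;L_{\nearrow},R_{\searrow},L_{\swarrow},R_{\nwarrow})$ is a bimodule of $(A,\rhd,\lhd)$. I take $T=\id_A$, which is clearly invertible. Checking Eqs.~\eqref{wofn1}--\eqref{wofn2} with $\lambda=0$ gives
\begin{align*}
\id(x)\rhd\id(y)=x\nearrow y+x\searrow y=L_{\nearrow}(x)y+R_{\searrow}(y)x,
\end{align*}
and likewise $x\lhd y=L_{\swarrow}(x)y+R_{\nwarrow}(y)x$. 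Hence $\id$ is an invertible $\mathcal{O}$-operator.

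The only step with real content is the bimodule claim in Proposition \ref{saforpd}(a), which the paper states was verified directly. If it needed rechecking, I would confirm each of the nine dialgebra bimodule identities of Definition \ref{defrep} against the axioms \eqref{predi1}--\eqref{predi3} together with the dendriform axioms. Everything else is formal transport of structure.
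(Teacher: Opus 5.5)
Your proposal is correct and is essentially the paper's own proof: for ($\Longrightarrow$) the identity map is an invertible $\mathcal{O}$-operator associated to the bimodule $(A;L_{\nearrow},R_{\searrow},L_{\swarrow},R_{\nwarrow})$ of Proposition \ref{saforpd}, and for ($\Longleftarrow$) the pre-dialgebra of Proposition \ref{OonVector} is transported along $T$, which gives exactly the operations of Eqs.~\eqref{ReOondpp1}--\eqref{ReOondpp2}. Your explicit check, via Eqs.~\eqref{wofn1}--\eqref{wofn2} with $\lambda=0$, that the transported operations sum back to $\rhd$ and $\lhd$ is a useful detail that the paper leaves implicit.
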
 

\begin{proof} Suppose that $(A, \nearrow,\searrow,\swarrow, \nwarrow )$ is a pre-dialgebra whose the sub-adjacent dialgebra is  $(A, \rhd,\lhd)$. Then we have
\begin{align*}
 x \rhd y  &=   x \nearrow y + x \searrow  y  = \id(L_{\nearrow}(\id(x))y+ R_{\searrow}(\id(y)) x), \\
 x \lhd y  &=   x \swarrow y + x \nwarrow y  = \id(L_{\swarrow}(\id(x))y+ R_{\nwarrow}(\id(y)) x), \quad \forall x,y \in A.
\end{align*}
Thus the identity map $\mathrm{id}:  A \rightarrow A$ is an invertible $\mathcal{O}$-operator on  $(A, \rhd,\lhd)$ associated to the bimodule  $(A; L_{\nearrow}, R_{\searrow}, L_{\swarrow}, R_{\nwarrow})$.

 Conversely, if  $T: V \to A$  is an invertible  $\mathcal{O}$-operator on $(A, \rhd,\lhd)$ associated to the  bimodule $(V;l_{\rhd},r_{\rhd},$ $l_{\lhd},r_{\lhd})$, then by Proposition \ref{OonVector}, there is a compatiable pre-dialgebra structure on  $(A, \rhd,\lhd)$   given by
\begin{align}
 x \nearrow y&:=T\left(l_{\rhd}(x)\left(T^{-1}(y)\right)\right),&& x \searrow y:=T\left(r_{\rhd}(y)\left(T^{-1}(x)\right)\right), \label{ReOondpp1}\\
x \swarrow y&:=T\left(l_{\lhd}(x)\left(T^{-1}(y)\right)\right), && x \nwarrow y:=T\left(r_{\lhd}(y)\left(T^{-1}(x)\right)\right),\quad \forall x, y \in A. \label{ReOondpp2} 
\end{align}
This completes the proof.
\end{proof}

A  nondegenerate symmetric “closed” bilinear form on a dialgebra   gives rise to a pre-dialgebra.


 \begin{pro}\label{syvofConclcyclic}
Let $\mathfrak{B}$  be the nondegenerate symmetric bilinear form on a dialgebra $(A, \rhd,\lhd)$ satisfying the closed condition \eqref{closed}. Then there exists a compatiable pre-dialgebra structure $(A, \nearrow,\searrow,\swarrow, \nwarrow )$ on $(A, \rhd,\lhd)$ with $ \nearrow,\searrow,\swarrow, \nwarrow $ given by
\begin{align*}
\mathfrak{B}(x \nearrow y,z) &= \mathfrak{B}(y, z \lhd x),&&\mathfrak{B}(x \searrow y,z)  =  \mathfrak{B}(x, y \star z),\\
\mathfrak{B}(x \swarrow y,z)&= -\mathfrak{B}(y,z \star x),&&\mathfrak{B}(x \nwarrow y,z)  =    \mathfrak{B}(x,y \rhd z),\quad \forall x,y,z \in A.
\end{align*}
 Moreover,  the  bimodules  $(A;L_{\rhd}, R_{\rhd},
L_{\lhd}, R_{\lhd})$    and
$(A^*;-R_{\nwarrow}^{*}, -L_{\nearrow}^{*}+L_{\swarrow}^{*},R_{\searrow}^{*}-R_{\nwarrow}^{*}, - L_{\nearrow}^{*})$  of the
dialgebra $(A, \rhd,\lhd)$   are equivalent.
 \end{pro}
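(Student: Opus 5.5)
The plan is to reduce everything to Theorem \ref{iogpdpp:0} and Proposition \ref{wssqof}. Since $\mathfrak{B}$ is nondegenerate and symmetric, define $r\in A\otimes A$ by $\widetilde{r}:=\varphi_{\mathfrak{B}}^{-1}$, i.e.\ $r=r_{\mathfrak{B}}$ as in Eq.~\eqref{inducedr}. Then $r$ is symmetric and nondegenerate, and its induced bilinear form (Eq.~\eqref{BF:inducebyr}) is exactly $\mathfrak{B}$. By Proposition \ref{wssqof}, the closed condition \eqref{closed} says precisely that $r$ solves the DAYBE. Proposition \ref{wssqof:0} then shows that $\widetilde{r}=\varphi_{\mathfrak{B}}^{-1}:A^*\to A$ is an invertible $\mathcal{O}$-operator associated to the bimodule $(A^*;-R_{\lhd}^{*},-L_{\star}^{*},R_{\star}^{*},-L_{\rhd}^{*})$. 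This is the coregular bimodule, since $L^*_\lhd-L^*_\rhd=-L^*_\star$ and $R^*_\rhd-R^*_\lhd=R^*_\star$.

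Next, apply the converse direction of Theorem \ref{iogpdpp:0}, i.e.\ Eqs.~\eqref{ReOondpp1}--\eqref{ReOondpp2} with $T=\varphi_{\mathfrak{B}}^{-1}$. This gives a compatible pre-dialgebra, for instance
\[
x\nearrow y=\varphi_{\mathfrak{B}}^{-1}\bigl(-R_{\lhd}^*(x)\varphi_{\mathfrak{B}}(y)\bigr).
\]
Pairing with $z$ and using symmetry of $\mathfrak{B}$ yields $\mathfrak{B}(x\nearrow y,z)=\langle -R^*_\lhd(x)\varphi_{\mathfrak{B}}(y),z\rangle=\mathfrak{B}(y,z\lhd x)$. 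The other three operations follow the same way. For $x\searrow y=\varphi_{\mathfrak{B}}^{-1}(-L^*_\star(y)\varphi_{\mathfrak{B}}(x))$ we get $\mathfrak{B}(x,y\star z)$. For $\swarrow$ we get $-\mathfrak{B}(y,z\star x)$. For $\nwarrow$ we get $\mathfrak{B}(x,y\rhd z)$. The only care needed is the order of arguments, which symmetry of $\mathfrak{B}$ makes harmless. By construction, the sub-adjacent dialgebra of this pre-dialgebra is $(A,\rhd,\lhd)$.

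For the equivalence of bimodules, take $\varphi_{\mathfrak{B}}:A\to A^*$ and check the four intertwining relations from Definition \ref{defrep}. As a sample,
\[
\langle\varphi_{\mathfrak{B}}(x\rhd y),z\rangle=\mathfrak{B}(x\nearrow y+x\searrow y,z),
\]
and we must match this with $\langle -R^*_\nwarrow(x)\varphi_{\mathfrak{B}}(y),z\rangle=\mathfrak{B}(y,z\nwarrow x)=\mathfrak{B}(z\nwarrow x,y)$. That last expression equals $\mathfrak{B}(z,x\rhd y)=\mathfrak{B}(x\rhd y,z)$, so this relation holds directly from the defining formula for $\nwarrow$. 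The remaining relations are handled similarly: $R_\rhd\leftrightarrow -L^*_\nearrow+L^*_\swarrow$, $L_\lhd\leftrightarrow R^*_\searrow-R^*_\nwarrow$, and $R_\lhd\leftrightarrow -L^*_\nearrow$. Each reduces, via the four defining identities and symmetry of $\mathfrak{B}$, to an identity of the form $\mathfrak{B}(\cdot,\cdot)$, occasionally combined with the closed condition \eqref{closed}.

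I expect the main obstacle to be bookkeeping in this last step. For example, $\langle(-L^*_\nearrow+L^*_\swarrow)(y)\varphi_{\mathfrak{B}}(x),z\rangle=\mathfrak{B}(x,y\nearrow z-y\swarrow z)$ must be shown equal to $\mathfrak{B}(x\rhd y,z)$. Expanding with the defining formulas gives $\mathfrak{B}(z,x\lhd y)+\mathfrak{B}(z,x\star y)=\mathfrak{B}(z,x\rhd y)$, which works out. I would verify each of the four relations this way, expecting all of them to close by the definitions and symmetry of $\mathfrak{B}$ alone, with \eqref{closed} needed only where a mismatch appears.
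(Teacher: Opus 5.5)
Your proposal is correct and follows the paper's proof. Both show that $\varphi_{\mathfrak{B}}^{-1}$ is an invertible $\mathcal{O}$-operator associated to the coregular bimodule $(A^*;-R_{\lhd}^{*},-L_{\star}^{*},R_{\star}^{*},-L_{\rhd}^{*})$, read off the four operations from Eqs.~\eqref{ReOondpp1}--\eqref{ReOondpp2}, and then verify the intertwining relations for $\varphi_{\mathfrak{B}}$; as you found, these need only the defining identities and the symmetry of $\mathfrak{B}$. The one difference is that you get the $\mathcal{O}$-operator property by citing Propositions \ref{wssqof} and \ref{wssqof:0}, whereas the paper states the two identities for $\varphi_{\mathfrak{B}}(x\rhd y)$ and $\varphi_{\mathfrak{B}}(x\lhd y)$ directly; in both versions the underlying fact is that, for symmetric $\mathfrak{B}$, \eqref{closed} reads $F+F\circ\sigma=0$, where $F$ is the defect of the $\rhd$-identity and $\sigma$ is the cyclic shift of order $3$, and this forces $F=0$ in characteristic $0$.
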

 \begin{proof}Let  $\varphi_{\mathfrak{B}}: A\rightarrow A^{*}$ be the induced linear isomorphism  by  $\mathfrak{B}$. Then for all $u^*,v^*,w^* \in A^*$, there exist $x,y,z \in A$ such that $u^* = \varphi_{\mathfrak{B}}(x), v^* = \varphi_{\mathfrak{B}}(y), w^* = \varphi_{\mathfrak{B}}(z)$. Therefore
\begin{align*}
\langle \varphi_{\mathfrak{B}}(x \rhd y), z \rangle  &=  \langle  -R_{\lhd}^* (x)\varphi_{\mathfrak{B}}(y),z\rangle - \langle
L_{\star}^*(y)\varphi_{\mathfrak{B}}(x), z \rangle,\\
\langle \varphi_{\mathfrak{B}}(x \lhd y), z \rangle  &=  \langle   R_{\star}^* (x)\varphi_{\mathfrak{B}}(y),z\rangle - \langle
L_{\rhd}^*(y)\varphi_{\mathfrak{B}}(x), z \rangle, 
\end{align*}
Then we obtain  
\begin{align*}
 \varphi_{\mathfrak{B}}^{-1}(u^*) \rhd \varphi_{\mathfrak{B}}^{-1}(v^*)  &= \varphi_{\mathfrak{B}}^{-1}\big(-R_{\lhd}^* (\varphi_{\mathfrak{B}}^{-1}(u^*))v^* -L_{\star}^*(\varphi_{\mathfrak{B}}^{-1}(v^*))u^*\big),\\
 \varphi_{\mathfrak{B}}^{-1}(u^*) \lhd \varphi_{\mathfrak{B}}^{-1}(v^*)  &= \varphi_{\mathfrak{B}}^{-1}\big( R_{\star}^* (\varphi_{\mathfrak{B}}^{-1}(u^*))v^* -L_{\rhd}^*(\varphi_{\mathfrak{B}}^{-1}(v^*))u^*\big).
\end{align*}
Hence $\varphi_{\mathfrak{B}}^{-1}: A^* \to A$ is an invertible $\mathcal{O}$-operator   on $(A, \rhd,\lhd)$  associated to the  bimodule $(A^*;-R_{\lhd}^{*},  -L_{\star}^{*}, R_{\star}^{*},$ $-L_{\rhd}^{*})$. By Proposition \ref{OonVector},   there is
a compatiable pre-dialgebra structure $(A, \nearrow,\searrow,\swarrow, \nwarrow )$  on 
$(A, \rhd,\lhd)$  with $\nearrow,\searrow,\swarrow$ and $\nwarrow$   given by
Eqs.~\eqref{ReOondpp1}-\eqref{ReOondpp2} in taking $T := \varphi_{\mathfrak{B}}^{-1}, l_{\rhd}:=-R_{\lhd}^*, r_{\rhd}:= -L_{\star}^*, l_{\lhd}:=
R_{\star}^*, r_{\lhd}:= -L_{\rhd}^*$. Thus we have
\begin{align*}
\mathfrak{B}(x \nearrow y, z) &=  \langle \varphi_{\mathfrak{B}} (x \nearrow y), z\rangle
=  -\langle    R_{\lhd}^*(x)\left(\varphi_{\mathfrak{B}}(y)\right), z\rangle =
  \mathfrak{B}(y, z \lhd x), \\
\mathfrak{B}(x \searrow y, z) &=  \langle \varphi_{\mathfrak{B}} (x \searrow y), z\rangle
=  -\langle    L_{\star}^*(y)\left(\varphi_{\mathfrak{B}}(x)\right), z\rangle =
  \mathfrak{B}(x, y \star z), \\
\mathfrak{B}(x \swarrow y, z) &=  \langle \varphi_{\mathfrak{B}} (x \swarrow y), z\rangle
=   \langle    R_{\star}^*(x)\left(\varphi_{\mathfrak{B}}(y)\right), z\rangle =
  -\mathfrak{B}(y,z \star x), \\
\mathfrak{B}(x \nwarrow y, z) &=  \langle \varphi_{\mathfrak{B}} (x \nwarrow y), z\rangle
=  - \langle    L_{\rhd}^*(y)\left(\varphi_{\mathfrak{B}}(x)\right), z\rangle =
   \mathfrak{B}(x,y \rhd z).
\end{align*}
The rest is straightforward. 
\end{proof}

\begin{rmk}
Just as a nondegenerate Connes cocycle on an associative algebra induces a compatible dendriform algebra structure on it \cite[Theorem 4.1.1]{B2}, a nondegenerate  closed symmetric bilinear form on a dialgebra plays the same role as the Connes cocycle does and gives rise to a compatiable pre-dialgebra. Thus in the sense of Proposition \ref{syvofConclcyclic}, it can be regarded as the analogue (symmetric version) of the   Connes cocycle for dialgebras.
\end{rmk}

\begin{cor} \label{pretoBialgebra}
    Let $(A, \nearrow,\searrow,\swarrow, \nwarrow )$ be a pre-dialgebra and $(A, \rhd,\lhd)$  be the sub-adjacent dialgebra of $(A, \nearrow,\searrow,\swarrow, \nwarrow )$. Let  $\left\{e_{1}, \ldots, e_{n}\right\}$  be a basis of  $A$  and  $\left\{e_{1}^{*}, \ldots, e_{n}^{*}\right\}$  be the dual basis. Then
\begin{align}
r=\sum_{i=1}^{n}\left(e_{i} \otimes e_{i}^{*}+e_{i}^{*} \otimes e_{i}\right)
\end{align}
is a symmetric solution of the DAYBE in the dialgebra  $\hat{A}$ defined by
\begin{align*}
\hat{A}:=A \ltimes_{-R_{\nwarrow}^{*}, -L_{\nearrow}^{*}+L_{\swarrow}^{*},R_{\searrow}^{*}-R_{\nwarrow}^{*}, - L_{\nearrow}^{*}} A^{*}.
\end{align*}
\end{cor}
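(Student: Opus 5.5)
The plan is to reduce the statement to Theorem \ref{tcdppab:3}, applied with $T=\mathrm{id}_A$ and the bimodule coming from the pre-dialgebra structure.

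First, by Proposition \ref{saforpd}\eqref{preditoDi:1}, $(A; L_{\nearrow}, R_{\searrow}, L_{\swarrow}, R_{\nwarrow})$ is a bimodule of the sub-adjacent dialgebra $(A,\rhd,\lhd)$. Take $(V;l_{\rhd},r_{\rhd},l_{\lhd},r_{\lhd}) := (A; L_{\nearrow}, R_{\searrow}, L_{\swarrow}, R_{\nwarrow})$ in Lemma \ref{dualbimodofdias}\eqref{repdi:2}. The dual bimodule is then
$$\left(A^*;-R_{\nwarrow}^{*},\ L_{\swarrow}^{*}-L_{\nearrow}^{*},\ R_{\searrow}^{*}-R_{\nwarrow}^{*},\ -L_{\nearrow}^{*}\right).$$
This is exactly the bimodule used to define $\hat{A}$. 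So $\hat{A}=A\ltimes_{-r_{\lhd}^{*}, l_{\lhd}^{*}-l_{\rhd}^{*}, r_{\rhd}^{*}-r_{\lhd}^{*},-l_{\rhd}^{*}} V^*$ is precisely the semidirect-product dialgebra that appears in Theorem \ref{tcdppab:3}.

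Second, as in the proof of Theorem \ref{iogpdpp:0}, the identity map $\mathrm{id}: A\to A$ is an $\mathcal{O}$-operator on $(A,\rhd,\lhd)$ associated to this bimodule. Indeed, for all $x,y\in A$ we have
$$x\rhd y=L_{\nearrow}(x)y+R_{\searrow}(y)x, \qquad x\lhd y=L_{\swarrow}(x)y+R_{\nwarrow}(y)x,$$
which is precisely Eqs.~\eqref{wofn1}--\eqref{wofn2} with $\lambda=0$ and $T=\mathrm{id}$.

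Third, under $\operatorname{Hom}(V,A)\cong A\otimes V^*$ the identity map corresponds to $\sum_i e_i\otimes e_i^*$. Its flip is $\tau(T)=\sum_i e_i^*\otimes e_i$, so $T+\tau(T)$ equals the given $r$, and $r$ is symmetric by construction. Theorem \ref{tcdppab:3} then says that $r$ is a solution of the DAYBE in $\hat{A}$.

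The only step needing care is matching the identification convention in Theorem \ref{tcdppab:3}. One must check that $T=\mathrm{id}$ goes to $\sum_i e_i\otimes e_i^*$ rather than its flip. Since $r$ is symmetric this ordering is harmless, so the argument is insensitive to the convention. Likewise the sign order in the second component, $L_{\swarrow}^*-L_{\nearrow}^*$, agrees with $l_{\lhd}^*-l_{\rhd}^*$. Beyond these checks I expect no real obstacle.
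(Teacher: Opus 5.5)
Your proposal is correct and follows the paper's proof: the paper likewise notes (via Theorem \ref{iogpdpp:0}) that $\mathrm{id}\colon A\to A$ is an $\mathcal{O}$-operator on $(A,\rhd,\lhd)$ associated to the bimodule $(A; L_{\nearrow}, R_{\searrow}, L_{\swarrow}, R_{\nwarrow})$, and then applies Theorem \ref{tcdppab:3}. Your extra checks are the bookkeeping left implicit in that citation: the dual bimodule matches the one defining $\hat{A}$, $T+\tau(T)$ equals the given $r$, and the identification convention does not matter because $r$ is symmetric.
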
 
\begin{proof} By Theorem \ref{iogpdpp:0}, the identity map $\mathrm{id} :  A \rightarrow A$  is an invertible $\mathcal{O}$-operator on $(A, \rhd,\lhd)$  associated to the bimodule  $(A; L_{\nearrow}, R_{\searrow}, L_{\swarrow}, R_{\nwarrow})$. Hence it follows from Theorem \ref{tcdppab:3}. 
\end{proof}

We conclude this subsection by providing an example illustrating the above construction.

\begin{ex}
 Continuing with Example \ref{ex:predppa}, let $(A, \nearrow,\searrow,\swarrow, \nwarrow )$  be the 2-dimensional pre-dialgebra  whose nonzero products are explicitly given by Eq.~\eqref{ex:predppa:cfb}.  Let $\left\{e_{1}^{*},  e_{2}^{*}\right\}$  be the dual basis of $\left\{e_{1},   e_{2}\right\}$.
Thus by  Corollary \ref{pretoBialgebra},  $r= \sum\limits_{i=1}^{2} (e_{i} \otimes e_{i}^{*}+e_{i}^{*} \otimes e_{i})$ is a symmetric solution of the DAYBE in the dialgebra  $\hat{A}:=A \ltimes_{-R_{\nwarrow}^{*}, -L_{\nearrow}^{*}+L_{\swarrow}^{*},R_{\searrow}^{*}-R_{\nwarrow}^{*}, - L_{\nearrow}^{*}} A^{*}$ whose nonzero products are explicitly given by  
\begin{align*}
e_2 \lhd e_2 = e_1,\quad e_1^* \rhd e_2 = -e_2^*.
\end{align*}
By Theorem \ref{tcdppab:1}, there is a 4-dimensional bi-dialgebra $(\hat{A},{\delta_{\rhd,r}}, {\delta_{\lhd,r}})$  where ${\delta_{\rhd,r}}, {\delta_{\lhd,r}}$ are given by
 \begin{align*}
&\delta_{\rhd,r}(e_1) = 0, &&\delta_{\rhd,r}(e_2) =  -e_1 \otimes e_2^*    ,&&\delta_{\rhd,r}(e_1^*) = 0 ,&&\delta_{\rhd,r}(e_2^*) = 0,\\
&\delta_{\lhd,r}(e_1) = 0, &&\delta_{\lhd,r}(e_2) = 0,&&\delta_{\lhd,r}(e_1^*) =  e_2^* \otimes e_2^* ,&&\delta_{\lhd,r}(e_2^*) = 0.
\end{align*}

\end{ex}

  \subsection{Factorizable  bi-dialgebras} 
  
  \begin{defi}\label{factorizabledb}
	A quasi-triangular bi-dialgebra $(A,\rhd,\lhd,{\delta_{\rhd,r}}, {\delta_{\lhd,r}})$     is called {\bf factorizable} if the skew-symmetric part $\alpha$ of $r$ is nondegenerate, which means that the linear map $\widetilde{\alpha}:= \widetilde{r} - \widetilde{r}^t: A^* \to A$  is a linear isomorphism of vector spaces. 
\end{defi}

\begin{pro}\label{fquasidba}
	Let  $(A,\rhd,\lhd)$   be a dialgebra and $r \in A \otimes A$.  
	Then $(A,\rhd,\lhd,{\delta_{\rhd,r}}, {\delta_{\lhd,r}})$ is a  factorizable bi-dialgebra if and only if $(A,\rhd,\lhd,{\delta_{\rhd,\tau(r)}}, {\delta_{\lhd,\tau(r)}})$ is a factorizable bi-dialgebra.
\end{pro}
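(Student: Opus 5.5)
The plan is to reduce the statement to Proposition \ref{tquasidba} and then check that the extra nondegeneracy condition in Definition \ref{factorizabledb} is symmetric under $r\mapsto \tau(r)$.

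By Definition \ref{factorizabledb}, $(A,\rhd,\lhd,\delta_{\rhd,r},\delta_{\lhd,r})$ is factorizable exactly when two things hold: it is quasi-triangular, and the skew-symmetric part $\alpha=r-\tau(r)$ is nondegenerate. Proposition \ref{tquasidba} already says that quasi-triangularity for $r$ is equivalent to quasi-triangularity for $\tau(r)$. That proposition rests on Lemma \ref{ryutaur}, which gives the equivalence of the DAYBE for $r$ and for $\tau(r)$ once the skew-symmetric part is invariant. It also rests on the invariance condition passing from $\alpha$ to its negative.

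Next I compute the skew-symmetric part of $\tau(r)$:
\begin{align*}
\tau(r)-\tau(\tau(r))=\tau(r)-r=-\alpha .
\end{align*}
On the level of maps this reads $\widetilde{\tau(r)}-\widetilde{\tau(r)}^t=\widetilde{r}^t-\widetilde{r}=-\widetilde{\alpha}$, using $\widetilde{\tau(r)}=\widetilde{r}^t$. Multiplying by $-1$ does not change whether a linear map $A^*\to A$ is an isomorphism, so $\widetilde{\alpha}$ is invertible if and only if $-\widetilde{\alpha}$ is.

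The same sign observation is what Proposition \ref{tquasidba} implicitly needs for invariance. The conditions $E(x)\alpha=F(x)\alpha=0$ are linear in $\alpha$, so $-\alpha$ is invariant if and only if $\alpha$ is.

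Combining these facts gives both directions of the equivalence. The implication from $\tau(r)$ back to $r$ also follows from the forward implication applied to $\tau(r)$, since $\tau(\tau(r))=r$.

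There is no real obstacle here. The only point that needs care is that the hypotheses of Lemma \ref{ryutaur} (invariance of the skew-symmetric part) transfer between $r$ and $\tau(r)$, and this is exactly the linearity remark above.
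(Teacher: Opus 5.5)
Your proof is correct and follows the paper's route: the paper's proof is the one-line citation of Proposition~\ref{tquasidba}. You additionally spell out what that citation leaves implicit, namely that the skew-symmetric part of $\tau(r)$ is $-\alpha$, so invariance and nondegeneracy of $\widetilde{\alpha}$ both carry over between $r$ and $\tau(r)$.
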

\begin{proof}
	It follows from Proposition~\ref{tquasidba}.
\end{proof}

Consider the linear map 
\begin{equation*}
	A^* \xrightarrow{\;\;\;\; \widetilde{r} \oplus  \widetilde{r}^t\;\;\;\;} A \oplus A \xrightarrow{(a, b) \mapsto a - b} A.
\end{equation*}

The following result justifies the terminology of a factorizable bi-dialgebra.
\begin{pro}\label{prop:fpt}
	Let $(A,\rhd,\lhd,{\delta_{\rhd,r}}, {\delta_{\lhd,r}})$ be a factorizable bi-dialgebra.
	Then $\mathrm{Im}(\widetilde{r} \oplus \widetilde{r}^t)$ is a  diassociative subalgebra of the direct sum dialgebra $A \oplus A$, 
	which is isomorphic to the dialgebra $(A^*, \rhd_r, \lhd_r)$, where $\rhd_r, \lhd_r: A^* \otimes A^* \to A^*$ are respectively defined by Eqs.~\eqref{wsddiybe1} and \eqref{wsddiybe2}.
	Moreover, any $x \in A$ has a unique decomposition 
	$x = x_{+} - x_{-}$ with $(x_{+}, x_{-}) \in \mathrm{Im}(\widetilde{r} \oplus \widetilde{r}^t)$.
\end{pro}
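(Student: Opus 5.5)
The plan is to derive everything from Theorem \ref{thm:daybe}, together with the nondegeneracy of $\alpha$ supplied by Definition \ref{factorizabledb}.

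First, since $(A,\rhd,\lhd,\delta_{\rhd,r},\delta_{\lhd,r})$ is quasi-triangular, $r$ solves the DAYBE and its skew-symmetric part $\alpha$ is invariant. Theorem \ref{thm:daybe} then says that $(A^*,\rhd_r,\lhd_r)$ is a dialgebra and that both $\widetilde{r}$ and $\widetilde{r}^t$ are homomorphisms of dialgebras from $(A^*,\rhd_r,\lhd_r)$ to $(A,\rhd,\lhd)$.

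Consequently the map $\widetilde{r}\oplus\widetilde{r}^t: A^*\to A\oplus A$, $a^*\mapsto(\widetilde{r}(a^*),\widetilde{r}^t(a^*))$, is a homomorphism into the direct sum dialgebra. Here the direct sum carries componentwise operations $(x_1,y_1)\rhd(x_2,y_2)=(x_1\rhd x_2,\,y_1\rhd y_2)$, and similarly for $\lhd$; these clearly satisfy Eqs.~\eqref{dias1}--\eqref{dias3}. The image of a dialgebra homomorphism is closed under both products, so $\mathrm{Im}(\widetilde{r}\oplus\widetilde{r}^t)$ is a diassociative subalgebra.

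To get the isomorphism with $(A^*,\rhd_r,\lhd_r)$, it remains to show that $\widetilde{r}\oplus\widetilde{r}^t$ is injective. Suppose $\widetilde{r}(a^*)=0$ and $\widetilde{r}^t(a^*)=0$. Then $\widetilde{\alpha}(a^*)=\widetilde{r}(a^*)-\widetilde{r}^t(a^*)=0$, and nondegeneracy of $\alpha$ forces $a^*=0$.

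For the decomposition, take $x\in A$ and set $a^*:=\widetilde{\alpha}^{-1}(x)$, $x_+:=\widetilde{r}(a^*)$, $x_-:=\widetilde{r}^t(a^*)$. Then $x_+-x_-=\widetilde{\alpha}(a^*)=x$, and $(x_+,x_-)$ lies in the image. For uniqueness, suppose $(\widetilde{r}(b^*),\widetilde{r}^t(b^*))$ is another pair with difference $x$. Then $\widetilde{\alpha}(b^*)=x$, so $b^*=a^*$ by injectivity of $\widetilde{\alpha}$, and the pairs coincide.

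The only substantive input is Theorem \ref{thm:daybe}, in particular the claim that $\widetilde{r}^t$ is also a homomorphism, which relies on the invariance of $\alpha$ through Lemma \ref{semiinequ}. Since that is already established in the paper, I expect no real obstacle; the rest is linear algebra.
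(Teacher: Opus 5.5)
Your proposal is correct and follows the paper's proof: Theorem~\ref{thm:daybe} shows that $\widetilde{r}$ and $\widetilde{r}^t$ are dialgebra homomorphisms, and the invertibility of $\widetilde{\alpha}=\widetilde{r}-\widetilde{r}^t$ gives both the isomorphism $A^*\cong\mathrm{Im}(\widetilde{r}\oplus\widetilde{r}^t)$ and the unique decomposition $x=\widetilde{r}\widetilde{\alpha}^{-1}(x)-\widetilde{r}^t\widetilde{\alpha}^{-1}(x)$. You also spell out the injectivity of $\widetilde{r}\oplus\widetilde{r}^t$, which the paper leaves implicit.
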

\begin{proof}
	By Theorem~\ref{thm:daybe}, both $\widetilde{r} $ and $\widetilde{r}^t$ are homomorphisms of dialgebras.
	Therefore, $\mathrm{Im}(\widetilde{r} \oplus \widetilde{r}^t)$ is a  diassociative subalgebra of the direct sum dialgebra $A \oplus A$.
	Since $\widetilde{\alpha} = \widetilde{r} - \widetilde{r}^t$ is a linear isomorphism of vector spaces, it follows that the dialgebra $\mathrm{Im}(\widetilde{r} \oplus \widetilde{r}^t)$ is isomorphic to the dialgebra $(A^*, \rhd_r, \lhd_r)$.
	Moreover, we have
	\begin{equation*}
		\widetilde{r}\widetilde{\alpha}^{-1}(x) - \widetilde{r}^t\widetilde{\alpha}^{-1}(x) = (\widetilde{r} - \widetilde{r}^t) \widetilde{\alpha}^{-1}(x)= x, \quad \forall x \in A,
	\end{equation*} 
	which shows that $x = x_{+} - x_{-}$ with $x_{+} =\widetilde{r}\widetilde{\alpha}^{-1}(x)$ and $x_{-} =  \widetilde{r}^t\widetilde{\alpha}^{-1}(x)$.
	The uniqueness also follows from the fact that $\widetilde{\alpha}: A^* \to A$ is a linear isomorphism of vector spaces.
\end{proof}

Let $(A, \rhd_A,\lhd_A,\delta_{\rhd},\delta_{\lhd})$   be an arbitrary bi-dialgebra and $\rhd_{A^*},\lhd_{A^*}: A^*  \otimes A^*  \to  A^*$ be the linear duals of  $\delta_{\rhd}$ and  $\delta_{\lhd}$ respectively. By  Theorem \ref{sandengjia}, there is a bi-dialgebra structrue on the direct sum  $\mathfrak{D}: = A \oplus A^*$  of vector spaces with $\rhd_{\mathfrak{D}},\lhd_{\mathfrak{D}}$ given by 
\begin{align}
(x+a^*) \rhd_{\mathfrak{D}} (y+b^*) &=x \rhd_A y\!-\!\mathcal{R}_{\lhd}^{*}(a^*) y\!-\!\mathcal{L}_{\star}^{*}(b^*) x-R_{\lhd}^{*}(x) b^*-L_{\star}^{*}(y) a^*\!+\!a^* \rhd_{A^*} b^*,\\
(x+a^*) \lhd_{\mathfrak{D}} (y+b^*) &=x \lhd_A y\!+\! \mathcal{R}_{\star}^{*}(a^*) y-\mathcal{L}_{\rhd}^{*} (b^*) x\!+\!R_{\star}^{*}(x) b^*-L_{\rhd}^{*}(y) a^*\!+\!a^* \lhd_{A^*} b^*, 
\end{align}
where $x,y \in A, a^*, b^* \in A^*$. Then $ (\mathfrak{D}, \rhd_{\mathfrak{D}},\lhd_{\mathfrak{D}})$ is called a {\bf Drinfeld
classical double} of the bi-dialgebra $(A, \rhd_A,\lhd_A,\delta_{\rhd},\delta_{\lhd})$.
 
\begin{thm} \label{Dfincdouble}
Let $(A, \rhd_A,\lhd_A,\delta_{\rhd},\delta_{\lhd})$ be a bi-dialgebra  and 
 $ (\mathfrak{D}, \rhd_{\mathfrak{D}},\lhd_{\mathfrak{D}})$ be the Drinfeld classical double of  $(A, \rhd_A,\lhd_A,\delta_{\rhd},\delta_{\lhd})$. Let $\{e_{1}, e_{2}, \ldots, e_{n}\}$  be a basis of $A$  and  $\{e_{1}^{*}, e_{2}^{*}, \!\ldots\!,$ $e_{n}^{*}\}$  be the dual basis. Set 
 $$
 r=\sum_{i}^{n} e_{i} \otimes e_{i}^{*} \in A \otimes A^{*} \subset   \mathfrak{D} \otimes \mathfrak{D}.
 $$
  Then   $\left(\mathfrak{D}, \rhd_{\mathfrak{D}},\lhd_{\mathfrak{D}}, \delta_{\rhd,r},\delta_{\lhd,r} \right)$  is a factorizable bi-dialgebra with $\delta_{\rhd,r},\delta_{\lhd,r} : \mathfrak{D} \rightarrow \mathfrak{D} \otimes \mathfrak{D}$  given by Eq.~\eqref{EF}.
\end{thm}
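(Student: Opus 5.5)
By Definition \ref{factorizabledb}, we must establish three things: the skew-symmetric part $\alpha=r-\tau(r)$ of $r=\sum_i e_i\otimes e_i^*$ is invariant in $(\mathfrak{D},\rhd_{\mathfrak{D}},\lhd_{\mathfrak{D}})$; $r$ solves the DAYBE in $\mathfrak{D}$; and $\alpha$ is nondegenerate. Theorem \ref{tcdppab:1} then says $(\mathfrak{D},\rhd_{\mathfrak{D}},\lhd_{\mathfrak{D}},\delta_{\rhd,r},\delta_{\lhd,r})$ is a quasi-triangular bi-dialgebra, and nondegeneracy of $\alpha$ makes it factorizable.

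Nondegeneracy comes first and is a direct computation. Identify $\mathfrak{D}^*=A^*\oplus A$. For $x+a^*\in\mathfrak{D}^*$ (with $a^*\in A^*$ and $x\in A$ acting on $A$ and $A^*$ respectively), $\widetilde r(x+a^*)=\sum_i\langle e_i,a^*\rangle e_i^*+\ldots$. More precisely, $\widetilde r$ sends the $A^*$-component to $A$ and kills $A$, while $\widetilde r^t$ sends the $A$-component to $A^*$. Hence $\widetilde\alpha=\widetilde r-\widetilde r^t$ is, up to sign on one block, the canonical identification $A^*\oplus A\to A\oplus A^*$, which is bijective. I will also observe that $\widetilde\alpha^{-1}$ corresponds, up to sign, to the form $\mathfrak{B}_d$ of Eq.~\eqref{mtorbl}. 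That is, $\alpha=\pm r_{\mathfrak{B}_d}$.

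For invariance I will use Proposition \ref{tensorformIndias}. By Theorem \ref{sandengjia}, $\mathfrak{D}$ is a double construction of a Frobenius dialgebra, so $(\mathfrak{D},\rhd_{\mathfrak{D}},\lhd_{\mathfrak{D}},\mathfrak{B}_d)$ is a skew-symmetric Frobenius dialgebra. Proposition \ref{tensorformIndias} then gives that $r_{\mathfrak{B}_d}$ is skew-symmetric and invariant. Since $\alpha$ is a scalar multiple of $r_{\mathfrak{B}_d}$, as I will check by matching $\langle\alpha,u\otimes v\rangle$ with $\langle\varphi_{\mathfrak{B}_d}^{-1}(u),v\rangle$ on basis elements, $\alpha$ is invariant.

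The main step is the DAYBE $\mathbf{A}(r)=r_{13}\rhd r_{23}-r_{23}\lhd r_{12}-r_{12}\star r_{13}=0$ in $\mathfrak{D}$. I will expand each term in the basis, with $r_{13}\rhd r_{23}=\sum_{i,j}e_i\otimes e_j\otimes e_i^*\rhd_{\mathfrak{D}}e_j^*=\sum_{i,j} e_i\otimes e_j\otimes e_i^*\rhd_{A^*}e_j^*$. The term $r_{23}\lhd r_{12}=\sum e_i\otimes e_i^*\lhd_{\mathfrak{D}} e_j\otimes e_j^*$ involves the mixed product $a^*\lhd_{\mathfrak{D}} y=\mathcal{R}_\star^*(a^*)y-L_\rhd^*(y)a^*$, and $r_{12}\star r_{13}=\sum e_i\star e_j\otimes e_i^*\otimes e_j^*$. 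I will pair the result against arbitrary $a^*\otimes b^*\otimes c^*$ and against the other components of $\mathfrak{D}^{*\otimes 3}$. Each component reduces to an identity in $A$ or $A^*$: the $A\otimes A\otimes A^*$ part compares $\rhd_{A^*}$ with the $\mathcal{R}^*_\star$-part, and the $A\otimes A^*\otimes A^*$ part compares the $L^*_\rhd$-part with $\star_A$. These all cancel by the definitions of the coregular actions, i.e. $\langle\mathcal{R}_\star^*(a^*)y,b^*\rangle=-\langle y, b^*\star_{A^*}a^*\rangle$ and the analogous identity for $L^*$. The bookkeeping of signs in these dual actions is the part I expect to be most error-prone.

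As a fallback, I can avoid a direct check. Lemma \ref{ryutaur} lets me verify $\mathbf{B}(r)=0$ or $\mathbf{C}(r)=0$ instead, whichever expansion is shortest. Alternatively, Proposition \ref{operaforDAYBE} reduces the DAYBE to Eqs.~\eqref{szxsDAYBE1}--\eqref{szxsDAYBE2} for $\widetilde r$, whose image is $A$, so these become the statement that $A$ is a subalgebra with the given products.
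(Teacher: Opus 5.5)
Your nondegeneracy and invariance steps are sound, and the invariance step takes a different route from the paper's. The paper checks the two identities of Lemma \ref{semiinequ}(3) on $\mathfrak{D}$ by direct computation. You instead observe that $\widetilde{\alpha}(a^*+x)=a^*-x=\varphi_{\mathfrak{B}_d}^{-1}(a^*+x)$, so $\alpha=r_{\mathfrak{B}_d}$ exactly, with sign $+$. Invariance then follows from Proposition \ref{tensorformIndias}, because $\mathfrak{B}_d$ is invariant on the double (Proposition \ref{mpandmtofdppa}, whose dialgebra is exactly $\mathfrak{D}$). Your route is more conceptual and avoids computing coregular actions on $\mathfrak{D}^*$; the paper's computation is self-contained. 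There is one slip: $\widetilde r(a^*+x)=a^*$ lies in $A^*\subset\mathfrak{D}$, and it is $\widetilde r^{t}(a^*+x)=x$ that lies in $A$. Bijectivity of $\widetilde\alpha$ is unaffected.

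The genuine problem is the DAYBE step, which is the real content of the theorem, and which you mis-expand. With the paper's leg convention $r_{23}\lhd r_{12}=\sum_{i,j}a_j\otimes a_i\lhd b_j\otimes b_i$, one gets
\[ r_{23}\lhd_{\mathfrak{D}}r_{12}=\sum_{i,j}e_j\otimes\big(e_i\lhd_{\mathfrak{D}}e_j^*\big)\otimes e_i^*,\qquad x\lhd_{\mathfrak{D}}b^*=R_{\star}^{*}(x)b^*-\mathcal{L}_{\rhd}^{*}(b^*)x . \]
What you wrote, $\sum e_i\otimes(e_i^*\lhd_{\mathfrak{D}}e_j)\otimes e_j^*$, is $r_{12}\lhd r_{23}$. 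For that expression the cancellations you announce are false. Pairing $r_{13}\rhd r_{23}-r_{12}\lhd r_{23}-r_{12}\star r_{13}$ with $a^*\otimes b^*\otimes z$ and with $a^*\otimes y\otimes z$ gives
\[ \langle a^*\rhd_{A^*}b^*+b^*\star_{A^*}a^*,z\rangle \qquad\text{and}\qquad -\langle a^*,z\rhd_A y+y\star_A z\rangle . \]
The second is already nonzero for a nonzero associative algebra $A$ with $\delta_{\rhd}=\delta_{\lhd}=0$. With the correct expansion the two components are
\[ \langle a^*\rhd_{A^*}b^*,z\rangle+\langle\mathcal{L}_{\rhd}^{*}(a^*)z,b^*\rangle=0,\qquad -\langle R_{\star}^{*}(z)a^*,y\rangle-\langle a^*,y\star_A z\rangle=0 . \]
So $\rhd_{A^*}$ cancels against the $\mathcal{L}^{*}_{\rhd}$-part and $\star_A$ against the $R^{*}_{\star}$-part, which is the reverse of the pairing you describe; this is the paper's computation.

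Your fallback through Proposition \ref{operaforDAYBE} has the same leg confusion. The image of $\widetilde r$ is $A^*$, not $A$, and Eqs.~\eqref{szxsDAYBE1}--\eqref{szxsDAYBE2} do not reduce to closure of a subalgebra under the products. They reduce to the same mixed-term cancellations as above.
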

\begin{proof}
First, we need to verify that the skew-symmetric part $\alpha =\sum_{i}  e_{i} \otimes e_{i}^{*} - e_{i}^{*} \otimes e_{i}$ of $r$ is  
invariant. Note that $\widetilde{\alpha}( a^* + x) = -x + a^*$ for all $x \in A, a^* \in A^*$ which implies that  $\widetilde{\alpha}$ is a linear isomorphism of vector spaces. Moreover, by a straightforward
computation, we have
\begin{align*}
	 \widetilde{\alpha}\left( L_{\lhd_{\mathfrak{D} }}^*(x+a^*) (y^* + b)\right)&= -b \star_A x -\mathcal{R}_{\rhd}^*(y^*)x-\mathcal{L}_{\lhd}^*(a^*)b + y^* \star_{A^*} a^* + R_{\rhd}^*(b)a^*+L_{\lhd}^*(x)y^*\\
	&= \widetilde{\alpha}\left(y^* + b\right) \star_{\mathfrak{D} } (x+a^*) ,\\
	\widetilde{\alpha}\left(L_{\star_{\mathfrak{D} }}^*(x+a^*)(y^* + b)\right) &=  b \rhd_A x +\mathcal{R}_{\lhd}^*(y^*)x-\mathcal{L}_{\star}^*(a^*)b - y^* \rhd_{A^*} a^* - R_{\lhd}^*(b)a^*+L_{\star}^*(x)y^*\\
	&=-\widetilde{\alpha}\left(y^* + b\right)\rhd (x+a^*), \quad \forall x,b \in A, a^*, y^* \in A^*.
\end{align*}
where $\rhd_{A^*},\lhd_{A^*}$ are the linear duals of  $\delta_{\rhd}$, $\delta_{\lhd}$ respectively. Thus by Lemma \ref{semiinequ}, $\alpha$ is invariant. Furthermore,
\begin{align*}
\mathbf{A} (r) &= r_{13} \rhd_{\mathfrak{D}} r_{23} -  r_{23} \lhd_{\mathfrak{D}} r_{12} -  r_{12} \star_{\mathfrak{D}} r_{13}\\
&= \sum_{i,j}e_{i} \otimes  e_{j} \otimes e_{i}^{*} \rhd_{\mathfrak{D}} e_{j}^{*} - e_{j} \otimes  e_{i}\lhd_{\mathfrak{D}} e_{j}^{*}  \otimes e_{i}^{*} -  e_{i}\star_{\mathfrak{D}} e_{j} \otimes  e_{i}^{*}  \otimes e_{j}^{*} \\
&= \sum_{i,j}e_{i} \otimes  e_{j} \otimes e_{i}^{*} \rhd_{A^*} e_{j}^{*} + e_{j} \otimes    \mathcal{L}_{\rhd}^{*} (e_{j}^{*}) e_{i}\otimes e_{i}^{*}  -e_{j} \otimes R_{\star}^{*}(e_{i}) e_{j}^{*}   \otimes e_{i}^{*} -  e_{i}\star_{A} e_{j} \otimes  e_{i}^{*}  \otimes e_{j}^{*} \\
&=0.
\end{align*}
Thus $r$ is a solution of the DAYBE in  $ (\mathfrak{D}, \rhd_{\mathfrak{D}},\lhd_{\mathfrak{D}})$. Therefore, $\left(\mathfrak{D}, \rhd_{\mathfrak{D}},\lhd_{\mathfrak{D}}, \delta_{\rhd,r},\delta_{\lhd,r} \right)$  is a factorizable bi-dialgebra.
\end{proof}

 \subsection{Skew-symmetric Rota-Baxter Frobenius dialgebras} 
In this section, we introduce the notion of a skew-symmetric Rota-Baxter Frobenius dialgebra  and show that a skew-symmetric Rota-Baxter Frobenius dialgebra of zero weight induces a triangular bi-dialgebra. Moreover, we show that there is a one-to-one correspondence between factorizable bi-dialgebras and skew-symmetric Rota-Baxter Frobenius dialgebras of nonzero weights. Finally, we construct distinct factorizable bi-dialgebras from associative algebras and dialgebras, respectively.

By equipping skew-symmetric Frobenius dialgebras with Rota-Baxter operators that satisfy certain compatibility conditions, we introduce the notion of skew-symmetric Rota-Baxter Frobenius dialgebras.

\begin{defi}
A {\bf  skew-symmetric  Rota-Baxter Frobenius dialgebra  of weight $\lambda \in \mathbb{F}$} is a quintuple $(A,  \rhd,\lhd,\mathfrak{B},P)$ such that $(A,  \rhd,\lhd,\mathfrak{B})$ is a skew-symmetric Frobenius dialgebra and $P$ is a Rota-Baxter operator of weight $\lambda$ on $(A,  \rhd,\lhd)$ satisfying the following compatibility condition:
\begin{align}
\mathfrak{B}(P(x),y) +  \mathfrak{B}(x,P(y)) + \lambda\mathfrak{B}(x,y)=0, \quad \forall x,y \in A. \label{xrxtjforrbB}
\end{align}
\end{defi}

\begin{pro}\label{PandtauP}
	Let $(A,  \rhd,\lhd,\mathfrak{B})$ be a skew-symmetric Frobenius dialgebra  and $P: A \rightarrow A$ be a linear map. Then $(A,  \rhd,\lhd,\mathfrak{B},P)$ is a  skew-symmetric  Rota-Baxter Frobenius dialgebra of weight $\lambda$ if and only if $(A,  \rhd,\lhd,-\mathfrak{B}, \hat{P})$ is a  skew-symmetric  Rota-Baxter Frobenius dialgebra  of weight $\lambda$, where $\hat{P}:= - \lambda \id - P$.
\end{pro}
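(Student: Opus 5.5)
The plan is to check three things separately: that $(A,\rhd,\lhd,-\mathfrak{B})$ is again a skew-symmetric Frobenius dialgebra, that $\hat{P}=-\lambda\,\mathrm{id}-P$ is a Rota-Baxter operator of weight $\lambda$ on $(A,\rhd,\lhd)$ whenever $P$ is, and that the compatibility condition \eqref{xrxtjforrbB} passes from the pair $(\mathfrak{B},P)$ to the pair $(-\mathfrak{B},\hat{P})$. The converse direction then comes for free. Indeed, $\hat{\hat{P}}=-\lambda\,\mathrm{id}-(-\lambda\,\mathrm{id}-P)=P$ and $-(-\mathfrak{B})=\mathfrak{B}$, so the construction is an involution. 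Applying the forward implication to $(A,\rhd,\lhd,-\mathfrak{B},\hat{P})$ therefore returns $(A,\rhd,\lhd,\mathfrak{B},P)$.

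The first point is immediate. Invariance \eqref{diinv1}, skew-symmetry and nondegeneracy are all preserved when $\mathfrak{B}$ is multiplied by the scalar $-1$.

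For the second point, I would use the standard associative-algebra fact that if $P$ is Rota-Baxter of weight $\lambda$, then so is $-\lambda\,\mathrm{id}-P$. I would verify it directly for each of the two products $\diamond\in\{\rhd,\lhd\}$, since \eqref{RBofdi1} and \eqref{RBofdi2} are just the associative Rota-Baxter identity applied to each product separately. Expanding $\hat{P}(x)\diamond\hat{P}(y)$ gives
\[
\lambda^2 x\diamond y+\lambda P(x)\diamond y+\lambda x\diamond P(y)+P(x)\diamond P(y).
\]
On the other side, set $w=\hat{P}(x)\diamond y+x\diamond\hat{P}(y)+\lambda x\diamond y=-\lambda x\diamond y-P(x)\diamond y-x\diamond P(y)$. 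Then
\[
\hat{P}(w)=\lambda^2 x\diamond y+\lambda P(x)\diamond y+\lambda x\diamond P(y)+P\bigl(\lambda x\diamond y+P(x)\diamond y+x\diamond P(y)\bigr).
\]
The last term equals $P(x)\diamond P(y)$ by the Rota-Baxter identity for $P$, so the two expressions agree. No mixed dialgebra axioms are needed, because the definition only involves each product separately.

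For the third point, I would compute
\[
-\mathfrak{B}(\hat{P}x,y)-\mathfrak{B}(x,\hat{P}y)-\lambda\mathfrak{B}(x,y)=\lambda\mathfrak{B}(x,y)+\mathfrak{B}(Px,y)+\lambda\mathfrak{B}(x,y)+\mathfrak{B}(x,Py)-\lambda\mathfrak{B}(x,y).
\]
The right-hand side equals $\mathfrak{B}(Px,y)+\mathfrak{B}(x,Py)+\lambda\mathfrak{B}(x,y)$, which vanishes by hypothesis.

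I do not expect a genuine obstacle here. The only care needed is in the sign bookkeeping of the compatibility condition, which is exactly where the sign change $\mathfrak{B}\mapsto-\mathfrak{B}$ is essential. With $+\mathfrak{B}$ the same computation would give $-(\ldots)-2\lambda\mathfrak{B}$, which does not vanish when $\lambda\neq0$.
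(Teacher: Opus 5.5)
Your verification is correct, and it is essentially the direct check the paper intends (the paper only defers to the analogous Leibniz-algebra result): $-\mathfrak{B}$ remains a nondegenerate skew-symmetric invariant form, $\hat{P}$ satisfies Eqs.~\eqref{RBofdi1}--\eqref{RBofdi2} product by product, the compatibility condition \eqref{xrxtjforrbB} transfers, and the involution $(\mathfrak{B},P)\mapsto(-\mathfrak{B},\hat{P})$ gives the converse. Your closing remark is false, however: Eq.~\eqref{xrxtjforrbB} is linear in $\mathfrak{B}$, and with $+\mathfrak{B}$ one gets $\mathfrak{B}(\hat{P}(x),y)+\mathfrak{B}(x,\hat{P}(y))+\lambda\mathfrak{B}(x,y)=-\bigl(\mathfrak{B}(P(x),y)+\mathfrak{B}(x,P(y))+\lambda\mathfrak{B}(x,y)\bigr)=0$, because the $+\lambda\mathfrak{B}(x,y)$ in the condition cancels one of the two $-\lambda\mathfrak{B}(x,y)$ contributions; so $(\mathfrak{B},\hat{P})$ works equally well, and the sign on $\mathfrak{B}$ is a convention (matching the passage $r\mapsto\tau(r)$) rather than an essential ingredient.
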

\begin{proof}
It is similar to the proof of \cite[Proposition 4.14]{BLST}.
\end{proof}
 
Let $V, W$ be two vector spaces and $P: V \to W$ be a linear map.
	Denote the dual map by $P^*: W^* \to V^*$, which is defined by 
	\begin{equation*}
		\langle v, P^*(w^*)\rangle = \langle P(v), w^*\rangle, \;\; \forall v \in V, w^* \in W^*.
	\end{equation*}

\begin{pro}\label{px:ppplgphpl}
 Let $A$ be a vector space and $\mathfrak{B}_{d}$ be the  bilinear form on the direct sum  $A \oplus A^*$  of vector spaces  defined by Eq.~\eqref{mtorbl}. Let $P: A \to A$ be a linear map and $\hat{P}:= - \lambda \id - P$ with $\lambda \in  \mathbb{F}$.
 \begin{enumerate}
 \item\label{sfdifromAs1} If $(A,\cdot)$ is an associative  algebra and $P$ is a Rota-Baxter operator of weight $\lambda$ on   the  associative  algebra  $(A,\cdot)$, then  $( A \rightthreetimes_{-R_{\cdot}^*,-L_{\cdot}^*} A^* ,\mathfrak{B}_d,P+\hat{P}^*)$ is a  skew-symmetric Rota-Baxter Frobenius dialgebra.
 \item\label{sfdifromdi2} If $(A,\rhd,\lhd)$ is a  dialgebra  and $P$ is a Rota-Baxter operator of weight $\lambda$ on the dialgebra $(A,\rhd,\lhd)$, then $(A
\ltimes_{-R_{\lhd}^{*},  L_{\lhd}^{*}-L_{\rhd}^{*}, R_{\rhd}^{*}-R_{\lhd}^{*},-L_{\rhd}^{*}}
A^*,\mathfrak{B}_d,P+\hat{P}^*)$ is a  skew-symmetric Rota-Baxter Frobenius dialgebra.
  \end{enumerate}
 \end{pro}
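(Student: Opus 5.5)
We treat both items by the same scheme: by Proposition \ref{p:ppplgphpl}, the dialgebras $A \rightthreetimes_{-R_{\cdot}^*,-L_{\cdot}^*} A^*$ and $A\ltimes_{-R_{\lhd}^{*}, L_{\lhd}^{*}-L_{\rhd}^{*}, R_{\rhd}^{*}-R_{\lhd}^{*},-L_{\rhd}^{*}} A^*$ equipped with $\mathfrak{B}_d$ are already skew-symmetric Frobenius dialgebras. So two things remain for $Q:=P+\hat{P}^*$, acting by $Q(x+a^*)=P(x)+\hat{P}^*(a^*)$: the compatibility condition \eqref{xrxtjforrbB}, and the Rota-Baxter identities \eqref{RBofdi1}--\eqref{RBofdi2} of weight $\lambda$ on the semidirect product.

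\emph{Step 1 (compatibility).} For $x,y\in A$ and $a^*,b^*\in A^*$, expand $\mathfrak{B}_d(Q(x+a^*),y+b^*)+\mathfrak{B}_d(x+a^*,Q(y+b^*))+\lambda\mathfrak{B}_d(x+a^*,y+b^*)$ using \eqref{mtorbl}. The $\langle y,a^*\rangle$-type terms give $\langle y,\hat{P}^*a^*\rangle+\langle P(y),a^*\rangle+\lambda\langle y,a^*\rangle=\langle (\hat P+P+\lambda\,\id)y,a^*\rangle=0$. The $\langle x,b^*\rangle$ terms cancel in the same way. This step is immediate.

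\emph{Step 2 (Rota-Baxter identity).} Write $u=x+a^*$ and $v=y+b^*$. The $A$-component of $Q(u)\diamond Q(v)-Q(Q(u)\diamond v+u\diamond Q(v)+\lambda u\diamond v)$, for $\diamond\in\{\rhd,\lhd\}$, reduces to the Rota-Baxter identity for $P$ on $A$. For item (a) we use $(A,\cdot)$ in both operations; for item (b) we use \eqref{RBofdi1}--\eqref{RBofdi2}. The $A^*$-component splits into an $(x,b^*)$ part and an $(a^*,y)$ part, since $A^*\diamond A^*=0$. Each part is checked by pairing with an arbitrary $z\in A$ and moving all operators across the pairing, so that $\hat P^*$ becomes $\hat P$ and the coregular actions become products in $A$. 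For instance, in item (a) with $\rhd$, the $(x,b^*)$ part pairs with $z$ to
\begin{align*}
\langle z\cdot P(x),\hat{P}^*b^*\rangle-\langle \hat P(z\cdot P(x)) +\hat P(z)\cdot P(x)\ \ldots\rangle,
\end{align*}
and the required vanishing becomes $\hat P(z)\cdot P(x)=\hat P\big(\hat P(z)\cdot x+z\cdot P(x)+\lambda z\cdot x\big)$-type identities. These hold because $\hat P=-\lambda\,\id-P$ is again a Rota-Baxter operator of weight $\lambda$, and because $P$ and $\hat P$ satisfy the standard mixed identity $P(x)\cdot \hat P(y)=P(x\cdot\hat P(y))+\hat P(P(x)\cdot y)$. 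We will prove both facts first by direct expansion from \eqref{RBofass}, and in item (b) from \eqref{RBofdi1}--\eqref{RBofdi2}, separately for $\rhd$ and $\lhd$ since the mixed identity follows from each of these equations by linearity.

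\emph{Expected obstacle.} The main work is item (b). There the coregular actions mix $\rhd$ and $\lhd$ (for example $L_{\lhd}^*-L_{\rhd}^*=-L_\star^*$), so after pairing the identities involve $\star$ and both products. We will handle this by observing that $\star=\rhd-\lhd$ also satisfies the Rota-Baxter and mixed identities, again by linearity. Each of the eight component identities then reduces to one of the mixed identities for $\rhd$, $\lhd$ or $\star$, and we will list them in the order $\rhd$ on $(x,b^*)$, $\rhd$ on $(a^*,y)$, $\lhd$ on $(x,b^*)$, $\lhd$ on $(a^*,y)$. Item (a) is the special case $\rhd=\lhd=\cdot$ with $\star=0$, so it can be obtained either directly or by specializing the computation for item (b).
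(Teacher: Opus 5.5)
Your proposal is correct and is essentially the paper's proof. The paper likewise takes the Frobenius structure from Proposition \ref{p:ppplgphpl}, checks \eqref{xrxtjforrbB} directly, and verifies the Rota--Baxter identity for $P+\hat{P}^*$ componentwise, reducing the $A^*$-parts to identities such as $R_{\cdot}^*(P(x))\hat{P}^*(b^*)=\hat{P}^*(R_{\cdot}^*(P(x))b^*)+\hat{P}^*(R_{\cdot}^*(x)\hat{P}^*(b^*))+\lambda\hat{P}^*(R_{\cdot}^*(x)b^*)$. Two of your observations fill in item (b), which the paper only calls straightforward: that (a) is the special case $\rhd=\lhd=\cdot$ of (b), and that $\star$ inherits these identities by linearity.

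One slip: after pairing with $z$, the identity you actually need is $\hat{P}(z\cdot P(x))=\hat{P}(z)\cdot P(x)+\hat{P}(\hat{P}(z)\cdot x)+\lambda\,\hat{P}(z)\cdot x$, not the one you display. It follows at once by substituting $\hat{P}(x)=-\lambda x-P(x)$ into the weight-$\lambda$ Rota--Baxter identity for $\hat{P}$, so your argument goes through.
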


\begin{proof}
\eqref{sfdifromAs1}. By Proposition \ref{p:ppplgphpl},  $(A \rightthreetimes_{-R_{\cdot}^*,-L_{\cdot}^*} A^* ,\mathfrak{B}_d)$ is a skew-symmetric  Frobenius dialgebra. Let $x,y  \in A, a^*,b^*  \in A^*$, we have
\begin{align*}
(P+\hat{P}^*)(x + a^*) \rhd  (P+\hat{P}^*)(y + b^*) &= (P(x)+\hat{P}^*(a^*))  \rhd  (P(y)+\hat{P}^*(b^*)) \\
& = P(x) \cdot P(y) -R_{\cdot}^*(P(x))\hat{P}^*(b^*),\\
 (P+\hat{P}^*)((P+\hat{P}^*)(x + a^*) \rhd (y + b^*))  &=P( P(x) \cdot y)-\hat{P}^*(R_{\cdot}^*(P(x)) b^* )\\
  (P+\hat{P}^*)((x + a^*) \rhd (P+\hat{P}^*)(y + b^*))  &=P( x \cdot P(y))-\hat{P}^*(R_{\cdot}^*(x) \hat{P}^*(b^*) ),\\
  \lambda  (P+\hat{P}^*)((x + a^*) \rhd  (y + b^*))  &=   \lambda P( x \cdot y)-\lambda \hat{P}^*(R_{\cdot}^*(x)  b^* ).\\
 (P+\hat{P}^*)(x + a^*) \lhd  (P+\hat{P}^*)(y + b^*) & = P(x) \cdot P(y) -L_{\cdot}^*(P(y))\hat{P}^*(a^*),\\
 (P+\hat{P}^*)((P+\hat{P}^*)(x + a^*) \lhd (y + b^*))  &=P( P(x) \cdot y)-\hat{P}^*(L_{\cdot}^*( y) \hat{P}^*(a^*) )\\
  (P+\hat{P}^*)((x + a^*) \lhd (P+\hat{P}^*)(y + b^*))  &=P( x \cdot P(y))-\hat{P}^*(L_{\cdot}^*(P(y)) a^*),\\
  \lambda  (P+\hat{P}^*)((x + a^*) \lhd  (y + b^*))  &=   \lambda P( x \cdot y)-\lambda \hat{P}^*(L_{\cdot}^*(y)  a^* ). 
\end{align*}
Note that
\begin{align*}
R_{\cdot}^*(P(x))\hat{P}^*(b^*) &= \hat{P}^*(R_{\cdot}^*(P(x)) b^* ) +  \hat{P}^*(R_{\cdot}^*(x) \hat{P}^*(b^*)  + \lambda \hat{P}^*(R_{\cdot}^*(x)  b^* ),\\
L_{\cdot}^*(P(y))\hat{P}^*(a^*)&=  \hat{P}^*(L_{\cdot}^*( y) \hat{P}^*(a^*) ) +  \hat{P}^*(L_{\cdot}^*(P(y)) a^*) + \lambda \hat{P}^*(L_{\cdot}^*(y)  a^* ).
\end{align*}
Then $P+\hat{P}^*$ is a Rota-Baxter operator of weight $\lambda $ on the  dialgebra  $ A \rightthreetimes_{-R_{\cdot}^*,-L_{\cdot}^*} A^* $.
Moreover,  it is straightforward to check that $P+\hat{P}^*$ satisfies Eq.~\eqref{xrxtjforrbB}. Thus $( A \rightthreetimes_{-R_{\cdot}^*,-L_{\cdot}^*} A^* ,\mathfrak{B}_d,P+\hat{P}^*)$ is a  skew-symmetric Rota-Baxter Frobenius dialgebra.

\eqref{sfdifromdi2}. It is straightforward to check that $P+\hat{P}^*$ is a  Rota-Baxter operator of weight $\lambda $ on the  dialgebra  $A
\ltimes_{-R_{\lhd}^{*},  L_{\lhd}^{*}-L_{\rhd}^{*}, R_{\rhd}^{*}-R_{\lhd}^{*},-L_{\rhd}^{*}}
A^*$.
\end{proof}

\begin{lem}\label{rbfna0}
	Let $A$ be a vector space and $\mathfrak{B}$ be a nondegenerate skew-symmetric bilinear form.
	Let $\varphi_{\mathfrak{B}}: A \to A^*$ be the induced linear isomorphism by $\mathfrak{B}$ and $r_\mathfrak{B} \in A \otimes A$ be the 2-tensor form of $\varphi_{\mathfrak{B}}^{-1}$ given by Eq.~\eqref{inducedr}. Suppose that $ r \in A \otimes A$ and $\alpha: = r - \tau(r)$ is the skew-symmetric part of $r$. Then $P_{r}:= \widetilde{r} \circ \varphi_{\mathfrak{B}}$ satisfies Eq.~\eqref{xrxtjforrbB} if and only if $\alpha=-\lambda r_{\mathfrak{B}}$  where $\lambda \in   \mathbb{F}$.
\end{lem}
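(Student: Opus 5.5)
Everything reduces to pairings, which turns Eq.~\eqref{xrxtjforrbB} for $P_r$ into an identity between elements of $A\otimes A$. Throughout, write $x=\varphi_{\mathfrak{B}}^{-1}(a^*)$ and $y=\varphi_{\mathfrak{B}}^{-1}(b^*)$ for arbitrary $a^*,b^*\in A^*$; this is legitimate because $\varphi_{\mathfrak{B}}$ is an isomorphism. The plan is to compute each of the three terms $\mathfrak{B}(P_r(x),y)$, $\mathfrak{B}(x,P_r(y))$ and $\lambda\mathfrak{B}(x,y)$ as a pairing of $r$, $\tau(r)$ or $r_{\mathfrak{B}}$ against $a^*\otimes b^*$.

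\textbf{Step 1.} Using $\mathfrak{B}(u,v)=\langle\varphi_{\mathfrak{B}}(u),v\rangle$ and the skew-symmetry of $\mathfrak{B}$, compute
\[
\mathfrak{B}(P_r(x),y)=-\mathfrak{B}(y,\widetilde r(a^*))=-\langle b^*,\widetilde r(a^*)\rangle=-\langle r,a^*\otimes b^*\rangle,
\]
\[
\mathfrak{B}(x,P_r(y))=\langle a^*,\widetilde r(b^*)\rangle=\langle r,b^*\otimes a^*\rangle=\langle \tau(r),a^*\otimes b^*\rangle .
\]
Adding these gives $\mathfrak{B}(P_r(x),y)+\mathfrak{B}(x,P_r(y))=-\langle \alpha,a^*\otimes b^*\rangle$.

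\textbf{Step 2.} Express $\mathfrak{B}(x,y)$ through $r_{\mathfrak{B}}$. By Eq.~\eqref{inducedr},
\[
\mathfrak{B}(x,y)=\langle a^*,\varphi_{\mathfrak{B}}^{-1}(b^*)\rangle=\langle r_{\mathfrak{B}},b^*\otimes a^*\rangle=-\langle r_{\mathfrak{B}},a^*\otimes b^*\rangle .
\]
The last equality uses that $r_{\mathfrak{B}}$ is skew-symmetric whenever $\mathfrak{B}$ is; this is the first computation in the proof of Proposition~\ref{tensorformIndias}, which only needs the skew-symmetry of $\mathfrak{B}$.

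\textbf{Step 3.} Combining Steps 1 and 2, Eq.~\eqref{xrxtjforrbB} for $P_r$ becomes $-\langle \alpha+\lambda r_{\mathfrak{B}},a^*\otimes b^*\rangle=0$ for all $a^*,b^*\in A^*$. Since such $a^*\otimes b^*$ span $(A\otimes A)^*$ in finite dimension, this holds if and only if $\alpha=-\lambda r_{\mathfrak{B}}$, which is the claim.

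The only real obstacle is keeping the conventions straight: $\widetilde r$ versus $\widetilde r^t$, the order of slots in $\langle r_{\mathfrak{B}},\cdot\otimes\cdot\rangle$, and the sign introduced by skew-symmetry. I would re-check these signs against the identity $\widetilde{\alpha}=\widetilde r-\widetilde r^t$ before concluding.
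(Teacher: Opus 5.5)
Your proof is correct and follows the paper's argument essentially line by line. Both compute the three terms of Eq.~\eqref{xrxtjforrbB} as pairings of $r$, $\tau(r)$ and $r_{\mathfrak{B}}$ against $a^*\otimes b^*$, and conclude since these pair against all of $A^*\otimes A^*$. The only cosmetic difference is in the term $\lambda\mathfrak{B}(x,y)$: the paper applies skew-symmetry of $\mathfrak{B}$ directly, while you go through skew-symmetry of $r_{\mathfrak{B}}$, which amounts to the same thing.
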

\begin{proof}
Let  $x, y \in A$, there exist  $a^{*}, b^{*} \in A^{*}$  such that  $\varphi_{\mathfrak{B}}^{-1}\left(a^{*}\right)=x, \varphi_{\mathfrak{B}}^{-1}\left(b^{*}\right)=y$. Then
\begin{align*}
\mathfrak{B}\left(P_{r}(x), y\right)&=-\mathfrak{B}\left(y, P_{r}(x)\right)=-\left\langle \varphi_{\mathfrak{B}}(y), \widetilde{r}\circ \varphi_{\mathfrak{B}} (x)\right\rangle=-\left\langle r, a^{*} \otimes b^{*}\right\rangle,\\
\mathfrak{B}(x, P_{r}(y))&=\left\langle\varphi_{\mathfrak{B}}(x),  \widetilde{r}\circ \varphi_{\mathfrak{B}}(y)\right\rangle=\left\langle r, b^{*} \otimes a^{*}\right\rangle=\left\langle \tau(r), a^{*} \otimes b^{*}\right\rangle, \\
\lambda \mathfrak{B}(x, y)&=-\lambda \mathfrak{B}(y, x)=-\lambda\left\langle \varphi_{\mathfrak{B}}(y),\varphi_{\mathfrak{B}}^{-1}\left(\varphi_{\mathfrak{B}}(x)\right)\right\rangle=-\lambda\left\langle r_{\mathfrak{B}}, a^{*} \otimes b^{*}\right\rangle .
\end{align*}
Hence  $P_{r}:= \widetilde{r} \circ \varphi_{\mathfrak{B}}$ satisfies Eq.~\eqref{xrxtjforrbB}  if and only if  $\alpha=-\lambda  r_{\mathfrak{B}}$.
\end{proof}

As a direct consequence, a skew-symmetric Rota-Baxter Frobenius dialgebra of zero  weight  gives rise to a triangular bi-dialgebra in the following sense.

\begin{pro} \label{rbfna2}
Let $(A,  \rhd,\lhd,\mathfrak{B},P)$ be a  skew-symmetric  Rota-Baxter Frobenius dialgebra of weight $0$ and  $\varphi_{\mathfrak{B}}: A \to A^*$ be the induced linear isomorphism by $\mathfrak{B}$. Then $(A, \rhd,\lhd, \delta_{\rhd,r}, \Delta_{\lhd,r})$ is a triangular  bi-dialgebra where $r\in A \otimes A$ is the 2-tensor form of $P \circ \varphi_{\mathfrak{B}}^{-1}$ given by
	\begin{equation}
		\langle r, a^* \otimes b^*\rangle := \langle (P\circ \varphi_{\mathfrak{B}}^{-1})(a^*), b^*\rangle, \;\; \forall a^*, b^* \in A^*. \label{eq:pnbf}
	\end{equation}
\end{pro}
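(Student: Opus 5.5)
The plan is to show that $r$ is symmetric and solves the DAYBE; the triangular bi-dialgebra structure then follows from Theorem~\ref{tcdppab:1} and Definition~\ref{quasidb}. (The statement writes $\Delta_{\lhd,r}$; this is read as $\delta_{\lhd,r}$.)

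First we identify $\widetilde{r}$. From Eq.~\eqref{eq:pnbf} we have $\widetilde{r}=P\circ\varphi_{\mathfrak{B}}^{-1}$. Hence
$$P_r:=\widetilde{r}\circ\varphi_{\mathfrak{B}}=P.$$

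Next we check symmetry, using Lemma~\ref{rbfna0} with $\lambda=0$. The compatibility condition~\eqref{xrxtjforrbB} at weight $0$ reads $\mathfrak{B}(P(x),y)+\mathfrak{B}(x,P(y))=0$. Since $P_r=P$ satisfies this, Lemma~\ref{rbfna0} gives $\alpha=r-\tau(r)=-0\cdot r_{\mathfrak{B}}=0$, so $r$ is symmetric.

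If one prefers to avoid the lemma, the direct computation is short. Write $a^*=\varphi_{\mathfrak{B}}(x)$ and $b^*=\varphi_{\mathfrak{B}}(y)$. Then
$$\langle r,a^*\otimes b^*\rangle=\langle P(x),\varphi_{\mathfrak{B}}(y)\rangle=\mathfrak{B}(y,P(x))=-\mathfrak{B}(P(x),y)=\mathfrak{B}(x,P(y))=\langle r,b^*\otimes a^*\rangle,$$
using skew-symmetry of $\mathfrak{B}$ and then the compatibility condition.

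Finally we apply Proposition~\ref{rbfna1}. The Frobenius dialgebra $(A,\rhd,\lhd,\mathfrak{B})$ is skew-symmetric, $r$ is symmetric, and $P_r=P$ is a Rota-Baxter operator of weight $0$ on $(A,\rhd,\lhd)$. By Proposition~\ref{rbfna1}, $r$ is therefore a solution of the DAYBE. Since $\alpha=0$ is trivially invariant, Theorem~\ref{tcdppab:1} shows that $(A,\rhd,\lhd,\delta_{\rhd,r},\delta_{\lhd,r})$ is a bi-dialgebra, and by Definition~\ref{quasidb} it is triangular.

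The only delicate point is the sign and index convention in identifying $\widetilde{r}$ with $P\circ\varphi_{\mathfrak{B}}^{-1}$, rather than with its transpose, and consequently $P_r$ with $P$. Because $r$ turns out to be symmetric, $\widetilde{r}=\widetilde{r}^t$, so either convention gives the same conclusion.
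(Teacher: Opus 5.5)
Your proposal is correct and follows essentially the same route as the paper, which obtains the result from Lemma~\ref{rbfna0} (giving $r-\tau(r)=0$) together with Proposition~\ref{rbfna:X}. Your Proposition~\ref{rbfna1} is just the symmetric specialization of Proposition~\ref{rbfna:X}, and your explicit symmetry computation and appeal to Theorem~\ref{tcdppab:1} only spell out steps the paper leaves implicit.
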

\begin{proof}
	It follows from Proposition~\ref{rbfna:X} and Lemma~\ref{rbfna0} with $r -\tau (r)=0$.
\end{proof}

The following theorem shows that a factorizable bi-dialgebra naturally gives rise to a   skew-symmetric  Rota-Baxter Frobenius dialgebra of nonzero weight.

\begin{thm}\label{thm:fpb2qrp}
	Let $(A, \rhd,\lhd, \delta_{\rhd,r}, \Delta_{\lhd,r})$  be a factorizable bi-dialgebra with $ \widetilde{\alpha} = \widetilde{r}- \widetilde{r}^t$.
	Then $(A, \rhd,\lhd,  \mathfrak{B}, P)$ is a skew-symmetric  Rota-Baxter Frobenius dialgebra  of weight $\lambda$, where the bilinear form $\mathfrak{B}$  and  linear map $P: A \to A$ are respectively defined by
	\begin{align*}
			\mathfrak{B}(x, y) &= -\lambda\langle \widetilde{\alpha}^{-1}(x), y \rangle, \;\; \forall x, y \in A,\\
		P &=  -\lambda \widetilde{r}  \circ \widetilde{\alpha}^{-1}, \quad \lambda \neq 0.
	\end{align*}
\end{thm}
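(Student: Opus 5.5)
We need to check three things: $\mathfrak{B}$ is a nondegenerate skew-symmetric invariant form, $P$ is a Rota-Baxter operator of weight $\lambda$, and the compatibility condition \eqref{xrxtjforrbB} holds.

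\textbf{The Frobenius structure.} Nondegeneracy is immediate since $\widetilde{\alpha}$ is invertible and $\lambda\neq 0$. Skew-symmetry follows from $\widetilde{\alpha}^t=-\widetilde{\alpha}$, because $\alpha$ is skew-symmetric. For invariance I use Proposition \ref{tensorformIndias}: the form $\mathfrak{B}$ has $\varphi_{\mathfrak{B}}=-\lambda\,\widetilde{\alpha}^{-1}$. Hence $\varphi_{\mathfrak{B}}^{-1}=-\lambda^{-1}\widetilde{\alpha}$, so $r_{\mathfrak{B}}=-\lambda^{-1}\alpha$, equivalently $\alpha=-\lambda r_{\mathfrak{B}}$. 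Since $\alpha$ is skew-symmetric and invariant by the quasi-triangular hypothesis, so is $r_{\mathfrak{B}}$. Proposition \ref{tensorformIndias} then makes $(A,\rhd,\lhd,\mathfrak{B})$ a skew-symmetric Frobenius dialgebra.

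\textbf{Compatibility.} Here $P=-\lambda\widetilde{r}\circ\widetilde{\alpha}^{-1}=\widetilde{r}\circ\varphi_{\mathfrak{B}}=P_r$. Lemma \ref{rbfna0} says $P_r$ satisfies \eqref{xrxtjforrbB} if and only if $\alpha=-\lambda r_{\mathfrak{B}}$, which we have just shown.

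\textbf{The Rota-Baxter identity.} This is the main step. Since $r$ solves the DAYBE and $\alpha$ is invariant, Proposition \ref{rbfna:X} gives
\begin{equation*}
P_r(x)\rhd P_r(y)=P_r\big(P_r(x)\rhd y+x\rhd P_r(y)-x\rhd P_\alpha(y)\big),
\end{equation*}
together with the analogous identity for $\lhd$. Here $P_\alpha=\widetilde{\alpha}\circ\varphi_{\mathfrak{B}}=-\lambda\widetilde{\alpha}\widetilde{\alpha}^{-1}=-\lambda\,\mathrm{id}$. Therefore $-x\rhd P_\alpha(y)=\lambda\, x\rhd y$, which is exactly \eqref{RBofdi1}, and likewise \eqref{RBofdi2} holds for $\lhd$.

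The only delicate points are the sign conventions. First, I must confirm $\widetilde{\alpha}=\widetilde{r}-\widetilde{r}^t$ matches $\alpha=r-\tau(r)$. Second, I must confirm $P_\alpha$ in Proposition \ref{rbfna:X} means $\widetilde{\alpha}\circ\varphi_{\mathfrak{B}}$ with the same $\varphi_{\mathfrak{B}}$. I would double-check these by pairing against $a^*\otimes b^*$, as in the proof of Lemma \ref{rbfna0}.
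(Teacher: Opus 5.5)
Your proposal is correct and follows the paper's proof essentially step for step. The identity $\varphi_{\mathfrak{B}}=-\lambda\widetilde{\alpha}^{-1}$ gives $\alpha=-\lambda r_{\mathfrak{B}}$, which yields the Frobenius structure via Proposition \ref{tensorformIndias}, the compatibility via Lemma \ref{rbfna0}, and the Rota--Baxter identity via Proposition \ref{rbfna:X}. You additionally make explicit that $P_\alpha=\widetilde{\alpha}\circ\varphi_{\mathfrak{B}}=-\lambda\,\mathrm{id}$, which the paper leaves implicit. Your sign checks go through, and you correctly identify $\mathfrak{B}$ as skew-symmetric (the paper's proof says ``symmetric'', which is a typo).
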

\begin{proof}
	Clearly, $\mathfrak{B}$ is a nondegenerate symmetric bilinear form. Let $\varphi_{\mathfrak{B}}: A \to A^*$ be the induced linear isomorphism by $\mathfrak{B}$. Then we have $\varphi_{\mathfrak{B}} = - \lambda \widetilde{\alpha}^{-1}$. It follows immediately that  $\widetilde{\alpha} = -\lambda \varphi_{\mathfrak{B}}^{-1}$, and hence $\alpha = -\lambda r_\mathfrak{B}$.
	Noting that the skew-symmetric part $\alpha$ of $r$ is  invariant, we conclude that $r_\mathfrak{B}$ is skew-symmetric and invariant as well. Thus by Proposition~\ref{tensorformIndias}, $(A, \rhd,\lhd,  \mathfrak{B})$ is a skew-symmetric  Frobenius dialgebra. 
	
	Moreover by Proposition~\ref{rbfna:X}, $P$ is a Rota-Baxter operator of weight $\lambda$ on the dialgebra $(A, \rhd,\lhd)$ and it satisfies Eq.~\eqref{xrxtjforrbB} by Lemma~\ref{rbfna0}. 
	Thus, $(A, \rhd,\lhd,  \mathfrak{B},P)$ is a skew-symmetric  Rota-Baxter Frobenius dialgebra  of weight $\lambda$.
\end{proof}

Serving as a counterpart to Theorem~\ref{thm:fpb2qrp}, the theorem below demonstrates that a skew-symmetric Rota-Baxter Frobenius dialgebra of nonzero weight induces a factorizable bi-dialgebra, which refines the one-to-one correspondence between factorizable bi-dialgebras and skew-symmetric  Rota-Baxter Frobenius dialgebras of nonzero weight.

\begin{thm}\label{thm:fdiasb2qrpx}
	Let $(A, \rhd,\lhd,  \mathfrak{B}, P)$ be a skew-symmetric  Rota-Baxter Frobenius dialgebra of weight $\lambda \neq 0$. Let $\varphi_{\mathfrak{B}}: A \to A^*$ be the induced linear isomorphism by $\mathfrak{B}$ and  $r \in A \otimes A$ be the 2-tensor form of $P \circ \varphi_{\mathfrak{B}}^{-1}$ given by Eq.~\eqref{eq:pnbf}. Then $(A, \rhd,\lhd, \delta_{\rhd,r}, \delta_{\lhd,r})$ is a factorizable bi-dialgebra with ${\delta_{\rhd,r}}, {\delta_{\lhd,r}}: A \rightarrow A \otimes A$ given by Eq.~\eqref{EF}.
\end{thm}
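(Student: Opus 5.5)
The plan is to reverse the argument of Theorem~\ref{thm:fpb2qrp}. The first task is to identify the skew-symmetric part $\alpha = r - \tau(r)$ of the tensor $r$ defined by Eq.~\eqref{eq:pnbf}. By construction $\widetilde{r} = P\circ\varphi_{\mathfrak{B}}^{-1}$, so $P_r := \widetilde{r}\circ\varphi_{\mathfrak{B}} = P$. Since $P$ satisfies the compatibility condition Eq.~\eqref{xrxtjforrbB}, Lemma~\ref{rbfna0} (applied to this $r$) gives
\[
\alpha = -\lambda r_{\mathfrak{B}} .
\]

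Next, since $(A,\rhd,\lhd,\mathfrak{B})$ is a skew-symmetric Frobenius dialgebra, Proposition~\ref{tensorformIndias} tells us that $r_{\mathfrak{B}}$ is skew-symmetric and invariant. Invariance is a linear condition, as $E(x)$ and $F(x)$ are linear, so $\alpha = -\lambda r_{\mathfrak{B}}$ is invariant as well. Moreover $\widetilde{r_{\mathfrak{B}}} = \varphi_{\mathfrak{B}}^{-1}$ up to the transpose convention. Since $\lambda\neq 0$, this means $\widetilde{\alpha} = \pm\lambda\varphi_{\mathfrak{B}}^{-1}$ is a linear isomorphism, so $\alpha$ is nondegenerate. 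I will check the sign carefully against Eq.~\eqref{inducedr} and the skew-symmetry of $\mathfrak{B}$, but it is irrelevant for nondegeneracy.

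It remains to show that $r$ solves the DAYBE; this is the main step. I invoke Proposition~\ref{rbfna:X}, which requires exactly that $\alpha$ be invariant, already established. By that proposition, $r$ is a solution if and only if $P_r = P$ satisfies Eqs.~\eqref{infordiBBF1}--\eqref{infordiBBF2}. To match these with the Rota-Baxter identities of weight $\lambda$, I compute $P_\alpha = \widetilde{\alpha}\circ\varphi_{\mathfrak{B}}$. From $\alpha=-\lambda r_{\mathfrak{B}}$ we get $P_\alpha = -\lambda\,\mathrm{id}$; the main obstacle is to confirm this sign using $\widetilde{r_{\mathfrak{B}}}$ versus $\widetilde{r_{\mathfrak{B}}}^t$. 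With $P_\alpha = -\lambda\,\mathrm{id}$, the correction term $-x\rhd P_\alpha(y)$ becomes $+\lambda\, x\rhd y$, so Eq.~\eqref{infordiBBF1} becomes exactly Eq.~\eqref{RBofdi1}, and similarly Eq.~\eqref{infordiBBF2} becomes Eq.~\eqref{RBofdi2}. Both hold because $P$ is a Rota-Baxter operator of weight $\lambda$.

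If the sign check instead gives $P_\alpha=\lambda\,\mathrm{id}$, I would rerun the argument with $\tau(r)$, using Proposition~\ref{PandtauP} (replace $\mathfrak{B}$ by $-\mathfrak{B}$ and $P$ by $\hat P$) together with Proposition~\ref{fquasidba}. Consistency with Theorem~\ref{thm:fpb2qrp}, where $\varphi_{\mathfrak{B}}=-\lambda\widetilde\alpha^{-1}$ is used, suggests that the direct sign works.

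Finally, Theorem~\ref{tcdppab:1} shows that $(A,\rhd,\lhd,\delta_{\rhd,r},\delta_{\lhd,r})$ is a quasi-triangular bi-dialgebra. Since $\alpha$ is nondegenerate, Definition~\ref{factorizabledb} makes it factorizable.
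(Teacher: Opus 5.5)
Your proposal is correct and follows essentially the same route as the paper: Lemma~\ref{rbfna0} and Proposition~\ref{tensorformIndias} give that $\alpha=-\lambda r_{\mathfrak{B}}$ is invariant and nondegenerate, and Proposition~\ref{rbfna:X} turns the weight-$\lambda$ Rota--Baxter identities into the DAYBE. The sign you were unsure about comes out directly: by Eq.~\eqref{inducedr}, $\widetilde{r_{\mathfrak{B}}}=\varphi_{\mathfrak{B}}^{-1}$ exactly, so $\widetilde{\alpha}=-\lambda\varphi_{\mathfrak{B}}^{-1}$ and $P_\alpha=-\lambda\,\mathrm{id}$, and the fallback via $\tau(r)$ is not needed.
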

 \begin{proof}
Let $r_{\mathfrak{B}}$ be the 2-tensor form of $\varphi_{\mathfrak{B}}^{-1}$ and $\alpha$ be the skew-symmetric part  of $r$. By Proposition~\ref{tensorformIndias} and  Lemma~\ref{rbfna0}, $r_{\mathfrak{B}}$ is  skew-symmetric and invariant, $\alpha=-\lambda r_{\mathfrak{B}}$ and $\widetilde{\alpha} = -\lambda \varphi_{\mathfrak{B}}^{-1}$, which show that  $\alpha$ is also invariant and $\widetilde{\alpha}$ is a linear isomorphism.  Moreover, since  $P$  is a Rota-Baxter operator of weight  $\lambda$ on  $(A, \rhd,\lhd)$, it follows from
Proposition~\ref{rbfna:X}  that  $r$  is a solution of the DAYBE in  $(A, \rhd,\lhd)$. Therefore, $(A, \rhd,\lhd, \delta_{\rhd,r}, \delta_{\lhd,r})$ is a factorizable bi-dialgebra.
 \end{proof}

\begin{pro}
	Let  $(A, \rhd,\lhd, \delta_{\rhd,r}, \delta_{\lhd,r})$ be a factorizable bi-dialgebra, which corresponds to a skew-symmetric  Rota-Baxter Frobenius dialgebra   $(A, \rhd,\lhd,  \mathfrak{B}, P)$ of weight $\lambda \neq 0$ via Theorems~\ref{thm:fpb2qrp} and \ref{thm:fdiasb2qrpx}. 
	Then the factorizable bi-dialgebra $(A, \rhd,\lhd, \delta_{\rhd,\tau(r)}, \delta_{\lhd,\tau(r)})$ corresponds to the skew-symmetric Rota-Baxter Frobenius dialgebra  $(A, \rhd,\lhd, -\mathfrak{B}, \hat{P})$ of weight $\lambda \neq 0$ where $\hat{P} := - \lambda \id - P$. 
	In conclusion, we have the following commutative diagram.
	\begin{displaymath}
		\xymatrix{ 
			(A, \rhd,\lhd, \delta_{\rhd,r}, \delta_{\lhd,r})\ar@{<->}[rr]^{\mathrm{Prop.}~\ref{fquasidba}}\ar@{->}[d]^{\mathrm{Thm.}~\ref{thm:fpb2qrp}}&  &  (A, \rhd,\lhd, \delta_{\rhd,\tau(r)}, \delta_{\lhd,\tau(r)}) \ar@{->}[d]^{\mathrm{Thm.}~\ref{thm:fpb2qrp}} \\
		(A, \rhd,\lhd,  \mathfrak{B}, P) \ar@<1.5ex>[u]^{\mathrm{Thm.}~\ref{thm:fdiasb2qrpx}} \ar@{<->}[rr]^{\mathrm{Prop}.~\ref{PandtauP} }& &   (A, \rhd,\lhd, -\mathfrak{B}, \hat{P}) \ar@<1.5ex>[u]^{\mathrm{Thm.}~\ref{thm:fdiasb2qrpx}}}
	\end{displaymath}
\end{pro}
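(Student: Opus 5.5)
The plan is to compute directly what Theorem~\ref{thm:fpb2qrp} produces when applied to $\tau(r)$, and compare it with $(-\mathfrak{B},\hat{P})$. Write $\alpha=r-\tau(r)$. The skew-symmetric part of $\tau(r)$ is $\alpha'=\tau(r)-r=-\alpha$, so $\widetilde{\alpha'}=-\widetilde{\alpha}$. Since $\widetilde{\tau(r)}=\widetilde{r}^t$, we also have $\widetilde{\tau(r)}=\widetilde{r}-\widetilde{\alpha}$. By Proposition~\ref{fquasidba}, $(A,\rhd,\lhd,\delta_{\rhd,\tau(r)},\delta_{\lhd,\tau(r)})$ is factorizable, so Theorem~\ref{thm:fpb2qrp} applies to it with the same weight $\lambda\neq0$.

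First I would compute the bilinear form attached to $\tau(r)$:
$$\mathfrak{B}'(x,y)=-\lambda\langle \widetilde{\alpha'}^{-1}(x),y\rangle=\lambda\langle\widetilde{\alpha}^{-1}(x),y\rangle=-\mathfrak{B}(x,y).$$
Next I would compute the operator:
$$P'=-\lambda\,\widetilde{r}^t\circ\widetilde{\alpha'}^{-1}=\lambda(\widetilde{r}-\widetilde{\alpha})\circ\widetilde{\alpha}^{-1}=\lambda\,\widetilde{r}\circ\widetilde{\alpha}^{-1}-\lambda\,\id=-P-\lambda\,\id=\hat{P}.$$
This shows that the right vertical arrow labelled Thm.~\ref{thm:fpb2qrp} sends the factorizable bi-dialgebra of $\tau(r)$ to $(A,\rhd,\lhd,-\mathfrak{B},\hat{P})$. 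That this is a skew-symmetric Rota-Baxter Frobenius dialgebra of weight $\lambda$ is consistent with Proposition~\ref{PandtauP}.

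For the reverse arrows (Thm.~\ref{thm:fdiasb2qrpx}), I would check that the two constructions are mutually inverse. Starting from $(\mathfrak{B},P)$, Theorem~\ref{thm:fdiasb2qrpx} takes $r$ to be the tensor form of $P\circ\varphi_{\mathfrak{B}}^{-1}$, so $\widetilde{r}=P\circ\varphi_{\mathfrak{B}}^{-1}$. Lemma~\ref{rbfna0} gives $\widetilde{\alpha}=-\lambda\varphi_{\mathfrak{B}}^{-1}$. Then $-\lambda\langle\widetilde{\alpha}^{-1}(x),y\rangle=\langle\varphi_{\mathfrak{B}}(x),y\rangle=\mathfrak{B}(x,y)$, and $-\lambda\widetilde{r}\widetilde{\alpha}^{-1}=P$. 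Conversely, starting from $r$, the form produced by Theorem~\ref{thm:fpb2qrp} satisfies $\varphi_{\mathfrak{B}}=-\lambda\widetilde{\alpha}^{-1}$. Hence $P\circ\varphi_{\mathfrak{B}}^{-1}=-\lambda\widetilde{r}\widetilde{\alpha}^{-1}\cdot(-\lambda^{-1})\widetilde{\alpha}=\widetilde{r}$, which recovers $r$.

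Applying this bijection to the pair $(\tau(r),(-\mathfrak{B},\hat{P}))$ shows that Thm.~\ref{thm:fdiasb2qrpx} sends $(-\mathfrak{B},\hat{P})$ back to $\tau(r)$. As a direct check, $\hat{P}\circ\varphi_{-\mathfrak{B}}^{-1}=(\lambda\,\id+P)\circ\varphi_{\mathfrak{B}}^{-1}=-\widetilde{\alpha}+\widetilde{r}=\widetilde{r}^t$, which is the map of $\tau(r)$. The diagram therefore commutes.

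The main obstacle is sign bookkeeping, specifically $\varphi_{-\mathfrak{B}}=-\varphi_{\mathfrak{B}}$ and the identity $\widetilde{r}^t=\widetilde{r}-\widetilde{\alpha}$. The statement of Theorem~\ref{thm:fpb2qrp} calls $\mathfrak{B}$ ``symmetric'', but it is skew-symmetric since $\alpha$ is. No earlier result depends on that slip, so I would simply note it and proceed.
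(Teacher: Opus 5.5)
Your proposal is correct and follows the paper's proof. Both apply Theorem~\ref{thm:fpb2qrp} to $\tau(r)$, use that its skew-symmetric part is $-\alpha$ with $\widetilde{\tau(r)}=\widetilde{r}^t=\widetilde{r}-\widetilde{\alpha}$, and obtain $\mathfrak{B}'=-\mathfrak{B}$ and $P'=\hat{P}$; your explicit check that Theorems~\ref{thm:fpb2qrp} and \ref{thm:fdiasb2qrpx} are mutually inverse, including $\hat{P}\circ\varphi_{-\mathfrak{B}}^{-1}=\widetilde{r}^t$, usefully fills in the reverse arrow that the paper only covers with ``by a similar argument''.
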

\begin{proof}
	By Theorem~\ref{thm:fpb2qrp}, the factorizable bi-dialgebra $ (A, \rhd,\lhd, \delta_{\rhd,\tau(r)}, \delta_{\lhd,\tau(r)})$ induces a  skew-symmetric  Rota-Baxter Frobenius dialgebra  $(A, \rhd,\lhd,  \mathfrak{B}^{\prime}, P^{\prime})$ of weight $\lambda \ne 0$, where
	\begin{align*}
		\mathfrak{B}^{\prime}(x, y)&=   \lambda\langle \widetilde{\alpha}^{-1}(x), y \rangle= -\mathfrak{B}(x, y), \quad \forall x, y \in A,\\
		P^{\prime} &=  \lambda  \widetilde{\tau(r)} \circ \widetilde{\alpha}^{-1}=  \lambda  \widetilde{r}^t \circ \widetilde{\alpha}^{-1}=   \lambda \widetilde{r} \circ \widetilde{\alpha}^{-1} - \lambda\widetilde{\alpha} \circ \widetilde{\alpha}^{-1} = -P -\lambda \mathrm{id}  = \hat{P}.
	\end{align*}
	Thus, $  (A, \rhd,\lhd, \delta_{\rhd,\tau(r)}, \delta_{\lhd,\tau(r)})$ induces a skew-symmetric  Rota-Baxter Frobenius dialgebra $(A, \rhd,\lhd,\mathfrak{B}, \hat{P})$ of weight $\lambda$ by Theorem~\ref{thm:fpb2qrp}.
	Conversely, by a similar argument, we show that the skew-symmetric  Rota-Baxter Frobenius dialgebra $(A, \rhd,\lhd,\mathfrak{B}, \hat{P})$ of weight $\lambda$ induces the factorizable bi-dialgebra $(A, \rhd,\lhd, \delta_{\rhd,\tau(r)}, $ $\delta_{\lhd,\tau(r)})$ via Theorem~\ref{thm:fdiasb2qrpx}.
\end{proof}

As an application of Theorem \ref{thm:fdiasb2qrpx}, we construct distinct factorizable bi-dialgebras from associative algebras and dialgebras, respectively.

\begin{cor}\label{jpx:ppplgphpl}
 Let $A$ be a vector space, $P: A \to A$ be a linear map and $\hat{P}:= - \lambda \id - P$.   Let $\left\{e_{1}, \cdots, e_{n}\right\}$ be a basis of  $A$  and  $\left\{e_{1}^{*}, \cdots, e_{n}^{*}\right\}$  be the dual basis.   Suppose $\lambda \ne 0$. Set
 \begin{align*}
r=\sum_{i}^n \hat{P} (e_i) \otimes  e_i^{*}-e_i^*\otimes  P(e_i).
\end{align*}
 \begin{enumerate}
 \item\label{xy:sfdifromAs1}  If $(A,\cdot)$ is an associative  algebra and $P$ is a Rota-Baxter operator of weight $\lambda$ on the associative algebra $(A,\cdot)$,  then  $( A \rightthreetimes_{-R_{\cdot}^*,-L_{\cdot}^*} A^* ,  \delta_{\rhd,r},\delta_{\lhd,r})$ is a factorizable bi-dialgebra with ${\delta_{\rhd,r}}, {\delta_{\lhd,r}}$ given by Eq.~\eqref{EF}.
 \item\label{xy:sfdifromdi2} If $(A,\rhd,\lhd)$ is a  dialgebra   and $P$ is a Rota-Baxter operator of weight $\lambda$ on the dialgebra $(A,\rhd,\lhd)$,  then  $(A
\ltimes_{-R_{\lhd}^{*},  L_{\lhd}^{*}-L_{\rhd}^{*}, R_{\rhd}^{*}-R_{\lhd}^{*},-L_{\rhd}^{*}}
A^*, \delta_{\rhd,r},\delta_{\lhd,r})$   is a factorizable bi-dialgebra  with ${\delta_{\rhd,r}}, {\delta_{\lhd,r}}$ given by Eq.~\eqref{EF}.
  \end{enumerate}

\end{cor}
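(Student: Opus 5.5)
The plan is to combine Proposition \ref{px:ppplgphpl} with Theorem \ref{thm:fdiasb2qrpx}. Let $D$ denote either $A \rightthreetimes_{-R_{\cdot}^*,-L_{\cdot}^*} A^*$ (case \eqref{xy:sfdifromAs1}) or $A\ltimes_{-R_{\lhd}^{*},  L_{\lhd}^{*}-L_{\rhd}^{*}, R_{\rhd}^{*}-R_{\lhd}^{*},-L_{\rhd}^{*}}A^*$ (case \eqref{xy:sfdifromdi2}). Proposition \ref{px:ppplgphpl} says that $(D,\mathfrak{B}_d,P+\hat{P}^*)$ is a skew-symmetric Rota-Baxter Frobenius dialgebra of weight $\lambda$. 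Since $\lambda\neq 0$, Theorem \ref{thm:fdiasb2qrpx} then gives a factorizable bi-dialgebra $(D,\delta_{\rhd,r'},\delta_{\lhd,r'})$. Here $r'\in D\otimes D$ is the 2-tensor form of $(P+\hat{P}^*)\circ\varphi_{\mathfrak{B}_d}^{-1}$ from Eq.~\eqref{eq:pnbf}. What remains is to show $r'=r$ with $r$ the element in the statement.

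First compute $\varphi_{\mathfrak{B}_d}$. From Eq.~\eqref{mtorbl} we have $\langle\varphi_{\mathfrak{B}_d}(x+a^*),y+b^*\rangle=\langle y,a^*\rangle-\langle x,b^*\rangle$. Identifying $D^*=A^*\oplus A$, this gives $\varphi_{\mathfrak{B}_d}(x+a^*)=-x+a^*$, where $-x$ is viewed in $(A^*)^*=A$ and $a^*$ in $A^{**}$ paired with $A$. So $\varphi_{\mathfrak{B}_d}^{-1}$ sends $a^*\in A^*\subset D^*$ to $a^*\in A^*\subset D$, and $x\in A\subset D^*$ to $-x\in A\subset D$. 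Composing with $P+\hat{P}^*$ yields
\begin{align*}
(P+\hat P^*)\varphi_{\mathfrak B_d}^{-1}(a^*)=\hat P^*(a^*),\qquad (P+\hat P^*)\varphi_{\mathfrak B_d}^{-1}(x)=-P(x).
\end{align*}

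Next pair with the dual basis. For $a^*=e_i^*$ we get $\langle r',e_i^*\otimes(y^*+b)\rangle=\langle\hat P^*(e_i^*),b\rangle=\langle \hat P(b),e_i^*\rangle$. For $x=e_i$ we get $\langle r',e_i\otimes(y^*+b)\rangle=-\langle P(e_i),y^*\rangle$. Reading off components, $r'=\sum_i \hat P(e_i)\otimes e_i^*-e_i^*\otimes P(e_i)$. One has to be careful with the tensor-slot convention: the first slot carries the argument of $\widetilde{r}$. The first sum arises from $\sum_i e_i^*\otimes \hat P^{**}$-type terms after rewriting via $\hat P^*$ duality, namely $\sum_i e_i\otimes \hat P^*(e_i^*)=\sum_i \hat P(e_i)\otimes e_i^*$ after transposition within the pairing. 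In both cases this is exactly $r$.

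The main obstacle is this bookkeeping of signs and slot order in identifying $r'$ with the given $r$. I will verify it by evaluating both on $a^*\otimes b^*$ for all four combinations of $A$ and $A^*$ components. If the sign or order came out transposed, the fallback is Proposition \ref{fquasidba}, which makes factorizability invariant under $\tau$. Once $r'=r$ is settled, both items follow immediately.
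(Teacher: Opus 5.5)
Your proposal is correct and follows the paper's proof. You apply Proposition \ref{px:ppplgphpl} and then Theorem \ref{thm:fdiasb2qrpx}, compute $\varphi_{\mathfrak{B}_d}(x+a^*)=a^*-x$, and identify the 2-tensor of $(P+\hat P^*)\circ\varphi_{\mathfrak{B}_d}^{-1}$ as $\sum_i e_i\otimes\hat P^*(e_i^*)-e_i^*\otimes P(e_i)=\sum_i\hat P(e_i)\otimes e_i^*-e_i^*\otimes P(e_i)=r$. Your slot convention and pairings check out, so the $\tau$-fallback via Proposition \ref{fquasidba} is not needed.
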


\begin{proof}\eqref{x:sfdifromAs1}. By Proposition \ref{px:ppplgphpl}, $( A \rightthreetimes_{-R_{\cdot}^*,-L_{\cdot}^*} A^* ,\mathfrak{B}_d,P+\hat{P}^*)$ is a  skew-symmetric Rota-Baxter Frobenius dialgebra of weight $\lambda$ with $\mathfrak{B}_{d}$ given by Eq.~\eqref{mtorbl}. Let $\varphi_{\mathfrak{B}_{d}}: A \to A^*$ be the induced linear isomorphism by $\mathfrak{B}_{d}$. Note that 
\begin{align*}
\varphi_{\mathfrak{B}_{d}}\left(x+a^{*}\right)=   a^{*}-x, \quad \forall x \in A, a^* \in A^*.
\end{align*}
By Theorem \ref{thm:fdiasb2qrpx}, we obtain a linear transformation  $\widetilde{r}$  on  $A \oplus A^{*}$ given by
\begin{align*}
\widetilde{r}\left(x+a^{*}\right)=\left(P+\hat{P}^*\right)\varphi_{\mathfrak{B}_{d}}^{-1}\left(x+a^{*}\right)=-P(x)+\hat{P}^*( a^{*}) .
\end{align*}
Then we have
\begin{align*}
r &= \sum_{i,j}(e_i  + e_j^*)\otimes \widetilde{r}(e_i^* + e_j) = \sum_{i,j}(e_i  + e_j^*)\otimes (- P(e_j) + \hat{P}^*( e_i^{*}) ) \\
&= \sum_{i,j}-e_j^*\otimes  P(e_j) +  e_i \otimes \hat{P}^*( e_i^{*})  = \sum_{i}^n\hat{P} (e_i) \otimes  e_i^{*}-e_i^*\otimes  P(e_i).
\end{align*}
By Theorem \ref{thm:fdiasb2qrpx},  $( A \rightthreetimes_{-R_{\cdot}^*,-L_{\cdot}^*} A^* ,  \delta_{\rhd,r},\delta_{\lhd,r})$ is a factorizable bi-dialgebra.

\eqref{x:sfdifromdi2}.  It is similar to the proof of Item \eqref{x:sfdifromAs1}
\end{proof}

\begin{cor}\label{EX:jpx:ppplgphpl}
 Let $A$ be a vector space. Let $\left\{e_{1}, \cdots, e_{n}\right\}$ be a basis of  $A$  and  $\left\{e_{1}^{*}, \cdots, e_{n}^{*}\right\}$  be the dual basis.    Set $r=\sum\limits_{i}^n  e_i^*\otimes   e_i$.
 \begin{enumerate}
 \item\label{x:sfdifromAs1}  If $(A,\cdot)$ is an associative  algebra,  then  $( A \rightthreetimes_{-R_{\cdot}^*,-L_{\cdot}^*} A^* ,  \delta_{\rhd,r},\delta_{\lhd,r})$ is a factorizable bi-dialgebra with ${\delta_{\rhd,r}}, {\delta_{\lhd,r}}$ given by Eq.~\eqref{EF}.
 \item\label{x:sfdifromdi2} If $(A,\rhd,\lhd)$ is a  dialgebra,  then  $(A
\ltimes_{-R_{\lhd}^{*},  L_{\lhd}^{*}-L_{\rhd}^{*}, R_{\rhd}^{*}-R_{\lhd}^{*},-L_{\rhd}^{*}}
A^*, \delta_{\rhd,r},\delta_{\lhd,r})$   is a factorizable bi-dialgebra  with ${\delta_{\rhd,r}}, {\delta_{\lhd,r}}$ given by Eq.~\eqref{EF}.
  \end{enumerate}
\end{cor}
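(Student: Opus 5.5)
The natural route is to specialize Corollary \ref{jpx:ppplgphpl} to a trivial Rota-Baxter operator. The weight must be nonzero, so I take $P=0$ of weight $\lambda=-1$. Then $\hat{P}=-\lambda\,\id-P=\id$.

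The first step is to check that $P=0$ is a Rota-Baxter operator of weight $-1$ in each setting. This is immediate, since both sides of Eq.~\eqref{RBofass} vanish, and both sides of Eqs.~\eqref{RBofdi1}-\eqref{RBofdi2} vanish as well. Hence the hypotheses of Corollary \ref{jpx:ppplgphpl} \eqref{xy:sfdifromAs1} hold for an associative algebra $(A,\cdot)$, and those of \eqref{xy:sfdifromdi2} hold for a dialgebra $(A,\rhd,\lhd)$.

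The second step is to evaluate the tensor of Corollary \ref{jpx:ppplgphpl} with these choices. We get
\[
\sum_i \hat{P}(e_i)\otimes e_i^*-e_i^*\otimes P(e_i)=\sum_i e_i\otimes e_i^*.
\]
This is not literally the $r=\sum_i e_i^*\otimes e_i$ of the statement; it is its flip. So we obtain $\tau(r)$ rather than $r$. To bridge this I will invoke Proposition \ref{fquasidba}: $(\cdot,\delta_{\rhd,\tau(r)},\delta_{\lhd,\tau(r)})$ is a factorizable bi-dialgebra if and only if $(\cdot,\delta_{\rhd,r},\delta_{\lhd,r})$ is one. Applying this to the tensor $\sum_i e_i\otimes e_i^*$ supplied by the corollary yields the claim for $r=\sum_i e_i^*\otimes e_i$, in both the associative and the dialgebra case.

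Alternatively, one could use $P=-\lambda\,\id$ with $\lambda=1$, i.e.\ $P=-\id$, so that $\hat{P}=0$. This gives directly $r=\sum_i e_i^*\otimes e_i$. One checks that $-\id$ is a Rota-Baxter operator of weight $1$: for the $\rhd$ relation, $x\rhd y = -(x\rhd y + x\rhd y - x\rhd y)$ fails sign-wise unless handled carefully. Indeed $P(x)P(y)=xy$, while $P(P(x)y+xP(y)+xy)=-(-xy-xy+xy)=xy$, so it holds. This route avoids Proposition \ref{fquasidba} entirely, and I would present it as the main argument, keeping the flip argument as a consistency check.

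The only real care needed is the bookkeeping of signs in identifying the tensor produced by Corollary \ref{jpx:ppplgphpl} with the stated $r$, including the identification of $\varphi_{\mathfrak{B}_d}$. No further computation is required, since all structural verifications (invariance, DAYBE, nondegeneracy of the skew-symmetric part) are inherited from Theorem \ref{thm:fdiasb2qrpx} through the corollary.
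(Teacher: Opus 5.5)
Your main argument, taking $P=-\id$ of weight $\lambda=1$ so that $\hat{P}=0$ and Corollary \ref{jpx:ppplgphpl} yields exactly $r=\sum_i e_i^*\otimes e_i$, is precisely the paper's proof. Your alternative route, using $P=0$ of weight $-1$ and then the flip symmetry of Proposition \ref{fquasidba}, is also valid. However, delete the garbled intermediate identity $x\rhd y=-(x\rhd y+x\rhd y-x\rhd y)$ that precedes your correct check $-(-xy-xy+xy)=xy$.
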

\begin{proof}
By Corollary \ref{jpx:ppplgphpl}, it follows   from the fact that $-\id$ is a Rota-Baxter operator of weight $1$ on the associative  algebra $(A,\cdot)$  and  the  dialgebra $(A,\rhd,\lhd)$, respectively.
\end{proof}

\begin{ex} 
(1) Continuing with Example \ref{amaExspe} (2), there is a dialgebra $ A \rightthreetimes_{-R_{\cdot}^*,-L_{\cdot}^*} A^*$ defined by Eqs.~\eqref{ex:diasexZ1}-\eqref{ex:diasexZ2}. Thus   by Corollary \ref{EX:jpx:ppplgphpl},  $( A \rightthreetimes_{-R_{\cdot}^*,-L_{\cdot}^*} A^* ,  \delta_{\rhd,r},\delta_{\lhd,r})$ is a factorizable bi-dialgebra with ${\delta_{\rhd,r}}, {\delta_{\lhd,r}}$ explicitly given by 
 \begin{align*}
&\delta_{\rhd,r}(e_1) = 0, &&\delta_{\rhd,r}(e_2) =0  ,&&\delta_{\rhd,r}(e_1^*) =  e_1^* \otimes e_2^* ,&&\delta_{\rhd,r}(e_2^*) = e_2^* \otimes e_2^*,\\
&\delta_{\lhd,r}(e_1) = 0, &&\delta_{\lhd,r}(e_2) = 0,&&\delta_{\lhd,r}(e_1^*) =  e_1^* \otimes e_2^* ,&&\delta_{\lhd,r}(e_2^*) = e_2^* \otimes e_2^*.
\end{align*}
 
 (2) Continuing with Example \ref{coregularrep} (2), there is a dialgebra $A \ltimes_{-R_{\lhd}^{*},  L_{\lhd}^{*}-L_{\rhd}^{*}, R_{\rhd}^{*}-R_{\lhd}^{*},-L_{\rhd}^{*}}
A^*$ defined by Eqs.~\eqref{exfordias}, \eqref{ex:diasexZ3} and \eqref{ex:diasexZ4}. Thus   by Corollary \ref{EX:jpx:ppplgphpl}, $(A
\ltimes_{-R_{\lhd}^{*},  L_{\lhd}^{*}-L_{\rhd}^{*}, R_{\rhd}^{*}-R_{\lhd}^{*},-L_{\rhd}^{*}}
A^*, \delta_{\rhd,r},\delta_{\lhd,r})$ is a factorizable bi-dialgebra with ${\delta_{\rhd,r}}, {\delta_{\lhd,r}}$ explicitly given by 
{}
 \begin{align*}
&\delta_{\rhd,r}(e_1) = 0, \quad \delta_{\rhd,r}(e_2) =e_1^* \otimes e_1  - e_2^* \otimes e_1  ,\quad\delta_{\rhd,r}(e_1^*) =  e_2^* \otimes e_2^* ,&& \delta_{\rhd,r}(e_2^*) = e_2^* \otimes e_2^*,\\
&\delta_{\lhd,r}(e_1) = 0, \quad \delta_{\lhd,r}(e_2) = e_1^* \otimes e_1  - e_2^* \otimes e_1, \quad \delta_{\lhd,r}(e_1^*) =  e_1^* \otimes e_2^* - e_2^* \otimes e_1^* + e_2^* \otimes e_2^*, && 
  \delta_{\lhd,r}(e_2^*) = e_2^* \otimes e_2^*.
\end{align*}
\end{ex}

\medskip

 \noindent

\medskip

 \noindent


\end{document}